\newcommand{\stalkiso}{
\coordinate[label=right:$U$] (base) at (4, 0);
\draw[very thick] (0, 0) -- (base);

\coordinate[label=right:$e$] (e) at (4, 1.9);
\coordinate[label=right:$\Upsilon^U_x e$] (f) at (4, 1.1);
\draw[name path=E, ietlagoon, very thick, tension=1] plot[smooth] coordinates {(0, 1.1) (2, 1.7) (e)};
\draw[name path=F, ietcoast, very thick, tension=0.7] plot[smooth] coordinates {(0, 2.8) (1.3, 2.5) (2.8, 1.5) (f)};

\path[name intersections={of=E and F}];
\coordinate (xStalk) at (intersection-1);
\path let \p1=(xStalk) in coordinate[label={[label distance=2pt]below:$x$}] (xBase) at (\x1, 0);

\draw[shorten <=4pt] (xBase) -- ++(90:3.5) coordinate[label=above:$E_x$];
\fill (xStalk) circle (2pt);
\fill (xBase) circle (2pt);
}
\newcommand{\sectiondev}{
\stalkiso

\coordinate[label=right:$\upsilon^U_{yx} e$] (ue) at (4, 3);
\draw[name path=uE, ietlagoon, very thick, tension=1] plot[smooth] coordinates {(0, 2) (2, 2.7) (ue)};

\path[name intersections={of=F and uE}];
\coordinate (yStalk) at (intersection-1);
\path let \p1=(yStalk) in coordinate[label={[label distance=2pt]below:$y$}] (yBase) at (\x1, 0);

\draw[shorten <=4pt] (yBase) -- ++(90:3.5) coordinate[label=above:$E_y$];
\fill (yStalk) circle (2pt);
\fill (yBase) circle (2pt);
}
\definecolor{pwpink}{RGB}{227, 25, 135}
\definecolor{pworange}{RGB}{245, 153, 30}
\definecolor{pwgold}{RGB}{255, 228, 0}
\definecolor{pwgreen}{RGB}{142, 195, 58}
\definecolor{pwblue}{RGB}{0, 151, 191}
\definecolor{pwviolet}{RGB}{147, 83, 155}
\definecolor{pwbeige}{RGB}{172, 146, 122}
\newcommand{\foliate}[3]{
\fill[preaction={fill, #1!#2}, pattern=vertical leaves, pattern color=#1!#3]
}
\newcommand{\freelabel}[4]{
\node[inner sep=1pt] at #3 [shape=circle, fill=#1!#4] {#2};
}
\newcommand{\sectorlabel}[4]{\freelabel{#1}{#2}{(#3 * 0.65, 0)}{#4}}
\newcommand{\sing}[2]{
\draw[#2, line width=2pt, shift={#1}] (45:3pt) -- (225:3pt);
\draw[#2, line width=2pt, shift={#1}] (-45:3pt) -- (-225:3pt);
}
\newcommand{\gennotchdown}[6]{
\foliate{#1}{#5}{#6} (0, 0) -- (-1, -1) -- (-1, 1) -- cycle;
\foliate{pwbeige}{#5}{#6} (0, 0) -- (-1, 1) -- (1, 1) -- cycle;
\foliate{#3}{#5}{#6} (0, 0) -- (1, 1) -- (1, -1) -- cycle;
\sectorlabel{#1}{#2}{-1}{#5}
\sectorlabel{#3}{#4}{1}{#5}
}
\newcommand{\gennotchup}[6]{
\foliate{#1}{#5}{#6} (0, 0) -- (-1, -1) -- (-1, 1) -- cycle;
\foliate{pwbeige}{#5}{#6} (0, 0) -- (1, -1) -- (-1, -1) -- cycle;
\foliate{#3}{#5}{#6} (0, 0) -- (1, 1) -- (1, -1) -- cycle;
\sectorlabel{#1}{#2}{-1}{#5}
\sectorlabel{#3}{#4}{1}{#5}
}
\newcommand{\notchdown}[4]{
\gennotchdown{#1}{#2}{#3}{#4}{30}{60}
\sing{(0, 0)}{pwbeige}
}
\newcommand{\notchup}[4]{
\gennotchup{#1}{#2}{#3}{#4}{30}{60}
\sing{(0, 0)}{pwbeige}
}
\newcommand{\dupnotchdown}[4]{
\gennotchdown{#1}{#2}{#3}{#4}{10}{20}
\sing{(0, 0)}{pwbeige!45}
}
\newcommand{\dupnotchup}[4]{
\gennotchup{#1}{#2}{#3}{#4}{10}{20}
\sing{(0, 0)}{pwbeige!45}
}
\newcommand{\critleaf}[2]{
\draw[pworange, line width=#2, cap=rect] (#1, {abs(#1)}) -- (#1, -1);
}
\newcommand{\divdleaf}[2]{
\draw[pwbeige, line width={2*(#2)}, cap=rect] (#1, {abs(#1)}) -- (#1, -1);
\draw[pwgold, line width={(2/3)*(#2)}, cap=rect] (#1, {abs(#1)}) -- (#1, -1);
}
\newcommand{\linednotchup}[1]{
\foliate{pwbeige}{10}{20} (0, 0) -- (-1, 1) -- (-1, -1) -- (1, -1) -- (1, 1) -- cycle;
\begin{scope}
\clip (0, 0) -- (-1, 1) -- (-1, -1) -- (1, -1) -- (1, 1) -- cycle;
#1{0}{2pt}
#1{0.251}{1.5pt}
#1{-0.672}{1.5pt}
#1{0.811}{1pt}
#1{-0.423}{1pt}
#1{-0.820}{0.5pt}
#1{0.709}{0.5pt}
\end{scope}
}
\newcommand{\critnotchup}{
\linednotchup{\critleaf}
\sing{(0, 0)}{pworange}
}
\newcommand{\divdnotchup}{
\linednotchup{\divdleaf}
}
\newcommand{\zoomnotchup}[1]{
\fill[pwbeige!10] (0, 0) -- (-1, 1) -- (-1, -1) -- (1, -1) -- (1, 1) -- cycle;
\clip (-1, -1) rectangle (1, 1);
\draw[pwgold, line width=#1, cap=rect] (0, 0) -- (0, -1);
\fill[white] (0, 3*#1/4) circle (#1/2);
\draw[pwbeige, line width=#1, cap=round] (-#1, 3*#1/4) -- (-#1, -1);
\draw[pwbeige, line width=#1, cap=round] (#1, 3*#1/4) -- (#1, -1);
}
\newcommand{\bentlane}[3]{
\begin{scope}[rotate=#1*60]
\draw[pwbeige, line width=#3, join=round, xshift=2*#3] (-30:#2) -- (0, 0) -- (30:#2);
\end{scope}
}
\newcommand{\zoomprongs}[2]{
\fill[pwbeige!10] (0, 0) circle (#1);
\begin{scope}
\clip (0, 0) circle (#1);
\foreach \n in {0,1,2} {\draw[pwgold, line width=#2] ({-90+\n*60}:#1) -- ({90+\n*60}:#1);}
\fill[white] (0, 0) circle (2*#2);
\foreach \n in {0,...,5} {\bentlane{\n}{#1}{#2}}
\end{scope}
}
\newcommand{\veerradius}{1.6cm}
\newcommand{\veerlanewidth}{1.6mm}
\newcommand{\veerleft}{
\zoomprongs{\veerradius}{\veerlanewidth}
\begin{scope}[rotate=-2*60]
\clip (0, 0) circle (\veerradius);
\draw[-stealth, line width=\veerlanewidth, join=round, xshift=2*\veerlanewidth] (30:2) -- (0, 0) -- (-30:1.2);
\end{scope}

\coordinate (start) at (-\veerlanewidth, -\veerradius);
\node (startlabel) at ($(start) + (225:9mm)$) {$\llane{w}$};
\draw[thin, shorten >=2pt] (startlabel) -- (start);
}
\newcommand{\fallmiddle}{
\fill[pwbeige!10] (0, 0) circle (\veerradius);
\begin{scope}
\clip (0, 0) circle (\veerradius);
\draw[line width=\veerlanewidth] (-90:\veerradius) -- (0, 0);
\foreach \n in {1,...,5} {\draw[pwgold, line width=\veerlanewidth] ({-90+\n*60}:\veerradius) -- (0, 0);}
\fill[white] (0, 0) circle (2*\veerlanewidth);
\foreach \n in {0,...,5} {\bentlane{\n}{2}{\veerlanewidth}}
\draw[-stealth, line width=\veerlanewidth] (-90:\veerradius) -- (-90:0.8);
\end{scope}

\coordinate (start) at (0, -\veerradius);
\node (startlabel) at ($(start) + (270:8mm)$) {$\median{w}$};
\draw[thin, shorten >=2pt] (startlabel) -- (start);
}
\newcommand{\veerright}{
\zoomprongs{\veerradius}{\veerlanewidth}
\begin{scope}[rotate=-1*60]
\clip (0, 0) circle (\veerradius);
\draw[-stealth, line width=\veerlanewidth, join=round, xshift=2*\veerlanewidth] (-30:2) -- (0, 0) -- (30:1.2);
\end{scope}

\coordinate (start) at (\veerlanewidth, -\veerradius);
\node (startlabel) at ($(start) + (315:9mm)$) {$\rlane{w}$};
\draw[thin, shorten >=2pt] (startlabel) -- (start);
}
\newcommand{\earringlabel}[4]{\freelabel{#1}{#2}{(#3 * \bighair * 0.6, 0)}{#4}}
\newcommand{\bighair}{1.4}
\newcommand{\radface}{0.29}
\newcommand{\radfade}{0.8}
\newcommand{\hairfade}{
\fill[white, path fading=south] (-\bighair, \bighair) rectangle (\bighair, {\radfade*\bighair});
\fill[white, path fading=west] (\bighair, \bighair) rectangle ({\radfade*\bighair}, -\bighair);
\fill[white, path fading=north] (\bighair, -\bighair) rectangle (-\bighair, -{\radfade*\bighair});
\fill[white, path fading=east] (-\bighair, -\bighair) rectangle (-{\radfade*\bighair}, \bighair);
\draw[white] (-\bighair, -\bighair) rectangle (\bighair, \bighair);
}
\newcommand{\hairdown}[4]{
\foliate{#1}{30}{60} ($\radface*(-\bighair, -\bighair)$) -- (-\bighair, -\bighair) -- (-\bighair, \bighair) -- ($\radface*(-\bighair, \bighair)$) -- cycle;
\foliate{pwbeige}{30}{60} ($\radface*(-\bighair, \bighair)$) -- (-\bighair, \bighair) -- (\bighair, \bighair) -- ($\radface*(\bighair, \bighair)$) -- cycle;
\foliate{#3}{30}{60} ($\radface*(\bighair, \bighair)$) -- (\bighair, \bighair) -- (\bighair, -\bighair) -- ($\radface*(\bighair, -\bighair)$) -- cycle;
\earringlabel{#1}{#2}{-1}{30}
\earringlabel{#3}{#4}{1}{30}
\hairfade
}
\newcommand{\hairup}[4]{
\foliate{#1}{30}{60} ($\radface*(-\bighair, -\bighair)$) -- (-\bighair, -\bighair) -- (-\bighair, \bighair) -- ($\radface*(-\bighair, \bighair)$) -- cycle;
\foliate{pwbeige}{30}{60} ($\radface*(\bighair, -\bighair)$) -- (\bighair, -\bighair) -- (-\bighair, -\bighair) -- ($\radface*(-\bighair, -\bighair)$) -- cycle;
\foliate{#3}{30}{60} ($\radface*(\bighair, \bighair)$) -- (\bighair, \bighair) -- (\bighair, -\bighair) -- ($\radface*(\bighair, -\bighair)$) -- cycle;
\earringlabel{#1}{#2}{-1}{30}
\earringlabel{#3}{#4}{1}{30}
\hairfade
}
\definecolor{ietocean}{RGB}{0, 30, 140}
\definecolor{ietcoast}{RGB}{0, 150, 173}
\definecolor{ietlagoon}{RGB}{0, 216, 180}
\definecolor{ietpapaya}{RGB}{255, 121, 0}
\definecolor{ietgold}{RGB}{255, 228, 0}
\pgfmathsetmacro{\lea}{sin(18)}
\pgfmathsetmacro{\leb}{sin(36) - sin(18)}
\pgfmathsetmacro{\lec}{sin(54) - sin(36)}
\pgfmathsetmacro{\led}{sin(72) - sin(54)}
\pgfmathsetmacro{\lee}{sin(90) - sin(72)}
\newcommand{\ininterval}[3]{
\fill[#3] (#1 + 0.005, -0.25) rectangle (#2 - 0.005, -0.2);
}
\newcommand{\outinterval}[3]{
\fill[#3] (#1 + 0.005, 0.2) rectangle (#2 - 0.005, 0.25);
}
\newcommand{\pmtarrow}[4]{
\draw (#1 + 0.005, -0.18) edge[out=#3,in=#4+180,->] (#2 - 0.005, 0.18);
}
\newcommand{\exampleiet}{
\ininterval{0}{\lea}{ietocean}
\ininterval{\lea}{\lea + \leb}{ietcoast}
\ininterval{\lea + \leb}{\lea + \leb + \lec}{ietlagoon}
\ininterval{\lea + \leb + \lec}{\lea + \leb + \lec + \led}{ietpapaya}
\ininterval{\lea + \leb + \lec + \led}{\lea + \leb + \lec + \led + \lee}{ietgold}

\outinterval{0}{\lec}{ietlagoon}
\outinterval{\lec}{\lec + \lee}{ietgold}
\outinterval{\lec + \lee}{\lec + \lee + \leb}{ietcoast}
\outinterval{\lec + \lee + \leb}{\lec + \lee + \leb + \led}{ietpapaya}
\outinterval{\lec + \lee + \leb + \led}{\lec + \lee + \leb + \led + \lea}{ietocean}

\pmtarrow{\lea/2}{\lec + \lee + \leb + \led + \lea/2}{90}{60}
\pmtarrow{\lea + \leb/2}{\lec + \lee + \leb/2}{90}{90}
\pmtarrow{\lea + \leb + \lec/2}{\lec/2}{120}{90}
\pmtarrow{\lea + \leb + \lec + \led/2}{\lec + \lee + \leb + \led/2}{90}{90}
\pmtarrow{\lea + \leb + \lec + \led + \lee/2}{\lec + \lee/2}{90}{120}
}
\definecolor{sky}{Hsb}{195,1.00,1.00}
\definecolor{rock}{Hsb}{22,.36,.38}
\definecolor{lshed}{RGB}{0, 150, 173}
\definecolor{rshed}{RGB}{0, 216, 180}
\pgfmathsetmacro{\mountainheight}{sqrt(3)/6}
\pgfmathsetmacro{\earthdepth}{-1/12}
\newcounter{mountainlevel}
\newcommand{\mountains}{
\fill[rock] (1/3+\earthdepth/2,6*\mountainheight*\earthdepth/2)
	-- (1/2,\mountainheight)
	-- (2/3-\earthdepth/2,6*\mountainheight*\earthdepth/2);
\fill[white] (1/2,\mountainheight)
	-- (4/9,2/3*\mountainheight)
	-- (.4649,.2059)
	-- (.4738,.1978)
	-- (.4890,.2057)
	-- (.5006,.1955)
	-- (.5129,.2139)
	-- (.5294,.1963)
	-- (.5360,.2036)
	-- (.5549,.1925)
	-- (5/9,2/3*\mountainheight);
\fill[black!7] (1/2,.2842) -- (.4501,.1989) -- (.4646,.2088);
\ifnum \value{mountainlevel} > 1
	\addtocounter{mountainlevel}{-1}
	\begin{scope}[scale=1/3]
		\mountains
		\begin{scope}[shift={(2,0)}]\mountains\end{scope}
	\end{scope}
	\addtocounter{mountainlevel}{1}
\fi
}
\newcommand{\landscape}{
\fill[sky] (0,0) rectangle (1,1/3);
\foreach \level / \skycolor in {1 / sky!80, 2 / sky!65, 3 / sky!50, 4 / sky!40, 5 / sky!30} {
	\pgfmathsetmacro{\altitude}{\mountainheight/pow(3,\level-1)}
	\fill[\skycolor] (0,0) rectangle (1,\altitude);
}
\fill[rock] (0,\earthdepth) rectangle (1,0);
\setcounter{mountainlevel}{5}
\mountains
}
\newcommand{\segment}{
\begin{scope}[line width=2pt]
	\draw[ietocean, shorten <=-0.5pt, shorten >=-1pt] (0pt, 0cm) -- ++(0:16pt);
	\begin{scope}[shorten <=-1pt, shorten >=-1pt]
		\draw[ietcoast] (18pt, 0cm) -- ++(0:12pt);
		\draw[ietlagoon] (32pt, 0cm) -- ++(0:12pt);
	\end{scope}
	\draw[ietpapaya, shorten <=-1pt, shorten >=-0.5pt] (46pt, 0cm) -- ++(0:10pt);
	\begin{scope}[shorten <=-0.5pt, shorten >=-0.5pt]
		\draw[ietocean] (l1) -- (r1);
		\draw[ietcoast] (l2) -- (r2);
		\draw[ietlagoon] (l3) -- (r3);
		\draw[ietpapaya] (l4) -- (r4);
	\end{scope}
\end{scope}
\node[bracing, minimum width=56pt] (zbrace) at (28pt, 0cm) {};
\node[below=8pt] at (zbrace.south) {$Z$};
}
\newcommand{\pointlabels}{
\begin{scope}[ietocean]
	\node (p0) at (225:8mm) {$p$};
	\node (pt) at ($(l1) + (135:9mm)$) {$\phi^t p$};
	\draw[thin, shorten >=2pt] (p0) -- (0pt, 0cm);
	\draw[thin, shorten >=2pt] (pt) -- (l1);
\end{scope}
}
\newcommand{\exits}{
\begin{tikzpicture}[line width=1pt]
\fill[pwbeige!10]
	(-10pt, 0cm) -- ++(0cm, 1cm) arc (0:15:6cm-10pt) arc (15:35:3cm-10pt) arc (35:65:2cm-10pt) -- ++(65:8pt)
	arc (65:35:2cm-2pt) arc (35:15:3cm-2pt) arc (15:0:6cm-2pt) -- (-2pt, 0cm) -- cycle;
\foreach \x in {-8, -6, ..., -4} {
\draw[pwbeige!20] (\x pt, 0cm) -- ++(90:1cm) arc (0:15:6cm+\x pt) arc (15:35:3cm+\x pt) arc (35:65:2cm+\x pt);
}
\draw[pwbeige] (-2pt, 0cm) -- ++(90:1cm) arc (0:15:6cm-2pt) arc (15:35:3cm-2pt) arc (35:65:2cm-2pt);
\draw[pwgold] (-1pt, 0cm) -- ++(90:1cm) arc (0:15:6cm-1pt) arc (15:35:3cm-1pt);
\fill[pwbeige!10]
	(0pt, 0cm) -- ++(0cm, 1cm) arc (0:15:6cm+0pt) arc (15:35:3cm+0pt) arc (215:180:2cm+0pt) coordinate (l1) -- ++(0:16pt) coordinate (r1)
	arc (180:215:2cm-16pt) arc (35:15:3cm+16pt) arc (15:0:6cm+16pt) -- (16pt, 0cm) -- cycle;
\draw[ietocean] (0pt, 0cm) -- ++(0cm, 1cm) arc (0:15:6cm+0pt) arc (15:35:3cm+0pt) arc (215:180:2cm+0pt);
\foreach \x in {2, 4, ..., 14} {
\draw[pwbeige!20] (\x pt, 0cm) -- ++(90:1cm) arc (0:15:6cm+\x pt) arc (15:35:3cm+\x pt) arc (215:180:2cm-\x pt);
}
\draw[pwbeige] (16pt, 0cm) -- ++(90:1cm) arc (0:15:6cm+16pt) arc (15:35:3cm+16pt) arc (215:180:2cm-16pt);
\draw[pwgold] (17 pt, 0cm) -- ++(90:1cm) arc (0:15:6cm+17pt);
\fill[pwbeige!10]
	(18pt, 0cm) -- ++(0cm, 1cm) arc (0:15:6cm+18pt) arc (195:170:4cm-18pt) coordinate (l2) -- ++(-10:12pt) coordinate (r2)
	arc (170:195:4cm-30pt) arc (15:0:6cm+30pt) -- (30pt, 0cm) -- cycle;
\draw[pwbeige] (18pt, 0cm) -- ++(90:1cm) arc (0:15:6cm+18pt) arc (195:170:4cm-18pt);
\foreach \x in {20, 22, ..., 28} {
\draw[pwbeige!20] (\x pt, 0cm) -- ++(90:1cm) arc (0:15:6cm+\x pt) arc (195:170:4cm-\x pt);
}
\draw[pwbeige] (30pt, 0cm) -- ++(90:1cm) arc (0:15:6cm+30pt) arc (195:170:4cm-30pt);
\draw[pwgold] (31pt, 0) -- (31pt, 1cm);
\fill[pwbeige!10]
	(32pt, 0cm) -- ++(90:1cm) arc (180:165:6cm-32pt) arc (-15:5:3cm+32pt) coordinate (l3) -- ++(5:12pt) coordinate (r3)
	arc (5:-15:3cm+44pt) arc (165:180:6cm-44pt) -- (44pt, 0cm) -- cycle;
\draw[pwbeige] (32pt, 0cm) -- ++(90:1cm) arc (180:165:6cm-32pt) arc (-15:5:3cm+32pt);
\foreach \x in {34, 36, ..., 42} {
\draw[pwbeige!20] (\x pt, 0cm) -- ++(90:1cm) arc (180:165:6cm-\x pt) arc (-15:5:3cm+\x pt);
}
\draw[pwbeige] (44pt, 0cm) -- ++(90:1cm) arc (180:165:6cm-44pt) arc (-15:5:3cm+44pt);
\draw[pwgold] (45pt, 0cm) -- ++(90:1cm) arc (180:165:6cm-45pt);
\fill[pwbeige!10]
	(46pt, 0cm) -- ++(90:1cm) arc (180:165:6cm-46pt) arc (165:135:4cm-46pt) coordinate (l4) -- ++(-45:10pt) coordinate (r4) -- ++(-45:10pt)
	arc (135:165:4cm-66pt) arc (165:180:6cm-66pt) -- (66pt, 0cm) -- cycle;
\draw[pwbeige] (46pt, 0cm) -- ++(90:1cm) arc (180:165:6cm-46pt) arc (165:135:4cm-46pt);
\foreach \x in {48, 50, ..., 54, 60, 62, 64} {
\draw[pwbeige!20] (\x pt, 0cm) -- ++(90:1cm) arc (180:165:6cm-\x pt) arc (165:135:4cm-\x pt);
}
\draw[pwgold] (57pt, 0cm) -- ++(90:1cm) arc (180:165:6cm-57pt) arc (165:135:4cm-57pt);
\foreach \x in {56, 58} {
\draw[pwbeige] (\x pt, 0cm) -- ++(90:1cm) arc (180:165:6cm-\x pt) arc (165:135:4cm-\x pt);
}
\segment
\pointlabels
\end{tikzpicture}
}
\newcommand{\parallelogramcoords}{
\coordinate (slope) at (-0.25, 0);
\coordinate (leaf) at (0, 3);
\coordinate (a) at (-3, 0);
\coordinate (b) at ($(leaf) + (slope)$);
}
\newcommand{\parallelogramsings}{
\sing{(0, 0)}{pwbeige}
\sing{(a)}{pwbeige}
\sing{($(a)+(b)$)}{pwbeige}
\sing{(b)}{pwbeige}
}
\newcommand{\spiralparallelogram}{
\parallelogramcoords
\foliate{pwbeige}{20}{40} (0, 0) -- ++(a) -- ++(b) -- ++($-1*(a)$) -- cycle;
\node[anchor=south] at ($(leaf) + 1/2*(slope)$) {$m$};
\draw (b) -- (leaf) -- ++(0, -0.1);
\draw[dotted] (leaf) + (0, -0.1) -- (0, 0);

\foreach \j in {0, ..., 5} { \draw[pwpink, line width=1.5pt-\j*0.21pt] ($(slope) + \j*(slope)$) -- ++(leaf); }
\foreach \j in {0, ..., 5} { \draw[pworange, line width=1.5pt-\j*0.21pt] ($(a) - \j*(slope)$) -- ++(leaf); }

\parallelogramsings

\draw[-stealth,shorten <=1mm] ($(b)+1/2*(a)$) -- ++($1/5*(b)$) node[anchor=south] {$B$};
\draw[-stealth,shorten <=1mm] ($(a)+1/2*(b)$) -- ++($1/5*(a)$) node[anchor=east] {$A$};
}
\newcommand{\returnparallelogram}{
\parallelogramcoords

\foliate{ietcoast}{75}{100} ($1/2*(b)$) -- ++($1/2*(b)$) -- ++($-1/2*(leaf)$) -- cycle;
\foliate{ietcoast}{75}{100} ($(a) + 1/2*(b)$) -- ++($1/2*(leaf)$) -- ++($1/2*(slope)$) -- cycle;
\foliate{ietcoast}{75}{100} (a) rectangle ++($1/2*(leaf) - 1/2*(slope)$);

\foliate{ietgold}{75}{100} ($1/2*(leaf) + (slope)$) rectangle ++($1/2*(b)$);
\foliate{ietgold}{75}{100} (0, 0) -- ++($1/2*(b)$) -- ++($-1/2*(leaf)$) -- cycle;
\foliate{ietgold}{75}{100} (a) -- ++($1/2*(leaf)$) -- ++($1/2*(slope)$) -- cycle;

\foliate{pwbeige}{20}{40} ($(b) + 1/2*(slope)$) rectangle ++($(a) - (slope) - 1/2*(leaf)$);
\foliate{pwbeige}{20}{40} ($1/2*(b)$) rectangle ++($(a) - (slope) - 1/2*(leaf)$);

\parallelogramsings

\draw[line width=1.5] ($1/2*(b)$) -- ++(a);
\node[anchor=west] at ($1/2*(b)$) {$Z$};
}
\theoremstyle{plain}
\newtheorem{prop}{Proposition}[subsection]
\newtheorem{lemma}[prop]{Lemma}
\newtheorem{thm}[prop]{Theorem}
\newtheorem{cor}[prop]{Corollary}
\newtheorem{secprop}{Proposition}[section]
\tikzset{bracing/.style={rectangle,inner sep=0pt,below delimiter=\}}}
\newcommand{\thru}{\unskip\,--\,\ignorespaces}
\newcommand{\secs}{\S\S\,\ignorespaces}
\newcommand{\blankbox}{
	\begin{tikzpicture}
	\node[fill=black!15, inner sep=0] {\phantom{$h$}};
	\end{tikzpicture}
}
\newcommand{\chars}[1]{\mathcal{X}(#1)}
\newcommand{\nechars}[1]{\mathcal{X}_\text{ne}(#1)}
\newcommand{\twchars}[1]{\tilde{\mathcal{X}}(#1)}
\newcommand{\netwchars}[1]{\tilde{\mathcal{X}}_\text{ne}(#1)}
\newcommand{\circgp}{\mathbb{T}}
\newcommand{\llane}[1]{{\accentset{\leftarrow}{#1}}}
\newcommand{\rlane}[1]{{\accentset{\rightarrow}{#1}}}
\newcommand{\median}[1]{{\hat{#1}}}
\newcommand{\divd}[1]{{\accentset{\leftrightarrow}{#1}}}
\newcommand{\frakd}[1]{{\accentset{\text{\guilsinglleft}\,\text{\guilsinglright}}{#1}}}
\DeclareMathOperator{\trim}{trim}
\DeclareMathOperator{\seal}{seal}
\newcommand{\nom}[1]{{\accentset{\text{-\,-}}{#1}}}
\newcommand{\gr}[1]{\on{grade} #1}
\newcommand{\ip}[2]{\langle #1, #2 \rangle}
\newcommand{\Hyp}{\mathbb{H}}
\newenvironment{tocsection}[1]{
{\item[] \textbf{\textit{#1}}}
\begin{samepage}
\begin{itemize}
}{
\end{itemize}
\end{samepage}
}
\newcommand{\tocitem}[1]{\item[] \textbf{#1} \;\;}
\title{\Large A dynamical perspective on shear-bend coordinates}
\author{Aaron Fenyes}
\date{}
\begin{document}
\maketitle
\begin{abstract}
Twisted $\on{SL}_2 \C$ local systems on surfaces of finite type appear often in geometry and physics. Most of them arise geometrically as local systems of charts for pleated hyperbolic structures. Bonahon and Thurston's {\em shear-bend coordinates} parameterize these local systems of charts.

On a surface with punctures, Gaiotto, Hollands, Moore and Neitzke's {\em abelianization} process computes the shear-bend coordinates of a twisted $\on{SL}_2 \C$ local system without reference to its hyperbolic geometry. Using tools from dynamics, we'll generalize abelianization to compact surfaces, leading to a dynamical recipe for the shear-bend parameterization. This recipe lends itself well to numerical approximation, and it may clarify the changes of coordinates that relate different shear parameterizations.
\end{abstract}
\section{Introduction}\label{intro}
\subsection{Context}\label{context}
\subsubsection{Why study twisted $\on{SL}_2 \C$ local systems?}
Bonahon and Thurston's {\em shear-bend coordinates} parameterize twisted $\on{SL}_2 \C$ local systems on a surface of finite type. Before we dive into the dynamics of these local systems, let's see why they're worth studying, and what makes the shear-bend parameterization good for studying them.

Let $C$ be a connected smooth surface of finite type. A {\em twisted $\on{SL}_2 \C$ local system} on $C$ is an $\on{SL}_2 \C$ local system on the unit tangent bundle\footnote{In spite of its name, the unit tangent bundle can be defined without reference to any metric. See \cite[\S 3.3.1]{warping-geom} for an explicit definition.} $UC$, with holonomy $-1$ around each fiber~\cite[\S 22]{curves-on-surfaces}\cite[\secs 2.3 \thru 2.4]{higher-teich}\cite[\S 10.1]{spec-nets}.\footnote{\label{gen-twisted} Some authors identify twisted local systems with representations of central extensions of $\pi_1 C$~\cite{geom-char-var}\cite[\secs 4.4 \thru 4.5]{alg-aspects}. Our definition is a special case. When $C$ is compact, you can see this by presenting $\pi_1 UC$ as
\[ \langle a_1, b_1, \ldots, a_g, b_g, u \mid [a_1, b_1] \cdots [a_g, b_g] = u^{2(1-g)}, u \text{ central} \rangle, \]
where $a_1, b_1, \ldots, a_g, b_g$ project to a standard generating set for $\pi_1 C$ and $u$ loops once counterclockwise around the fiber over the base point of $C$~\cite[\S 5.6.3]{mcg-primer}.} You can push it forward to an ordinary local system on $C$ if you forget about the difference between $-1$ and $1$. This gives a natural map from twisted $\on{SL}_2 \C$ local systems to ordinary $\on{PSL}_2 \C$ local systems.

The twisted $\on{SL}_2 \C$ local systems on $C$ and the natural isomorphisms between them form a groupoid, which is essentially a complex Lie groupoid. Its Morita equivalence class is a complex differentiable stack, called the {\em twisted $\on{SL}_2 \C$ character stack} of $C$.
Its ordinary points are the isomorphism classes of irreducible local systems. They form a complex manifold, which I'll call the {\em twisted $\on{SL}_2 \C$ character variety} of $C$.\footnote{This terminology isn't quite standard. The character variety is typically defined as a singular algebraic variety which includes both the ordinary points and the stacky points of the character stack. The complex manifold I'm calling the character variety is the smooth part of the usual character variety~\cite[\S 4.5]{alg-aspects}.}

In geometry, the fundamental example of a twisted $\on{SL}_2 \C$ local system comes from a complete hyperbolic structure on $C$. The local isometries from $C$ to the hyperbolic plane form a $\on{PSL}_2 \R$ local system, and knowing that local system up to isomorphism is the same as knowing the hyperbolic structure up to isotopy \cite[\S 2]{warping-geom}. A $\on{PSL}_2 \R$ local system arising from a hyperbolic structure lifts to a twisted $\on{SL}_2 \R$ local system in a canonical way~\cite[\S 23]{curves-on-surfaces}\cite[\S 3.3.1]{warping-geom}. Thus, the space of complete hyperbolic structures on $C$ embeds naturally in the twisted $\on{SL}_2 \R$ character variety of $C$.

A more general example of a twisted $\on{SL}_2 \C$ local system comes from a pleated hyperbolic structure on $C$. This is a complete hyperbolic structure with some extra data: an atlas of ``pleated path-isometries'' into hyperbolic $3$-space~\cite{shearing-bending}. The pleated path-isometries form a $\on{PSL}_2 \C$ local system, which lifts canonically to a twisted $\on{SL}_2 \C$ local system. This maps the space of pleated hyperbolic structures onto a dense open subset of the twisted $\on{SL}_2 \C$ character variety. When $C$ is compact, the image comprises the {\em non-elementary} local systems: the ones whose structure groups don't reduce to the upper-triangular or unitary subgroups of $\on{SL}_2 \C$.\footnote{Here's a proof sketch for connoisseurs. Let $\mathcal{F}(C)$ be the space of complete hyperbolic structures up to isotopy. A pleated hyperbolic structure is determined by its underlying hyperbolic structure and its pleating lamination, so $\mathcal{F}(C) \times \mathcal{ML}(C)$ parameterizes pleated hyperbolic structures up to isotopy. Let $\mathcal{P}(C)$ be the space of complex projective structures up to isotopy. Let $\chars{C}$ and $\twchars{C}$ be the ordinary $\on{PSL}_2 \C$ and twisted $\on{SL}_2 \C$ character varieties. The non-elementary local systems form dense open submanifolds $\nechars{C} \subset \chars{C}$ and $\netwchars{C} \subset \twchars{C}$.

You can turn a pleated hyperbolic surface into a complex projective surface by grafting along its pleating lamination. Kamishima, Tan, and Thurston showed this gives a homeomorphism $\mathcal{F}(C) \times \mathcal{ML}(C) \to \mathcal{P}(C)$~\cite{cp-dfm-spaces}. Heijal, Earle, and Hubbard proved in turn that the holonomy map $\mathcal{P}(C) \to \mathcal{X}(C)$ is a local homeomorphism~\cite{heijals-thm}. Its image is $\nechars{C}$, as shown by Gallo, Kapovich, and Marden~\cite{cp-monodromy}. Altogether, we have a surjective local homeomorphism $\mathcal{F}(C) \times \mathcal{ML}(C) \to \nechars{C}$ which sends each pleated hyperbolic structure to its local system of pleated path-isometries.

As I mentioned earlier, a $\on{PSL}_2 \C$ local system arising in this way lifts canonically to a twisted $\on{SL}_2 \C$ local system. The lifted holonomy map $\mathcal{F}(C) \times \mathcal{ML}(C) \to \netwchars{C}$ is continuous, and thus still a local homeomorphism. To see it's still surjective, note that the projection $\netwchars{C} \to \nechars{C}$ is a covering map, and $\netwchars{C}$ and $\nechars{C}$ are both connected.}

In physics, twisted $\on{SL}_2 \C$ local systems parameterize an interesting family of solutions to the Yang-Mills equation with gauge group $\on{SO}_3$. For this example, assume $C$ is compact, and give it a flat metric with finitely many conical singularities. Let's study Yang-Mills fields on $C \times \mathbb{T}^2$ which extend smoothly over the singularities of $C$, and are invariant under translations of the flat torus $\mathbb{T}^2$. We can learn about the quantum behavior of these fields, in the classical limit, by looking at the configurations that minimize the Yang-Mills action~\cite[\S 4.8]{qft-invitation}. The minimizers turn out to be the fields whose curvature forms are either self-dual or anti-self-dual \cite{arbi-chern}.\footnote{The argument in \cite{arbi-chern} takes a little extra care in our setting, because the integral in the Yang-Mills action has to avoid the singularities. The trick is to observe that we can still use the integral formula for the second Chern number, because we're looking at fields which extend smoothly over the singularities.} The self-dual fields, through a lovely construction by Donaldson, correspond one-to-one with the irreducible $\on{PSL}_2 \C$ local systems on $C$~\cite[\S 11]{geom-top-moduli}. Composing this correspondence with the natural map from twisted $\on{SL}_2 \C$ local systems to ordinary $\on{PSL}_2 \C$ local systems gives the promised parameterization.\footnote{This parameterization only describes the self-dual fields that correspond to $\on{PSL}_2 \C$ local systems that lift to twisted $\on{SL}_2 \C$ local systems. It can be extended to all self-dual fields using the generalized twisted local systems mentioned in footnote~\ref{gen-twisted}~\cite[\S 9]{self-duality-eqns}.} The twisted $\on{SL}_2 \R$ local systems arising from hyperbolic structures on $C$ map to a particularly simple family of self-dual fields~\cite[\S 11]{geom-top-moduli}.

The space of complete hyperbolic structures on $C$ is naturally a real analytic symplectic manifold.\footnote{We'll set its symplectic form to $-2$ times the Weil-Petersson form, using Goldman's normalization for the latter~\cite[\S 2.6.3]{geom-of-chars}.} It sits inside the $\on{SL}_2 \C$ character variety as a maximal totally real analytic submanifold~\cite[\S 5.1]{sympl-dfm-space}. Its symplectic structure therefore extends uniquely to a complex symplectic structure on the twisted character variety~\cite[\S 6.2]{sympl-dfm-space}, first described by Goldman~\cite{geom-of-chars}.

For each singular flat metric on $C$, the space of self-dual Yang-Mills fields on $C \times \mathbb{T}^2$ is naturally a complex symplectic manifold. The parameterization of self-dual fields by twisted $\on{SL}_2 \C$ local systems thus defines another complex symplectic structure on the twisted character variety, first described by Hitchin~\cite[equation~6.5]{self-duality-eqns}. The space of complete hyperbolic structures is a complex Lagrangian submanifold with respect to this complex symplectic structure~\cite[p.~95 and \S 11]{self-duality-eqns}.

At first glance, the geometric and physical motivations for studying $\on{SL}_2 \C$ local systems might seem to have little in common. The complex symplectic structures they lead to, however, are related in a surprising way. Goldman's and Hitchin's complex symplectic forms turn out to have the same imaginary part. They can be written as
\begin{align*}
\Omega_\text{Gold} & = -\omega_1 + i \omega_3 \\
\Omega_\text{Hit} & = \hphantom{-}\omega_2 + i \omega_3
\end{align*}
in terms of real symplectic forms $\omega_1, \omega_2, \omega_3$, which fit together into a hyperk\"{a}hler structure on the twisted $\on{SL}_2 \C$ character variety.\footnote{\label{hyperkahler-note}In Section~6 of \cite{self-duality-eqns}, Hitchin describes this hyperk\"{a}hler structure in terms of an explicitly defined metric $g$, the complex structure $I$ of the space of self-dual Yang-Mills fields, and another complex structure $J$, which is revealed on p.~112 of \cite{self-duality-eqns} to be the complex structure of the $\on{SL}_2 \C$ character variety. You can get explicit formulas for $\omega_1, \omega_2, \omega_3$ using the formula for $J$ found on p.~109. You can then verify that $\Omega_\text{Gold} = -\omega_1 + i \omega_3$ by direct computation, using the formula for $\Omega_\text{Gold}$ from p.~125. To compare this expression with Proposition~11.17 of \cite{self-duality-eqns}, note that the $\omega_1$ in the proposition should be $-\omega_1$, and that Hitchin's normalization of the Weil-Petersson form is $-1/2$ times Goldman's. (Many thanks to Brice Loustau for explaining all of this.)} This extra structure isn't apparent from the geometric or the physical perspective alone. You need both eyes to see it.
\subsubsection{Charting hyperbolic structures with shear-bend coordinates}\label{shear-intro}
The space of complete hyperbolic structures on a finite-type surface comes with an interesting family of parameterizations, called {\em shear parameterizations}. On a punctured surface $C'$, you get a shear parameterization by choosing an ideal triangulation and a winding direction around each puncture. When you put a hyperbolic structure on $C'$, the edges of the triangulation ``snap tight'' to geodesics, winding in the chosen direction around each funnel-shaped end. They cut $C'$ into hyperbolic ideal triangles and funnels.
\begin{center}
\begin{tikzpicture}
\matrix[row sep=0.2cm, column sep=1cm]{
\node {\includegraphics[width=3cm]{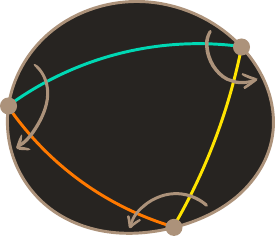}}; &
\node {\includegraphics[width=4cm]{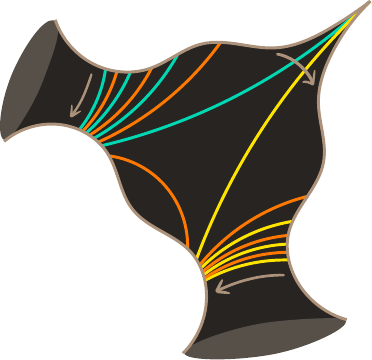}}; \\
\node[align = center, text width=4cm] {\small Ideal triangulation and winding directions}; &
\node[align = center, text width=4cm] {\small Snapping tight}; \\
};
\end{tikzpicture}
\end{center}
A hyperbolic ideal triangle comes with a horocyclic ``contact triangle'' at its center~\cite[\S 3.1]{length-convexity}. Along each edge $e$ of the triangulation, measure the displacement $x_e \in \R$ between the corners of the neighboring contact triangles. By convention, $x_e$ is positive when someone standing at the corner of one contact triangle can look across $e$ and see the other contact triangle to the left.
\begin{center}
\begin{tikzpicture}
\matrix[row sep=0.2cm]{
\node {\includegraphics[width=52mm]{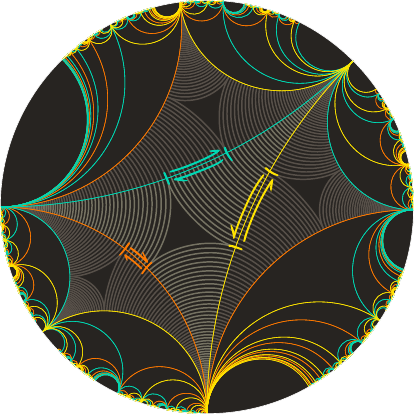}}; \\
\node[align = center, text width=8cm]{\small Displacements between contact triangles, on the universal cover of $C'$}; \\
};
\end{tikzpicture}
\end{center}
The numbers $X_e = \exp x_e$ are the {\em shear coordinates} of the hyperbolic structure on $C'$. They tell you how to piece $C'$ together out of ideal triangles and funnels, determining its hyperbolic structure up to isotopy. In this way, $\R_{> 0}$-valued functions on the edge set of the triangulation parameterize the complete hyperbolic structures on $C'$. That's the shear parameterization.

The shear coordinates can't take just any values. Their product is always one or more around each counterclockwise puncture, and one or less around each clockwise puncture, with equality when the puncture is a cusp. This constraint cuts the parameter space down to a closed polyhedral cone, with the cusped hyperbolic structures on the boundary.

On a compact surface $C$, you get a shear parameterization by choosing a {\em maximal measured geodesic lamination}, which is like an ideal triangulation with no vertices and a Cantor set of edges. When you pick a hyperbolic structure on $C$, the leaves of the lamination snap tight to geodesics, cutting $C$ into finitely many hyperbolic ideal triangles.
\begin{center}
\begin{tikzpicture}
\matrix[column sep=1.5cm]{
\node {\includegraphics{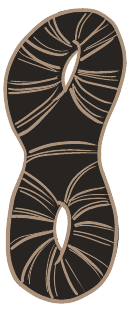}}; &
\node {\includegraphics[width=52mm]{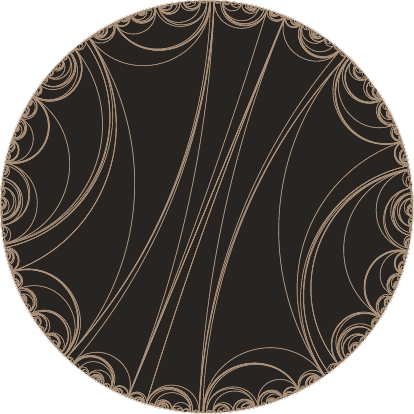}}; \\
\node[align = center, text width=4cm] {\small Maximal measured geodesic lamination}; &
\node[align = center, text width=4cm] {\small Snapping tight, on the universal cover of $C$}; \\
};
\end{tikzpicture}
\end{center}
Given a swath of leaves $H$ separating a pair of ideal triangles, you can measure a well-defined displacement $x_H \in \R$ between the corners of the contact triangles~\cite[\S 2]{shearing-bending}, as illustrated in \cite{length-convexity} at the end of Section~3.2. The {\em shear parameters} $X_H = \exp x_H$ fit together into an $\R_{> 0}$-valued {\em transverse cocycle} on the lamination~\cite[\S 1]{shearing-bending}. Surprisingly, the transverse cocycles form a finite-dimensional vector space. An open convex cone in this space parameterizes the hyperbolic structures on $C$~\cite[\S 6]{shearing-bending}.

Choosing an ideal triangulation of $C'$ picks out a class of pleated hyperbolic structures: the ones pleated along the edges of the triangulation. You can parameterize these pleated hyperbolic structures using $\C^\times$-valued {\em shear-bend coordinates}, which record both the displacement and the pleating angle between neighboring triangles. Similarly, on $C$, the shear parameterization picked out by a maximal measured geodesic lamination extends to a {\em shear-bend parameterization} of the hyperbolic structures pleated along that lamination~\cite[\secs 7 \thru 10]{shearing-bending}. Its values are $\C^\times$-valued transverse cocycles.

Each shear-bend parameterization is holomorphic with respect to the natural complex structure of the twisted $\on{SL}_2 \C$ character variety, and each one provides a simple description of Goldman's complex symplectic structure. On $C'$, the Poisson brackets of the shear-bend coordinates can be written explicitly as $\{X_e, X_{e'}\} = \ip{e}{e'} X_e X_{e'}$, where the pairing $\ip{\blankbox}{\blankbox}$ counts adjacencies between triangulation edges~\cite[\secs 4.2 and 5.5]{wkb}. On $C$, the description is similar, though less explicit. The space of $\C^\times$-valued transverse cocycles comes with a complex symplectic form---Thurston's {\em intersection form}. It pulls back along the shear-bend parameterization to half of Goldman's complex symplectic form~\cite{wp-and-thurs}.

The groupoid of twisted $\on{SL}_2 \C$ local systems is not only essentially a Lie groupoid, but essentially an algebraic groupoid. That makes the twisted character stack and variety algebraic too. On $C'$, each shear-bend parameterization is rational, and the relationships between different shear-bend parameterizations reveal a new structure on the twisted $\on{SL}_2 \C$ character variety. Each edge of an ideal triangulation can be seen as the diagonal of a quadrilateral. If you {\em flip} the edge, by replacing it with the opposite diagonal, you get a new triangulation. The shear-bend coordinates you get from the old and new triangulations are related by a special kind of rational transformation, called a {\em cluster transformation}. The shear-bend parameterizations thus form an atlas for a {\em cluster variety} structure~\cite[end of \S 3.3]{geom-clust-from-surf}\cite[Theorem 1.17]{higher-teich}.
\subsubsection{Computing shear-bend coordinates with abelianization}\label{spec-coords}
The shear-bend parameterization, as we've seen it, lives solidly in the world of hyperbolic geometry. On a punctured surface, however, Gaiotto, Hollands, Moore, and Neitzke have found an elegant way to describe it in terms of local systems, building on the work of Fock and Goncharov. Their version of the shear-bend parameterization turns twisted $\on{SL}_2 \C$ local systems on a punctured surface $C'$ into ordinary $\C^\times$ local systems on a double cover of $C'$. They call this process {\em abelianization}.

You get an abelianization process by giving $C'$ a certain kind of geometric structure, called a {\em half-translation structure}. We'll learn more about these in Section~\ref{tras-ha-tras}. Briefly, a half-translation structure is a flat metric with conical singularities and a foliation by straight lines. For simplicity, let's pick one in which the punctures are shaped like half-infinite cylinders. I'll assume our half-translation structure is typical in various ways.\footnote{There's at least one singularity, all the singularities have cone angle $3\pi$, there are no saddle connections (and hence no periodic leaves), and the leaves spiral down the cylindrical ends rather than plunging straight in.}

The leaves of the foliation trace out an ideal triangulation of $C'$ as they run from puncture to puncture, and they pick out a winding direction around each puncture as they spiral down the cylindrical ends. Our half-translation structure thus contains all the data you need to get a shear-bend parameterization.

It also determines a branched double cover $\Sigma' \to C'$, whose sheets track the two local orientations of the foliation. The surface $\Sigma'$ has a {\em translation structure}: a half-translation structure whose foliation is oriented. Away from the singularity set $\mathfrak{B}$, where the covering is branched, you can see $\Sigma'$ as a submanifold of the unit tangent bundle $UC$. Hence, twisted $\on{SL}_2 \C$ local systems on $C'$ restrict to ordinary $\on{SL}_2 \C$ local systems on $\Sigma' \smallsetminus \mathfrak{B}$. This way of ``untwisting'' a twisted local system will reappear, with a few sophistications, in Section~\ref{untwisting}. An untwisted $\on{SL}_2 \C$ local system turns out to be {\em almost-flat}, meaning it has holonomy $-1$ around each singularity~\cite{fenchel-nielsen}.

Now we're ready to describe the abelianization process. Take a generic\footnote{In the sense of Section~\ref{punkd-ab}.} twisted $\on{SL}_2 \C$ local system on $C'$ and untwist it into an ordinary $\on{SL}_2 \C$ local system $\mathcal{E}$ on $\Sigma' \smallsetminus \mathfrak{B}$. Hollands and Neitzke observed that if you cut $\mathcal{E}$ along the ``critical leaves'' of the foliation, it naturally reassembles into a local system $\mathcal{F}$ with the diagonal subgroup $\C^\times \hookrightarrow \on{SL}_2 \C$ as its structure group. Because $\mathcal{E}$ is almost-flat, $\mathcal{F}$ comes out with holonomy $1$ around each singularity.\footnote{Hollands and Neitzke consider the case where $\mathcal{E}$ extends over the singularities, so $\mathcal{F}$ comes out almost-flat.} In this way, the $\C^\times$ local systems on $\Sigma'$ parameterize a dense open subset of the twisted $\on{SL}_2 \C$ character variety of $C'$, cut out by our genericity assumption.

Abelianization provides direct access to the shear-bend coordinates, which appear as holonomies of the abelianized local system~\cite[\secs 4.4 \thru 4.5]{fenchel-nielsen}. It also offers a tidy coordinate-free description of Goldman's complex symplectic structure. Goldman's construction gives complex symplectic structures to all sorts of character varieties, including both the twisted $\on{SL}_2 \C$ character variety of $C'$ and the ordinary $\C^\times$ character variety of $\Sigma'$. Each abelianization map is a local holomorphic symplectomorphism between these structures~\cite[\S 10.4]{spec-nets}.
\subsection{Results}
\subsubsection{Abelianization without punctures}
In the original account of abelianization, the punctures in $C'$ played an important role, but Gaiotto, Moore, and Neitzke noted that their construction ought to work on compact surfaces as well~\cite[open problem 7]{spec-nets}. The main result of this paper is to confirm that abelianization can be carried out on compact surfaces. In this setting, the ease of working with half-translation structures rather than measured laminations can make abelianization an especially attractive approach to the shear-bend parameterization. The abelianization process is well-suited to numerical approximation, as discussed in Section~\ref{numerical-note}. It may also provide a good view of how the shear parameters vary under change of measured lamination. The geometry of the space of half-translation structures suggests, with some numerical support, that any two abelianization processes should be related by an infinite ordered composition of cluster transformations~\cite{cluster-like}.

Abelianization on a compact half-translation surface $C$ works basically the same as it did on a punctured surface. The catch is that, instead of running from puncture to puncture, the leaves of $C$ spiral around, filling the surface densely. This introduces two challenges, discussed in Section~\ref{no-punks}. First, the cutting and gluing across the critical leaves of $\Sigma \to C$ is supposed to be determined by the behavior of $\mathcal{E}$ at the ends of the leaves. Without punctures, the leaves have nowhere to end. Second, it's tricky to cut and glue along a dense set of lines.

We'll overcome both challenges by looking at local systems on $\Sigma$ from a dynamical point of view. When $\mathcal{E}$ satisfies a condition called {\em uniform hyperbolicity}, its sections grow or decay along the leaves of $\Sigma$ in the same way as they would near the punctures of $\Sigma'$. This dynamical end behavior tells us how to cut and glue, and ensures that the reassembled local system $\mathcal{F}$ diagonalizes as expected. We'll do all our work on an enlargement of $\Sigma$ where the flow along the leaves is nicer, and it's easier to cut and glue along the critical leaves.

The main result, with all that dynamical machinery accounted for, is summarized below. Its statement is very condensed, making it look intimidatingly remote, but the trip from here to there will hopefully feel more like a long hike up a gentle slope than a short climb up a sheer cliff. Words and ideas that won't be formally introduced until later along the route are tagged with references to the relevant sections. You can get an idea of what those sections are about from the table of contents in Section~\ref{contents}.
\begin{thm}[Sections \ref{running-assumptions} \thru \ref{slithering}]
Let $\Sigma$ be a compact translation surface with no saddle connections (\ref{tras-surfs}), and let $\mathfrak{B}$ be its finite set of singularities. Let $\divd{\Sigma}$ be the associated divided surface (\ref{dividing}), whose groupoid of $\on{SL}_2 \C$ local systems is equivalent to the groupoid of $\on{SL}_2 \C$ local systems on $\Sigma \smallsetminus \mathfrak{B}$ (Theorem~\ref{loc-sys-equiv-2d}).

Given a uniformly hyperbolic (\ref{unif-hyp}) $\on{SL}_2 \C$ local system $\mathcal{E}$ on $\divd{\Sigma}$, we can find a new $\on{SL}_2 \C$ local system $\mathcal{F}$ and a stalkwise isomorphism $\Upsilon \maps \mathcal{E} \to \mathcal{F}$, supported on a dense subspace of $\divd{\Sigma}$, with the following properties:
\begin{itemize}
\item The deviation (\ref{deviations-intuition} \thru \ref{dev-loc-const}) of $\Upsilon$ behaves like the deviation of an abelianized local system from its original would behave on a punctured surface (\ref{ab-overview}).
\item The structure group of $\mathcal{F}$ reduces to the diagonal subgroup $\C^\times \hookrightarrow \on{SL}_2 \C$.
\end{itemize}
\end{thm}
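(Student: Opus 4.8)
The plan is to carry out the cut-and-shear of Section~\ref{ab-overview} directly on $\divd{\Sigma}$, using the straight-line flow of the translation structure where a finite spectral network would be used in the punctured case (here $\Sigma$ already plays the role of the spectral cover, so there is no lifting step), and leaning on the slithering machinery built up through Section~\ref{slithering} to make the construction converge even though the cutting lines are now dense in $\Sigma$.

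\emph{The dynamical splitting.} One of the translation structure's two transverse foliations gives a singular straight-line flow on $\Sigma$, and transporting the rank-two data of $\mathcal{E}$ along it (via parallel transport) yields a linear cocycle over that flow. Uniformity of $\mathcal{E}$ (\ref{uniformity}) is exactly the hypothesis that makes this cocycle hyperbolic uniformly enough to split continuously, along each leaf, into a contracting line $L^-$ and an expanding line $L^+$ --- the analogues of the small flat sections along the walls of a spectral network --- while genericity of the dynamics (\ref{running-assumptions}) kills the recurrence phenomena (non-minimality, saddle connections) that would spoil this. The pair $L^\pm$ is then defined and leafwise continuous on a dense subspace $\Sigma^\circ \subseteq \divd{\Sigma}$, and one checks that it extends continuously over the notches of $\divd{\Sigma}$ --- part of the point of dividing in the first place; cf.\ Theorem~\ref{loc-sys-equiv-2d}. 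Along a single leaf the holonomy of $\mathcal{E}$ preserves each of $L^\pm$; transverse to the foliation it does not, and that transverse twisting is the shear to be removed.

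\emph{Building $\mathcal{F}$ and $\Upsilon$.} Record the transverse twisting of $L^\pm$ as a shear cocycle: crossing a leaf contributes a unipotent shear, in the $L^\pm$-adapted frame, whose parameter is an explicit dynamical cross-ratio, and one accumulates these over the (dense, hence uncountable) family of crossed leaves as a limit of finite products. Uniform hyperbolicity forces the contributions to decay exponentially across any transversal, so --- with the slithering maps handling the bookkeeping --- the cocycle is well-defined; declaring $L^+$ and $L^-$ to be flat then produces an honest local system $\mathcal{F}$ on $\divd{\Sigma}$, and since $L^- \cong (L^+)^{-1}$ it is an $\on{SL}_2 \R$ local system that splits as a direct sum of two $\R^\times$ local systems --- the second bullet. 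Let $\Upsilon \maps \mathcal{E} \to \mathcal{F}$ be the resulting stalkwise comparison; it is an isomorphism on every stalk over $\Sigma^\circ$, hence supported on that dense subspace. By construction its deviation (\ref{deviations-intuition}--\ref{dev-loc-const}) is the shear cocycle above --- vanishing along leaves, jumping leaf by leaf transverse to the foliation --- and matching it against the model deviation computed in Section~\ref{ab-overview} gives the first bullet.

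\emph{The main obstacle.} Everything hinges on the convergence in the middle step: a dense set of cutting lines means a dense set of shears, and the work is to show they assemble into a genuine local system with a genuinely \emph{locally constant} deviation (\ref{dev-loc-const}) rather than a divergence. Uniform hyperbolicity is what supplies the exponential contraction of the elementary shears across a transversal, and the slithering maps --- in the spirit of Bonahon's --- are the device for summing the leaf-by-leaf contributions coherently and for identifying the total with the deviation of a bona fide punctured-surface abelianization. The remaining points --- the $\on{SL}_2 \R$ and $\R^\times$ structures, density of the support, the $\det = 1$ bookkeeping, and continuity across $\mathfrak{B}$ --- are then essentially formal, given what is in hand by Section~\ref{slithering}.
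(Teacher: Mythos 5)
Your proposal captures the right skeleton of the construction --- extract stable/unstable lines from uniformity, encode the comparison as a jump at the critical leaves, use exponential decay from uniform hyperbolicity to make the infinite ordered products converge, and warp $\mathcal{E}$ along the resulting deviation --- and this tracks Sections~\ref{slithering} and~\ref{ab-conv} of the paper reasonably well. But there is a serious gap in how you handle the second bullet, the splitting of $\mathcal{F}$.

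You write that ``declaring $L^+$ and $L^-$ to be flat then produces an honest local system $\mathcal{F}$'' and that the remaining points are ``essentially formal.'' This is where the argument breaks down. Warping $\mathcal{E}$ by the slithering deviation produces a well-defined local system $\mathcal{F}$ together with $\Upsilon$, but it does \emph{not} automatically make the images $F^\pm = \Upsilon(E^\pm)$ flat in $\mathcal{F}$. The warp matches the stable lines across each individual median, but being constant across a dense set of medians is a much weaker statement than being constant on the whole fractured interval --- there is no ``finitely many walls'' argument to fall back on, unlike in the punctured case. This is precisely the problem that Section~\ref{ab-deliv} exists to solve, and the solution is not formal: one must (i) show the original stable distributions $E^\pm$ are H\"{o}lder along transversals, which is done by invoking a substantial external theorem of Ara\'{u}jo, Bufetov, and Filip on H\"{o}lder regularity of Oseledets subspaces; (ii) show the deviation $\sigma$ is Lipschitz near the identity, so the abelianized distributions $F^\pm$ remain H\"{o}lder; and (iii) invoke the \emph{fat gap condition} on $\Sigma$ (Section~\ref{fat-gap}, one of the running assumptions of Section~\ref{running-assumptions}) together with Corollary~\ref{matched-const} to conclude that a H\"{o}lder function on the fractured interval whose values match at adjacent edge points must in fact be constant. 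Your proposal never mentions H\"{o}lder regularity, the Ara\'{u}jo--Bufetov--Filip theorem, or the fat gap condition at all, and without that third ingredient there is no mechanism to promote ``matches across each divider'' to ``constant everywhere.'' In short: the convergence step you emphasize is a necessary prerequisite, but it is the arithmetic of the division metric versus the Euclidean metric --- enforced by the Diophantine-type fat gap hypothesis --- that actually delivers the splitting, and that piece of the argument is missing from your sketch.
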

\begin{proof}
Sections \ref{ab-conv} \thru \ref{ab-deliv}.
\end{proof}
As a demonstration, in Section~\ref{toy-example}, we'll use abelianization on a compact translation torus to parameterize the $\on{SL}_2 \C$ local systems on a punctured torus.

On a compact surface, abelianization once again provides access to the shear parameterization. Although it's not as direct as it was in the punctured case, it's convenient for numerical computations, as discussed in Section~\ref{numerical-note}.
\begin{thm}[Section~\ref{shear-params}]\label{shear-access}
Let $C_\text{hyp}$ be a compact surface equipped with a maximal measured geodesic lamination. Given a hyperbolic structure on $C_\text{hyp}$, you can use Gupta's ``collapsing'' process to build a translation surface $\Sigma_\text{flat}$, whose geometry encodes the shear parameters of the hyperbolic structure. The associated divided surface $\divd{\Sigma}_\text{flat}$ comes with some extra data:
\begin{itemize}
\item An $\on{SL}_2 \R$ local system $\mathcal{E}$ describing the hyperbolic structure of $C_\text{hyp}$.
\item An $\R^\times \hookrightarrow \on{SL}_2 \R$ local system $\mathcal{F}$ describing the ``vertical part'' of the translation structure of $\Sigma_\text{flat}$.
\item A stalkwise isomorphism $\Upsilon \maps \mathcal{E} \to \mathcal{F}$ supported on a dense subspace of $\divd{\Sigma}_\text{flat}$.
\end{itemize}
The local system $\mathcal{F}$ and the stalkwise isomorphism $\Upsilon$ are the same as the ones you get by abelianizing $\mathcal{E}$. The shear parameters appear as holonomies of $\mathcal{F}$.
\end{thm}
\begin{proof}
Section~\ref{shear-params}.
\end{proof}
This picture should generalize to the shear-bend parameterization, with the translation surface $\Sigma_\text{flat}$ becoming a ``pleated translation surface'' of some kind.
\subsubsection{Related work in higher rank}
On a compact surface, the twisted $\on{SL}_2 \R$ local systems that come from hyperbolic structures form a connected component of the twisted $\on{SL}_2 \R$ character variety. For every $n \ge 2$, the twisted $\on{SL}_n \R$ character variety has an analogous component, called the {\em Hitchin component}~\cite[\S 7]{higgs-geom}. Bonahon and Dreyer recently generalized the shear parameterization from the $\on{SL}_2 \R$ Hitchin component to any $\on{SL}_n \R$ Hitchin component~\cite{hit-chars}. Their approach pivots on the fact that local systems in the Hitchin component satisfy a dynamical condition called the Anosov property.

This paper shares key technical elements with Bonahon and Dreyer's. One is the {\em slithering jump} we construct in Section~\ref{slithering}, which carries the same kind of information as Bonahon and Dreyer's {\em slithering maps}. Another is our reliance on the fact that uniformly hyperbolic {\em Markov cocycles} have Lipschitz continuous stable distributions, which brings to mind Bonahon and Dreyer's reliance on the fact that Anosov local systems have H\"{o}lder continuous limit curves. These common elements are inherited: both papers descend from Bonahon's original description of the shear-bend parameterization~\cite{shearing-bending}.

The approach used here and the approach of Bonahon and Dreyer have enough machinery in common to provide some hope that they might be combined, extending the shear-bend parameterization to uniformly hyperbolic twisted $\on{SL}_n \C$ local systems for $n \ge 2$. Over a well-behaved dynamical system, the stable distributions of a uniformly hyperbolic $\on{SL}_n \C$ cocycle are H\"{o}lder continuous in most places, as Ara\'{u}jo, Bufetov, and Filip recently proved~\cite{oseledets-holder}. Using their result, the main argument of this paper can be carried out with H\"{o}lder continuity in place of Lipschitz continuity, at the cost of some extra dynamical assumptions on $\Sigma$~\cite{warping-geom}. That version of the argument, though, does some tricks with Lyapunov exponents that may not work in higher rank.
\subsection{Contents}\label{contents}
\begin{itemize}
\begin{tocsection}{Introduction}
\tocitem{Section~\ref{intro}}
Context, summary of results, and various administrative things, including notation that will be used throughout the paper.
\end{tocsection}
\begin{tocsection}{Tools}
\tocitem{Section~\ref{warping}}
Abstracting from the idea of deforming a flat connection on a smooth bundle, we get a more general way of deforming local systems, called {\em warping}.
\tocitem{Section~\ref{dividing}}
After a brief introduction to translation surfaces, we describe a way to enlarge a translation surface by dividing its critical leaves into two-lane {\em critical roads}. We show that the resulting {\em divided surface} has useful topological and dynamical properties, and that it resembles the original surface both dynamically and in terms of its local systems.
\tocitem{Section~\ref{warping-on-divd}}
We show how the warping process from Section~\ref{warping} can be used to make sense of the idea of cutting and gluing a local sytem along the critical roads of a divided surface, even when the critical roads fill the surface densely.
\tocitem{Section~\ref{unif-hyp}}
We take a well-studied condition on dynamical cocycles, called {\em uniform hyperbolicity}, and reinterpret it as a condition on local systems on compact translation surfaces.
\end{tocsection}
\begin{tocsection}{Abelianization}
\tocitem{Section~\ref{ab-princ}}
We review how abelianization works for $\on{SL}_2 \C$ local systems on a translation surface with punctures, point out the obstacles to carrying it out a compact translation surface, and describe how these obstacles can be overcome using the machinery from the previous sections. Section \ref{slithering} contains the main product of the paper: instructions for abelianizing an $\on{SL}_2 \C$ local system on a compact translation surface, which are guaranteed to work under the conditions laid out in Section~\ref{running-assumptions}.
\tocitem{Section~\ref{ab-conv}}
We show that abelianization, as defined by the instructions in Section~\ref{slithering}, produces a well-defined local system, assuming the conditions from Section~\ref{running-assumptions}.
\tocitem{Section~\ref{ab-deliv}}
We reduce the structure group of the abelianized local system to the diagonal subgroup $\C^\times \hookrightarrow \on{SL}_2 \C$, assuming the conditions from Section~\ref{running-assumptions}.
\end{tocsection}
\begin{tocsection}{Examples}
\tocitem{Section~\ref{toy-example}}
A non-rigorous sample computation that uses abelianization to find holomorphic coordinates on the $\on{SL}_2 \C$ character variety of the punctured torus.
\tocitem{Section~\ref{shear-params}}
We see that a geometric technique for computing the shear parameters of a compact hyperbolic surface is a special case of the abelianization process. We mention a numerical recipe for computing shear parameters with abelianization.
\end{tocsection}
\begin{tocsection}{Appendices}
\tocitem{Appendix~\ref{warping-tech}}
A pair of small technical lemmas for Section~\ref{warping}.
\tocitem{Appendix~\ref{rel-dyne}}
A formalism for dynamical systems described by relations, used throughout the paper.
\tocitem{Appendix~\ref{ord-prod}}
Results on infinite ordered products, used heavily in Sections \ref{warping-on-divd}, \ref{ab-conv}, and \ref{ab-deliv}.
\tocitem{Appendix~\ref{euclidean}}
Linear algebra facts about the complex analogue of a Euclidean plane, used in Section~\ref{ab-deliv}.
\tocitem{Appendix~\ref{markov-stable-lines}}
A stand-alone proof of the fact, used in Section~\ref{stable-lipschitz}, that a uniformly hyperbolic {\em Markov cocycle} has Lipschitz continuous stable distributions.
\tocitem{Appendix~\ref{punk-shapes}}
A list of standard puncture shapes for translation surfaces and an explanation of where they come from, included to clarify the review in Section~\ref{ab-princ}.
\end{tocsection}
\end{itemize}
\subsection{Setup}
\subsubsection{Running notation}\label{running-notation}
The terminology of this section hasn't been introduced yet, but will be familiar to readers familiar with translation surfaces. If you'd like to become familiar with translation surfaces, skip ahead to Section~\ref{tras-ha-tras}.

From now on, $\Sigma$ will be a compact translation surface, $\mathfrak{B}$ its set of singularities, and $\mathfrak{W}$ the union of its critical leaves. Within $\mathfrak{W}$, let $\mathfrak{W}^+$ and $\mathfrak{W}^-$ be the unions of the backward- and forward-critical leaves, respectively. Saying that $\Sigma$ has no saddle connections is the same as saying that $\mathfrak{W}^+$ and $\mathfrak{W}^-$ are disjoint. The $\pm$ labeling is meant to evoke the fact that, in the absence of saddle connections, the vertical flow is well-defined on $\mathfrak{W}^+$ for all positive times, and on $\mathfrak{W}^-$ for all negative times.

I should stress that $\Sigma$ doesn't need to be the translation double cover of a half-translation surface, and there are nice examples of abelianization where it isn't. One of these is discussed in Section~\ref{toy-example}.
\subsubsection{Index of symbols}
Symbols can be hard to look up, so here's a list of unusual symbols that appear frequently in this paper, with references to the sections where they're defined. The first three are introduced in this paper, and the fourth is common, but not universal, in analysis.
\begin{center}
\begin{tabular}{rl}
$\divd{\blankbox}$ & Divided interval or surface (Sections \ref{divd-intvl-construct} and \ref{divd-surf-construct}). \\
$\frakd{\blankbox}$ & Fractured interval or surface (same sections). \\
$\nom{\blankbox}$ & Intersection with fractured interval or surface (Sections \ref{frakd-props-1d} and \ref{divd-props-2d}). \\
$\blankbox \lesssim \blankbox$ & Bounded by a constant multiple (Section~\ref{unif-hyp-motive}).
\end{tabular}
\end{center}
\subsection{Acknowledgements}
This paper is a fork of my Ph.D. thesis, which would not have been started or finished without the tireless guidance of Andy Neitzke. Its motivation has been clarified, and its main argument simplified dramatically, following a conversation with Bill Goldman and Richard Wentworth at Jacques Hurtubise's 60th birthday conference.

I'm very grateful to Jen Berg for pointing out the argument used to prove Proposition~\ref{stalk-res-iso}, and to Brice Loustau for walking me through the material in footnote~\ref{hyperkahler-note} (I am, of course, responsible for any mistakes in these passages). I've also enjoyed the benefit of conversations, some short and some long, with Sona Akopian, David Ben-Zvi, Francis Bischoff, Francis Bonahon, Richard Derryberry, Luis Duque, Tim Magee, Tom Mainiero, Taylor McAdam, Amir Mohammadi, Javier Morales, and Max Riestenberg, as well as diction brainstorming with Eliana Fenyes. This research was supported in part by NSF grants 1148490 and 1160461.
\section{Warping local systems}\label{warping}
\subsection{Overview}
Gaiotto, Hollands, Moore, and Neitzke introduced abelianization as a cutting and gluing process acting on flat bundles, rather than local systems. On the punctured surfaces where abelianization was first described, this point of view works great, because the cutting and gluing only has to be done along a few isolated lines. On a compact surface, however, the cutting and gluing happens along lines that fill the surface densely, and keeping track of what it does to the topology of the bundle becomes a serious hassle. By working with local systems, which have no explicit total space to worry about, we'll avoid this inconvenience. Local systems are also less sensitive to the topology of the space they live over, a feature we'll take great advantage of later.

In this section, we'll develop a general tool for deforming locally constant sheaves, called {\em warping}. We'll see in Section~\ref{warping-on-divd} that warping includes our dense cutting and gluing of local systems as a special case.
\subsection{Conventions for local systems}\label{loc-sys-conventions}
\subsubsection{Basics}
Given a group $G$, we define a $G$ local system to be a locally constant sheaf of $G$-sets whose stalks are all $G$-torsors. Until Section~\ref{unif-hyp}, it won't matter much what $G$ is, so we'll often just talk about local systems in general.

The category of $G$-sets is a nice target category for sheaves, because it's a type of algebraic structure~\cite[Tag 007L]{stacks-project}.\footnote{See \cite[\S 2.15]{cat-alg} for proofs of completeness and cocompleteness.} Throughout this article, ``sheaf'' will mean a sheaf whose target category is a type of algebraic structure. With this said, we can define a constant sheaf to be a sheaf of locally constant functions---that is, functions constant on every connected component of their domain. In a locally connected space, every connected component of an open set is open, so the constant sheaf with value $A$ is characterized by the property that it sends every connected, non-empty open set to $A$, and every inclusion of such sets to $1_A$.

If $\mathcal{F}$ is a constant sheaf on a locally connected space, the stalk restriction morphism $\mathcal{F}_{x \in X} \maps \mathcal{F}_x \leftarrow \mathcal{F}_X$ is an isomorphism for every $x \in X$. In our context, the converse is true as well:
\begin{prop}\label{stalk-res-iso}
Suppose $\mathcal{F}$ is a sheaf on a locally connected space $X$. If the stalk restriction $\mathcal{F}_{x \in X}$ is an isomorphism for every $x \in X$, then $\mathcal{F}$ is isomorphic to a constant sheaf.
\end{prop}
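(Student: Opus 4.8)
The plan is to build an explicit isomorphism from $\mathcal F$ to the constant sheaf on the value $A := \mathcal F_X$ of its global sections. Write $\underline A$ for that constant sheaf; since $X$ is locally connected, $\underline A$ is the sheafification of the presheaf that assigns $A$ to each nonempty open set, with identity transition maps between nonempty opens. The presheaf morphism sending a global section $a \in \mathcal F_X$ to its restriction $a|_U \in \mathcal F(U)$ is compatible with further restriction, so it is a morphism of presheaves into the sheaf $\mathcal F$, and hence factors uniquely through sheafification to give a canonical morphism $\Phi \maps \underline A \to \mathcal F$. (Equivalently, $\Phi$ is the counit, at $\mathcal F$, of the adjunction between the constant-sheaf functor and the global-sections functor.)

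The claim is that $\Phi$ is an isomorphism, and the first step is to reduce this to a statement about stalks. A morphism of sheaves valued in a type of algebraic structure is an isomorphism as soon as it induces isomorphisms on all stalks: composing with the forgetful functor to $\mathbf{Set}$ preserves the sheaf condition and commutes with the filtered colimits that compute stalks, so this follows from the analogous fact for sheaves of sets together with the observation that a bijective homomorphism of algebraic structures is invertible. It therefore suffices to show that $\Phi_x \maps (\underline A)_x \to \mathcal F_x$ is an isomorphism for every $x \in X$.

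Here the hypothesis does all the work. Because $X$ is locally connected, $(\underline A)_x$ is a filtered colimit over connected open neighborhoods of $x$, on each of which $\underline A$ has value $A$ with identity transition maps, so $(\underline A)_x \cong A$ canonically. Tracing the definition of $\Phi$ through this colimit identifies $\Phi_x \maps A \to \mathcal F_x$ with the germ map $\mathcal F_{x \in X}$, which carries a global section to its germ at $x$. By hypothesis this is an isomorphism for each $x$, so $\Phi$ is a stalkwise isomorphism, hence an isomorphism of sheaves, and $\mathcal F \cong \underline A$ is constant. The argument uses no connectedness assumption on $X$ itself; when $X$ is disconnected the hypothesis is simply very restrictive.

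I do not expect a genuine obstacle. The only points needing care are the two bookkeeping facts flagged above: that $(\underline A)_x \cong A$, which genuinely uses local connectedness and would fail for an arbitrary space, and that ``stalkwise isomorphism implies isomorphism'' is legitimate for sheaves valued in $\mathcal C_G$ and not merely in $\mathbf{Set}$ — exactly the categorical convenience that Section~\ref{loc-sys-conventions} was arranged to provide.
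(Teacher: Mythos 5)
Your proof is correct and takes essentially the same approach as the paper: construct the canonical morphism from the constant sheaf $\underline A$ with $A = \mathcal F_X$ via restriction of global sections, observe it induces the stalk restriction $\mathcal F_{x\in X}$ (using local connectedness to identify $(\underline A)_x \cong A$), and conclude via ``stalkwise isomorphism implies isomorphism'' for sheaves of algebraic structures. The paper states this more tersely but the underlying argument is identical.
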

\begin{proof}
Let $\bar{\mathcal{F}}$ be the constant sheaf with value $\mathcal{F}_X$. For each $U \subset X$, the restriction $\mathcal{F}_{U \subset X}$ gives a morphism $\bar{\mathcal{F}}_U \to \mathcal{F}_U$, and these morphisms fit together into a natural transformation from $\bar{\mathcal{F}}$ to $\mathcal{F}$. This natural transformation induces an isomorphism on every stalk, so it's an isomorphism of the underlying sheaves of sets, and therefore an isomorphism of sheaves of algebraic structures. (Many thanks to Jen Berg for pointing out this argument.)
\end{proof}
To save ink, let's say a connected open subset of a space is {\em simple} with respect to a sheaf if the restriction of the sheaf to the subset is isomorphic to a constant sheaf. Notice that the value of a $G$ local system on a simple set is a $G$-torsor.

We'll frequently and without fanfare make use of the fact that a sheaf defined on a basis for a topological space extends uniquely (up to canonical isomorphism) to a sheaf on the full poset of open sets~\cite[Tag~009H, Lemma~9]{stacks-project}.
\subsubsection{Linear local systems}\label{lin-loc-sys}
Let's say $G$ is a linear group---a subgroup of the automorphism group of some finite-dimensional vector space $R$. Define a {\em $G$-structure} on a vector space $V$ to be an isomorphism $V \to R$ modulo postcomposition by $G$. When a vector space is equipped with a $G$-structure, we'll call it a {\em $G$ vector space}. Observe that $G$-structures pull back along isomorphisms. You can say an isomorphism between $G$ vector spaces is {\em structure-preserving} if it pulls the $G$-structure on the target back to the $G$-structure on the source.

Lots of familiar structures on an $n$-dimensional complex vector space are examples of $G$-structures.
\begin{itemize}
\item A volume form is an $\on{SL}_n \C$-structure.
\item An inner product is a $\on{U}_n$-structure.
\item A complete flag is a structure of the upper-triangular subgroup of $\on{GL}_n \C$.
\item A line decomposition is a structure of the diagonal subgroup of $\on{GL}_n \C$.
\end{itemize}
In general, if $G$ is defined as the group of automorphisms of $R$ preserving a certain structure, you can turn around and define that structure as a $G$-structure.

The isomorphisms $V \to R$ that define a $G$-structure are interesting in their own right. We'll call them {\em structured frames}. The structured frames for the familiar $G$-structures listed above are also familiar objects.
\begin{itemize}
\item A unit-volume basis is an $\on{SL}_n \C$-structured frame.
\item An orthonormal basis is an $\on{U}_n$-structured frame.
\item A basis subordinate to a complete flag is an upper-triangular-structured frame.
\item A basis subordinate to a line decomposition is a diagonal-structured frame.
\end{itemize}
By definition, the set of structured frames for a $G$ vector space $V$ is the orbit of an isomorphism $V \to R$ under the action of $G$ by postcomposition. Since $G$ acts freely on isomorphisms $V \to R$, it acts freely and transitively on the set of structured frames. In other words, the structured frames form a $G$-torsor. The set of structured frames is what defines a $G$-structure in the first place, so there's a pithier way to say it: a $G$-structure is an example of a $G$-torsor. That's why we're talking about $G$-structures in a section on $G$ local systems.

We can define a functor from $G$ vector spaces to $G$-torsors by sending each $G$ vector space to its $G$-torsor of structured frames. This functor is an equivalence of categories. If we define a ``compound $G$ vector space'' to be a formal direct sum of $G$ vector spaces, the structured frames functor should extend to an equivalence between the category of compound $G$ vector spaces and the category of $G$-sets generated by taking limits and colimits of $G$-torsors.

Let's say a {\em linear $G$ local system} is a locally constant sheaf of compound $G$ vector spaces whose stalks are all single $G$ vector spaces. Using the equivalence above, we can realize any $G$ local system as a linear $G$ local system. Later in the paper, when we're dealing exclusively with $\on{SL}_2 \C$ local systems, let's assume that all our local systems are linear. Concretely, that means we'll be working with locally constant sheaves of two-dimensional real vector spaces with volume forms.
\subsection{Deviations of flat connections}\label{deviations-intuition}
Say $E \to X$ is a smooth bundle, $A$ and $A'$ are flat connections on $E$, and $\mathcal{E}$ and $\mathcal{E}'$ are the corresponding sheaves of flat sections. The stalk $\mathcal{E}_x$ is the space of germs of flat sections of $A$ at $x$. Sending each germ in $\mathcal{E}_x$ to the unique germ in $\mathcal{E}'_x$ that has the same value at $x$ gives an isomorphism $\Upsilon_x \maps \mathcal{E}_x \to \mathcal{E}'_x$.
\begin{center}
\begin{tikzpicture}
\stalkiso
\end{tikzpicture}
\end{center}
If $U \subset X$ is simple with respect to both $\mathcal{E}$ and $\mathcal{E}'$, we can visualize $\Upsilon_x$ by its action $\Upsilon^U_x \maps \mathcal{E}_U \to \mathcal{E}'_U$ on flat sections over $U$, as shown above. Then, for any $x, y \in U$, we can define an automorphism $\upsilon^U_{yx}$ of $\mathcal{E}_U$ that tells us how parallel transport along $A'$ deviates from parallel transport along $A$:
\begin{center}
\begin{minipage}{0.5\textwidth}
\xymatrix@R=4mm@C=8mm{
& \mathcal{E}'_U \ar[dl]_\in \\
\mathcal{E}'_y \ar[dd]_{\Upsilon_y^{-1}} & & \mathcal{E}'_x \ar[ul]_{\in^{-1}} \\ \\
\mathcal{E}_y \ar[dr]_{\in^{-1}} & & \mathcal{E}_x \ar[uu]_{\Upsilon_x} \\
& \mathcal{E}_U \ar[ur]_\in \ar@{.>}@`{p+(30,35),p+(-30,35)}|{\upsilon^U_{yx}}
}
\end{minipage}
\begin{minipage}{0.5\textwidth}
\centering
\begin{tikzpicture}
\sectiondev
\end{tikzpicture}
\end{minipage}
\end{center}
This automorphism is characterized by the property that $\Upsilon^U_y \upsilon^U_{yx} = \Upsilon^U_x$. If $V \subset U$ is a simple neighborhood of $x$ and $y$, the automorphisms $\upsilon^U_{yx}$ and $\upsilon^V_{yx}$ commute with the restriction map $\mathcal{E}_{V \subset U}$, so all these automorphisms fit together into a natural automorphism $\upsilon_{yx}$ of the functor we get by restricting $\mathcal{E}$ to the poset of simple neighborhoods of $x$ and $y$. Restricting $\mathcal{E}$ further to the simple neighborhoods of three points $x$, $y$, and $z$, we can observe that $\upsilon_{zy} \upsilon_{yx} = \upsilon_{zx}$. Collectively, the natural automorphisms $\{\upsilon_{yx}\}_{x, y \in X}$ might be called the {\em deviation} of $A'$ from $A$.
\subsection{Deviations of locally constant sheaves}\label{dev-loc-const}
More generally, say $X$ is just a locally connected space, and $\mathcal{F}$ and $\mathcal{F}'$ are locally constant sheaves on $X$. Given a stalkwise isomorphism $\Upsilon_x \maps \mathcal{F}_x \to \mathcal{F}'_x$, the construction of the natural automorphisms $\upsilon_{yx}$ goes through exactly as before. Since our choice of stalkwise isomorphism was not necessarily canonical, we should really talk about ``the deviation of $\Upsilon$'' instead of ``the deviation of $\mathcal{F}'$ from $\mathcal{F}$.''

Now, consider three locally constant sheaves $\mathcal{G}$, $\mathcal{F}$, and $\mathcal{F}'$ over $X$. If two stalkwise isomorphisms $\Phi \maps \mathcal{G} \to \mathcal{F}$ and $\Psi \maps \mathcal{G} \to \mathcal{F}'$ have the same deviation, $\upsilon$, how similar must they be? As it turns out, they're as good as identical: there's a unique natural isomorphism $T \maps \mathcal{F} \to \mathcal{F}'$ such that $\Psi_x = T_x \Phi_x$ for all $x \in X$. Here's why.

If we can find a natural isomorphism like this, it's clearly unique, because a natural transformation of sheaves is completely described by its action on stalks. Now, let's find one. Choose a basis $\mathcal{B}$ for the topology of $X$ consisting of sets which are simple with respect to all three sheaves. (We can do this because the sheaves are locally constant, and $X$ is locally conected.) Look at any basis element $U \in \mathcal{B}$. For any $x \in U$, define a morphism $T^U_x \maps \mathcal{F}_U \to \mathcal{F}'_U$ by
\[ \xymatrix{
\mathcal{F}_U \ar@{.>}@(ur,ul)[rrrr]^{T^U_x} \ar[r]_\in & \mathcal{F}_x \ar[r]_{{\Phi_x}^{-1}} & \mathcal{G}_x \ar[r]_{\Psi_x} & \mathcal{F}'_x \ar[r]_{\in^{-1}} & \mathcal{F}'_U
} \]
What if we had chosen another point $y \in U$ instead? Let $\upsilon$ be the shared deviation of $\Phi$ and $\Psi$, and consider the diagram
\[ \xymatrix@R=8mm@C=4mm{
& \mathcal{F}_y \ar[rr]^{{\Phi_y}^{-1}} && \mathcal{G}_y \ar[rr]^{\in^{-1}} && \mathcal{G}_U \ar[rr]^\in && \mathcal{G}_y \ar[rr]^{\Psi_y} && \mathcal{F}'_y \ar[dr]^{\in^{-1}} \\
\mathcal{F}_U \ar[ur]^\in \ar[dr]_\in & && && && && & \mathcal{F}'_U \\
& \mathcal{F}_x \ar[rr]_{{\Phi_x}^{-1}} && \mathcal{G}_x \ar[rr]_{\in^{-1}} && \mathcal{G}_U \ar[uu]^{\upsilon^U_{yx}}\ar[rr]_\in && \mathcal{G}_x \ar[rr]_{\Psi_x} && \mathcal{F}'_x \ar[ur]_{\in^{-1}} \\
} \]
Notice that the bottom path is $T^U_x$, and the top path is $T^U_y$. The left and right chambers both commute, because $\upsilon$ is the deviation of both $\Phi$ and $\Psi$, so $T^U_x = T^U_y$ for all $x, y \in U$. Thus, we really have just one morphism $T^U \maps \mathcal{F}_U \to \mathcal{F}'_U$, which can be written in terms of any point in $U$.

For any other basis element $V \subset U$, writing $T^V$ and $T^U$ in terms of the same point $x \in V$ makes it easy to check that the square
\[ \xymatrix{
\mathcal{F}_U \ar[d]_\subset \ar[r]^{T^U} & \mathcal{F}'_U \ar[d]^\subset \\
\mathcal{F}_V \ar[r]_{T^V} & \mathcal{F}'_V
} \]
commutes. Since we've been working with arbitrary basis elements, we now see that the morphisms $\{T^U\}_{U \in \mathcal{B}}$ fit together into a natural transformation $T \maps \mathcal{F} \to \mathcal{F}'$, and it's clear by construction that $\Psi_x = T_x \Phi_x$ for all $x \in X$.
\subsection{Warping locally constant sheaves}\label{warping-sheaves}
We just saw that, under favorable conditions, a stalkwise isomorphism is determined up to canonical isomorphism by its deviation. Let's see if we can go the other way and produce a stalkwise isomorphism with a specified deviation. First, we have to say what it means to specify a deviation.

Suppose $\mathcal{F}$ is a locally constant sheaf on a locally connected space $X$, $D$ is a dense subset of $X$, and $\mathcal{B}$ is a basis for the topology of $X$ consisting of $\mathcal{F}$-simple sets. To specify a {\em deviation} from $\mathcal{F}$ with support $D$, defined over the basis $\mathcal{B}$, we give for each pair of points $x, y \in D$ and each neighborhood $U \in \mathcal{B}$ of $x$ and $y$ an automorphism $\upsilon^U_{yx}$ of $\mathcal{F}_U$. These automorphisms have to fit together as follows:
\begin{itemize}
\item If $V \subset U$ is a basis element containing $x$ and $y$, the automorphisms $\upsilon^U_{yx}$ and $\upsilon^V_{yx}$ commute with the restriction morphism $\mathcal{F}_{V \subset U}$.
\item For any three points $x, y, z \in D$, we have $\upsilon^U_{zy} \upsilon^U_{yx} = \upsilon^U_{zx}$.
\end{itemize}
The first condition just says that $\upsilon_{yx}$ is a natural automorphism of the restriction of $\mathcal{F}$ to the poset of basis elements containing both $x$ and $y$.

It turns out that, given a deviation $\upsilon$ from $\mathcal{F}$, we can always produce a locally constant sheaf $\mathcal{F}'$ and a stalkwise isomorphism $\Upsilon \maps \mathcal{F} \to \mathcal{F}'$, supported on $D$, whose deviation is $\upsilon$. We'll call this process {\em warping}. Here's how it's done. For each $U \in \mathcal{B}$, pick a point $x_U \in U \cap D$. Define $\mathcal{F}'_U$ to be the same as $\mathcal{F}_U$, but with the warped restriction morphism
\[ \mathcal{F}'_{V \subset U} = \mathcal{F}_{V \subset U}\;\upsilon^U_{x_V x_U} \]
for each basis element $V \subset U$. Because every basis element is $\mathcal{F}$-simple, $\mathcal{F}_{V \subset U}$ is an isomorphism, so $\mathcal{F}'_{V \subset U}$ is an isomorphism too. It follows that the stalk restriction $\mathcal{F}'_{x \in U}$ is an isomorphism for any $x \in U$, so every basis element is $\mathcal{F}'$-simple. The stalkwise isomorphism $\Upsilon \maps \mathcal{F} \to \mathcal{F}'$ is given by
\[ \Upsilon_x = \mathcal{F}'_{x \in U}\;\upsilon^U_{x_U x}\;{\mathcal{F}_{x \in U}}^{-1} \]
for any basis element $U$ containing $x \in D$.

There are three claims implicit in the description of $\mathcal{F} \overset{\Upsilon}{\to} \mathcal{F}'$ above:
\begin{itemize}
\item $\mathcal{F}'$ is a locally constant sheaf.
\item The definition of $\Upsilon_x$ doesn't depend on our choice of neighborhood $U$.
\item The deviation of $\mathcal{F}'$ from $\mathcal{F}$ is $\upsilon$.
\end{itemize}
Let's check these claims.
\paragraph{$\mathcal{F}'$ is a locally constant sheaf}
The functoriality of $\mathcal{F}'$ follows easily from the fact that $\upsilon$ is a deviation. To verify that $\mathcal{F}'$ is a sheaf, pick any element $U$ of the basis $\mathcal{B}$. Suppose that for each basis element $V \subset U$, we have an element $s_V$ of $\mathcal{F}'_V$, and these elements commute with the restriction morphisms of $\mathcal{F}'$. We need to find an element $s$ of $\mathcal{F}'_U$ that restricts to $s_V$ on every basis element $V \subset U$. Since each of the restrictions $\mathcal{F}'_{V \subset U}$ is an isomorphism, there can only be one element like this, and it will exist if and only if the elements ${\mathcal{F}'_{V \subset U}}^{-1}\,s_V$ match for all the basis elements $V \subset U$.

For two basis elements $W \subset V$ contained in $U$, we have
\begin{align*}
{\mathcal{F}'_{W \subset U}}^{-1}\;s_W & = {\mathcal{F}'_{W \subset U}}^{-1}\;\mathcal{F}'_{W \subset V}\;s_V \\
& = {\mathcal{F}'_{V \subset U}}^{-1}\;s_V,
\end{align*}
so $W$ and $V$ give the same element. Since any two overlapping basis elements contain another basis element in their intersection, it follows that any two overlapping basis elements give the same element as well. Because $U$ is connected, any two basis elements can be linked by a finite sequence of overlapping basis elements (Appendix~\ref{lily-pads}). Therefore, all the basis elements contained in $U$ give the same element. This is the element $s$ we were looking for, completing our proof that $\mathcal{F}'$ is a sheaf.

To see that $\mathcal{F}'$ is locally constant, first recall that the elements of $\mathcal{B}$ are simple, so the restriction arrows of $\mathcal{F}$ over $\mathcal{B}$ are isomorphisms. Thus, the restriction arrows of $\mathcal{F}'$ are isomorphisms as well. Now, pick any point $x \in X$, not necessarily in $D$. Applying $\mathcal{F}'$ to the poset of basis elements containing $x$ yields a downward-directed diagram whose arrows are all isomorphisms. The defining arrows from a diagram like this to its colimit are always isomorphisms (Appendix~\ref{iso-colim}). In other words, the stalk restriction $\mathcal{F}'_{x \in U}$ is an isomorphism for every $U \in \mathcal{B}$ containing $x$. Since $x$ was an arbitrary point in $X$, it follows by Proposition~\ref{stalk-res-iso} that $\mathcal{F}'$ is locally constant.
\paragraph{$\Upsilon_x$ is well-defined}
To see that the definition of $\Upsilon_x$ doesn't depend on our choice of neighborhood, first observe that for two basis elements $V \subset U$ containing $x$,
\begin{align*}
(\mathcal{F}'_{x \in U})\;\upsilon^U_{x_U x}\;{\mathcal{F}_{x \in U}}^{-1} & = (\mathcal{F}'_{x \in V}\;[\mathcal{F}'_{V \subset U}])\;\upsilon^U_{x_U x}\;{\mathcal{F}_{x \in U}}^{-1} \\
& = \mathcal{F}'_{x \in V}[\mathcal{F}_{V \subset U}\;(\upsilon^U_{x_V x_U}]\;\upsilon^U_{x_U x})\;{\mathcal{F}_{x \in U}}^{-1} \\
& = \mathcal{F}'_{x \in V}\;[\mathcal{F}_{V \subset U}\;(\upsilon^U_{x_V x})]\;{\mathcal{F}_{x \in U}}^{-1} \\
& = \mathcal{F}'_{x \in V}\;[\upsilon^V_{x_V x}\;(\mathcal{F}_{V \subset U}]\;{\mathcal{F}_{x \in U}}^{-1}) \\
& = \mathcal{F}'_{x \in V}\;\upsilon^V_{x_V x}\;({\mathcal{F}_{x \in V}}^{-1}),
\end{align*}
so $V$ and $U$ give the same isomorphism. Since any two basis elements containing $x$ contain another basis element in their intersection, it follows that every basis element containing $x$ gives the same isomorphism.
\paragraph{The deviation of $\mathcal{F}'$ from $\mathcal{F}$ is $\upsilon$}
Finally, let $\delta$ be the deviation of $\Upsilon$. It's easy to calculate $\delta^U_{yx}$ by defining $\Upsilon_x$ and $\Upsilon_y$ in terms of $U$:
\begin{align*}
\delta^U_{yx} & = {\mathcal{F}_{y \in U}}^{-1}\;(\Upsilon_y^{-1})\;\mathcal{F}'_{y \in U}\;{\mathcal{F}'_{x \in U}}^{-1}\;(\Upsilon_x)\;\mathcal{F}_{x \in U} \\
& = {\mathcal{F}_{y \in U}}^{-1}\;(\mathcal{F}_{y \in U}\;\upsilon^U_{y x_U}\;{\mathcal{F}'_{y \in U}}^{-1})\;\mathcal{F}'_{y \in U}\;{\mathcal{F}'_{x \in U}}^{-1}\;(\mathcal{F}'_{x \in U}\;\upsilon^U_{x_U x}\;{\mathcal{F}_{x \in U}}^{-1})\;\mathcal{F}_{x \in U} \\
& = \upsilon^U_{y x_U}\;\upsilon^U_{x_U x}\; \\
& = \upsilon^U_{yx}.
\end{align*}
Thus, $\delta = \upsilon$, as claimed.
\subsection{Warping local systems}
Local systems are just a special kind of locally constant sheaves, so all the constructions of the previous sections can be applied to them. In this case, the definition of a deviation can be pared down a bit, because if $\mathcal{F}$ is a $G$ local system and $U$ is an $\mathcal{F}$-simple open set, an automorphism of $\mathcal{F}_U$ is just an element of $G$.

Warping a local system always produces another local system. To see why, take a $G$ local system $\mathcal{F}$ and warp it by some deviation. The warped sheaf $\mathcal{F}'$ is locally constant, and stalkwise isomorphic to $\mathcal{F}$ over the support of the deviation. Because the support is dense, it follows that every stalk of $\mathcal{F}'$ is isomorphic to a stalk of $\mathcal{F}$, and hence a $G$-torsor.
\section{Dividing translation surfaces}\label{dividing}
\subsection{Overview}
For working out the technical details of abelianization, it will be useful to embed the surface $\Sigma \smallsetminus \mathfrak{B}$ in a larger space $\divd{\Sigma}$, called the {\em divided surface}, whose local systems are naturally in correspondence with the local systems on $\Sigma \smallsetminus \mathfrak{B}$. On the divided surface, we can stand infinitesimally close to any critical leaf, streamlining our discussion of the abelianization process in Sections \ref{ab-princ} and \ref{ab-conv}.

Removing the critical leaves of $\Sigma$ from the divided surface yields a compact space $\frakd{\Sigma}$, called the {\em fractured surface}, which can locally be metrized in a very natural way. Its metric properties will play a crucial role in Section~\ref{ab-deliv}, where we prove that abelianization does the job it's meant to do.

There's a geometric perspective on the shear parameterization, outlined in Sections 6.3 \thru 6.4 of \cite{warping-geom} and Section~\ref{shear-params} of this paper, which leads naturally to the idea of a divided surface. See Section~\ref{collapsing} for details.
\subsection{A review of translation and half-translation surfaces}\label{tras-ha-tras}
\subsubsection{Translation surfaces}\label{tras-surfs}
A non-singular {\em translation surface} is a manifold whose charts are open subsets of $\R^2$ and whose transition maps are translations. Every translation surface comes with a bunch of geometric structures induced by the translation-invariant geometric structures on $\R^2$, which include:
\begin{itemize}
\item The flat metric.
\item The four cardinal directions: up, down, right, and left.
\item The vertical and horizontal foliations, whose leaves are vertical and horizontal lines. Both foliations can be oriented; we'll orient them upward and rightward, respectively.
\item The vertical flow, which moves points upward at unit speed. On a surface which is non-compact, as most non-singular translation surfaces are, this flow might not be defined everywhere at all times. In general, the flow at a given time will be only a {\em bicontinuous relation}, rather than a homeomorphism (see Appendix~\ref{rel-dyne} for details).
\end{itemize}
Whenever I refer to a foliation of a translation surface, I mean the vertical one, unless I say otherwise.

It's conventional, and convenient, to allow translation surfaces to have {\em conical singularities}, which look like this:
\begin{center}
\begin{tikzpicture}
\matrix[row sep=1cm, column sep=1.2cm]{
\notchdown{pwviolet}{$6$}{pwblue}{$5$} & \notchdown{pwgreen}{$4$}{pwgold}{$3$} & \notchdown{pworange}{$2$}{pwpink}{$1$} \\
\notchup{pwgreen}{$4$}{pwblue}{$5$} & \notchup{pworange}{$2$}{pwgold}{$3$} & \notchup{pwviolet}{$6$}{pwpink}{$1$} \\
};
\end{tikzpicture}
\end{center}
The like-numbered triangles are identified through translation. The triangles include the cross-marked center points, which get quotiented down to a single point by the identifications. In the case shown above, the total angle around the singularity is $6\pi$; in general, any even multiple of $\pi$ is possible. It's sometimes useful to mark a discrete set of ordinary points as ``singularities'' of cone angle $2\pi$.

A compact translation surface can have only finitely many singularities. Conformally, the vertical and horizontal foliations in the neighborhood of a singularity look like this:
\begin{center}
\begin{tikzpicture}
\matrix[row sep=0.2cm, column sep=0.5cm]{
\node {\includegraphics{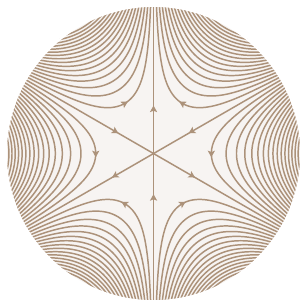}}; &
\node {\includegraphics{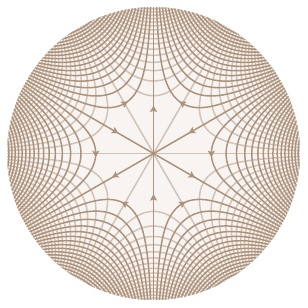}};
\\
\node {\small Vertical}; &
\node {\small Vertical and horizontal};
\\
};
\end{tikzpicture}
\end{center}
The vertical leaves that dive into the singularity are called {\em forward-critical}, and the ones that shoot out of the singularity are called {\em backward-critical}. A leaf which is critical in both directions is called a {\em saddle connection}. Critical leaves are spaced evenly around the singularity at angles of $\pi$.

The vertical flow on a singular translation surface is only defined away from the singularities. It acts by bicontinuous relations, making points on the critical leaves disappear as they fall into the singularities. When restricted to the complement of the critical leaves, the vertical flow acts by homeomorphisms.

A translation surface is said to be {\em minimal} if all its vertical leaves are dense. We'll see in Section~\ref{first-return} that a non-critical leaf which is dense in one direction must be dense in both directions. Thus, on a minimal translation surface, every leaf that's not forward-critical is dense in the forward direction, and every leaf that's not backward-critical is dense in the backward direction. A translation surface with no saddle connections is automatically minimal~\cite[proof of Theorem~1.8]{rat-flat}.

Away from the singularities, the topology of a translation surface has a basis consisting of {\em flow boxes}: open rectangles with vertical and horizontal sides. A compact translation surface can be covered by a finite collection of flow boxes and singularity charts. The special class of {\em well-cut} flow boxes, defined in Section~\ref{first-return}, will play an important role in this paper.

Because translations form a normal subgroup of $\on{Aff} \R^2$, you can modify a translation structure by composing an element of $\on{GL}_2 \R$ with all its charts. In particular, you can rotate a translation structure, tilting the vertical foliation. If you rotate a translation structure through a full circle, only countably many of the structures you pass through will have saddle connections~\cite[proof of Theorem~1.8]{rat-flat}. That means you can get rid of any saddle connections with an arbitrarily small rotation.
\subsubsection{First return maps}\label{first-return}
Let's say a {\em horizontal segment} on a translation surface is a subset that looks like an open, closed, or half-open horizontal line segment in some chart. Pick a point on a horizontal segment, and watch it as it's carried upward by the vertical flow. On a compact translation surface, unless it falls into a singularity, the point will eventually return to the segment it started on. This is an immediate corollary of \cite[Lemma~1.7]{rat-flat}, which for reference I'll restate here.
\begin{lemma}\label{return-lemma}
Let $Z$ be a closed horizontal segment on a compact translation surface, and let $p$ be one of its endpoints. Unless the vertical leaf through $p$ is forward-critical, the vertical flow will eventually carry $p$ back to $Z$.
\end{lemma}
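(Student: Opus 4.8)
The plan is to locate which part of the surface $p$ lives in, using the standard decomposition of a compact translation surface along its saddle connections into minimal and periodic (cylinder) components, and then to read off the conclusion from that decomposition.

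First I would unpack the hypothesis. On a compact translation surface the vertical flow is defined for all positive time except along the forward-critical leaves, so the assumption that the vertical leaf through $p$ is not forward-critical says precisely that $\phi^t p$ is defined for every $t \ge 0$; in particular $p$ lies on no forward-critical leaf. Since every saddle connection is forward-critical, and $p$, being a point of a chart segment, is not a singularity, $p$ avoids the finite union of saddle connections and so lies in a single component of its complement. Each such component is either a periodic component, a cylinder foliated by closed vertical leaves, or a minimal component $M$, an open subsurface in which every non-forward-critical forward orbit is dense (by minimality, together with the forward-density of non-forward-critical leaves noted earlier).

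If $p$ lies in a periodic component, the leaf through $p$ is a closed vertical leaf, so $\phi^\tau p = p$ with $\tau$ its length, and since $p \in Z$ this is already a return, at time $t = \tau$. Suppose instead $p$ lies in a minimal component $M$. Then the forward orbit $\{\phi^t p : t > 0\}$ is dense in $M$; and since $M$ is open, an entire neighborhood of $p$ lies in $M$, so $Z$ meets $M$ in a sub-segment having $p$ as an endpoint, and that sub-segment contains points of the interior of $Z$. Fix such a point $q$ and take a thin flow box sitting on top of $Z$ at $q$ --- an open rectangle whose bottom edge is a sub-segment of $Z$ through $q$ and which is narrow enough to lie inside $M$. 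By density the forward orbit of $p$ enters this box, and it can enter only through the bottom edge, so at the entry time $t$ the point $\phi^t p$ lies on that edge, hence in $Z$; moreover $t > 0$, since a dense forward orbit meets any nonempty open set at arbitrarily large times (the closure of a bounded initial arc being nowhere dense). This is the return we wanted.

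The step I expect to be the real obstacle, and the reason for bringing in the decomposition, is precisely that of handling the \emph{specific} point $p$. The obvious alternative --- flow a short sub-segment of $Z$ at $p$, observe that it sweeps out an embedded region whose area grows linearly in time until it first returns to $Z$ or hits a singularity, and compare with the finite area of $\Sigma$ --- proves only that \emph{almost every} point of $Z$ returns: forward-critical points of $Z$ may accumulate at $p$, forcing the usable sub-segment to shrink as the time bound grows, so the area inequality gives no contradiction for $p$ alone. Nailing down the endpoint genuinely seems to require the qualitative structure of the vertical dynamics, not just a volume estimate.
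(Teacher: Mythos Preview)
The paper does not prove this lemma; it merely restates \cite[Lemma~1.7]{rat-flat}. Your argument, however, has a circularity problem. You invoke the decomposition of a compact translation surface into periodic cylinders and minimal components, and within a minimal component the density of every non-forward-critical forward orbit. In the standard development --- including that of \cite{rat-flat} itself --- both of these facts lie \emph{downstream} of the recurrence lemma you are trying to prove: one uses the lemma to show that first return maps to transversals are well-defined interval exchanges, and the decomposition theorem and the density statements then follow from the structure theory of IETs. Your parenthetical justification for forward-density points to the claim in Section~\ref{tras-surfs} that a non-critical leaf dense in one direction is dense in both, but the paper explicitly defers that claim to Section~\ref{first-return}, that is, to this very lemma and its immediate consequences. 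So as written you are assuming what you set out to prove.

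Your diagnosis in the final paragraph is correct: a naive area estimate does not by itself pin down the endpoint $p$. But the fix in \cite{rat-flat} is not the decomposition theorem; it is a sharper direct argument using compactness (the forward orbit accumulates somewhere) together with the fact that the vertical flow preserves transverse distances. One could in principle break your circularity by citing an independent proof of the decomposition --- Strebel's complex-analytic treatment of trajectory structure for quadratic differentials, say --- but that is much heavier machinery than this elementary lemma warrants, and you would still owe a separate argument that forward \emph{half}-orbits (not just full leaves) are dense in a minimal component whose closure contains boundary saddle connections as proper closed invariant subsets.
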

The backward vertical flow, of course, has the same return property. As a consequence, the forward and backward orbits of a point on a non-critical leaf pass through the same collection of horizontal segments.

On any horizontal segment $Z$ in a compact translation surface, we can define a relation $\alpha$ that sends each point to the place where it first returns to $Z$ under the vertical flow. When fed a point that falls into a singularity before returning, $\alpha$ gives back nothing. We'll call $\alpha$ the {\em first return relation} on $Z$. The inverse relation $\alpha^{-1}$ sends each point to the place where it first returns to $Z$ under the backward vertical flow.

When restricted to the complement of the forward-critical leaves, $\alpha$ becomes a function, and is called the {\em first return map}. Similarly, $\alpha^{-1}$ becomes a function when restricted to the complement of the backward-critical leaves. On the complement of all the critical leaves, $\alpha$ and $\alpha^{-1}$ are inverse functions.

Because the vertical foliation only has a few kinds of local geometry, the first return relation only has a few kinds of local behavior. Under mild conditions, it belongs to the class of transformations called {\em interval exchanges}, which we'll hear more about in Section~\ref{divd-ex}~\cite[\S 3]{billiard-flows}. The conditions are the price we pay for defining first return relations, horizontal segments, and interval exchanges in a slightly non-standard way, whose advantages will become apparent in Section~\ref{divd-first-return}.

In our framework, the first return relation is only an interval exchange on a {\em well-cut} segment, which is a horizontal segment with the following properties:
\begin{itemize}
\item It looks like an open horizontal line segment bounded by critical leaves. (Both endpoints may lie on the same critical leaf.)
\item The forward vertical flow drops each forward-critical boundary point into a singularity without carrying it through the segment. Similarly, the backward vertical flow drops each backward-critical boundary point into a singularity without carrying it through the segment.
\end{itemize}
When we construct the forward and backward first return relations on a horizontal segment $Z$, this condition prevents the vertical flow $\psi^t$ from breaking $\psi^t Z$ across the boundaries of $Z$ in a way inconsistent with our definition of an interval exchange. We'll call a flow box {\em well-cut} if every horizontal slice across it is a well-cut segment.

When critical leaves are plentiful, well-cut segments are easy to find.
\begin{prop}\label{well-cut-pieces}
If $Z$ is an open horizontal segment bounded by critical leaves, every non-critical point on $Z$ is contained in a well-cut subsegment of $Z$.
\end{prop}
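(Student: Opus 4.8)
The plan is to show that any non-critical point $q$ on $Z$ has a neighborhood in $Z$ whose closure misses $\mathfrak{W}^+ \cup \mathfrak{W}^-$ except possibly at two boundary points lying on critical leaves, and that the well-cut boundary condition can be arranged there. The starting observation is that the forward-critical leaf through a point, followed forward, reaches a singularity in finite time, and likewise backward-critical leaves followed backward reach a singularity in finite time; so it suffices to bound, locally near $q$, how the vertical flow interacts with the boundary critical leaves of $Z$.

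First I would set up the situation: write $Z$ as a horizontal segment bounded by critical leaves, so its two endpoints $p_-, p_+$ lie on (possibly the same) critical leaf. The key point is that the set $\mathfrak{W}^+ \cap Z$ of forward-critical points on $Z$ is \emph{nowhere dense}: each forward-critical leaf hits the horizontal segment $Z$ in a discrete set of points (a horizontal segment meets a vertical leaf transversally), and there are only finitely many singularities, each with finitely many critical leaves, so $\mathfrak{W}^+ \cap Z$ is a countable set with no accumulation points except possibly at the endpoints of $Z$ — actually, since each critical leaf followed toward its singularity is an embedded ray, the intersections with the \emph{closed} segment $\bar Z$ accumulate only where a leaf spirals, which cannot happen in finite forward time before hitting the singularity. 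I would spell this out carefully: for each singularity $s \in \mathfrak{B}$ and each forward-critical prong at $s$, the leaf is the continuous image of $[0,\infty)$ landing at $s$ at time $0$ (going backward), and its intersection with the compact set $\bar Z$ is closed; if it were infinite it would accumulate, forcing a vertical segment in $\bar Z$, contradicting $Z$ horizontal. The same applies to $\mathfrak{W}^-\cap Z$. Hence $(\mathfrak{W}^+ \cup \mathfrak{W}^-)\cap Z$ is a closed discrete subset of the open segment $Z$ (it may accumulate at $p_\pm$, but those are excluded from $Z$).

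Given that, pick the non-critical point $q \in Z$. Since $q \notin \mathfrak{W}^+ \cup \mathfrak{W}^-$ and this set is closed and discrete in $Z$, there is a maximal open subsegment $Z_q \subset Z$ containing $q$ and disjoint from $(\mathfrak{W}^+ \cup \mathfrak{W}^-)\cap Z$; its two endpoints $a, b$ either lie on critical leaves of $\Sigma$ or coincide with $p_\pm$ (which themselves lie on critical leaves). Either way $Z_q$ is an open horizontal segment bounded by critical leaves. It remains to check the second bullet in the definition of well-cut: the forward vertical flow must drop each forward-critical boundary point of $Z_q$ into a singularity without first re-crossing $Z_q$, and dually for backward-critical boundary points. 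For a boundary point on a forward-critical leaf, the forward flow carries it into its singularity in finite time along a leaf that meets $Z$ — hence meets $Z_q$ — only at isolated points; if it re-entered $Z_q$ before the singularity, that re-entry point would be a forward-critical point of $Z$ in the interior of $Z_q$, contradicting the construction of $Z_q$. The analogous argument handles backward-critical boundary points. So $Z_q$ is well-cut, and $q \in Z_q \subset Z$, as required.

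The step I expect to be the main obstacle is the discreteness claim — verifying that a critical leaf meets the closed horizontal segment $\bar Z$ in only finitely many points. Naively a vertical leaf can wind around and hit a horizontal segment infinitely often, so the finite-time-to-singularity property of critical leaves is doing essential work here, and I would need to state it cleanly (perhaps this is exactly where ``no saddle connections'' or the running assumptions enter, ensuring $\mathfrak{W}^+$ and $\mathfrak{W}^-$ are disjoint so that a forward-critical leaf cannot also be backward-critical and thus cannot loop back before its singularity). Once that local finiteness is pinned down, the rest is bookkeeping about which critical leaf each new endpoint sits on and checking the non-recrossing condition, which follows formally from maximality of $Z_q$.
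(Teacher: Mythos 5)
There is a genuine gap, and it is precisely in the discreteness claim you flagged as the main risk. The assertion that $\mathfrak{W}^+ \cap Z$, $\mathfrak{W}^- \cap Z$, and hence $\mathfrak{W} \cap Z$ are closed, discrete subsets of $Z$ is false in the cases that matter. A forward-critical leaf reaches its singularity in finite time in the forward direction, but it extends \emph{indefinitely} in the backward direction (unless it is a saddle connection), and on a minimal translation surface that backward half-leaf is dense in $\Sigma$. So a single critical leaf already meets $Z$ in a set with accumulation points in the interior of $Z$, and $\mathfrak{W} \cap Z = (\mathfrak{W}^+ \cup \mathfrak{W}^-) \cap Z$ is generically dense in $Z$, not nowhere dense. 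The error enters when you describe a critical leaf as ``the continuous image of $[0,\infty)$ landing at $s$ at time $0$'': the leaf is the whole maximal orbit, hence the image of $(-\infty, 0]$, and it is the noncompact tail that produces the accumulation, not any hypothetical spiraling near the singularity. With $\mathfrak{W}\cap Z$ dense, your ``maximal open subsegment $Z_q$ disjoint from $(\mathfrak{W}^+ \cup \mathfrak{W}^-) \cap Z$'' is empty, and the construction collapses.

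The deeper misstep is a misreading of ``well-cut.'' A well-cut segment is \emph{not} required to be disjoint from the critical leaves; the definition only constrains the two (or one) boundary points, asking that the finite arc of critical leaf running from each critical boundary point to its singularity avoid the open segment. The paper's proof is built around exactly this distinction: it flows the finitely many critical endpoints of $Z$ toward their singularities, marks the finitely many places those finite arcs meet $Z$ (finitely many because the return time to a fixed horizontal segment on a compact translation surface is bounded below, so a finite vertical arc crosses $Z$ only finitely often), and deletes the marks. Each resulting piece is bounded by critical leaves, and the arcs from its critical endpoints to their singularities hit $Z$ only at deleted marks, hence never inside the piece. The pieces still intersect $\mathfrak{W}$ densely; they do not need to avoid it, and could not.
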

\begin{proof}
Carry each forward-critical boundary point of $Z$ along the forward vertical flow, marking each place it passes through $Z$ before falling into a singularity. Do the same with each backward-critical boundary point, using the backward vertical flow. Removing the marked points breaks $Z$ into a finite collection of open horizontal segments, which are all well-cut.
\end{proof}
\subsubsection{Half-translation surfaces}
A half-translation surface is the same thing as a translation surface, but with transition maps composed of both translations and half-turn rotations. This expansion of the structure group is pretty small, so half-translation surfaces have almost as much structure as translation surfaces do:
\begin{itemize}
\item The flat metric remains.
\item The vertical and horizontal foliations remain, but they're no longer canonically oriented. In fact, it's often not possible to give either foliation a consistent global orientation.
\item As a result, it's often not possible to define a global vertical flow.
\end{itemize}
The group generated by translations and half-turns has the translation group as a normal subgroup, so each of its elements is either a translation or a translation followed by a half-turn. I'll call the latter a {\em flip}.

In a translation surface, we saw that the angle around a conical singularity could be any even multiple of $\pi$. In a half-translation surface, any multiple of $\pi$ is possible. Here's a pattern for a singularity with cone angle $3\pi$:
\begin{center}
\begin{tikzpicture}
\matrix[row sep=1cm, column sep=1.2cm]{
& \notchdown{pworange}{$2$}{pwpink}{$1$} \\
\notchup{pworange}{$2$}{pwgold}{$3$} & \notchup{pwgold}{$3$}{pwpink}{1} \\
};
\end{tikzpicture}
\end{center}
The triangles labeled $3$ are identified by a flip. A singularity with cone angle $\pi$ can be constructed, informally, by using a flip to glue one of the notched square building blocks we've been using to itself:
\begin{center}
\begin{tikzpicture}
\notchup{pwpink}{$1$}{pwpink}{$1$}
\end{tikzpicture}
\end{center}

The vertical and horizontal foliations can't be oriented consistently around an odd singularity, but they can be oriented on a small enough neighborhood of any other point. In particular, on a half-translation surface, the definition of a flow box still makes sense, and the vertical and horizontal foliations can be oriented inside a flow box.

Every half-translation surface comes with a translation surface hovering over it as a branched double cover, with a projection map that preserves the half-translation structure. Away from the singularities, this {\em translation double cover} is built by making two copies of each flow box on the half-translation surface, one for each possible orientation of the vertical foliation. The transition maps of the half-translation surface induce transitions between the oriented copies in a natural way.

The translation double cover can be extended over the whole surface by duplicating the cut square pieces that make up the region around each singularity. In the picture below, duplicate pieces are labeled with the same numbers. One piece from each pair is drawn in dark ink, and the other in light ink.
\begin{center}
\begin{tikzpicture}
\matrix[row sep=1cm, column sep=1.2cm]{
\dupnotchdown{pwviolet}{$\bar{3}$}{pwblue}{$\bar{2}$} & \dupnotchdown{pwgreen}{$\bar{1}$}{pwgold}{$3$} & \notchdown{pworange}{$2$}{pwpink}{$1$} \\
\dupnotchup{pwgreen}{$\bar{1}$}{pwblue}{$\bar{2}$} & \notchup{pworange}{$2$}{pwgold}{$3$} & \notchup{pwviolet}{$\bar{3}$}{pwpink}{$1$} \\
};
\end{tikzpicture}
\end{center}
Around a singularity whose cone angle is an odd multiple of $\pi$, like the one shown here, the duplicated pieces fit together into a single connected region, whose covering map to the original region is branched over the singularity. The branch point is a conical singularity with twice the angle of the original. Around a singularity whose cone angle is an even multiple of $\pi$, the duplicated pieces form two disconnected copies of the original region.
\subsection{Dividing intervals}\label{divd-intvl}
\subsubsection{Construction of divided and fractured intervals}\label{divd-intvl-construct}
Dividing a translation surface is essentially a one-dimensional process, so let's start in the one-dimensional case. Let $I \subset \R$ be an open interval, and let $W$ be a subset of $I$ which is countable or smaller. To {\em divide} $I$ at $W$, first build a new set
\[ \divd{I} = \{\llane{w}, \median{w}, \rlane{w}\}_{w \in W} \sqcup \{s\}_{s \in I \smallsetminus W}. \]
There's an obvious map $\pi \maps \divd{I} \to I$ which sends $\llane{w}$, $\median{w}$, and $\rlane{w}$ to $w$ and each point in $I \smallsetminus W$ to itself. Order $\divd{I}$ so that $\pi$ is order-preserving and $\llane{w} < \median{w} < \rlane{w}$. Give $\divd{I}$ the topology generated by all the non-empty intervals $(a, b)$ except the ones that look like $(\llane{w}, b)$ or $(a, \rlane{w})$. This topology is coarser than the order topology, and non-Hausdorff: neither $\llane{w}$ nor $\rlane{w}$ can be separated from $\median{w}$ by open sets. The generating intervals described above form a basis for the topology, so I'll refer to them as ``basis intervals.''

As you might expect, $\pi$ turns out to be a quotient map. Going the other direction, let $\iota \maps I \to \divd{I}$ be the map that sends $w \in W$ to $\hat{w}$ and each point in $I \smallsetminus W$ to itself. Perhaps surprisingly, $\iota$ turns out to be an embedding. Both of these claims will be proven in Section~\ref{divd-props-1d}.

Define the {\em fractured interval} $\frakd{I}$ to be the complement of $\iota W$ in the divided interval $\divd{I}$. Although removing $\iota W$ changes the topology of $\divd{I}$ considerably, the restriction of $\pi$ to $\frakd{I}$ is still a quotient map. We'll prove this in Section~\ref{frakd-props-1d}.
\subsubsection{Construction of divided and fractured $1$-manifolds}
If you replace all the linear orders with cyclic ones, the instructions in Section~\ref{divd-intvl-construct} for dividing an open interval apply just as well to an oriented circle. Thus, component by component, we can divide any oriented $1$-manifold.

For simplicity, we'll take $I$ to be an open interval throughout Section~\ref{divd-intvl}, because that's the only case we need later in the paper. However, the results of Section~\ref{divd-intvl} should hold for any oriented $1$-manifold $I$.

Working on a circle will make the examples of Section~\ref{divd-ex} easier to describe. It will also give us a taste of the local reasoning we'll use to study divided translation surfaces.
\subsubsection{Examples from dynamics}\label{divd-ex}
Divided intervals arise naturally in dynamics, as a way of coding the trajectories of one-dimensional systems with ``break points.'' We'll see how this works through two examples. The first, a divided version of the doubling map, is meant to demonstrate the basic ideas in a simple setting. The second, a divided version of any interval exchange, will play a central role in this paper.

In both examples, we'll describe the dynamics using a partial map; you'll probably be able to guess from context what that means. To be precise, it means a coinjective, bicontinuous relation, in the terminology of Appendix~\ref{rel-dyne}.
\paragraph{The doubling map}
Let $\circgp$ be the the circle $\R/\Z$---the space of fractional parts of numbers. The {\em doubling map} is the partial map from $\circgp$ to itself that doubles each fractional part, returning nothing if the result is an integer. Geometrically, the doubling map breaks $\circgp$ into two pieces, $(0, \tfrac{1}{2})$ and $(\tfrac{1}{2}, 1)$, and stretches each piece evenly over the whole interval $(0, 1)$. This point of view is the motivation for returning nothing at half-integers. From yet another point of view, the doubling map removes the first digit of each fractional part's binary expansion, returning nothing if the result is $.00000\ldots$ or $.11111\ldots\;$.

Applying the doubling map over and over, let $W \subset \circgp$ be the set of points that eventually ``fall into a break,'' reaching a point where the map returns nothing. This turns out to be the set of rationals whose denominators are powers of two.

Divide $\circgp$ at $W$. Observe that $\frakd{\circgp}$ can be expressed as the interval $[\rlane{0}, \llane{1}]$. If you've read ahead to Section~\ref{frakd-props-1d}, you can immediately see from Corollary~\ref{frakd-cantor} that $\frakd{\circgp}$ is a Cantor set. We can see the same thing directly by identifying $\frakd{\circgp}$ with the space of one-sided binary sequences. A fractional part $s \in \circgp \smallsetminus W$ has a unique binary expansion, $\iota s$. A fractional part $w \in W$ has two binary expansions: $\llane{w}$, the one ending in ones, and $\rlane{w}$, the one ending in zeros. The quotient map $\pi \maps \frakd{\circgp} \to \circgp$ interprets each binary sequence as a fractional part.

As I hinted earlier, the doubling map lifts naturally to a map on the space of binary sequences---that is, to a map on $\frakd{\circgp}$. It then extends to a partial map on $\divd{\circgp}$, which acts on $\iota \circgp$ as usual and on $\frakd{\circgp}$ by shifting binary sequences. This is the only continuous partial map on $\divd{\circgp}$ which matches the doubling map on $\iota \circgp$ and returns something at every point in $\frakd{\circgp}$.
\paragraph{Interval exchanges}
An {\em interval exchange transformation} is a partial map from $\circgp$ to itself that works by splitting $\circgp$ into finitely many open intervals and shuffling them around:
\begin{center}
\begin{tikzpicture}[scale=5]
\exampleiet
\end{tikzpicture}
\end{center}
On the break points between the intervals, the map returns nothing. For convenience, let's assume $0$ is always a break point.

An interval exchange, unlike the doubling map, is injective, so its inverse relation is also a partial map. In fact, its inverse is another interval exchange, which unshuffles the pieces of $\circgp$.

Pick an interval exchange $\alpha$, and let $W \subset \circgp$ be the set of points that eventually fall into a break under iteration of $\alpha$ or its inverse. Just as before, divide $\circgp$ at $W$, and observe that $\frakd{\circgp}$ can be expressed as $[\rlane{0}, \llane{1}]$.

Suppose $W$ is dense in $\circgp$. Then, like before, Corollary~\ref{frakd-cantor} in Section~\ref{frakd-props-1d} will tell us that $\frakd{\circgp}$ is a Cantor set. We can see this directly by embedding $\frakd{\circgp}$ in a space of two-sided sequences. Let $\mathcal{A}$ be the intersections of the intervals exchanged by $\alpha$ and the ones exchanged by $\alpha^{-1}$. The orbit of a point $s \in \circgp \smallsetminus W$ under iteration of $\alpha$ and its inverse is infinite in both directions, so we can get a two-sided sequence $\iota s$ by keeping track of which intervals the orbit passes through.

The orbit of a point $w \in W$ ends when it falls into a break, but it can be continued in two natural ways. One is to extend the partial map $\alpha$ to a left-continuous map, so each point travels with the points to the left of it, and the intervals being shuffled become closed on the right. The orbit of $w$ becomes infinite in both directions, and keeping track of which intervals it goes through yields a two-sided sequence $\llane{w}$. The other way to continue the orbit of $w$ is to extend $\alpha$ to a right-continuous map, giving a different two-sided sequence $\rlane{w}$.

The sequence space $\mathcal{A}^\Z$ comes with a natural dynamical map: the {\em shift map}, which acts on a sequence by moving each letter one step earlier. Its action on the embedded copy of $\frakd{\circgp}$ is a lift of $\alpha$, so it extends $\alpha$ to a partial map on $\divd{\circgp}$. Like before, this extension is the only continuous partial map on $\divd{\circgp}$ which matches $\alpha$ on $\iota \circgp$ and returns something at every point in $\frakd{\circgp}$.

Our divided circle $\divd{\circgp}$ isn't the only one you can build from an interval exchange. In \cite{cantor-min-sys}, for instance, Gjerde and Johansen divide the circle only at the points that fall into a break under forward iteration of $\alpha$. They construct the fractured circle, lift $\alpha$ to it, and find an interesting model for the lifted dynamics.
\subsubsection{Properties of divided intervals}\label{divd-props-1d}
With those examples in mind, let's go back to studying a general interval $I$ divided at $W$. For convenience, let $\llane{\iota} \maps I \to \divd{I}$ be the map that sends $w \in W$ to $\llane{w}$ and each point in $I \smallsetminus W$ to itself. Define $\rlane{\iota}$ similarly.

A basis interval has a leftmost element if and only if it looks like $(\median{w}, b) = [\rlane{w}, b)$, and a rightmost element if and only if it looks like $(a, \median{w}) = (a, \llane{w}]$. It will often be useful to {\em trim} a basis interval by removing its leftmost and rightmost elements, if they exist. The trimmed version of an interval $(a, b) \subset \divd{I}$, denoted $\trim (a, b)$, can be written explicitly as $(\rlane{\iota}\pi a, \llane{\iota}\pi b)$. Notice that $\pi \trim (a, b) = \iota^{-1}(a, b) = (\pi a, \pi b)$ for any basis interval $(a, b)$, and that trimming a basis interval does not remove any points in the image of $\iota$. Conveniently, for any basis interval, $\pi^{-1} \iota^{-1} (a, b) = \trim (a, b)$.

With these tools in hand, let's prove the claims about $\pi$ and $\iota$ made in the previous section.
\begin{proof}[Proof that $\pi$ is a quotient map]
To see that $\pi$ is continuous, observe that the preimage of $(a, b) \subset I$ under $\pi$ is the basis interval $(\rlane{\iota}a, \llane{\iota}b)$.

To see that $\pi$ is a quotient map, pick any $S \subset I$ whose preimage under $\pi$ is open. We want to show $S$ is open. For any $s \in S$, the point $\iota s$ is in $\pi^{-1} S$, so there is a basis interval $H \subset \pi^{-1} S$ containing $\iota s$. Since $\trim H$ also contains $\iota s$, and $\pi$ sends trimmed basis intervals to open intervals, $\pi \trim H$ is an open subset of $S$ containing $s$.
\end{proof}
\begin{proof}[Proof that $\iota$ is an embedding]
To see that $\iota$ is continuous, recall that $\iota^{-1}(a, b) = (\pi a, \pi b)$ for any basis interval $(a, b)$.

To see that $\iota$ is an embedding, observe that the image under $\iota$ of an interval $(a, b) \subset I$ is the intersection of $(\iota a, \iota b)$ with $\iota I$.
\end{proof}

The continuity of $\iota$ is a way of saying that passing from $I$ to $\divd{I}$ spreads out the points of $W$, but it doesn't spread them out too much. Here are two more reflections of this idea.
\begin{prop}\label{undivd-dense-1d}
The embedding of $I$ in $\divd{I}$ is dense.
\end{prop}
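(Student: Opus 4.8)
The plan is to prove density on a basis: since the ``basis intervals'' form a basis for the topology of $\divd{I}$, it suffices to show that every non-empty basis interval $(a,b)$ contains a point of the image $\iota I$.

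The one step that needs a moment's thought is that the two endpoints of a non-empty basis interval lie over distinct points of $I$. Suppose $(a,b)$ is a basis interval with $\pi a = \pi b$. The common value cannot lie in $I \smallsetminus W$, since then the fiber of $\pi$ would be a single point and $(a,b)$ would be empty; so the common value is some $w \in W$, and $a, b \in \{\llane{w} < \median{w} < \rlane{w}\}$. A non-empty interval then forces $(a,b)$ to be one of $(\llane{w},\median{w})$, $(\llane{w},\rlane{w})$, or $(\median{w},\rlane{w})$ --- each of which has one of the excluded forms $(\llane{w},\,\cdot\,)$ or $(\,\cdot\,,\rlane{w})$, and so is not a basis interval. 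Hence a non-empty basis interval $(a,b)$ has $a < b$ with $\pi a \neq \pi b$, and since $\pi$ is order-preserving, $\pi a < \pi b$; in particular $(\pi a, \pi b)$ is a non-empty open subinterval of $I$.

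Now the conclusion is immediate from an identity already recorded, $\iota^{-1}(a,b) = (\pi a, \pi b)$: choosing any $s \in (\pi a, \pi b)$ gives $\iota s \in (a,b)$. As $(a,b)$ was an arbitrary non-empty basis interval, $\iota I$ meets every non-empty open subset of $\divd{I}$, so it is dense. I do not expect any real obstacle here: everything reduces to the little bit of bookkeeping above about which intervals have been thrown out of the basis, together with the pullback identity $\iota^{-1}(a,b) = (\pi a, \pi b)$. (If a ``basis interval'' is allowed to have a formal bottom or top endpoint rather than an element of $\divd{I}$, one reads $\pi$ of it as the matching endpoint of $I$ and nothing changes.)
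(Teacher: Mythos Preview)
Your proof is correct and follows essentially the same approach as the paper: both reduce to basis intervals, use the identity $\iota^{-1}(a,b) = (\pi a,\pi b)$, and argue that the defining rules for basis intervals force $\pi a \neq \pi b$. You simply spell out in more detail the case analysis that the paper compresses into the phrase ``precluded by the rules defining basis intervals.''
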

\begin{proof}
It's enough to show that $\iota I$ intersects every basis interval. Suppose the basis interval $(a, b)$ doesn't intersect $\iota I$, so its preimage $(\pi a, \pi b)$ under $\iota$ is empty. Since $I$ is densely ordered, this means $\pi a = \pi b$, which is precluded by the rules defining basis intervals.
\end{proof}
\begin{prop}\label{divd-loc-conn-1d}
The divided interval $\divd{I}$ is locally connected.
\end{prop}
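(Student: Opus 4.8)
The plan is to reduce the statement to a single claim: \emph{every basis interval of $\divd I$ is connected}. Since the basis intervals form a basis for the topology, this claim gives local connectedness at once --- given a point $x$ and an open set $U \ni x$, choose a basis interval $H$ with $x \in H \subseteq U$; it is then a connected open neighborhood of $x$ inside $U$.

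To prove a basis interval $H = (a, b)$ is connected, rather than work with $H$ itself I would work with its trim. Recall that $\trim H = \pi^{-1}\iota^{-1}(a,b) = \pi^{-1}\!\big((\pi a, \pi b)\big)$, and write $J = (\pi a, \pi b)$; this is a non-empty open subinterval of $I$, because the rules defining basis intervals forbid $\pi a = \pi b$. Two things are true of $\trim H$. First, it is again a basis interval: its endpoints are $\rlane{\iota}(\pi a)$ and $\llane{\iota}(\pi b)$, which inspection shows are never of the two forbidden shapes $\llane w$ and $\rlane w$. So $\trim H$ is open, and it is $\pi$-saturated by construction, so the restriction of $\pi$ to $\trim H$ is a quotient map onto the connected interval $J$. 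Second, the fibers of $\pi$ are connected: over a point of $I \smallsetminus W$ a fiber is a point, and the triple $\{\llane w, \median w, \rlane w\}$ is connected because any open set containing $\median w$ contains a basis interval around $\median w$, and such an interval --- unable to use $\llane w$ or $\rlane w$ as an endpoint --- must swallow the whole triple. Now a quotient map onto a connected space with connected fibers has connected domain: if the domain were a disjoint union of two non-empty open sets, each fiber would lie in one of them, so both would be saturated and would therefore have open, non-empty, disjoint, covering images in the connected base, which is impossible. Hence $\trim H$ is connected.

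It remains to recover $H$ from $\trim H$. Trimming removes at most two points: a leftmost element, which exists precisely when $a = \median w$, and is then $\rlane w$; and a rightmost element, which exists precisely when $b = \median w$, and is then $\llane w$. Each of these points lies in $\overline{\trim H}$ --- every basis neighborhood of $\rlane w$ reaches past it to the right, into $\trim H$, and symmetrically for $\llane w$. So $\trim H \subseteq H \subseteq \overline{\trim H}$, and therefore $H$ is connected.

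The idea is entirely in the second paragraph. What needs care, and is the only place I expect friction, is the bookkeeping around the non-Hausdorff triples $\llane w < \median w < \rlane w$: that a basis interval meeting $\median w$ swallows the triple, that trimming never turns an endpoint into a forbidden shape, and that the trimmed-off endpoints sit in the closure of the trim. Each is a short check, but there are a few of them.
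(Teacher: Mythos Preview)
Your proof is correct but takes a more elaborate route than the paper's. The paper uses the dense embedding $\iota \colon I \to \divd I$ rather than the quotient $\pi$: if a basis interval $(a,b)$ were split as $U \sqcup V$, then $\iota^{-1}U$ and $\iota^{-1}V$ would be disjoint open sets covering $\iota^{-1}(a,b) = (\pi a, \pi b)$, and both are nonempty because $\iota I$ is dense in $\divd I$, so they would disconnect a real interval --- contradiction. This sidesteps the trimming, the connected-fibers argument, and the endpoint bookkeeping entirely. Your argument via $\pi$ is more structural: it isolates exactly why the non-Hausdorff triples $\{\llane w, \median w, \rlane w\}$ do not obstruct connectedness (they are themselves connected fibers of $\pi$), and the passage through $\trim H$ rehearses machinery that reappears when local systems on $\divd I$ and $I$ are compared. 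The paper's version is a two-line pullback; yours makes the mechanism more explicit at the cost of several small checks.
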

\begin{proof}
It's enough to show that every basis interval is connected. Recall that basis intervals are non-empty by definition. Let's say the basis interval $(a, b)$ is disconnected by two open subsets $U$ and $V$. Since $\iota I$ is dense in $\divd{I}$, the preimages of $U$ and $V$ under $\iota$ are non-empty, so they disconnect the preimage $(\pi a, \pi b)$ of $(a, b)$.
\end{proof}

For our purposes, the most important feature of $\divd{I}$ is that its local systems are naturally in correspondence with the local systems on $I$. This idea can be stated more precisely as follows.
\begin{thm}\label{loc-sys-equiv-1d}
For any group $G$, the direct image functors $\pi_*$ and $\iota_*$ give an equivalence between the groupoid of $G$ local systems on $\divd{I}$ and the groupoid of $G$ local systems on $I$.\footnote{Though they're stated for groupoids of local systems, Theorem~\ref{loc-sys-equiv-1d} and Lemma~\ref{trim-iso} hold for categories of locally constant sheaves into any fixed target category. The proofs are the same, keeping in mind our convention (from Section~\ref{loc-sys-conventions}) that the target category is a type of algebraic structure.}
\end{thm}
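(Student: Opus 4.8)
The plan is to check that the two round-trip composites are naturally isomorphic to the identity functors. One direction is free: since $\pi \circ \iota = \mathrm{id}_I$, functoriality of direct image gives $\pi_* \circ \iota_* = (\pi\iota)_* \cong \mathrm{id}$ on the category of $G$ local systems on $I$. So everything comes down to producing a natural isomorphism $\mathcal E \cong \iota_*\pi_*\mathcal E$ for each $G$ local system $\mathcal E$ on $\divd I$, which will give $\iota_* \circ \pi_* \cong \mathrm{id}$ and hence that $\pi_*, \iota_*$ are inverse equivalences.

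I would build the comparison map over the basis of basis intervals, which is legitimate since a sheaf defined on a basis extends uniquely to a sheaf. For a basis interval $(a,b)$ we have $(\iota_*\pi_*\mathcal E)_{(a,b)} = \mathcal E_{\pi^{-1}\iota^{-1}(a,b)} = \mathcal E_{\trim(a,b)}$, and since $\trim(a,b) \subseteq (a,b)$ the restriction maps $\mathcal E_{(a,b)} \to \mathcal E_{\trim(a,b)}$ assemble — a routine check that they commute with restriction among nested basis intervals and are natural in $\mathcal E$ — into a natural transformation $\mathrm{id} \Rightarrow \iota_*\pi_*$ that extends to a natural transformation of sheaves. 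It then remains to see this transformation is an isomorphism, which, both sides being sheaves, reduces to the statement that $\mathcal E_{(a,b)} \to \mathcal E_{\trim(a,b)}$ is an isomorphism for every basis interval.

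The cleanest route to that statement is through a lemma: every $G$ local system on $\divd I$ is isomorphic to a constant sheaf, and the same argument applies on every basis interval. Granting the lemma, $\mathcal E$ restricted to a basis interval $(a,b)$ is constant while $\trim(a,b)$ is a non-empty connected open subset of it (connectedness of basis intervals is exactly what proves Proposition~\ref{divd-loc-conn-1d}), so the restriction is an isomorphism and the equivalence follows. The lemma also shows $\pi_*\mathcal E$ and $\iota_*\mathcal G$ really are local systems: their stalks are filtered colimits of copies of $R_G$ with isomorphism transition maps, which by the colimit lemma of Appendix~\ref{iso-colim} are again isomorphic to $R_G$ (one uses here that local systems on an honest interval in $\R$ are constant).

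The constancy lemma is the main obstacle. The tempting inductive strategy — peel off one divided point $\median w$ at a time from a basis interval using the open cover by $\{x<\median w\}$, a small neighborhood of $\median w$, and $\{x>\median w\}$ (legitimate because $\{\median w\}$ is closed, and the nerve of this cover is a path) — breaks down because $W$ can be dense, so there is no well-founded induction and the genuinely interval-like pieces that would be the base case need not be neighborhoods of anything. I would argue globally instead: by Proposition~\ref{stalk-res-iso} it suffices to show the global stalk restriction $\mathcal E_{\divd I} \to \mathcal E_x$ is an isomorphism for every $x$; cover $\divd I$ by basis intervals on which $\mathcal E$ is constant, observe that all pairwise and triple overlaps are again basis intervals and hence connected, and read off the gluing cocycle of $\mathcal E$ over this cover. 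One then needs that the nerve of any cover of $\divd I$ by basis intervals is simply connected, so that the cocycle trivializes. This last point is where the linear order carries the argument — the ``lily-pad'' chain-connectivity of Appendix~\ref{lily-pads} together with the fact that intervals in a linearly ordered set cannot overlap in a cycle — and establishing it cleanly in the present non-Hausdorff generality is the step I expect to demand the most care.
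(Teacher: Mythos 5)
Your proposal is correct in outline but takes a genuinely different — and heavier — route than the paper. Both of you reduce the theorem to showing the restriction $\mathcal{E}_{(a,b)} \to \mathcal{E}_{\trim(a,b)}$ is an isomorphism for every basis interval, but you then reach for a global constancy lemma (``every $G$ local system on $\divd{I}$ is constant'') established by a \v{C}ech/nerve argument, which is considerably stronger than what the theorem needs. The paper instead proves only Lemma~\ref{trim-iso}, and does so by a purely local argument at the endpoints of $H$: assume without loss of generality that $H$ has a least element $\rlane{w}$ and no greatest element, choose a small basis interval $A \subset H$ containing $\rlane{w}$ on which $\mathcal{F}$ is already constant (possible because $\mathcal{F}$ is locally constant), observe that $\mathcal{F}_{\trim A \subset A}$ is an isomorphism since $A$ and $\trim A$ are both connected, and then a two-row limit diagram produces the inverse of $\mathcal{F}_{\trim H \subset H}$. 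No density of $W$, no induction, no nerve. This sidesteps exactly the obstacle you flag: you correctly observe that an induction peeling off one point of $W$ at a time fails when $W$ is dense, but the paper's lemma never needs to traverse $W$ at all — it looks only at the one endpoint being trimmed. Your nerve-simple-connectedness claim (contract a cycle of basis intervals by picking a point in each consecutive overlap, taking the maximal one, and using order-convexity to exhibit a triple intersection) is true and can be pushed through, so your proof is repairable, but you are doing extra work to establish a stronger statement that only falls out of the theorem as a corollary. One thing your route does buy is an explicit verification that $\pi_*$ and $\iota_*$ send local systems to local systems, a point the paper leaves implicit.
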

This identifies the $G$ character stacks of $\divd{I}$ and $I$.

The reason $\divd{I}$ has no more local systems than $I$, despite having more open subsets, is that a local system on $\divd{I}$ is determined entirely by its values on trimmed intervals.
\begin{lemma}\label{trim-iso}
If $\mathcal{F}$ is a local system on $\divd{I}$, the restriction $\mathcal{F}_{\trim H \subset H}$ is an isomorphism for any basis interval $H$.
\end{lemma}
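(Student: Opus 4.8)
The plan is to whittle $H$ down to $\trim H$ one point at a time. By the description of $\trim$ in Section~\ref{divd-props-1d}, the set $\trim H$ is obtained from $H$ by deleting its leftmost element, if it has one (necessarily a point $\rlane{w}$, so that $H = (\median{w}, b) = [\rlane{w}, b)$), and then its rightmost element, if it has one (necessarily a point $\llane{w'}$). Each intermediate set is again a non-empty, non-excluded interval, hence a basis interval. So it suffices to prove that deleting the leftmost point of a basis interval induces an isomorphism on $\mathcal{F}$, together with the mirror-image statement for the rightmost point; I will spell out the first, since the second is identical after reflecting the order.

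So suppose $H = [\rlane{w}, b)$ has leftmost point $\rlane{w}$, and set $U_1 = H \smallsetminus \{\rlane{w}\}$. The singleton $\{\rlane{w}\}$ is closed in $H$ (every other point of $H$ lies in a basis subinterval of $H$ that misses $\rlane{w}$), so $U_1$ is open, and $U_1 = (\rlane{w}, b)$ is itself a basis interval. Because $\mathcal{F}$ is locally constant, $\rlane{w}$ has a neighborhood on which $\mathcal{F}$ is isomorphic to a constant sheaf, and this neighborhood contains a basis interval of the form $[\rlane{w}, c_1)$. Since $W$ is at most countable, $I \smallsetminus W$ is dense in $I$, so I can pick $c_0 \in I \smallsetminus W$ with $w < c_0 < \pi b$ and $c_0 < \pi c_1$; then $U_2 := [\rlane{w}, c_0) = (\median{w}, c_0)$ is a basis interval, contained in $[\rlane{w}, c_1)$, so $\mathcal{F}$ is isomorphic to a constant sheaf on $U_2$. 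Now $\{U_1, U_2\}$ is an open cover of $H$ with $U_1 \cap U_2 = (\rlane{w}, c_0)$, which is a non-empty basis interval (it is none of the excluded forms, since $c_0 \notin W$), hence connected by the proof of Proposition~\ref{divd-loc-conn-1d}.

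Now run the sheaf axiom for the two-element cover: $\mathcal{F}(H)$ is the fiber product $\mathcal{F}(U_1) \times_{\mathcal{F}(U_1 \cap U_2)} \mathcal{F}(U_2)$. Since $\mathcal{F}$ is isomorphic to a constant sheaf on $U_2$ and $U_1 \cap U_2 \subset U_2$ is a non-empty connected open set, the restriction $\mathcal{F}(U_2) \to \mathcal{F}(U_1 \cap U_2)$ is an isomorphism. A fiber product one of whose legs is an isomorphism projects isomorphically onto the other factor, so $\mathcal{F}(H) \to \mathcal{F}(U_1)$ is an isomorphism; but this map is exactly the restriction $\mathcal{F}_{U_1 \subset H}$. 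Applying the mirror statement to delete the rightmost point and composing, $\mathcal{F}_{\trim H \subset H}$ is an isomorphism.

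The sheaf-theoretic heart of this argument is routine; the only place that demands care is the point-set bookkeeping in the non-Hausdorff space $\divd{I}$. Specifically, one must check that each auxiliary set ($U_1$, $U_2$, and $U_1 \cap U_2$) is genuinely open and, where needed, a non-empty interval of one of the allowed forms — which is where the choice $c_0 \notin W$, the fact that $\divd{I}$ is totally ordered with $\pi$ order-preserving, and the rules forbidding $(\llane{w},b)$ and $(a,\rlane{w})$ all get used. Everything else is a formal consequence of the sheaf condition and local constancy.
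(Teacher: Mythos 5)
Your argument is correct and amounts to the same strategy the paper uses: cover $H$ by $\trim H$ and a small basis interval around the removed endpoint on which $\mathcal{F}$ is constant, then invoke the sheaf axiom (the paper phrases the fiber-product step as "taking limits" of its diagram, but it is the same gluing argument). The only superficial difference is that you make the two-endpoint case explicit by whittling one point at a time, whereas the paper disposes of it with "the remaining cases are essentially the same"; your extra point-set verifications (that $U_1$, $U_2$, $U_1 \cap U_2$ are non-excluded basis intervals) are all accurate.
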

\begin{proof}
If $H$ has neither a least element nor a greatest element, $\trim H = H$, so there's nothing to prove. Let's assume $H$ has a least element, but no greatest element; the remaining cases are essentially the same.

Since $\mathcal{F}$ is locally constant, we can pick a basis interval $A \subset H$ which contains the least element of $H$ and is small enough that $\left.\mathcal{F}\right|_A$ is constant. Since $A$ and $\trim A$ are both connected, $\mathcal{F}_{\trim A \subset A}$ is an isomorphism. The diagram
\[ \xymatrix{
& \ar[dl]_{\subset^{-1}} \ar[d]^= \mathcal{F}_{\trim A} & \ar[l]_\subset \ar[d]^= \mathcal{F}_{\trim H} \\
\mathcal{F}_A \ar[r]_\subset & \mathcal{F}_{\trim A} & \ar[l]^\subset \mathcal{F}_{\trim H}
} \]
commutes, so taking limits of its top and bottom rows gives a map $\mathcal{F}_{\trim H} \to \mathcal{F}_H$, which inverts $\mathcal{F}_{\trim H \subset H}$.
\end{proof}
\begin{proof}[Proof of Theorem~\ref{loc-sys-equiv-1d}]
There's a canonical natural isomorphism between $\pi_* \iota_*$ and the identity functor, because $\pi \iota$ is the identity map from $I$ to itself. Now, all we need is a natural isomorphism between $\iota_* \pi_*$ and the identity.

Pick any local system $\mathcal{F}$ on $\divd{I}$. For each basis interval $H$, recall that $\pi^{-1} \iota^{-1} H = \trim H$, so
\begin{align*}
(\iota_* \pi_* \mathcal{F})_H & = \mathcal{F}_{\pi^{-1} \iota^{-1} H} \\
& = \mathcal{F}_{\trim H}.
\end{align*}
Thus, the restriction $\mathcal{F}_{\trim H \subset H}$ gives a morphism from $\mathcal{F}_H$ to $(\iota_* \pi_* \mathcal{F})_H$, and Lemma~\ref{trim-iso} tells us this morphism is an isomorphism. For any basis interval $H' \subset H$, the diagram
\[ \xymatrix{
\mathcal{F}_{H'} \ar[d]_\subset & \ar[l]_\subset \ar[d]^\subset \mathcal{F}_H \\
\mathcal{F}_{\trim H'} & \ar[l]^\subset \mathcal{F}_{\trim H}
} \]
commutes because all the arrows are restrictions, so we've found a natural isomorphism from $\mathcal{F}$ to $\iota_* \pi _* \mathcal{F}$.
\end{proof}
\subsubsection{Properties of fractured intervals}\label{frakd-props-1d}
Let's start our discussion of fractured intervals with some tools for passing back and forth between $\frakd{I}$ and $\divd{I}$. Define $\nom{R} = R \cap \frakd{I}$ for any $R \subset \divd{I}$. We can {\em seal} the gaps in an open set $V \subset \frakd{I}$, yielding an open set $\seal V \subset \divd{I}$, by taking the union of all open $U \subset \divd{I}$ for which $\nom{U} = V$. Here's a more explicit construction of $\seal V$. To include $\llane{w}$ and $\rlane{w}$, an open set $U \subset \divd{I}$ must contain basis intervals $(a, \median{w})$ and $(\median{w}, b)$, implying that $U \cup \{\median{w}\}$ is open. So, if $V$ contains $\llane{w}$ and $\rlane{w}$, its sealed version contains $\median{w}$. The converse also holds, because every basis interval with $\median{w}$ in it contains $\llane{w}$ and $\rlane{w}$. Sealing an open set $V \subset \frakd{I}$ thus amounts to adding each $\median{w}$ for which $V$ contains $\llane{w}$ and $\rlane{w}$.

We can use these tools to prove our last claim about $\pi$ from Section~\ref{divd-intvl-construct}.
\begin{proof}[Proof that the restriction of $\pi$ to $\frakd{I}$ is a quotient map.]
Let $R = \pi^{-1}S$ for some $S \subset I$, and suppose $\nom{R}$ is open in $\frakd{I}$. Notice that $R$ contains $\median{w}$ if and only if $\nom{R}$ contains $\llane{w}$ and $\rlane{w}$. That means $R = \seal \nom{R}$, by our explicit construction above. In particular, $R$ is open. That means $S$ is open, because $\pi$ is a quotient map.
\end{proof}
One nice feature of $\frakd{I}$ is that, with $\median{w}$ out of the way, the points $\llane{w}$ and $\rlane{w}$ can be separated by open sets. The consequence is just what you'd expect.
\begin{prop}\label{frakd-hausdorff-1d}
The fractured interval $\frakd{I}$ is Hausdorff.
\end{prop}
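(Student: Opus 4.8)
The plan is to separate any two distinct points $p, q \in \frakd{I}$ by disjoint open sets, splitting into two cases according to whether $\pi p = \pi q$.

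When $\pi p \neq \pi q$, I would use that $\pi \maps \divd{I} \to I$ is continuous (shown in the proof that $\pi$ is a quotient map) and that $I \subset \R$ is Hausdorff: pick disjoint open neighborhoods $U \ni \pi p$ and $V \ni \pi q$ in $I$, and take the open sets $\pi^{-1} U \cap \frakd{I}$ and $\pi^{-1} V \cap \frakd{I}$. These are disjoint and contain $p$ and $q$ respectively, and no exclusion rule for basis intervals enters. (Equivalently, one can write down explicit separating basis intervals $(\iota a_0, \iota t)$ and $(\iota t, \iota b_0)$ using points $a_0 < \pi p < t < \pi q < b_0$ of $I$, picked by density.)

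When $\pi p = \pi q =: w$, the point $w$ must lie in $W$, since otherwise $p = q = \iota w$; hence $p$ and $q$ lie in $\{\llane{w}, \median{w}, \rlane{w}\}$. Because $\median{w} \notin \frakd{I}$ by construction, the only possibility is $\{p, q\} = \{\llane{w}, \rlane{w}\}$, say $p = \llane{w}$ and $q = \rlane{w}$. Since $I$ is an open interval, choose $w^-, w^+ \in I$ with $w^- < w < w^+$. Then $(\iota w^-, \median{w})$ and $(\median{w}, \iota w^+)$ are genuine basis intervals — their endpoints are points of $\iota I$ and the median $\median{w}$, so neither has an $\llane{\cdot}$ on the left or an $\rlane{\cdot}$ on the right, and hence neither is of the excluded form $(\llane{v}, b)$ or $(a, \rlane{v})$. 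Using that $\pi x < \pi y$ forces $x < y$, the first interval contains $\llane{w}$ and the second contains $\rlane{w}$, and the two are disjoint because no element can be simultaneously below and above $\median{w}$ in the order. Intersecting with $\frakd{I}$ completes this case.

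The only delicate point is the bookkeeping in the $\pi p = \pi q$ case: one has to verify that $(\iota w^-, \median{w})$ and $(\median{w}, \iota w^+)$ survive the exclusion of the $(\llane{v}, b)$ and $(a, \rlane{v})$ shapes and that they actually contain the intended lane points. This is also where the passage to $\frakd{I}$ earns its keep: these same two intervals already separate $\llane{w}$ from $\rlane{w}$ inside $\divd{I}$, so the sole obstruction to Hausdorffness was the presence of the medians $\median{w}$, which $\frakd{I}$ has deleted; the $\pi p \neq \pi q$ case, by contrast, would go through verbatim in $\divd{I}$ itself.
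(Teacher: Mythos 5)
Your proof is correct and follows essentially the same strategy as the paper's: split on whether $\pi p = \pi q$, pull back disjoint neighborhoods from $I$ in the first case, and use basis intervals bounded by $\median{w}$ in the second. The only (immaterial) difference is that you use bounded separating intervals $(\iota w^-, \median{w})$ and $(\median{w}, \iota w^+)$ where the paper uses the half-infinite ones $(-\infty, \llane{w}]$ and $[\rlane{w}, \infty)$.
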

\begin{proof}
Pick two points $s < t$ in $\frakd{I}$. If $\pi s \neq \pi t$, we can find disjoint neighborhoods of $\pi s$ and $\pi t$ in $I$ and pull them back to $\frakd{I}$. If $\pi s = \pi t$, then $s = \llane{w}$ and $t = \rlane{w}$ for some $w \in W$. Hence, $(-\infty, \llane{w}]$ and $[\rlane{w}, \infty)$ are disjoint neighborhoods of $s$ and $t$.
\end{proof}

When studying $\divd{I}$, we found it useful to work with basis intervals whose leftmost and rightmost elements had been removed. For studying $\frakd{I}$, it will be useful to go the opposite direction. Let's say a basis interval is {\em full} if it has both a leftmost element and a rightmost element. As we saw earlier, the full intervals in $\divd{I}$ are the ones that look like $[\rlane{a}, \llane{b}]$. The full intervals in $\frakd{I}$ are the same.
\begin{prop}\label{frakd-cpt-basis-1d}
In $\frakd{I}$, every full interval is compact.
\end{prop}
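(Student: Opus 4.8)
The plan is to identify a full interval in $\frakd I$ with a complete linearly ordered space and then invoke the order-topological form of the Heine--Borel theorem. Write $H = [\rlane a, \llane b]$ for a full interval in $\frakd I$; as observed just above, $a$ and $b$ must lie in $W$, and since $\rlane a \leq \llane b$ forces $a < b$, the points $\rlane a$ and $\llane b$ are genuinely the least and greatest elements of $H$. Restricting $\pi$ to $H$ gives an order-preserving surjection $H \to [\pi a, \pi b]$ whose fibre over an interior point $w \in (\pi a, \pi b) \cap W$ is the two-element chain $\llane w < \rlane w$, whose fibre over $\pi a$ is $\{\rlane a\}$ and over $\pi b$ is $\{\llane b\}$, and whose remaining fibres are singletons $\{\iota s\}$ (no $\median w$ ever appears, since $\median w \notin \frakd I$). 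What I need to check is (i) that $H$ is complete as a linear order, and (ii) that the topology $H$ inherits from $\frakd I$ is no finer than its order topology; granting these, $H$ is compact in the order topology by the standard generalization of Heine--Borel (a linearly ordered set with the least-upper-bound property and a least and a greatest element is compact in its order topology), hence a fortiori compact in the coarser inherited topology.

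For (i), I would take a nonempty $S \subseteq H$, set $c = \sup \pi(S)$ (which exists in $[\pi a, \pi b]$ since $\R$ is complete), and read off $\sup_H S$ from the fibre over $c$ by a short case split. If $\pi a < c < \pi b$, then: when $c \notin W$ the unique point $\iota c$ over $c$ is the supremum, and when $c \in W$ the supremum is $\rlane c$ if $\rlane c \in S$ and $\llane c$ otherwise. If $c = \pi b$ the supremum is $\llane b$, the only point of $H$ over $\pi b$; and if $c = \pi a$ then $\pi(S) = \{\pi a\}$, so $S = \{\rlane a\}$ and the supremum is $\rlane a$. Each clause is routine order bookkeeping in $\divd I$; the only thing to watch is the $\llane{}$/$\median{}$/$\rlane{}$ trichotomy over points of $W$, together with the fact that $\median w \notin \frakd I$. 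Infima are handled by the mirror argument. Combined with the least element $\rlane a$ and greatest element $\llane b$, this exhibits $H$ as a complete linear order with extrema.

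For (ii), recall that a basis interval of $\divd I$ is an interval $(p, q)$ subject to $p \neq \llane w$ and $q \neq \rlane w$ for every $w \in W$, so the inherited topology on $H$ is generated by the traces $\{x \in H : x < q\}$ and $\{x \in H : x > p\}$ of such intervals. I would check that $\{x \in H : x < q\}$ is open in the order topology of $H$: if $q \in H$ it is literally a subbasic order-open ray; the only way $q \notin H$ can produce a set that is neither empty nor all of $H$ is $q = \median w$ for some $w \in W$, and then $\{x \in H : x < \median w\} = \{x \in H : x \leq \llane w\} = \{x \in H : x < \rlane w\}$, again a subbasic order-open ray (in the leftover boundary sub-cases the set collapses to $\emptyset$ or $H$). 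The argument for $\{x \in H : x > p\}$ is the mirror image, with the constraint $p \neq \llane w$ now playing the role that $q \neq \rlane w$ played before. (In fact the two topologies on $H$ coincide, but only this inclusion is needed.)

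The work is entirely in these two verifications --- the case split computing suprema in (i), and the endpoint analysis in (ii) --- and the only real obstacle is keeping careful track of which of $\llane w$, $\median w$, $\rlane w$ can occur where; neither step involves anything deeper. Once both are in hand the conclusion is immediate: $H$ is compact in its order topology, its subspace topology in $\frakd I$ is contained in the order topology, so $H$ is compact.
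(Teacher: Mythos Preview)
Your proof is correct and takes a genuinely different route from the paper's. The paper constructs an explicit order-preserving map $\theta \colon H \to \R$ by choosing summable positive weights $\kappa_w$ for the interior division points $w \in W' = W \cap (a,b)$ and setting $\theta s = \pi s + \sum_{\median w < s} \kappa_w$; this inflates each adjacent pair $\llane w, \rlane w$ into a genuine gap of width $\kappa_w$, and one checks directly that $\theta$ is a homeomorphism onto the closed bounded set $[a, b+K] \smallsetminus \bigcup_{w \in W'} (\theta \llane w, \theta \rlane w)$, where $K = \sum_{w} \kappa_w$. Your argument instead verifies the least-upper-bound property for $H$ by lifting suprema from $[\pi a, \pi b]$ through the fibres of $\pi$, compares the subspace topology to the order topology on $H$, and invokes the order-theoretic form of Heine--Borel. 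The paper's approach is more hands-on and produces an explicit embedding into $\R$ that foreshadows the metrization in Section~\ref{division-metric}; yours is conceptually cleaner, makes no use of the ambient real line beyond the completeness of $[\pi a,\pi b]$, and would work verbatim for divided versions of more general complete orders.
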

\begin{proof}
Consider a full interval $[\rlane{a}, \llane{b}]$. Let $W'$ be the subset of $W$ lying between $\rlane{a}$ and $\llane{b}$. Pick a function $\kappa \maps W' \to \R_+$ for which the sum $K = \sum_{w \in W'} \kappa w$ is finite, and let $\theta$ be the map from $[\rlane{a}, \llane{b}] \cap \frakd{I}$ to $\R$ given by the formula
\[ \theta s = \pi s + \sum_{\substack{w \in W' \\ \hat{w} < s}} \kappa w. \]
It's not hard to see that $\theta$ is a homeomorphism whose image is the set
\[ [a, b + K] \smallsetminus \bigcup_{w \in W'} (\theta \llane{w}, \theta \rlane{w}), \]
which is closed and bounded.
\end{proof}
\begin{prop}
In $\frakd{I}$, every full interval is clopen.
\end{prop}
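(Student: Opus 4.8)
The plan is to split the claim ``every full interval in $\frakd{I}$ is clopen'' into its two halves and to lean on facts already established, so that essentially no new work is needed.

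For closedness, I would simply combine Proposition~\ref{frakd-cpt-basis-1d}, which says that every full interval in $\frakd{I}$ is compact, with Proposition~\ref{frakd-hausdorff-1d}, which says that $\frakd{I}$ is Hausdorff. Since a compact subset of a Hausdorff space is always closed, every full interval is closed in $\frakd{I}$. That disposes of one half of the statement in a single line.

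For openness, recall that a full interval of $\frakd{I}$ has the form $[\rlane{a}, \llane{b}]$ with $a, b \in W$, so that $\median{a}$ and $\median{b}$ are defined. The first step is to identify this set, inside $\divd{I}$, with the basis interval $(\median{a}, \median{b})$. To check the set equality I would use that $\pi$ is order-preserving together with $\llane{w} < \median{w} < \rlane{w}$: any $x$ with $\median{a} < x < \median{b}$ satisfies $a \le \pi x \le b$, and the extreme cases $\pi x = a$ and $\pi x = b$ force $x = \rlane{a}$ and $x = \llane{b}$ respectively, so $(\median{a}, \median{b}) = [\rlane{a}, \llane{b}]$. This interval is non-empty and is not of either excluded form $(\llane{w}, c)$ or $(c, \rlane{w})$, so it is a genuine basis interval, hence open in $\divd{I}$. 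Since $\frakd{I}$ carries the subspace topology from $\divd{I}$ (being, by definition, $\divd{I} \smallsetminus \iota W$), and since $\median{a}, \median{b}$ already lie outside $[\rlane{a}, \llane{b}]$, we get $[\rlane{a}, \llane{b}] = (\median{a}, \median{b}) \cap \frakd{I}$, which is open in $\frakd{I}$.

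I don't expect a genuine obstacle here: the only real content is translating between the ``full interval'' description $[\rlane{a}, \llane{b}]$ and the ``basis interval'' description $(\median{a}, \median{b})$, and being slightly careful that a full interval really is non-empty (so that it qualifies as a basis interval) and really does have endpoints of the stated form with $a, b \in W$ (so that $\median{a}$, $\median{b}$ make sense). Everything else is a citation to the two preceding propositions and to the subspace-topology definition of $\frakd{I}$.
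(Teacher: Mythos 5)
Your proof is correct, but it takes a different route for the closedness half than the paper does. The paper's entire proof is one sentence: the complement of $[\rlane{a}, \llane{b}]$ in $\frakd{I}$ is $(-\infty, \llane{a}] \cup [\rlane{b}, \infty)$, a union of two basis intervals and hence open, so $[\rlane{a}, \llane{b}]$ is closed. (Openness of the full interval itself is taken as read: a ``full interval'' is, by definition, a kind of basis interval, and basis intervals are open.) You instead derive closedness from the earlier compactness result (Proposition~\ref{frakd-cpt-basis-1d}) together with the Hausdorff property (Proposition~\ref{frakd-hausdorff-1d}), using the general fact that a compact subset of a Hausdorff space is closed. That works, but it's a heavier tool: the compactness proposition itself required the homeomorphism $\theta$ construction, whereas the paper's argument is a direct, elementary exhibition of the open complement. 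Your openness argument, identifying $[\rlane{a}, \llane{b}]$ with the basis interval $(\median{a}, \median{b})$ via the order relations, is sound, and it makes explicit a step the paper leaves implicit in the definition of ``full interval.'' So both routes succeed; yours trades a one-line observation for a citation chain, and spells out a definitional point the paper treats as obvious.
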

\begin{proof}
In $\frakd{I}$, the complement of a full interval $[\rlane{a}, \llane{b}]$ is the union of the basis intervals $(-\infty, \llane{a}]$ and $[\rlane{b}, \infty)$.
\end{proof}
\begin{prop}\label{full-basis}
If $W$ is dense in $I$, the full intervals form a basis for $\frakd{I}$.
\end{prop}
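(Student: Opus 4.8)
The plan is to verify the basis criterion directly. Since the basis intervals of $\divd{I}$ restrict to a basis of the subspace $\frakd{I}$, it suffices to show that for every basis interval $(a,b)$ of $\divd{I}$ and every point $p \in (a,b) \cap \frakd{I}$ there is a full interval $F$ of $\frakd{I}$ with $p \in F \subseteq (a,b) \cap \frakd{I}$. Recall that the full intervals of $\divd{I}$ are exactly the basis intervals of the form $(\median{c}, \median{d}) = [\rlane{c}, \llane{d}]$ with $c, d \in W$; in particular they are open, so once I produce $c, d \in W$ with $a < \rlane{c} \le p \le \llane{d} < b$, the set $F = [\rlane{c}, \llane{d}] \cap \frakd{I}$ will do the job.

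The heart of the argument is finding such a $c$ (the argument for $d$ being symmetric). I would split into cases according to the type of $p$ in $\divd{I}$: since $p \in \frakd{I}$, it is either $\rlane{w}$ for some $w \in W$, or $\llane{w}$ for some $w \in W$, or $\iota s$ for some $s \in I \smallsetminus W$. In the first case, take $c = w$, so that $\rlane{c} = p$ and $a < \rlane{c}$ holds simply because $p \in (a,b)$. In the remaining two cases $p$ is the minimum element of its fiber $\pi^{-1}(\pi p)$, so $a < p$ forces $\pi a < \pi p$; here I would invoke the density of $W$ in $I$. Because $I$ is an open interval, $\pi p$ is not the left endpoint of $I$, so $(\pi a, \pi p) \cap I$ is a nonempty open subinterval and therefore meets $W$; picking $c$ in this set gives $\pi a < c < \pi p$, hence $a < \rlane{c} < p$ since $\pi$ is order-preserving. (A little care is needed when the left endpoint of $(a,b)$ is not the $\iota$-image of a point of $I$ -- for instance when the basis interval is unbounded on the left -- but then ``$\pi a < c$'' is either vacuous or just asks for $c \in I$ below $\pi p$, which density again supplies.)

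With $c$ and $d$ chosen, I would finish by checking that $F = [\rlane{c}, \llane{d}] \cap \frakd{I}$ has the desired properties: it contains $p$ and is nonempty since $\rlane{c} \le p \le \llane{d}$; it is a full interval of $\frakd{I}$ because $[\rlane{c}, \llane{d}] = (\median{c}, \median{d})$ is a genuine basis interval of $\divd{I}$ possessing both a leftmost and a rightmost element; and $F \subseteq (a,b) \cap \frakd{I}$ because every $x \in [\rlane{c}, \llane{d}]$ satisfies $a < \rlane{c} \le x \le \llane{d} < b$. This presents $(a,b) \cap \frakd{I}$ as a union of full intervals, completing the verification.

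The step I expect to be the main obstacle is the construction of the endpoint $\rlane{c}$ (and symmetrically $\llane{d}$) in the case where $p$ is a left-lane point $\llane{w}$ or an undivided point: unlike the case $p = \rlane{w}$, there is no canonical full interval hugging $p$ on that side, and one genuinely needs the density of $W$ to slide the closed endpoint $\rlane{c}$ strictly below $p$ while keeping it strictly above $a$. Everything else -- recognizing the full intervals as the basis intervals $(\median{c}, \median{d})$, and the bookkeeping with $\pi$ and the fiber orders -- should be routine, modulo the minor edge cases for half-open or unbounded basis intervals.
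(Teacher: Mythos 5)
Your proof is correct and follows essentially the same route as the paper's: both arguments produce, for a point $p$ in a basis interval $(a,b)$, points $c,d\in W$ with $a<\rlane{c}\le p\le\llane{d}<b$, and both handle the two possibilities by either taking the $W$-point whose lane $p$ already lies on, or using density of $W$ to slide a fresh $W$-point into the gap $(\pi a,\pi p)$. The only cosmetic difference is that you case-split on the type of $p$ (right lane, left lane, or undivided) rather than on whether $\pi a=\pi p$ as the paper does, and you are a little more explicit about the unbounded-endpoint edge case, but the substance is identical.
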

\begin{proof}
Suppose $W$ is dense in $I$. Pick any point $s \in \frakd{I}$ and any basis interval $(a, b)$ containing it. If $\pi a \neq \pi s$, find a point of $W$ in the interval $(\pi a, \pi s)$ and call it $\alpha$. If $\pi a = \pi s$, observe that $a = \median{\alpha}$ and $s = \rlane{\alpha}$ for some $\alpha \in W$. One way or another, we've found a point $\alpha \in W$ with $a < \rlane{\alpha} \le s$. Using the same technique, we can find a point $\beta \in W$ with $s \le \llane{\beta} < b$. The full interval $[\rlane{\alpha}, \llane{\beta}]$ is a neighborhood of $s$ contained in $(a, b)$.
\end{proof}
\begin{cor}\label{frakd-cantor}
If $W$ is dense in $I$, every full interval in $\frakd{I}$ is a Cantor set.
\end{cor}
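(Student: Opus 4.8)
The plan is to invoke Brouwer's characterization of the Cantor set: a topological space is homeomorphic to the Cantor set if and only if it is nonempty, compact, metrizable, perfect, and totally disconnected. So, fixing a full interval $F = [\rlane{a}, \llane{b}]$ of $\frakd{I}$ — here $a, b \in W$, and since $F$ is a nonempty basis interval we must have $a < b$, hence $\rlane{a} < \llane{b}$ — I would check these five properties in turn. Nonemptiness is clear, since $\rlane{a} \in F$, and compactness is exactly Proposition~\ref{frakd-cpt-basis-1d}. For metrizability, I would reuse the map $\theta$ built in the proof of Proposition~\ref{frakd-cpt-basis-1d}: it is a homeomorphism of $F$ onto a subset of $\R$, so $F$ inherits a metric. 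Total disconnectedness is where the hypothesis that $W$ is dense in $I$ enters: by Proposition~\ref{full-basis} the full intervals form a basis for $\frakd{I}$, and each full interval is clopen (the proposition just before Proposition~\ref{full-basis}), so $\frakd{I}$ has a basis of clopen sets; hence $\frakd{I}$ is totally disconnected, and so is its subspace $F$.

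It remains to see that $F$ is perfect, and since $F$ is open in $\frakd{I}$ it is enough to show that $\frakd{I}$ has no isolated points. Given $s \in \frakd{I}$ and a basis interval $(c,d)$ containing it, I would run the construction from the proof of Proposition~\ref{full-basis}, which produces $\alpha, \beta \in W$ with $c < \rlane{\alpha} \le s \le \llane{\beta} < d$. Since $\llane{\alpha} < \rlane{\alpha}$ we cannot have $\alpha = \beta$, so $\alpha < \beta$ and therefore $\rlane{\alpha} < \llane{\beta}$. Both $\rlane{\alpha}$ and $\llane{\beta}$ lie in the full interval $[\rlane{\alpha}, \llane{\beta}] \subseteq (c,d)$ and they are distinct, so at least one of them is a point of $(c,d)$ different from $s$. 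Hence $s$ is not isolated, which completes the verification of all five properties; Brouwer's characterization then identifies $F$ as a Cantor set (and, via its uniqueness clause, shows all full intervals are homeomorphic).

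The only step that requires any thought is perfectness, and even there the essential input — the density of $W$ — has already been packaged into Proposition~\ref{full-basis}, so the remaining work is just bookkeeping the handful of point-types in $\frakd{I}$. It is worth flagging that the denseness hypothesis on $W$ is genuinely necessary: when $W = \emptyset$ we have $\frakd{I} = I$, a connected interval, which is about as far from a Cantor set as one can get.
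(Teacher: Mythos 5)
Your proof is correct and takes essentially the same route as the paper's: both reduce to a characterization theorem for the Cantor set (you invoke Brouwer's; the paper cites an equivalent one for compact Hausdorff spaces with a countable clopen basis and no isolated points), and both obtain compactness from Proposition~\ref{frakd-cpt-basis-1d}, total disconnectedness from the clopen basis of full intervals, and perfectness from the observation that every basis interval of $\frakd{I}$ contains more than one point. The only cosmetic differences are that you get metrizability directly from the embedding $\theta$ into $\R$ whereas the paper obtains it implicitly from second countability, and your verification of perfectness is spelled out in more detail than the paper's one-line version, though the content is identical.
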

\begin{proof}
Suppose $W$ is dense in $I$. Because we require $W$ to be countable or smaller, the results above imply that $\frakd{I}$ is a Hausdorff space with a countable basis of clopen sets. Any full interval $H \subset \frakd{I}$ has the same properties, and in addition is compact. Therefore, $H$ is a Cantor set as long as it has no isolated points~\cite[Theorem~3]{set-theo-top}.

Intersecting $H$ with a basis interval in $\frakd{I}$ yields another basis interval. Since every basis interval contains more than one point, it follows that $H$ has no isolated points.
\end{proof}
\subsubsection{Metrization}
The topology of $I$ is induced by the metric that $I$ inherits from $\R$. If $W$ is dense in $I$, the topology of $\frakd{I}$ can be metrized too, and there's a simple way to do it. For the examples in Section~\ref{divd-ex}, the resulting metric is dynamically meaningful, as we'll see in Section~\ref{division-dynamics}.

For the rest of this section, suppose $W$ is dense in $I$. Let's say we've assigned each point in $W$ a natural number, its {\em grade}, and there are only finitely many points of each grade. Since $W$ is countable or smaller, this is always possible.

For the doubling map, the points in $W$ are the fractional parts whose binary expansions are eventually constant, and they're naturally graded by the position of the last digit before the constant tail. (This grading doesn't work at zero, which has no digits before the constant tail, but we won't need it there.) For an interval exchange, the points in $W$ are the points that will eventually fall into a break under forward or backward iteration, and they're naturally graded by how long it takes for that to happen. We'll fix a normalization by declaring the break points to have grade zero.

Pick a real number $\Delta > 1$, and define the {\em height} of a point in $w \in W$ to be $\Delta^{-\gr{w}}$. In $\divd{I}$, define the height of $\median{w} \in \iota W$ to be the height of $w$, and the height of any other point to be zero. The heights of some points in the divided circle for the doubling map are illustrated below.
\begin{center}
\begin{tikzpicture}[scale=9]
\landscape
\foreach \level / \size in {0 / \normalsize, 1 / \small, 2 / \tiny} {
	\pgfmathsetmacro{\altitude}{\mountainheight/pow(3,\level)}
	\draw[font=\size] (1+4/81,\altitude) node {$\Delta^{-\pgfmathprintnumber[fixed,precision=0]{\level}}$};
}
\begin{scope}[white]
\draw (1/3,-1/24) node {$\llane{\frac{1}{2}}$};
\draw (1/2,-1/24) node {$\median{\frac{1}{2}}$};
\draw (2/3,-1/24) node {$\rlane{\frac{1}{2}}$};
\begin{scope}[font=\small]
\draw (1/9,-1/24) node {$\llane{\frac{1}{4}}$};
\draw (3/18,-1/24) node {$\median{\frac{1}{4}}$};
\draw (2/9,-1/24) node {$\rlane{\frac{1}{4}}$};
\draw (7/9,-1/24) node {$\llane{\frac{3}{4}}$};
\draw (15/18,-1/24) node {$\frac{3}{4}$};
\draw (8/9,-1/24) node {$\rlane{\frac{3}{4}}$};
\end{scope}
\end{scope}
\end{tikzpicture}
\end{center}
Let's say the distance between two points $a, b \in \frakd{I}$ is the height of the highest point in $(a, b) \subset \divd{I}$. This defines a metric (in fact, an ultrametric) on $\frakd{I}$, which I'll call the {\em division metric} with steepness $\Delta$. The assumption that $W$ is dense in $I$ is essential here: it guarantees that distances between distinct points are positive.
\begin{prop}\label{metric-metrizes}
The division metric induces the topology of $\frakd{I}$.
\end{prop}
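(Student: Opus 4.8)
The plan is to compare the metric balls of the division metric with the full intervals, which by Proposition~\ref{full-basis} form a basis for the topology of $\frakd{I}$, and to check that each of these two families refines the other. I use throughout the standing hypothesis that $W$ is dense in $I$ — this is exactly what keeps the division metric from being degenerate — and the standing hypothesis that only finitely many points of $W$ have any given grade.

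\textbf{Step 1: metric balls are open.} Fix $r>0$ and let $P_r \subset \divd{I}$ be the set of points of height at least $r$; it is finite, since a point of positive height is some $\median{w}$, the condition $K^{-\gr{w}} \ge r$ is met by finitely many grades, and each grade carries finitely many points. Write $P_r = \{\median{w_1} < \cdots < \median{w_k}\}$. Since $d(x,y)$ is, by definition, the height of the highest point of $\divd{I}$ strictly between $x$ and $y$, we have, for $x,y \in \frakd{I}$, that $d(x,y) < r$ iff the open interval between $x$ and $y$ contains no point of $P_r$. Thus $B(x,r)$ is the ``gap'' cut out by $P_r$ in which $x$ lies; because $x$ belongs to $\frakd{I}$ it differs from every $\median{w}$, so it sits strictly inside its gap, and a short case check — on whether $x$ lies below $\median{w_1}$, between two consecutive elements of $P_r$, above $\median{w_k}$, or (degenerately) $P_r = \emptyset$ — identifies that gap as a full interval $[\rlane{w_j}, \llane{w_{j+1}}] \cap \frakd{I}$, as one of the basis intervals $(-\infty, \llane{w_1}]$ or $[\rlane{w_k}, \infty)$ of $\frakd{I}$, or as all of $\frakd{I}$. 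Each of these is open in $\frakd{I}$, so every metric ball is open, i.e. the metric topology is coarser than the given topology.

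\textbf{Step 2: full intervals are metric-open.} Conversely, since the full intervals are a basis for $\frakd{I}$, it suffices to produce, for each full interval $H = [\rlane{\alpha}, \llane{\beta}] \cap \frakd{I}$ and each $x \in H$, a ball $B(x,r) \subseteq H$. Take $r = \min\{K^{-\gr{\alpha}}, K^{-\gr{\beta}}\}$. If $y \in \frakd{I} \smallsetminus H$, then $y \le \llane{\alpha}$ or $y \ge \rlane{\beta}$; in the first case $\median{\alpha}$ lies strictly between $y$ and $x$ (as $\llane{\alpha} < \median{\alpha} < \rlane{\alpha} \le x$), and in the second $\median{\beta}$ lies strictly between $x$ and $y$. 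Either way $d(x,y) \ge r$, so $y \notin B(x,r)$, giving $B(x,r) \subseteq H$. Hence every full interval, and so every open subset of $\frakd{I}$, is metric-open, and the two topologies coincide.

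\textbf{Where the difficulty lies.} None of this is deep — it is really just the remark that ``within distance $r$'' is controlled by the finite set $P_r$. The one place that wants attention is the bookkeeping in Step 1: verifying that the balls are genuinely (traces of) full intervals means handling the boundary cases at the two ends of $P_r$, confirming that the endpoints $\rlane{w_j}$ and $\llane{w_{j+1}}$ are included in the gap while the $\median{\cdot}$'s are excluded, and keeping in mind that no point of $\frakd{I}$ coincides with any $\median{w}$.
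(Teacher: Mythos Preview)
Your proof is correct and follows essentially the same approach as the paper's: both identify the open $r$-balls as the ``gaps'' cut out of $\frakd{I}$ by the finite set of points of height at least $r$, then compare these with the basis of full intervals from Proposition~\ref{full-basis}. The only cosmetic difference is in Step~2: the paper observes directly that a full interval $[\rlane{a},\llane{b}] = (\median{a},\median{b})$ is itself a union of open balls of radius $r=\min\{K^{-\gr{a}},K^{-\gr{b}}\}$ (since $\median{a}$ and $\median{b}$ both lie in $W_{\ge r}$, so the gaps between them tile the interval), whereas you argue pointwise that $B(x,r)\subseteq H$ --- but your $r$ is the same and the content is identical.
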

\begin{proof}
Let's see what the open balls of the division metric look like. Given a radius $r > 0$, let $W_{\ge r}$ be the set of points in $W$ with heights greater than or equal to $r$. This set is finite, because there are only finitely many points of each grade. Listing the points in $W_{\ge r}$ from left to right as $w_1, \ldots, w_n$, we can write down all the open balls of radius $r$:
\[ (-\infty, \median{w}_1), (\median{w}_1, \median{w}_2), \ldots, (\median{w}_{n-1}, \median{w}_n), (\median{w}_n, \infty). \]

From this description, it's clear that the open balls of the division metric are open subsets of $\frakd{I}$. It's also clear that every full interval is a union of open balls, because every full interval can be written as $(\median{a}, \median{b})$ for $a, b \in W$. By Proposition~\ref{full-basis}, the full intervals form a basis for $\frakd{I}$, so we're done.
\end{proof}
\subsubsection{Dynamical significance of the division metric}\label{division-dynamics}
In Section~\ref{divd-ex}, we embedded our fractured circle for the doubling map into the one-sided sequence space $\mathbf{2}^\N$. A choice of $\Delta > 1$ picks out a metric on $\mathbf{2}^\N$, defined as follows. Given two distinct sequences $x, y \in \mathbf{2}^\N$, let $m \in \N$ be the first position where they differ. The distance between $x$ and $y$ is $\Delta^{-m}$. This metric restricts to the division metric of steepness $\Delta$ on the divided circle, so let's call it by the same name. Then we can say that the fractured circle for the doubling map embeds isometrically into $\mathbf{2}^\N$, as long as we use the division metric of the same steepness on both sides.

We embedded our fractured interval exchange into a two-sided sequence space $\mathcal{A}^\Z$, which also comes with a division metric of steepness $\Delta$. Define the distance between two distinct sequences in $\mathcal{A}^\Z$ to be $\Delta^{-m}$, where this time $m$ is the absolute value of the first position where the sequences differ. Once again, the fractured interval exchange embeds isometrically into $\mathcal{A}^\Z$, as long as we use the division metric of the same steepness on both sides.

In both of our dynamical examples, the division metric tells you how long two points in the fractured circle travel together before they end up on opposite sides of a break. Nearby points move together for a long time, while the most distant points are separated immediately. A single step of the dynamics can take a pair of points at most one step closer to being separated, increasing the distance between then by at most a factor of $\Delta$. That means the dynamical map is Lipschitz with respect to the division metric.
\subsubsection{Lipschitz functions under the division metric}\label{lipschitz-functions}
Aside from the contrast between irreversible and reversible dynamics, our two dynamical examples from Section~\ref{divd-ex} have looked pretty similar so far. Here's a more subtle difference between them. For the doubling map, there are $2^{m+1} - 1$ points in $W$ with grade $m$ or less, because each nonzero point has two preimages. For an interval exchange, with break points $B \subset W$, there are at most $(m + 1)|B|$ points of grade $m$ or less, because each point has at most one preimage.\footnote{The break points of $\alpha^{-1}$ have no preimages. This only affects the number of points with grade $m$ or less if $\alpha$ is non-minimal~\cite[proof of Theorem~1.8]{rat-flat}, and in any case it won't bother us.}

Using the division metric, you can detect this difference in an interesting way. When the number of points with grade $m$ or less grows slowly with $m$, Lipschitz functions on $\frakd{I}$ are much more rigid than continuous ones. How slowly depends on the steepness of the division metric, but subexponential growth is slow enough at any steepness. Hence, Lipschitz functions on a fractured interval exchange are always rigid. In Section~\ref{ab-deliv}, their rigidity will ensure that our abelianization process does what it's supposed to do.

To see what kind of rigidity I mean, first consider a continuous function $f$ from $\frakd{I}$ into some metric space. Heuristically, $f$ has two sources of flexibility. One is the freedom to jump at the crack between two adjacent edge points $\llane{w}$ and $\rlane{w}$, for any $w \in W$. We can remove this ``flexibility at the edges'' by requiring the values of $f$ to match at adjacent edge points, in the sense that $f\llane{w} = f\rlane{w}$ for all $w \in W$. Topologically, we're requiring $f$ to factor through the quotient map $\pi \maps \frakd{I} \to I$. This reveals that $f$ still has some ``flexibility in the bulk''---the flexibility of a continuous function on $I$. Rigidity, as I've been using it here, means having no flexibility in the bulk.

To say what growing slowly means, it will be useful to think in terms of heights instead of grades. As we did in the proof of Proposition~\ref{metric-metrizes}, let $W_{\ge r}$ be the set of points in $W$ with heights greater than or equal to $r$.
\begin{thm}\label{no-bulk-flex}
As $r$ approaches zero, suppose $W_{\ge r}$ grows slowly enough that $r\left|W_{\ge r}\right|$ still approaches zero. Consider a Lipschitz function $f$ from $\frakd{I}$ into some metric space. If its values match at adjacent edge points, in the sense that $f\llane{w} = f\rlane{w}$ for all $w \in W$, then it's constant.
\end{thm}
\begin{proof}
Consider any two points $a, b \in \frakd{I}$. Pick a radius $r > 0$, and list the points in $W_{\ge r} \cap (a, b)$ from left to right as $w_1, \ldots, w_n$. By the triangle inequality,
\begin{align*}
d(fa, fb) & \le d(fa, f\llane{w}_1) + d(f\llane{w}_1, f\rlane{w}_1) + d(f\rlane{w}_1, f\llane{w}_2) + \ldots \\
& \qquad + d(f\rlane{w}_{n-1}, f\llane{w}_n) + d(f\llane{w}_n, f\rlane{w}_n) + d(f\rlane{w}_n, fb).
\end{align*}
Suppose the values of $f$ match at adjacent endpoints. That means the $d(f\llane{w}_k, f\rlane{w}_k)$ terms vanish, so we're left with the bound
\[ d(fa, fb) \le d(fa, f\llane{w}_1) + d(f\rlane{w}_1, f\llane{w}_2) + \ldots + d(f\rlane{w}_{n-1}, f\llane{w}_n) + d(f\rlane{w}_n, fb). \]
Since $f$ is Lipschitz, we can deduce that
\[ d(fa, fb) \le M \big[ d(a, \llane{w}_1) + d(\rlane{w}_1, \llane{w}_2) + \ldots + d(\rlane{w}_{n-1}, \llane{w}_n) + d(\rlane{w}_n, b) \big], \]
where $M$ is a Lipschitz constant for $f$. By definition, $w_1, \ldots, w_n$ are the only points of height $r$ or higher in $(a, b)$. That means the distances on the right-hand side are all less than $r$, so
\begin{align*}
d(fa, fb) & \le M(n+1)r \\
& \le M(\left|W_{\ge r}\right| + 1)r.
\end{align*}

The bound above holds for any radius $r$. As $r$ approaches zero, the right-hand side approaches zero by hypothesis, so $d(fa, fb)$ must be zero. Since $a, b \in \frakd{I}$ could be any points, that means $f$ is constant.
\end{proof}
\subsection{Dividing translation surfaces}
\subsubsection{Construction of divided and fractured surfaces}\label{divd-surf-construct}
Now, let's move up to the two-dimensional case. We'll use the notation from Section~\ref{running-notation} for the singularities and critical leaves of $\Sigma$. Cover $\Sigma \smallsetminus \mathfrak{B}$ with flow boxes. Each flow box can be identified with a rectangle $I \times L \subset \R^2$, where $I$ and $L$ are open intervals in $\R$. The critical leaves of $\Sigma$ intersect the flow box as vertical lines $\{w\} \times L$. There are only finitely many critical leaves, and each one passes through the flow box at most countably many times. Dividing $I$ at the positions of the critical leaves, we can produce a {\em divided flow box} $\divd{I} \times L$. In the divided flow box, each critical leaf $\{w\} \times L$ splits into a {\em left lane} $\{\llane{w}\} \times L$, a {\em median} $\{\median{w}\} \times L$, and a {\em right lane} $\{\rlane{w}\} \times L$.

The transition maps between the flow boxes induce transitions between the divided flow boxes in a natural way. Gluing the divided flow boxes together along these transitions yields a new space---a divided version of $\Sigma \smallsetminus \mathfrak{B}$.

We still need to decide what to do with the singularities. The region around a singularity $\mathfrak{b} \in \mathfrak{B}$ can be built from cut square pieces, as described in Section~\ref{tras-surfs}. Here's one of them, with the critical leaves marked. (Since the critical leaves are generally dense in $\Sigma$, I've only drawn finitely many of them thick enough to see.)
\begin{center}
\begin{tikzpicture}
\critnotchup
\end{tikzpicture}
\end{center}
The leaf through the center point is critical, of course, because the center point is $\mathfrak{b}$ itself. Here's a divided version of the same cut square:
\begin{center}
\begin{tikzpicture}
\matrix[column sep=1.2cm]{
\divdnotchup & \zoomnotchup{2mm} \\
};
\end{tikzpicture}
\end{center}
Zooming in, you can see that I've cut away $\median{\mathfrak{b}}$, but kept $\llane{\mathfrak{b}}$ and $\rlane{\mathfrak{b}}$. When a singularity is built from pieces that look like this one (and its half-rotation), the left and right lanes of the center leaf will connect up into lines running past the singularity, but the medians will remain disconnected rays that end at the singularity. Topologically, the region around a divided singularity looks like this:
\begin{center}
\begin{tikzpicture}
\matrix[column sep=0.5cm]{
\node {\includegraphics{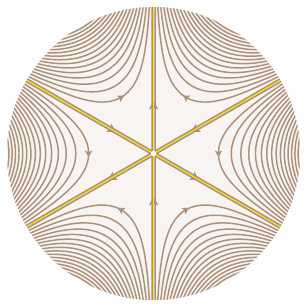}}; &
\useasboundingbox (-2.6, -2.6) rectangle (2.6, 2.6);
\zoomprongs{2}{2mm} \\
};
\end{tikzpicture}
\end{center}
The zoomed-in picture on the right shows how the lanes of the critical leaves join up around the singularity. For clarity, only the parts of the critical leaves adjacent to the singularity are shown.

Now that we've decided what to do with the singularities, we can extend our divided version of $\Sigma \smallsetminus \mathfrak{B}$ to a full {\em divided surface} $\divd{\Sigma}$. The quotient map discussed in the one-dimensional case extrudes naturally to a quotient map $\pi \maps \divd{\Sigma} \to \Sigma$. Away from the singularities, the embedding from the one-dimensional case also extrudes, yielding an embedding $\Sigma \smallsetminus \mathfrak{B} \to \divd{\Sigma}$. This embedding can't be extended over the singularities, because a singularity has no median point associated with it, and sending it to an associated lane point would break continuity.

We'll refer to the $\pi$-preimage of a leaf of $\Sigma$ as a {\em road}.\footnote{Our one-way roads are technically different from, but morally related to, the {\em two-way streets} of \cite{spec-nets}.} If $\mathcal{L}$ is a critical leaf of $\Sigma$, the {\em critical road} $\pi^{-1}\mathcal{L}$ splits into a left lane $\{\llane{w} : w \in \mathcal{L}\}$, a median $\{\hat{w} : w \in \mathcal{L}\}$, and a right lane $\{\rlane{w} : w \in \mathcal{L}\}$, as we saw locally at the beginning of the section. Each of the lane points associated with a singularity is adjacent to a lane of a forward-critical road and a lane of a backward-critical road. For convenience, we'll consider it an honorary member of both.

As in the one-dimensional case, define the {\em fractured surface} $\frakd{\Sigma}$ to be the complement of the critical leaves of $\Sigma$---or, more precisely, their images under $\iota$---in the divided surface $\divd{\Sigma}$. The divided flow boxes and divided singularity charts from which we constructed $\divd{\Sigma}$ pull back to an atlas of {\em fractured flow boxes} and {\em fractured singularity charts} on $\frakd{\Sigma}$.
\subsubsection{Properties of divided and fractured surfaces}\label{divd-props-2d}
Within each divided flow box $\divd{I} \times L$, we can find more flow boxes of the form $H \times L$, where $H \subset \divd{I}$ is a basis interval. Recall the notation $\nom{H} = H \cap \frakd{I}$ from Section~\ref{frakd-props-1d}. A flow box $U = H \times L$ will be called {\em full} if $H$ is full, and {\em well-cut} if $H$ is full and $\iota^{-1} U$ is well-cut. Define $\nom{U}$ as $U \cap \frakd{\Sigma}$, or equivalently $\nom{H} \times L$, and $\trim U$ as $(\trim H) \times L$. It's apparent from the analogous one-dimensional result that $\pi^{-1} \iota^{-1} U = \trim U$ for any flow box $U$.

Flow boxes form a basis for $\divd{\Sigma}$. If the critical leaves are dense in $\Sigma$, full flow boxes form a basis for $\frakd{\Sigma}$, as a consequence of Proposition~\ref{full-basis}. If full flow boxes form a basis for $\frakd{\Sigma}$, well-cut flow boxes do too, as a consequence of Proposition~\ref{well-cut-pieces}.

Propositions \ref{undivd-dense-1d}, \ref{divd-loc-conn-1d}, and \ref{frakd-hausdorff-1d} carry over from dimension one straightforwardly enough that I'll state them without proof.
\begin{prop}\label{undivd-dense-2d}
The embedding of $\Sigma$ in $\divd{\Sigma}$ is dense.
\end{prop}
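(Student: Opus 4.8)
The plan is to copy the one-dimensional argument of Proposition~\ref{undivd-dense-1d}, transferred from basis intervals to flow boxes. Since the flow boxes $H \times L$, with $H$ a basis interval of a divided interval $\divd{I}$, form a basis for the topology of $\divd{\Sigma}$ (as noted at the start of Section~\ref{divd-props-2d}), it suffices to check that the image of the embedding $\Sigma \smallsetminus \mathfrak{B} \hookrightarrow \divd{\Sigma}$ meets every such flow box. So I would fix a flow box $H \times L$ in $\divd{\Sigma}$ and recall from the construction in Section~\ref{divd-surf-construct} that it sits inside a divided flow box $\divd{I} \times L$ obtained by dividing an honest flow box $I \times L \subset \Sigma \smallsetminus \mathfrak{B}$ at the positions of the critical leaves; under this identification the embedding restricts on $I \times L$ to $\iota \times \mathrm{id}_L$, where $\iota \maps I \to \divd{I}$ is the one-dimensional embedding. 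Proposition~\ref{undivd-dense-1d} then gives a point $s \in I$ with $\iota s \in H$, so for any $\ell \in L$ the point $(\iota s, \ell) = (\iota \times \mathrm{id}_L)(s, \ell)$ lies both in $H \times L$ and in the image of the embedding. That is the whole argument.

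The one point that needs a second look is whether the flow boxes near a singularity — those coming from the divided singularity charts of Section~\ref{divd-surf-construct} rather than from divided flow boxes on $\Sigma \smallsetminus \mathfrak{B}$ — also fit the pattern ``$H \times L$ inside $\divd{I} \times L$ coming from an honest $I \times L$''. They do, because dividing is a purely local operation carried out away from the singular point itself: the median $\median{\mathfrak{b}}$ has been excised, so every point of a divided singularity chart either lies on an ordinary left or right lane of a nearby critical ray or lies off the critical leaves entirely, and in either case a small enough neighborhood of it is an ordinary divided flow box of the form $\divd{I} \times L$. Hence the reduction in the previous paragraph applies to those basic open sets verbatim. (If one wants the proposition read literally as a statement about ``$\Sigma$'' rather than $\Sigma \smallsetminus \mathfrak{B}$, one just recalls, as in Section~\ref{divd-surf-construct}, that the embedding is only defined on $\Sigma \smallsetminus \mathfrak{B}$ and it is its image whose density is being asserted; nothing in the proof changes.)

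I do not expect a serious obstacle here — this is a routine transfer of the one-dimensional density fact, and indeed the paper states it without proof. If there is any delicate spot, it is exactly the bookkeeping of the second paragraph: confirming that the ``flow boxes form a basis for $\divd{\Sigma}$'' claim genuinely accounts for neighborhoods of the excised singularities, so that no basic open set of $\divd{\Sigma}$ escapes the reduction to the one-dimensional Proposition~\ref{undivd-dense-1d}.
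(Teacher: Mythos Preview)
Your proposal is correct and matches the paper's approach: the paper explicitly states this proposition without proof, remarking that it ``carr[ies] over from dimension one straightforwardly enough,'' which is exactly the reduction to Proposition~\ref{undivd-dense-1d} via the flow-box basis that you outline. Your extra care about the divided singularity charts is a reasonable bookkeeping check but not something the paper felt the need to spell out.
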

\begin{prop}
The divided surface $\divd{\Sigma}$ is locally connected.
\end{prop}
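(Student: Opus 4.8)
The plan is to run the one-dimensional argument (Proposition~\ref{divd-loc-conn-1d}) through the product structure that $\divd{\Sigma}$ inherits from its divided flow boxes. Recall the reformulation already exploited in the proof of Proposition~\ref{divd-loc-conn-1d}: a space is locally connected exactly when its connected open sets form a basis. So it will be enough to exhibit such a basis for $\divd{\Sigma}$.

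Since flow boxes form a basis for $\divd{\Sigma}$, it suffices to check that each divided flow box $\divd{I} \times L$ is locally connected. Here I would invoke Proposition~\ref{divd-loc-conn-1d} to say that the connected open subsets of $\divd{I}$ form a basis for $\divd{I}$, while the connected open subintervals of $L$ form a basis for $L$; since products of basis elements form a basis for a product topology, the sets $H \times L'$ with $H \subseteq \divd{I}$ and $L' \subseteq L$ connected open form a basis for $\divd{I} \times L$. Each such $H \times L'$ is a product of two connected spaces, hence connected, so these products refine the flow-box basis to a basis of connected open sets, and $\divd{\Sigma}$ is locally connected. (Connectedness and finite products behave well with no separation hypotheses, so the non-Hausdorffness of $\divd{\Sigma}$ causes no trouble here.)

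The one step that warrants a careful look — and the only place I expect any real friction — is the input that flow boxes genuinely form a basis for \emph{all} of $\divd{\Sigma}$, including in neighborhoods of the divided singularities. This is precisely the content of the divided-singularity picture in Section~\ref{divd-surf-construct}: cutting away the median $\median{\mathfrak{b}}$ while keeping the lanes $\llane{\mathfrak{b}}$ and $\rlane{\mathfrak{b}}$ arranges that every critical road runs continuously past the former singular point, so every point near it — the lane points and the points of the median rays alike — lies in the interior of a road, hence inside a flow box. If one would rather not lean on that assertion, the same conclusion follows by covering the divided singularity chart with the divided flow boxes into which its constituent divided cut squares are organized and invoking the fact that local connectedness is a local property (a space covered by open, locally connected subspaces is locally connected). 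Away from $\mathfrak{B}$ the argument is just the product argument above, which is immediate; the bookkeeping around the singular locus is the part I would write out with care.
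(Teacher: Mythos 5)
Your argument is correct and is precisely the one the paper intends but leaves unstated: the paper disposes of this proposition (and its two neighbors) with the remark that they ``carry over from dimension one straightforwardly enough that I'll state them without proof.'' Localizing to divided flow boxes, observing each is a product $\divd{I}\times L$, and invoking Proposition~\ref{divd-loc-conn-1d} together with the fact that a finite product of locally connected spaces is locally connected (and that local connectedness is a local property) is exactly the carry-over the author has in mind. Your remark that non-Hausdorffness causes no trouble for the product-of-connected-sets argument is also right and worth saying.

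The one place I would press a bit harder is the very step you flag. You offer two escape routes, and both lean on the same assertion from Section~\ref{divd-props-2d} that ``flow boxes form a basis for $\divd{\Sigma}$.'' A divided flow box $\divd{I}\times L$ arises from a flow box $I\times L$ contained in $\Sigma\smallsetminus\mathfrak{B}$, so it cannot contain the finitely many lane points $\llane{\mathfrak{b}}$, $\rlane{\mathfrak{b}}$ that Section~\ref{divd-surf-construct} adds at each singularity $\mathfrak{b}$ (for such a point to lie in $\divd{I}\times L$ one would need both the critical position and the critical time to lie in $I\times L$, which would force $\mathfrak{b}\in I\times L$). So strictly speaking the flow boxes only form a basis away from those lane points, and your route (b) as phrased --- ``the divided flow boxes into which its constituent divided cut squares are organized'' --- doesn't quite reach them either. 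The fix is easy and local: at each such lane point one checks directly that a neighborhood base consists of ``half-open'' slit-sector sets, each of which is connected, so local connectedness still holds there; combined with your product argument on the divided flow boxes this closes the proof. This is a small patch, and the paper's own exposition is equally cavalier about it, but since you singled this out as the place to write with care, it's worth knowing that the care is genuinely needed and that the flow-box basis claim shouldn't be taken entirely at face value near $\mathfrak{B}$.
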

\begin{prop}
The fractured surface $\frakd{\Sigma}$ is Hausdorff.
\end{prop}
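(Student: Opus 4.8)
The plan is to imitate the one-dimensional argument of Proposition~\ref{frakd-hausdorff-1d}, working one chart at a time and using $\pi$ to deal with points that lie far apart. Recall from Section~\ref{divd-surf-construct} that $\frakd{\Sigma}$ is covered by fractured flow boxes, each of the form $\nom{H} \times L = (H \cap \frakd{I}) \times L$, together with fractured singularity charts. The first step is to observe that every such chart is Hausdorff: a fractured flow box is a product of the Hausdorff space $\frakd{I}$ (Proposition~\ref{frakd-hausdorff-1d}) with an open interval $L$, and a fractured singularity chart is a divided singularity chart with its median rays deleted, which I will argue below is exactly what makes it Hausdorff.

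Next, take two distinct points $x, y \in \frakd{\Sigma}$. If $x$ and $y$ lie in a common chart, they are separated by open subsets of that chart, which are open in $\frakd{\Sigma}$, and we are done. Otherwise I claim $\pi x \neq \pi y$. Indeed, the charts of $\frakd{\Sigma}$ are pulled back from flow boxes and singularity charts of $\Sigma$, so a chart around a point $p$ contains the full $\pi$-preimage of a neighborhood of $p$; hence if $\pi x = \pi y$ we could take a single chart around that common image and it would contain both $x$ and $y$, contradicting the case we are in. Knowing $\pi x \neq \pi y$, I would pick disjoint neighborhoods of $\pi x$ and $\pi y$ in $\Sigma$ — possible since $\Sigma$, a compact surface with conical points, is Hausdorff — and pull them back along the continuous map $\pi \maps \frakd{\Sigma} \to \Sigma$ to get disjoint neighborhoods of $x$ and $y$.

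The one step that needs care is the Hausdorffness of the charts near a singularity, and underneath it the claim that deleting medians removes every obstruction to separation. In a divided flow box $\divd{I} \times L$ the only unseparable pairs are $(\llane{w}, \ell)$ with $(\median{w}, \ell)$ and $(\rlane{w}, \ell)$ with $(\median{w}, \ell)$: this is immediate from the topology on $\divd{I}$, in which $(a, \median{w})$ and $(\median{w}, b)$ are disjoint basis intervals containing $\llane{w}$ and $\rlane{w}$ respectively. So two distinct median-free points of a divided flow box are always separable, which re-proves Proposition~\ref{frakd-hausdorff-1d} in the product form I need. For a divided singularity chart I would use its explicit local model: near the singularity the lanes of the critical roads join up into arcs while the medians are disjoint rays ending at the singularity, so a median-free point sits in one of finitely many sectors cut out by consecutive lanes, and two such points — on the same critical road or on different ones — are separated by shrinking these sectoral neighborhoods toward the lanes (and, if needed, toward or away from the singularity), just as $(a, \median{w})$ and $(\median{w}, b)$ were used in the flow-box case. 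I expect this bookkeeping around the singularity to be the only mildly fiddly point; everything else is a direct transcription of the one-dimensional proof.
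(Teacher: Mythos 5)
Your proof is correct and takes the route the paper intends: the paper states this proposition without proof on the grounds that the one-dimensional argument of Proposition~\ref{frakd-hausdorff-1d} ``carries over straightforwardly,'' and your argument is indeed a faithful transcription, with your case split on whether $x$ and $y$ share a chart playing the role of the one-dimensional dichotomy $\pi s \neq \pi t$ versus $\pi s = \pi t$, and the pullback of disjoint open sets along the continuous map $\pi$ handling the first alternative exactly as in dimension one. You also correctly identify the one genuinely two-dimensional wrinkle, the fractured singularity chart: a pair $x, y \in \frakd{\Sigma}$ with $\pi x = \pi y$ equal to a singularity $\mathfrak{b}$ really can occur through the honorary lane points (which, unlike the median, survive into $\frakd{\Sigma}$), and such a pair lies in no fractured flow box, so the Hausdorffness of the singularity chart has to be argued on its own. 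Your sketch of that case via shrinking sectoral neighborhoods toward the lanes is sound in spirit, though a fully explicit argument would need to exhibit the separating open sets from the topology of the glued divided cut-square pieces — but the paper itself asks for no more detail than you give, and nothing in what you wrote is wrong.
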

Because $\Sigma$ is compact, Proposition~\ref{frakd-cpt-basis-1d} can be extended to a global result.
\begin{prop}
The fractured surface $\frakd{\Sigma}$ is compact.
\end{prop}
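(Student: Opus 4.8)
The plan is to realize $\frakd{\Sigma}$ as a \emph{finite} union of compact subsets. The finiteness has to come from the compactness of $\Sigma$, not from $\frakd{\Sigma}$ itself: covering $\frakd{\Sigma}$ by compact neighborhoods and ``extracting a finite subcover'' would beg the question. So the strategy is to attach to each point of $\Sigma$ an open neighborhood $U$ in $\Sigma$ whose fractured preimage $\pi^{-1}(U)\cap\frakd{\Sigma}$ sits inside an explicit compact subset of $\frakd{\Sigma}$, then pass to a finite subcover of $\Sigma$ and pull back. As in the discussion preceding the statement, I take the critical leaves of $\Sigma$ to be dense (they are, since $\Sigma$ has no saddle connections and is therefore minimal; see Section~\ref{tras-surfs}).

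Away from $\mathfrak{B}$ this rests on the one-dimensional Proposition~\ref{frakd-cpt-basis-1d}. Given $p\in\Sigma\smallsetminus\mathfrak{B}$, pick a flow box $I\times L$ around it. Since the horizontal positions of the critical leaves are dense in $I$, I can choose such positions $a<b$ with the horizontal coordinate of $p$ strictly between them, together with a compact interval $[c,d]\subset L$ containing its vertical coordinate in the interior, small enough that $[a,b]\times[c,d]\subset I\times L$. Put $U_p=(a,b)\times(c,d)$. Inside the divided flow box $\divd{I}\times L$, the computation of $\pi^{-1}$ on an interval (from the proof that $\pi$ is a quotient map, applied in each horizontal slice) gives $\pi^{-1}(U_p)=(\rlane{a},\llane{b})\times(c,d)$, so
\[ \pi^{-1}(U_p)\cap\frakd{\Sigma} \;=\; \bigl((\rlane{a},\llane{b})\cap\frakd{I}\bigr)\times(c,d) \;\subset\; \bigl([\rlane{a},\llane{b}]\cap\frakd{I}\bigr)\times[c,d] \;=:\; C_p . \]
Here $[\rlane{a},\llane{b}]\cap\frakd{I}$ is a full interval in $\frakd{I}$, compact by Proposition~\ref{frakd-cpt-basis-1d}, so $C_p$ is compact, being a product of two compact spaces; and $C_p\subset\frakd{I}\times L\subset\frakd{\Sigma}$.

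At a singularity $\mathfrak{b}\in\mathfrak{B}$, pick a singularity chart $S_{\mathfrak{b}}$ and a relatively compact open $S'_{\mathfrak{b}}\ni\mathfrak{b}$ with $\overline{S'_{\mathfrak{b}}}\subset S_{\mathfrak{b}}$. I claim $C_{\mathfrak{b}}:=\pi^{-1}(\overline{S'_{\mathfrak{b}}})\cap\frakd{\Sigma}$ is compact. Unwinding the construction of the divided singularity chart from divided cut squares (Section~\ref{divd-surf-construct}): over the compact core $\overline{S'_{\mathfrak{b}}}$, the fractured part of each cut square breaks into finitely many pieces, each a product of a full interval in some $\frakd{I}$ with a compact interval, and $\mathfrak{b}$ is assembled from finitely many cut squares, so $C_{\mathfrak{b}}$ is a finite union of such products and hence compact by Proposition~\ref{frakd-cpt-basis-1d}. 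I expect this local bookkeeping --- in particular keeping straight which medians get excised near $\mathfrak{b}$ while the lanes $\llane{\mathfrak{b}}$, $\rlane{\mathfrak{b}}$ survive --- to be the only real obstacle; everything else is soft topology.

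Finally, $\{U_p : p\in\Sigma\smallsetminus\mathfrak{B}\}\cup\{S'_{\mathfrak{b}} : \mathfrak{b}\in\mathfrak{B}\}$ is an open cover of the compact space $\Sigma$, hence has a finite subcover, say by neighborhoods $U_{p_1},\dots,U_{p_m}$ and $S'_{\mathfrak{b}_1},\dots,S'_{\mathfrak{b}_n}$. Applying $\pi^{-1}(\,\cdot\,)\cap\frakd{\Sigma}$ and using the inclusions above,
\[ \frakd{\Sigma} \;=\; \pi^{-1}(\Sigma)\cap\frakd{\Sigma} \;\subset\; C_{p_1}\cup\dots\cup C_{p_m}\cup C_{\mathfrak{b}_1}\cup\dots\cup C_{\mathfrak{b}_n}. \]
Each set on the right is a compact subset of $\frakd{\Sigma}$, so the inclusion is an equality, exhibiting $\frakd{\Sigma}$ as a finite union of compact sets --- hence compact.
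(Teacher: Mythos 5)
Your proof is correct and takes essentially the same approach as the paper's: use the compactness of $\Sigma$ to obtain a finite cover by flow boxes and singularity charts, then invoke Proposition~\ref{frakd-cpt-basis-1d} to show the corresponding fractured pieces are compact, so $\frakd{\Sigma}$ is a finite union of compact sets. The only differences are organizational --- you assign a neighborhood to each point before extracting a finite subcover rather than choosing the finite cover outright --- and that you make explicit the density of critical leaves, which the paper's proof also relies on (implicitly) when it covers each $\frakd{I}\times L$ by a full closed box $\nom{H}\times C$; both proofs gloss over the bookkeeping at the singularities in similar terms.
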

\begin{proof}
Because $\Sigma$ compact, it can be covered by a finite collection of flow boxes and singularity charts. Therefore, $\frakd{\Sigma}$ can be covered by a finite collection of fractured flow boxes and fractured singularity charts. Each fractured flow box $\frakd{I} \times L$ can be covered almost out to the edges by a ``full closed box'' of the form $\nom{H} \times C$, where $H \subset \divd{I}$ is full interval and $C \subset L$ is a closed interval. Similarly, each fractured singularity chart can be covered almost out to the edges by a finite collection of full closed boxes, taking crucial advantage of the fact that the non-compact medians of the critical rays have been removed.

With a little care, we can now cover $\frakd{\Sigma}$ with a finite collection of full closed boxes. Each full closed box is a product of two compact spaces, so we've covered $\frakd{\Sigma}$ with a finite collection of compact sets.
\end{proof}
Theorem~\ref{loc-sys-equiv-1d} carries over with a catch: dividing a translation surface opens up a tiny hole at each singularity, and a local system on the divided surface can have nontrivial holonomies around those holes.
\begin{thm}\label{loc-sys-equiv-2d}
For any group $G$, the direct image functors $\pi_*$ and $\iota_*$ give an equivalence between the groupoid of $G$ local systems on $\divd{\Sigma}$ and the groupoid of $G$ local systems on $\Sigma \smallsetminus \mathfrak{B}$.
\end{thm}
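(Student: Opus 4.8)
The plan is to run the proof of Theorem~\ref{loc-sys-equiv-1d} again, ``extruding'' each one-dimensional ingredient along the leaf direction, and then to do some extra bookkeeping at the singularities, which is where the two-dimensional situation genuinely differs from the interval case. Since $\pi\iota$ is the identity on $\Sigma \smallsetminus \mathfrak{B}$, there is a canonical natural isomorphism $\pi_* \iota_* \cong \mathrm{id}$ exactly as before, so the real task is to build a natural isomorphism $\iota_* \pi_* \cong \mathrm{id}$. (One also checks, as is left implicit in dimension one, that $\pi_*$ and $\iota_*$ really do take local systems to local systems; this is a routine local computation, using near a singularity the fact that divided flow boxes cover a punctured neighborhood of the hole.)

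The one new lemma I would isolate is a flow-box version of Lemma~\ref{trim-iso}: for a local system $\mathcal{F}$ on $\divd{\Sigma}$ and any flow box $U$, the restriction $\mathcal{F}_{\trim U \subset U}$ is an isomorphism. Writing $U = H \times L$ with $H \subset \divd{I}$ a basis interval, this should follow from Lemma~\ref{trim-iso} applied in the $\divd{I}$-factor together with the local constancy of $\mathcal{F}$ along $L$: cover a ``wall'' such as $\{\rlane{w}\} \times L$ or $\{\median{w}\} \times L$ by finitely many flow boxes on which $\mathcal{F}$ is constant, run the limit-of-a-commuting-square argument of Lemma~\ref{trim-iso} on each to peel that wall off, and iterate over the finitely many walls of $U$. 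Granting this, the unit is formal, just as in Theorem~\ref{loc-sys-equiv-1d}: over the basis of flow boxes we have $(\iota_* \pi_* \mathcal{F})_U = \mathcal{F}_{\pi^{-1}\iota^{-1}U} = \mathcal{F}_{\trim U}$ by the identity $\pi^{-1}\iota^{-1}U = \trim U$, and the restriction $\mathcal{F}_{\trim U \subset U}$ is the desired isomorphism $\mathcal{F}_U \to (\iota_* \pi_* \mathcal{F})_U$. These isomorphisms commute with restriction along any inclusion $U' \subset U$ because every arrow involved is a restriction map, so --- since flow boxes form a basis for $\divd{\Sigma}$ --- they glue into a natural isomorphism $\mathcal{F} \to \iota_* \pi_* \mathcal{F}$.

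The step I expect to be the main obstacle is the behavior at the singularities, which the one-dimensional argument never had to face. When the divided cut squares are reglued around a singularity $\mathfrak{b}$, a tiny hole opens in place of $\mathfrak{b}$, the median of the critical leaf through $\mathfrak{b}$ becomes a dead-end ray abutting that hole, and the two lanes of that leaf connect up into lines running past $\mathfrak{b}$; one must check that this gluing is well defined, that $\pi$ and $\iota$ descend to it, that divided flow boxes still form a basis of a punctured neighborhood of the hole, and that the exotic transition maps sewing the lanes across $\mathfrak{b}$ are compatible with the unit constructed above. The payoff of this bookkeeping is precisely the ``catch'' flagged before the statement: the hole at $\mathfrak{b}$ behaves like the puncture of $\Sigma \smallsetminus \mathfrak{B}$, so a local system on $\divd{\Sigma}$ may carry holonomy around it, and the equivalence is with $\Sigma \smallsetminus \mathfrak{B}$ rather than with $\Sigma$ itself. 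Everything away from the singularities is a direct extrusion of Section~\ref{divd-props-1d}.
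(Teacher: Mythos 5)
This is essentially the paper's proof: isolate the extruded version of Lemma~\ref{trim-iso} for flow boxes (the paper proves it by ``replacing $A$ with $A \times L$'' in the one-dimensional argument, which is what your peeling argument amounts to), then rerun the argument of Theorem~\ref{loc-sys-equiv-1d} with flow boxes in place of basis intervals. One remark: the singularity bookkeeping you anticipate as the main obstacle is not actually a step in this proof, because flow boxes already form a basis for all of $\divd{\Sigma}$ (including the punctured neighborhoods of the deleted singularities), so the sheaf-over-a-basis argument handles that region automatically; the ``catch'' you flag is built into the construction of $\divd{\Sigma}$, $\pi$, and $\iota$ in Section~\ref{divd-surf-construct}, not something this theorem has to verify.
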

This identifies the $G$ character stacks of $\divd{\Sigma}$ and $\Sigma \smallsetminus \mathfrak{B}$.

The reason for the correspondence between local systems remains the same.
\begin{lemma}
If $\mathcal{F}$ is a local system on $\divd{\Sigma}$, the restriction $\mathcal{F}_{\trim U \subset U}$ is an isomorphism for any flow box $U$.
\end{lemma}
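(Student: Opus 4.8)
The plan is to reduce to the one-dimensional statement, Lemma~\ref{trim-iso}, handling the extra vertical factor by localizing. Write the flow box as $U = H \times L$, with $H \subset \divd{I}$ a basis interval and $L \subset \R$ an open interval; then $\trim U = (\trim H) \times L$. If $H$ has neither a leftmost nor a rightmost element, then $\trim U = U$ and there is nothing to prove. Otherwise, by the description of trimming in Section~\ref{divd-props-1d}, each extremal element of $H$ is a lane point (a right-lane point if it is leftmost, a left-lane point if it is rightmost), so $U \smallsetminus \trim U$ is a union of at most two vertical ``lane walls'' of the form $\{\rlane{w}\} \times L$ or $\{\llane{v}\} \times L$. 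I will treat a single such wall, say $\{\rlane{w}\} \times L$ coming from $H = [\rlane{w}, b)$; the case of two walls is identical wall by wall, and the walls do not interact.

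The difficulty is that a lane wall is not open, so one cannot cover $U$ by $\trim U$ together with the wall, and one also cannot make $\mathcal{F}$ constant on all of $U$ merely by shrinking the $H$-factor, as in the proof of Lemma~\ref{trim-iso}. Instead I localize along the wall. Let $j \maps \trim U \hookrightarrow U$ be the inclusion and form the sheaf $j_*(\mathcal{F}|_{\trim U})$ on $U$, which sends an open $U' \subset U$ to $\mathcal{F}(U' \cap \trim U)$; the evident restriction maps assemble into a natural morphism $\eta \maps \mathcal{F}|_U \to j_*(\mathcal{F}|_{\trim U})$ whose value on $U$ itself is exactly the restriction $\mathcal{F}_{\trim U \subset U}$. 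Over $\trim U$ this morphism is an isomorphism because $j$ is an open embedding, so by the stalkwise criterion used in the proof of Proposition~\ref{stalk-res-iso} (an isomorphism on all stalks is an isomorphism of the underlying sheaves of sets, hence of sheaves of algebraic structures) it suffices to show that $\eta$ is an isomorphism on the stalk at each wall point $p = (\rlane{w}, \ell)$.

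To see this, use that $\mathcal{F}$ is locally constant and $\divd{\Sigma}$ is locally connected to choose, cofinally among neighborhoods of $p$ in $U$, flow boxes $V = [\rlane{w}, c) \times L'$ on which $\mathcal{F}$ is constant, with $c$ an interior (non-lane, non-median) point of $\divd{I}$ and $L' \subset L$; such $c$ are available arbitrarily close to $w$ because $I \smallsetminus W$ is dense in $I$. For such a $V$ one has $V \cap \trim U = (\rlane{w}, c) \times L' = \trim V$, which is connected, since $(\rlane{w}, c)$ is a basis interval of $\divd{I}$ and hence connected (by the argument behind Proposition~\ref{divd-loc-conn-1d}) and $L'$ is connected. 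So $\mathcal{F}_{\trim V \subset V}$, a restriction of a constant sheaf between nonempty connected opens, is an isomorphism. Passing to the filtered colimit over this cofinal family of $V$ shows that $\eta$ induces an isomorphism $\mathcal{F}_p \to \bigl(j_*(\mathcal{F}|_{\trim U})\bigr)_p$, as needed. Hence $\eta$ is an isomorphism, and in particular so is $\mathcal{F}_{\trim U \subset U}$.

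The only real obstacle relative to the one-dimensional case is precisely this: trimming acts only in the horizontal direction, but $\mathcal{F}$ cannot be trivialized on $H \times L$ by shrinking $H$ alone, so the ``one small neighborhood'' trick of Lemma~\ref{trim-iso} must be replaced by a genuinely local argument along each lane wall. Packaging that localization through the pushforward $j_*$ keeps the bookkeeping minimal and makes the two-wall case a trivial variant.
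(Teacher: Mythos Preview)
Your argument is correct. The paper's proof is much terser: it says to repeat the one-dimensional argument of Lemma~\ref{trim-iso} verbatim with $A$ replaced by $A \times L$. That recipe hinges on finding a basis interval $A$ containing the endpoint of $H$ with $\mathcal{F}|_{A \times L}$ constant, which---as you point out---does not follow from local constancy alone, since shrinking $A$ leaves the full vertical factor $L$ intact. The paper's shortcut can be justified (for instance via homotopy invariance: $A \times L$ retracts onto the slice $A \times \{\ell_0\}$, and for $A$ small enough $\mathcal{F}$ is constant on that slice), but this step is not made explicit. Your approach avoids the issue entirely by working stalk-locally: rather than producing one trivializing strip along the whole wall, you trivialize in a small flow box around each wall point and pass to the colimit, packaging the bookkeeping through $j_*$ so that the global statement follows from the stalkwise isomorphism criterion already established in the paper. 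This is longer but more transparent, and stays within the sheaf-theoretic toolkit the paper has set up.
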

\begin{proof}
Write $U$ as $H \times L$ for some basis interval $H \subset \divd{I}$ and repeat the proof of Lemma~\ref{trim-iso}, replacing $A$ with $A \times L$.
\end{proof}
\begin{proof}[Proof of Theorem~\ref{loc-sys-equiv-2d}]
Replace
\[
\begin{gathered} \xymatrix{
\divd{I} \ar@/_/[d]_{\pi} \\
I \ar@/_/[u]_{\iota}
} \end{gathered}
\quad\text{with}\quad
\begin{gathered} \xymatrix{
\divd{\Sigma} \ar@/_/[d]_{\pi} \\
\Sigma \smallsetminus \mathfrak{B} \ar@/_/[u]_{\iota}
} \end{gathered}
\]
in the proof of Theorem~\ref{loc-sys-equiv-1d}, and change all the basis intervals to flow boxes.
\end{proof}
\subsection{Dynamics on divided surfaces}
\subsubsection{The vertical flow}\label{divd-flow}
Within a flow box $I \times L$ in $\Sigma$, the vertical flow is easy to describe:
\[ \psi^t \maps (s, \zeta) \mapsto (s, \zeta + t), \]
as long as $t$ is small enough that $\zeta + t$ is still in the interval $L$. This local description can be lifted directly to the divided flow box $\divd{I} \times L$ in $\divd{\Sigma}$; the only change is that $s$ will now be a point in $\divd{I}$ instead of in $I$.

The vertical flow on a singularity chart can be lifted in the same way. It's worth thinking carefully about how the lifted flow acts on the critical roads of $\divd{\Sigma}$. Pick a singularity chart on $\Sigma$, and look at a point $w$ on a forward-critical leaf that plunges into the singularity without leaving the chart. In $\divd{\Sigma}$, the point $w$ splits into the three points $\llane{w}, \median{w}, \rlane{w}$ on a forward-critical road. As time runs forward, the point $w$ falls into the singularity and disappears. Its lift $\median{w}$, on the median of the road, does the same. The points $\llane{w}$ and $\rlane{w}$, however, follow the left and right lanes past the singularity, peeling off in different directions:
\begin{center}
\begin{tikzpicture}
\matrix[column sep=0.6cm]{
\veerleft & \fallmiddle & \veerright \\
};
\end{tikzpicture}
\end{center}
On a backward-critical road, the story is the same, but told backward.

Just as the local vertical flows on flow boxes and singularity charts fit together into a global vertical flow on $\Sigma \smallsetminus \mathfrak{B}$, their lifts fit together into a global vertical flow on $\divd{\Sigma}$. For convenience, I'll refer to this vertical flow also as $\psi$. The vertical flows on $\divd{\Sigma}$ and $\Sigma \smallsetminus \mathfrak{B}$ commute with the embedding $\iota$. They don't quite commute with the quotient map $\pi$, because when a point on a critical leaf of $\Sigma \smallsetminus \mathfrak{B}$ falls into a singularity, its left- and right-lane lifts keep going. The forward vertical flows on $\Sigma \smallsetminus \mathfrak{W}^-$ and its $\pi$-preimage, however, do commute with $\pi$. The same goes for the backward vertical flows on $\Sigma \smallsetminus \mathfrak{W}^+$ and its $\pi$-preimage.

The vertical flow on $\divd{\Sigma}$, like the one on $\Sigma \smallsetminus \mathfrak{B}$, is a flow by bicontinuous relations. This should not be taken for granted: the topology of a divided interval was engineered to make it so. The medians of the critical leaves are the only parts of $\divd{\Sigma}$ that vanish into the singularities under the vertical flow. Removing them leaves an invariant subspace, $\frakd{\Sigma}$, on which the vertical flow acts by homeomorphisms. Thus, while the vertical flows on $\Sigma \smallsetminus \mathfrak{B}$ and $\divd{\Sigma}$ may look a bit ugly, the vertical flow on $\frakd{\Sigma}$ is remarkably well-behaved: it's a flow by homeomorphisms on a compact Hausdorff space.

The divided and fractured surfaces are foliated by the orbits of the vertical flow---or they would be, at least, if foliations were usually defined on more general spaces than manifolds. With that in mind, we'll sometimes refer to the orbits of the vertical flows on $\divd{\Sigma}$ and $\frakd{\Sigma}$ as vertical leaves.
\subsubsection{First return maps}\label{divd-first-return}
Let's say a {\em horizontal segment} on a divided surface is a subset that looks like a horizontal basis interval in some flow box. More precisely, it's a horizontal slice $H \times \{\zeta\}$ across a divided flow box $H \times L$. The quotient map $\pi$ projects horizontal segments on $\divd{\Sigma}$ down to horizontal segments on $\Sigma$, as defined in Section~\ref{first-return}. Lemma~\ref{return-lemma}, which made it sensible to talk about first return maps on a translation surface, has an analogue on the divided surface.
\begin{lemma}\label{divd-return-lemma}
Let $Z$ be a horizontal segment on $\divd{\Sigma}$, and let $p$ be a point in $Z$. Unless $p$ is on the median of a forward-critical road, the vertical flow will eventually carry $p$ back to $Z$.
\end{lemma}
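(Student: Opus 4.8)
The plan is to reduce the statement to Lemma~\ref{return-lemma} on the translation surface $\Sigma$ itself, using the fact that the vertical flow on $\divd{\Sigma}$ projects to the vertical flow on $\Sigma$ wherever the two do not disagree. First I would note that the hypothesis on $p$ is precisely what makes its forward orbit exist: a point of $\divd{\Sigma}$ is swallowed by a singularity under the forward flow exactly when it lies on the median of a forward-critical road --- the lanes peel off and survive, as in the pictures of Section~\ref{divd-flow} --- and this property is invariant under flowing backward. So if $p$ is not such a median, then $\psi^t p$ is defined for every $t \geq 0$, and it suffices to produce a single $t > 0$ with $\psi^t p \in Z$.

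Write $Z = H \times \{\zeta\}$ in a divided flow box $\divd{I} \times L$, so that $\pi Z = (\pi H) \times \{\zeta\}$ is a horizontal segment on $\Sigma$, and set $q = \pi p$. Suppose first that $q$ is not forward-critical, that is, $q \in \Sigma \smallsetminus \mathfrak{W}^-$. The set $\Sigma \smallsetminus \mathfrak{W}^-$ is forward-invariant, and the forward flow is defined on it for all positive times, so Lemma~\ref{return-lemma}, applied to a closed subsegment of $\pi Z$ having $q$ as an endpoint, yields a $T > 0$ with $\psi^T q \in \pi Z$ and with $\psi^t q \in \Sigma \smallsetminus \mathfrak{W}^-$ for all $t \in [0,T]$. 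Over $\Sigma \smallsetminus \mathfrak{W}^-$ and its $\pi$-preimage the forward flow commutes with $\pi$, so $\psi^T p$ is defined and lies over $\psi^T q$. What remains is to see that $\psi^T p$ lands in $Z$ and not merely in $\pi^{-1}(\pi Z)$: since $\pi H$ is open, $\psi^T q$ sits over an interior point $w$ of $\pi H$, and for a basis interval $H$ every point of $\divd{I}$ over an interior point of $\pi H$ --- including all of $\llane{w}$, $\median{w}$, $\rlane{w}$ when $w$ is a critical position --- lies in $H$. Hence the whole $\pi$-fiber over $\psi^T q$ sits inside $Z$, and in particular $\psi^T p \in Z$.

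The remaining case is that $q = \pi p$ is forward-critical. Then $q$ lies on a forward-critical leaf $\mathcal{L}$ running into a singularity $\mathfrak{b}$, and, not being the median of $\pi^{-1}\mathcal{L}$, the point $p$ lies on one of its lanes. Flowing $p$ forward, it runs up that lane to the lane point over $\mathfrak{b}$ and then, by the local picture of the flow near a divided singularity (Section~\ref{divd-flow}), continues onto a lane of a backward-critical road emerging from $\mathfrak{b}$; so for small $\varepsilon > 0$ the point $p' = \psi^{T_0 + \varepsilon} p$, where $T_0$ is the time of the passage, projects to a point of a backward-critical leaf. Assuming $\Sigma$ has no saddle connections, so that $\mathfrak{W}^+$ and $\mathfrak{W}^-$ are disjoint, this point is not forward-critical, and the previous paragraph, applied to $p'$, completes the proof; without that assumption one repeats the passage step, which is only needed when a saddle connection is involved.

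I expect the crux to be the second case: turning ``runs up the lane, through the singularity, and out the other side'' into something checkable. Concretely, one has to read off from the divided-singularity charts of Section~\ref{divd-surf-construct} that a lane of a forward-critical road genuinely reconnects to a lane of a backward-critical road, and that the flow --- a relation in general --- is actually defined throughout the passage. The first case is comparatively routine, provided one is careful to catch the return inside $H$ rather than just over $\pi H$; that is exactly where the basis-interval conventions of Section~\ref{divd-intvl-construct} pull their weight.
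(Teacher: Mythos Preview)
Your overall strategy matches the paper's---reduce to Lemma~\ref{return-lemma} on $\Sigma$ when $\pi p$ is not forward-critical, and otherwise flow past the singularity onto a backward-critical road---but there are two gaps.

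The first is in your Case~1: the claim that $\pi H$ is open is false whenever $H$ has a leftmost or rightmost element. If $H = (\median{w_1}, b)$, then $\rlane{w_1} \in H$ with $\pi\rlane{w_1} = w_1$, so $\pi H$ is closed on the left; the fiber $\{\llane{w_1}, \median{w_1}, \rlane{w_1}\}$ over that endpoint is not contained in $H$. So when $p = \rlane{w_1}$ is itself the leftmost point of $Z$, your interior-point argument has nothing to say about whether $\psi^T p$ lands in $Z$ or at $\llane{w_1}$ or $\median{w_1}$. The paper closes this with a different observation: points carried by the vertical flow on $\divd{\Sigma}$ never change lanes, so $\psi^T p$ is still a right-lane point and hence must be the one element of that fiber lying in $H$.

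The second gap is more serious. In your forward-critical case you flow $p$ to a point $p'$ on a backward-critical road and then invoke ``the previous paragraph, applied to $p'$.'' But that paragraph requires its point to lie in $Z$, and $p'$ does not; Lemma~\ref{return-lemma} says nothing about a point hitting a segment it doesn't already belong to. The paper supplies the missing idea: as you flow $Z$ forward, some piece $Y \subset \psi^t Z$ stays attached to $\psi^t p$ through the singularity passage (pieces may peel off, but $p$ is leftmost, so something survives on its right). Once $\psi^t p$ is on a road that isn't forward-critical, apply the first case with $Y$ as the segment to get $\psi^t p$ returning to $Y$ at a later time; since the flow is injective and $\psi^{-t}Y \subset Z$, the orbit of $p$ must pass through $Z$ on the way. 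This ``carry a piece of $Z$ along'' trick---together with the finiteness argument that terminates chains of saddle connections and handles the periodic case---is the real content of the second case, and it is what your sketch is missing.
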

The proof is edifying, but rather long, so I've postponed it to the end of the section. The most important consequence of this lemma is that we can define a first return relation on any horizontal segment in $\divd{\Sigma}$, just like we did for horizontal segments in $\Sigma$.

Suppose $Z$ is a horizontal slice across a well-cut flow box $U = H \times L$ in $\divd{\Sigma}$. Then $\iota^{-1} Z$ is a well-cut horizontal segment in $\Sigma$, so the first return relation on $\iota^{-1} Z$ is an interval exchange. The divided version of that interval exchange, constructed as in Section~\ref{divd-ex}, is precisely the first return relation on $Z$.

Identifying $Z$ and $\iota^{-1} Z$ with $H$ and $\iota^{-1} H$ in the obvious way, we can think of the first return relations as relations on $H$ and $\iota^{-1} H$. These relations don't depend on which slice across $H \times L$ we take, so we can say directly that a well-cut flow box $H \times L$ comes with first return relations on $H$ and $\iota^{-1} H$. The one on $H$ becomes a function when restricted to $\nom{H}$.

We'll be working with these first return relations a lot, so it will be useful to set down a pattern of notation for talking about them. For convenience, I'll refer to the first return relations on $H$ and $\iota^{-1} H$ both as $\alpha$. Let $W \subset \iota^{-1} H$ be the positions of the critical leaves, recalling that $\nom{H} = H \smallsetminus \iota W$. Within $W$, let $W^+$ and $W^-$ be the positions of the backward- and forward-critical leaves. If $\Sigma$ has no saddle connections, $W^+$ and $W^-$ are disjoint, and thus form a partition of $W$.

Let $B^+ \subset W^+$ be the places where the backward-critical leaves first pass through $\iota^{-1} U$ after shooting out of their singularities. Similarly, let $B^- \subset W^-$ be the places where the forward-critical leaves last pass through $\iota^{-1} U$ before diving into their singularities. The sets $B^+$ and $B^-$ are the break points of the backward and forward first return relations $\alpha^{-1}$ and $\alpha$ on $\iota^{-1} H$, as described in Section~\ref{divd-ex}.

\begin{proof}[Proof of Lemma~\ref{divd-return-lemma}]
If $p$ is in $\iota \Sigma$, we can just apply Lemma~\ref{return-lemma} to an appropriate closed subinterval of $\pi Z$, and we're done. The only time we have to do something less direct is when $p$ is in the left or right lane of a critical road.

Suppose $p = \rlane{w}$ for some $w \in \mathfrak{W} \smallsetminus \mathfrak{W}^-$. Because it's in the right lane of a critical road, $p$ can't be the rightmost element of a horizontal basis interval. We can therefore assume, without loss of generality, that $Z$ has no rightmost element. In this case, the only way for $\pi^{-1} \pi Z$ to contain more points than $Z$ is for $Z$ to have a leftmost point $\rlane{a}$, in which case $\pi^{-1} \pi Z = \{\llane{a}, \median{a}\} \cup Z$.

By Lemma~\ref{return-lemma}, $\psi^t w$ returns to $\pi Z$ at some time $t > 0$. Away from $\mathfrak{W}^-$ and its $\pi$-preimage, the forward vertical flows on $\divd{\Sigma}$ and $\Sigma$ commute, so $\pi \psi^t p \in \pi Z$. In other words, $\psi^t p \in \pi^{-1} \pi Z$. Points traveling along the vertical flow on $\divd{\Sigma}$ never change lanes, so $\psi^t p$ is in a right lane. Hence, knowing that $\psi^t p \in \pi^{-1} \pi Z$, we can conclude that $\psi^t p \in Z$, which is what we wanted to show. The same reasoning can be used when $p = \llane{w}$ for some $w \in \mathfrak{W} \smallsetminus \mathfrak{W}^-$.

On the other hand, suppose $p = \rlane{w}$ for some $w \in \mathfrak{W}^-$. In this case, we can assume without loss of generality that $p$ is the leftmost point of $Z$. Follow $\psi^t Z$ upward along the vertical flow until $\psi^t p$ passes a singularity, exiting the forward-critical road it started on and merging onto the adjacent backward-critical road. By this time, a few pieces of $\psi^t Z$ may have already hit singularities and peeled off to the right, but some piece of $\psi^t Z$ is still traveling with $\psi^t p$.
\begin{center}
\exits
\end{center}
The backward-critical road $\psi^t p$ is now following might also be forward-critical---a saddle connection. In that case, we can repeat the same maneuver. Because $\Sigma$ is compact, it has only finitely many forward-critical roads, so $\psi^t p$ will eventually end up on either a road that isn't forward-critical or a road that it's traveled before. In the latter case, because $\psi^t p$ can only merge onto a road at its very beginning, $\psi^t p$ will eventually return to a point it's passed through before. Since $\psi$ is a flow by bicontinuous relations, as defined in Appendix~\ref{rel-dyne}, it follows that $\psi^t p$ is defined for all $t \in \R$, and periodic. As a result, $\psi^t p$ will eventually return to its starting point, and thus to $Z$.

This leaves us with the case where $\psi^t p$ finally merges onto a road that isn't forward critical. In other words, at some time $t > 0$, we have $\pi \psi^t p \in \mathfrak{W} \smallsetminus \mathfrak{W}^-$. Let $Y$ be the piece of $\psi^t Z$ that's stuck with $\psi^t p$ all this time. We showed earlier that the vertical flow eventually carries $\psi^t p$ back to $Y$. The vertical flow is injective, so $\psi^t p$ must have passed through $\psi^{-t} Y$ on its way back to $Y$. Since $\psi^{-t} Y \subset Z$, we've proven that $\psi^t p$ eventually returns to $Z$.
\end{proof}
\subsubsection{Minimality}\label{divd-minimality}
The vertical flow on $\frakd{\Sigma}$ is very well-behaved, as I noted in Section~\ref{divd-flow}. It acts even more nicely when $\Sigma$ has no saddle connections.
\begin{prop}
If $\Sigma$ has no saddle connections, the forward and backward vertical flows on $\frakd{\Sigma}$ are both minimal as dynamical systems.
\end{prop}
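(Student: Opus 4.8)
The plan is to prove that the \emph{forward} vertical flow on $\frakd{\Sigma}$ is minimal; the backward case then follows by reversing the vertical direction, an operation that exchanges $\mathfrak{W}^+$ and $\mathfrak{W}^-$ but preserves the hypothesis ``no saddle connections''. Minimality of the forward flow means that for every $p \in \frakd{\Sigma}$ the positive semiorbit $\{\psi^t p : t \ge 0\}$ is dense. Since no saddle connections forces $\Sigma$ to be minimal, the critical leaves are dense in $\Sigma$, so by Proposition~\ref{full-basis} (and its two-dimensional form) the full flow boxes form a basis for $\frakd{\Sigma}$. It therefore suffices to show that the forward semiorbit of each $p$ meets every full flow box $V = \nom{H} \times L$, where $H = [\rlane{a}, \llane{b}]$ is a full interval of some divided flow box $\divd{I}\times L'$ and $a, b$ are critical positions with $a < b$.

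The heart of the argument is a ``lane-catching'' observation. Suppose $y \in \Sigma \smallsetminus \mathfrak{B}$ lies on a leaf that is dense in the forward direction, and $q \in \frakd{\Sigma}$ lies over $y$ — so $q = \iota y$ if $y$ is non-critical, and $q$ is one of the lane points $\llane{y}, \rlane{y}$ if $y$ is critical. I claim the forward semiorbit of $q$ meets $V$. Indeed, because $y$'s leaf is not forward-critical it never meets a singularity going forward, so the forward orbit of $q$ never peels off and stays on a single lane (or on $\iota(\Sigma\smallsetminus\mathfrak{B})$); this keeps it inside $\pi^{-1}(\Sigma \smallsetminus \mathfrak{W}^-)$, where the forward flows commute with $\pi$, so $\pi$ carries it onto the forward orbit of $y$. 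That orbit is dense in $\Sigma \smallsetminus \mathfrak{B}$, hence enters the nonempty open set $(a,b)\times L$. At such a time $\psi^t q = (\sigma, \zeta)$ projects to a point $(s,\zeta)$ with $a < s < b$, where $\sigma$ is the appropriate lane of $s$ (or $s$ itself). Since $a$, $s$, $b$ are critical positions with $a < s < b$ we get $\rlane{a} < \sigma < \llane{b}$, and $\sigma \notin \iota W$, so $(\sigma,\zeta) \in \nom{H}\times L = V$.

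Granting this, the proof is a short case analysis on the type of $p$. If $p = \iota x$ with $x$ non-critical, then $x$'s leaf is forward-dense (being non-critical, it is not forward-critical) and the observation applies verbatim. If $p$ is a lane point over a non-singular point $y$ of a backward-critical leaf $\mathcal{L} \in \mathfrak{W}^+$, then — since there are no saddle connections — $\mathcal{L}$ is not forward-critical, $y$'s leaf is forward-dense, and again the observation applies. The remaining cases all reduce to this one: if $p$ is a lane point over a forward-critical leaf $\mathcal{L} \in \mathfrak{W}^-$, I flow forward until the underlying point of $\Sigma$ falls into its singularity, at which instant — by the divided-flow picture of Section~\ref{divd-flow} — $\psi^{t_0}p$ has peeled off onto a lane of the backward-critical road emanating from that singularity, which (no saddle connections) is not forward-critical; and an honorary lane point attached to a singularity moves onto such a backward-critical lane after flowing forward an arbitrarily short time. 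In every case the forward semiorbit of $p$ reaches a point to which the backward-critical case applies, so it meets $V$.

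The main thing to get right is the behavior of lane points under the vertical flow: that the forward orbit of a lane point genuinely stays on its lane, that at most one ``peeling off'' can occur so the orbit is eventually trapped on a single backward-critical lane, and that honorary lane points at singularities dislodge onto backward-critical lanes. These facts all follow from the construction of $\divd{\Sigma}$ in Section~\ref{divd-surf-construct} and from the analysis underlying Lemma~\ref{divd-return-lemma}, but they need to be stated carefully to make the case analysis airtight; the accompanying order inequalities in $\divd{I}$ are routine once set up. I do not expect any conceptual obstacle beyond this bookkeeping, since every dynamical input — density of non-critical leaves, density of the forward halves of backward-critical leaves, and the $\pi$/$\iota$-compatibility of the flows — is already in hand.
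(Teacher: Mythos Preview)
Your argument is correct and follows essentially the same route as the paper's: the same case split into non-critical points, lane points over backward-critical leaves, and lane points over forward-critical leaves, with the last case reduced to the second by flowing forward past the singularity. The only cosmetic difference is in the backward-critical case, where the paper uses the identity $\pi^{-1}\iota^{-1}U = \trim U$ together with the fact that any open neighborhood of a median contains the adjacent lane points, while you argue directly with the order structure on $\divd{I}$; both executions are short and equivalent.
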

\begin{proof}
The forward and backward cases are mirror images of each other, so let's focus on the forward vertical flow. Suppose $\Sigma$ has no saddle connections, recalling that this implies $\Sigma$ is minimal. We want to prove that the forward orbit $\mathcal{P}$ of any point $p \in \frakd{\Sigma}$ is dense.

Suppose $p = \iota q$ for some $q \in \Sigma \smallsetminus \mathfrak{W}$. Then we can just observe that the minimality of $\Sigma$ implies that the forward orbit of $q$ is dense in $\Sigma$. Since $\iota \Sigma$ is dense in $\frakd{\Sigma}$ by Proposition~\ref{undivd-dense-2d}, we're done.

Suppose $p$ is in the left or right lane of a backward-critical road. In other words, $p \in \{\llane{w}, \rlane{w}\}$ for some $w \in \mathfrak{W}^+$. Let $\mathcal{W}$ be the forward orbit of $w$. Because $\Sigma$ has no saddle connections, the backward-critical leaf $w$ lies on can't also be forward-critical, so the minimality of $\Sigma$ implies that $\mathcal{W}$ is dense in $\Sigma$.

Because $\Sigma$ is minimal, full flow boxes form a basis for $\frakd{\Sigma}$, as pointed out in Section~\ref{divd-props-2d}. Thus, to show that $\mathcal{P}$ is dense in $\frakd{\Sigma}$, we just have to show that it intersects every full flow box $V$. Note that $V$ is the intersection of $\frakd{\Sigma}$ with a full flow box $U$ in $\divd{\Sigma}$. Since $\iota$ is continuous, $\iota^{-1} U$ is an open subset of $\Sigma$, and therefore intersects the dense orbit $\mathcal{W}$. That means $\pi^{-1} \iota^{-1} U$ intersects $\iota \mathcal{W}$. Recall from Section~\ref{divd-props-2d} that $\pi^{-1} \iota^{-1} U = \trim U$. In $\divd{\Sigma}$, any open subset that contains a point in the median of a critical road also contains the corresponding points in the left and right lanes, so $\trim U$ intersects $\mathcal{P}$. Since $\trim U \subset U$, we've shown that $U$ intersects $\mathcal{P}$. Since $\mathcal{P} \subset \frakd{\Sigma}$, it follows that $V$ intersects $\mathcal{P}$. Since $V$ could have been any full flow box in $\frakd{\Sigma}$, we've proven that $\mathcal{P}$ is dense in $\frakd{\Sigma}$, under the assumption that $p$ is in the left or right lane of a backward-critical road.

Finally, suppose $p$ is in the left or right lane of a forward-critical road. The forward vertical flow will eventually carry $p$ past a singularity, where it will leave its current road and merge onto an adjacent backward-critical road. That means the forward orbit of $p$ contains the left or right lane of a backward-critical road. We just proved that the left and right lanes of a backward-critical road are both dense in $\frakd{\Sigma}$, so we're done.
\end{proof}
On the fractured surface, a minimal vertical flow induces minimal first return maps.
\begin{prop}
Let $H \times L$ be a well-cut flow box in $\divd{\Sigma}$. If the vertical flow on $\frakd{\Sigma}$ is minimal, the first return map on $\nom{H}$ is too.
\end{prop}
\begin{proof}
Suppose the first return map on $\nom{H}$ is not minimal. Find a closed invariant subset $C \subset \nom{H}$ which is neither empty nor all of $\nom{H}$. Let $\mathcal{C}$ be the orbit of $C \times L$ under the vertical flow. Notice that $\mathcal{C}$ can't intersect the open set $(\nom{H} \smallsetminus C) \times L$: if it did, the first return map on $\nom{H}$ would send some element of $C$ into $\nom{H} \smallsetminus C$. Since $\mathcal{C}$ is made of vertical orbits, we've shown that not every orbit of the vertical flow is dense in $\frakd{\Sigma}$. Hence, the vertical flow on $\frakd{\Sigma}$ is not minimal.
\end{proof}
\section{Warping local systems on divided surfaces}\label{warping-on-divd}
\subsection{Overview}
Many classic geometric constructions involve cutting, shifting, and regluing a local system on a manifold along something akin to a codimension-one submanifold. For example, a Fenchel-Nielsen twist of a hyperbolic surface shifts the $\on{PSL}_2 \R$ local system of charts along a closed geodesic. A complex earthquake of a pleated hyperbolic surface shifts the $\on{PSL}_2 \C$ local system of pleated path-isometries along a geodesic lamination~\cite[\S 5.3]{cp-structs}\cite[\S II.3]{convex-hulls}\cite[\S 1.3]{geom-of-qf}. Analogous processes act on $\on{PSL}_n \R$ local systems in higher Hitchin components~\cite{closed-edge-flows}\cite{anosov-cataclysms}.

The abelianization process described in this paper shifts an $\on{SL}_2 \C$ local system along the critical leaves of a translation surface $\Sigma$. It's most conveniently carried out by pushing the local system up to the divided surface $\divd{\Sigma}$ and warping it along a deviation supported on $\frakd{\Sigma}$. The special class of deviations used in this process will be the subject of this section.

Our discussion of warping only makes sense on a locally connected space, so the local connectedness of the divided surface is now playing an important role. The fact that we can warp local systems on the divided surface, like the equivalence of groupoids of local systems given by Theorem~\ref{loc-sys-equiv-2d}, reflects the general idea that local systems on the divided surface tend to be well-behaved.
\subsection{Critical roads in a flow box}
Consider a flow box $U = H \times L$ in $\divd{\Sigma}$. The median of each critical road passes through $U$ at most countably many times, intersecting it in a collection of vertical lines I'll call {\em dividers}. The dividers are naturally ordered by their positions in $H$. Given two points $y$ and $x$ in $\nom{U}$, with $y$ sitting to the left of $x$ along $H$, let's write $(y \mid x)^U$ to denote the ordered set of dividers in $U$ that lie between $y$ and $x$.
\subsection{Deviations defined by jumps, conceptually}\label{jump-concept}
Let $\mathcal{F}$ be a locally constant sheaf on $\divd{\Sigma}$. The $\mathcal{F}$-simple flow boxes form a basis for the topology of $\divd{\Sigma}$. To specify a {\em jump} in $\mathcal{F}$, we give for each divider $P$ in a simple flow box $U$ an automorphism $j_P$ of $\mathcal{F}_U$. These automorphisms have to fit together as follows:
\begin{itemize}
\item If the basis element $U$ contains the basis element $V$, and the divider $P$ in $U$ contains the divider $Q$ in $V$, the automorphisms $j_Q$ and $j_P$ commute with the restriction morphism $\mathcal{F}_{V \subset U}$.
\end{itemize}

Consider a simple flow box $U$ and a pair of points $y$ and $x$ in $\nom{U}$, with $y$ to the left of $x$. In the presence of a jump $j$, the ordered set $(y \mid x)^U$ of dividers between $y$ and $x$ becomes an ordered set of automorphisms of $\mathcal{F}_U$, which are just begging to be composed. There are probably infinitely many of them, so composing them may not make sense, but let's do it anyway, and call the result
\[ \delta^U_{yx} = \prod_{P \in (y \mid x)^U} j_P. \]
For good measure, define $\delta^U_{xy}$ to be the inverse of $\delta^U_{yx}$, so we don't have to worry about checking that $y$ is to the left of $x$ in $U$.

Our notation makes $\delta$ look like a deviation from $\mathcal{F}$ with support $\frakd{\Sigma}$, defined over the basis of $\mathcal{F}$-simple flow boxes. In fact, $\delta$ really will be a deviation as long as the compositions defining it make sense, and behave in the way you'd expect. We'll see this concretely in the next section, where we specialize to the case of jumps in local systems.
\subsection{Deviations defined by jumps, concretely}
Let's say $G$ is a Hausdorff topological group, and $\mathcal{F}$ is a $G$ local system. In this case, a jump in $\mathcal{F}$ assigns an element of $G$ to each divider. We can use the topology of $G$ to make sense of infinite ordered products, as described in Appendix~\ref{ord-prod}. Using the properties of these products, we can show that the construction in the previous section really does produce a deviation $\delta$ from a jump $j$, as long as all the products converge.
\subsubsection{The restriction property}
We want to prove that for any simple flow boxes $V \subset U$ and any two points $y, x \in V \cap \frakd{\Sigma}$, the automorphisms $\delta^U_{yx}$ and $\delta^V_{yx}$ commute with the restriction morphism $\mathcal{F}_{V \subset U}$. We can assume, without loss of generality, that $y$ is to the left of $x$. Jumps are required to commute with restriction, so
\[ \prod_{P \in (y \mid x)^U} j_P = \prod_{P \in (y \mid x)^V} \mathcal{F}_{V \subset U}^{-1}\;j_P\;\mathcal{F}_{V \subset U}. \]
Recalling that conjugation is a topological group automorphism, we apply Proposition~\ref{prod-equivar} and get
\[ \prod_{P \in (y \mid x)^U} j_P = \mathcal{F}_{V \subset U}^{-1} \left( \prod_{P \in (y \mid x)^V} j_P \right) \mathcal{F}_{V \subset U}, \]
which is what we wanted to show.
\subsubsection{The composition property}
We want to prove that $\delta^U_{zy} \delta^U_{yx} = \delta^U_{zx}$ for any simple flow box $U$ and any three points $z, y, x \in \nom{U}$. If $z, y, x$ happen to be ordered from left to right, we're trying to prove that
\[ \left( \prod_{P \in (z \mid y)^U} j_P \right)
\left( \prod_{P \in (y \mid x)^U} j_P \right)
= \prod_{P \in (z \mid x)^U} j_P. \]
Since, in the notation of Section~\ref{ord-prod}, $(z \mid y)^U \sqcup (y \mid x)^U = (z \mid x)^U$, this follows directly from Proposition~\ref{prod-compose}.

Now, suppose the three points are ordered differently. If the ordering from left to right is $y, z, x$, we can use the reasoning above to conclude that $\delta^U_{yz} \delta^U_{zx} = \delta^U_{yx}$, rewrite this as $(\delta^U_{zy})^{-1} \delta^U_{zx} = \delta^U_{yx}$, and rearrange to get the desired result. The other cases work similarly.
\section{Uniform hyperbolicity for $\on{SL}_2 \C$ dynamics}\label{unif-hyp}
\subsection{Motivation and notation}\label{unif-hyp-motive}
In Section~\ref{jump-concept}, we saw a way to turn a jump into a deviation by taking infinite products of jump automorphisms. To apply that construction to a given jump in a local system $\mathcal{E}$ on $\divd{\Sigma}$, we need some way of showing that all those infinite products converge. A convenient approach is to show that the jump automorphisms decay rapidly as you follow them out along the critical roads of $\divd{\Sigma}$.

{\em Uniform hyperbolicity} is a powerful and well-studied decay condition on the sections of a linear local system over a dynamical base. It will be a key player in our abelianization procedure, with a role going far beyond ensuring that jumps converge.

A bit of notation will streamline our reasoning about growth and decay. For positive functions $f$ and $g$, we'll write $f \lesssim g$ to say that $f$ is bounded by a constant multiple of $g$. When $f$ and $g$ depend on several parameters, we might say that $f \lesssim g$ over a parameter $t$ to specify that we're thinking of $f$ and $g$ as functions of $t$, with the other parameters held fixed.
\subsection{The global version}\label{global-unif-hyp}
Fix a linear $\on{SL}_2 \C$ local system $\mathcal{E}$ on $\divd{\Sigma}$. Parallel transport along the vertical flow $\psi \maps \R \times \frakd{\Sigma} \to \frakd{\Sigma}$ induces a flow $\Psi$ on the stalks of $\mathcal{E}$. At time $t$, the parallel transport flow gives an isomorphism $\Psi^t \maps \mathcal{E}_x \to \mathcal{E}_{\psi^t x}$ for each $x \in \frakd{\Sigma}$. Pick an inner product on the stalks of $\mathcal{E}$ over $\frakd{\Sigma}$ which is continuous in the sense that, for any two vectors $u, v \in \mathcal{E}_U$ over an open set $U \subset \divd{\Sigma}$, the inner product of $u$ and $v$ in the stalk $\mathcal{E}_x$ varies continuously as a function of $x \in \frakd{\Sigma}$. Because $\frakd{\Sigma}$ is compact, it doesn't matter which inner product we pick.

Saying $\mathcal{E}$ is {\em globally uniformly hyperbolic} means that for every $x \in \frakd{\Sigma}$, the dynamics of $\Psi$ split $\mathcal{E}_x$ into a direct sum of two one-dimensional subspaces $\mathcal{E}^+_x$ and $\mathcal{E}^-_x$, called the {\em forward-stable} and {\em backward-stable} lines, respectively. These lines are characterized by the following properties:
\begin{enumerate}
\item The parallel transport map $\Psi^t$ sends $\mathcal{E}^\pm_x$ to $\mathcal{E}^\pm_{\psi^t x}$.
\item There is a constant $K > 0$ such that
\[ \|\Psi^{\pm t} v\| \lesssim e^{-Kt} \|v\| \]
over all $x \in \frakd{\Sigma}$, $v \in \mathcal{E}^\pm_x$, and $t \in [0, \infty)$.
\end{enumerate}
We'll call the constant $K$ a {\em bounding exponent} for $\mathcal{E}$. Bounding exponents are not by any means unique: if $K$ is a bounding exponent for a uniformly hyperbolic local system, every positive number less than $K$ is a bounding exponent too.

Let's collect the forward- and backward-stable lines into a pair of functions $\mathcal{E}^\pm$, which assign to each point $x \in \frakd{\Sigma}$ the line $\mathcal{E}^\pm_x \subset \mathcal{E}_x$. We'll call these functions the {\em stable distributions} of $\mathcal{E}$.

The decay condition in the definition of uniform hyperbolicity only talks about forward parallel transport, but we can turn it around to get an equivalent growth condition on backward parallel transport. Our argument depends crucially on the fact that $x$ and $v$ are treated as variables in the decay condition, rather than parameters that can be held fixed. That detail is the ``uniform'' part of ``uniform hyperbolicity.''
\begin{prop}\label{global-soft-turnaround}
For a given value of $K$ and choice of sign, Condition~2 in the definition of global uniform hyperbolicity holds if and only if
\[ e^{Kt} \|v\| \lesssim \|\Psi^{\mp t} v\| \]
over all $x \in \frakd{\Sigma}$, $v \in \mathcal{E}^\pm_x$, and $t \in [0, \infty)$.
\end{prop}
\begin{proof}
Suppose Condition~2 holds. Then there's a constant $C$ such that
\[ \|\Psi^{\pm t} v\| \le Ce^{-Kt} \|v\| \]
for all $x \in \frakd{\Sigma}$, $v \in \mathcal{E}^\pm_x$, and $t \in [0, \infty)$. In particular,
\[ \|\Psi^{\pm t} \Psi^{\mp t} v'\| \le Ce^{-Kt} \|\Psi^{\mp t} v'\| \]
for all $x \in \frakd{\Sigma}$, $v' \in \mathcal{E}^\pm_{\psi^{\pm t} x}$, and $t \in [0, \infty)$. Simplifying the inequality and rewriting the quantifier over $x$, we see that
\[ \|v'\| \le Ce^{-Kt} \|\Psi^{\mp t} v'\| \]
for all $x' \in \frakd{\Sigma}$, $v' \in \mathcal{E}^\pm_{x'}$, and $t \in [0, \infty)$. This implies the condition in the proposition.

We've now shown that Condition~2 implies the condition in the proposition. The same kind of reasoning can be used to prove the reverse implication.
\end{proof}
\subsection{The local version}\label{local-unif-hyp}
Consider a simple, well-cut flow box $U = H \times L$ in $\divd{\Sigma}$, and let $E = \mathcal{E}_U$. Let $\alpha \maps \nom{H} \to \nom{H}$ be the first return map discussed in Section~\ref{divd-first-return}. For each $h \in \nom{H}$, parallel transport along the leaf through $\{h\} \times L$ gives an automorphism $A_h$ of $E$, defined by the commutative square
\[ \xymatrix@C=20mm{
E \ar[d]_{\mathcal{E}_{\{h\} \times L \subset U}} \ar@{.>}[r]^{A_h} & E \ar[d]^{\mathcal{E}_{\{\alpha h\} \times L \subset U}} \\
\mathcal{E}_{\{h\} \times L} \ar[r]_{\substack{\text{parallel} \\ \text{transport}}} & \mathcal{E}_{\{\alpha h\} \times L}
} \]
These automorphisms form a cocycle over $\alpha$, which we'll call the {\em parallel transport cocycle}. Write $A^n_h$ for the parallel transport along $n$ iterations of $\alpha$, starting at $h$. Pick an inner product on $E$ (it doesn't matter which one).

Saying $\mathcal{E}$ is {\em locally uniformly hyperbolic} with respect to $U$ means that for every $h \in \nom{H}$, the dynamics of $A$ split $E$ into a direct sum of two one-dimensional subspaces $E^+_h$ and $E^-_h$, called the {\em forward-stable} and {\em backward-stable} lines, respectively. These lines are characterized by the following properties:
\begin{enumerate}
\item The parallel transport map $A_h$ sends $E^\pm_h$ to $E^\pm_{\alpha h}$.
\item There is a constant $K > 0$ such that
\[ \|A_h^{\pm n} v\| \lesssim e^{-Kn} \|v\| \]
over all $h \in \nom{H}$, $v \in E^\pm_h$, and $n \in \N$.
\end{enumerate}
This is a standard definition of uniform hyperbolicity for dynamical cocycles~\cite[\S 2.2]{lyapunov-lectures}, specialized to the task at hand. We'll call $K$ a {\em bounding exponent} for $A$.

Writing $\Proj E$ for the projective space of $E$, let's collect the forward- and backward-stable lines into a pair of functions $E^\pm \maps \nom{H} \to \Proj E$. We'll call these functions the {\em stable distributions} of $A$.

We can turn the decay condition around to get an equivalent growth condition on backward parallel transport, just like we did for the global version of uniform hyperbolicity.
\begin{prop}\label{local-soft-turnaround}
For a given value of $K$ and choice of sign, Condition~2 in the definition of local uniform hyperbolicity holds if and only if
\[ e^{Kn} \|v\| \lesssim \|A_h^{\mp n} v\| \]
over all $h \in \nom{H}$, $v \in E^\pm_h$, and $n \in \N$.
\end{prop}
\begin{proof}
Essentially the same as for Proposition~\ref{global-soft-turnaround}.
\end{proof}
\subsection{The two versions are typically equivalent}
Suppose the vertical flow on $\frakd{\Sigma}$ is minimal. Then, for any simple, well-cut flow box $U$ in $\divd{\Sigma}$, global uniform hyperbolicity is equivalent to local uniform hyperbolicity with respect to $U$. That means we can drop the distinction between them, and just talk about {\em uniform hyperbolicity}.

The fact above follows from a more general result, which doesn't require the vertical flow on $\frakd{\Sigma}$ to be minimal.
\begin{prop}\label{unif-hyp-descent}
Take any simple, well-cut flow box $U$ in $\divd{\Sigma}$.
\begin{itemize}
\item If $\mathcal{E}$ is globally uniformly hyperbolic, then it's locally uniformly hyperbolic with respect to $U$.
\item Suppose $U$ intersects every leaf of $\frakd{\Sigma}$. If $\mathcal{E}$ is locally uniformly hyperbolic with respect to $U$, then it's globally uniformly hyperbolic.
\end{itemize}
\end{prop}
\begin{proof}
We'll use all the notation and equipment of Sections \ref{global-unif-hyp} and \ref{local-unif-hyp}. Pick a horizontal slice $Z = H \times \{\zeta\}$ across $U$. For each $h \in \nom{H}$, we can identify $E$ with $\mathcal{E}_{(h,\zeta)}$ through the stalk restriction $\mathcal{E}_{(h,\zeta) \in U}$. We'll make this identification implicitly throughout the arguments that follow.

The equipment we're bringing in from earlier includes an inner product on $E$ and a continuous inner product on the stalks of $\mathcal{E}$ over $\frakd{\Sigma}$. Adjust the latter so that the inner products on $E$ and $\mathcal{E}_{(h,\zeta)}$ match for all $h \in \nom{H}$. You can make the adjustment using a continuous function $\frakd{\Sigma} \to [0, 1]$ which is one on $Z$ and zero outside of $U$. Functions like this exist because $U$ is well-cut, and hence full.
\paragraph{Global to local}
Suppose $\mathcal{E}$ is globally uniformly hyperbolic, with bounding exponent $\kappa$. Then there's a constant $C$ such that
\[ \|\Psi^{\pm t} v\| \le Ce^{-\kappa t} \|v\| \]
for all $x \in \frakd{\Sigma}$, $v \in \mathcal{E}^\pm_x$, and $t \in [0, \infty)$. Let $\tau$ be the minimum forward return time for a point on $Z$, noting that it's also the minimum backward return time.

Let $E^\pm_h \subset E$ be the line that restricts to the stable line $\mathcal{E}^\pm_{(h,\zeta)} \subset \mathcal{E}_{(h,\zeta)}$. We're going to show that $E^\pm$ are stable distributions for the parallel transport cocycle $A$. It's not hard to see that $A_h$ sends $E^\pm_h$ to $E^\pm_{\alpha h}$, so we just need to verify the decay condition.

Pick a point $h \in \nom{H}$, a number $n \in \N$, and a direction to follow the vertical flow. Let $t$ be the time it takes for the vertical flow to carry $(h, \zeta)$ back to $Z$ for the $n$th time, flowing in the chosen direction. Observe that $A^{\pm n}_h = \Psi^{\pm t}_{(h, \zeta)}$, with the sign determined by our choice of direction. By the global decay condition,
\[ \|A^{\pm n}_h v\| \le Ce^{-\kappa t} \|v\| \]
for all $v \in E^\pm_h$. Since $\tau n \le t$, it follows that
\[ \|A^{\pm n}_h v\| \le Ce^{-\kappa \tau n} \|v\| \]
for all $v \in E^\pm_h$. The constant $C$ doesn't depend on our choice of $h$, $n$, or flow direction, so we've shown that
\[ \|A^{\pm n}_h v\| \lesssim e^{-\kappa \tau n} \|v\| \]
over all $h \in \nom{H}$, $n \in \N$, and $v \in E^\pm_h$. Hence, $\mathcal{E}$ is locally uniformly hyperbolic with respect to $U$. Its parallel transport cocycle has stable distributions $E^\pm$ and bounding exponent $\kappa \tau$.

\paragraph{Local to global}
Suppose $\mathcal{E}$ is locally uniformly hyperbolic with respect to $U$, with parallel transport cocycle $A$. Suppose $U$ intersects every leaf of $\frakd{\Sigma}$. For each point $x \in \frakd{\Sigma}$, the leaf through $x$ must pass through some point $(h, \zeta) \in Z$. Pick up one of the stable lines $E^\pm_h$ and carry it to $x$ along the parallel transport flow $\Psi$, defining a line $\mathcal{E}^\pm_x \subset \mathcal{E}_x$. You get the same line over $x$ no matter which intersection with $Z$ you come from, because the stable distribution $E^\pm$ is invariant under $A$. By construction, the distributions $\mathcal{E}^\pm$ are invariant under $\Psi$.

We're going to show that $\mathcal{E}^\pm$ are stable distributions for $\mathcal{E}$. First, let's fix some constants. Pick a bounding exponent $K$ for $A$. From the definition of a bounding exponent, we get a constant $C > 0$ such that
\[ \|A_h^{\pm n} v\| \le C e^{-Kn} \|v\| \]
for all $h \in \nom{H}$, $v \in E^\pm_h$, and $n \in \N$. Let $T$ be the maximum forward return time for a point on $Z$, noting that it's also the maximum backward return time. Because $\frakd{\Sigma}$ is compact, and both the parallel transport flow and the stalkwise inner product on $\mathcal{E}$ are continuous, $\|\Psi^s_x\|$ is bounded by some constant $M$ as $x$ varies over $\frakd{\Sigma}$ and $s$ varies over the interval $[0, T]$.

Starting at any point $x \in \frakd{\Sigma}$, let the forward or backward vertical flow carry us along for time $t \in [0, \infty)$. Our trip can be broken up into three parts. First, we flow for some time $s \in [0, T]$ until we hit $Z$. Then we return to $Z$ some number $n \in \N$ of times. Finally, we flow for some additional time $s' \in [0, T]$ after our last encounter with $Z$. If we never hit $Z$, set $s$ and $n$ to zero. The parallel transport flow over our trip has a corresponding decomposition
\[ \Psi^{\pm t}_x = \Psi^{\pm s'}_{(\alpha^n h, \zeta)}\;A^{\pm n}_h\;\Psi^{\pm s}_x, \]
where $(h, \zeta) \in \nom{H} \times \{\zeta\}$ is the point where we first hit $Z$, and the sign depends on which flow direction we picked.

For any $v \in \mathcal{E}^\pm_x$, the constants we defined earlier give the bounds
\begin{align*}
\|\Psi^{\pm t}_x v\|
& = \|\Psi^{\pm s'}_{(\alpha^n h, \zeta)}\;A^{\pm n}_h\;\Psi^{\pm s}_x v\| \\
& \le M \|A^{\pm n}_h\;\Psi^{\pm s}_x v\| \\
& \le M Ce^{-Kn}\|\Psi^{\pm s}_x v\| \\
& \le M^2 Ce^{-Kn} \|v\|.
\end{align*}
We know from the way we decomposed our trip that $t \le T(n+2)$, so
\begin{align*}
\|\Psi^{\pm t}_x v\| & \le M^2 Ce^{-K(t/T - 2)} \|v\| \\
& = M^2 e^{2K} Ce^{-(K/T)t} \|v\|.
\end{align*}
The constant $M^2 e^{2K} C$ doesn't depend on our choice of $x$, $t$, or flow direction, so we've shown that
\[ \|\Psi^{\pm t}_x v\| \lesssim e^{-(K/T)t} \|v\| \]
over all $x \in \frakd{\Sigma}$, $v \in \mathcal{E}^\pm_x$, and $t \in [0, \infty)$. Hence, $\mathcal{E}$ is globally uniformly hyperbolic, with stable distributions $\mathcal{E}^\pm$ and bounding exponent $K/T$.
\end{proof}
\subsection{The stable distributions are typically Lipschitz}\label{stable-lipschitz}
Suppose $\mathcal{E}$ is locally uniformly hyperbolic with respect to a simple, well-cut flow box $U = H \times L$, still using all the notation and equipment of Section~\ref{local-unif-hyp}. Since we picked an inner product on $E$, we can say the distance between two lines in $E$ is the sine of the angle between them. This puts a metric on the projective space $\Proj E$, which I'll call the {\em sine metric}. I'll write $d_\angle(u, v)$ to mean the distance in $\Proj E$ between the lines generated by $u, v \in E$.

Suppose the critical leaves are dense in $U$, so we can put a division metric on $\nom{H}$. Then the stable distributions $E^\pm \maps \nom{H} \to \Proj E$ become maps from one metric space to another, and we can ask about their regularity. For a shallow enough choice of division metric, they turn out to be Lipschitz continuous.
\begin{prop}\label{transport-stable-lipschitz}
If $K$ is a bounding exponent for the parallel transport cocycle $A$, the stable distributions of $A$ are Lipschitz continuous with respect to the sine metric on $\Proj E$ and the division metric on $\nom{H}$ of steepness $e^{2K}$.
\end{prop}
This result isn't special to our setting; it extends to any uniformly hyperbolic ``Markov cocycle'' over a two-sided sequence space. I learned it from a paper by Damanik, Fillman, Lukic, and Yessen, who prove it in passing while constructing the stable distributions~\cite{cmv-matrices}. We'll treat it as a special case of the version for Markov cocycles, which is proven in Appendix~\ref{markov-stable-lines}.
\begin{proof}[Reduction of Proposition~\ref{transport-stable-lipschitz} to Proposition~\ref{markov-stable-lipschitz}]
Embed $\nom{H}$ isometrically into a two-sided sequence space as we did in Sections \ref{divd-ex} and \ref{division-dynamics}. Our embedding turns the parllel transport cocycle into a Markov cocycle, so Proposition~\ref{markov-stable-lipschitz} applies.
\end{proof}
\subsection{Extending over medians}\label{unif-hyp-medians}
Let's suppose in this section that $\Sigma$ has no saddle connections. Not much would change if there were saddle connections, as long as we stayed away from them, but our notation will be more coherent this way. As a bonus, we won't have to distinguish between global and local uniform hyperbolicity.

Consider a point $w$ on a critical leaf of $\Sigma$. Every neighborhood of $\median{w}$ contains $\llane{w}$ and $\rlane{w}$, so we can take a colimit over neighborhoods $U$ in the diagram
\[ \xymatrix@C15mm{
\mathcal{E}_{\llane{w}} & \ar[l]_{\mathcal{E}_{\llane{w} \subset U}} \mathcal{E}_U \ar[r]^{\mathcal{E}_{\rlane{w} \subset U}} & \mathcal{E}_{\rlane{w}}
} \]
to get isomorphisms
\[ \xymatrix@C15mm{
\mathcal{E}_{\llane{w}} & \ar[l] \mathcal{E}_{\median{w}} \ar[r] & \mathcal{E}_{\rlane{w}}
} \]
identifying the three stalks. I'll refer to all three as $\mathcal{E}_w$, writing $\llane{v}$, $\median{v}$, or $\rlane{v}$ when I want to think of a vector $v \in \mathcal{E}_w$ as an element of one or the other.

Let's say $w$ is on a backward-critical leaf. The left and right lanes of this leaf are never separated by the forward vertical flow, so $\Psi^t \llane{v} = \Psi^t \rlane{v}$ for all $v \in \mathcal{E}_w$ as long as $t \ge 0$. From the continuity of the inner product and the compactness of $\frakd{\Sigma}$, you can show that the difference between $\log \|\Psi^t \llane{v}\|$ and $\log \|\Psi^t \rlane{v}\|$ stays bounded as $t$ varies over $[0, \infty)$.\footnote{Here's a sketch of the proof. Cover $\divd{\Sigma}$ with a finite collection $\mathcal{U}$ of simple open sets. For each $U \in \mathcal{U}$, the closure of $\nom{U}$ in $\frakd{\Sigma}$ is compact. Hence, the difference between $\log \|v\|_x$ and $\log \|v\|_y$ stays bounded as $x$ and $y$ vary over $\nom{U}$ and $v$ varies over $\mathcal{E}_U$. In particular, the difference between $\log \|v\|_\llane{w}$ and $\log \|v\|_\rlane{w}$ is bounded over all $w \in \mathfrak{W} \cap \pi U$. Now, just combine the bounds over all $U \in \mathcal{U}$.} Thus, if $\mathcal{E}$ is uniformly hyperbolic, the forward-stable lines $\mathcal{E}^+_{\llane{w}}$ and $\mathcal{E}^+_{\rlane{w}}$ are the same, and we can refer to both as $\mathcal{E}^+_w$. The backward-stable lines at $\llane{w}$ and $\rlane{w}$, on the other hand, are typically different. If $w$ is on a forward-critical leaf instead of a backward-critical one, we can use the same reasoning in the other direction to define $\mathcal{E}^-_w$.

Working in a well-cut flow box $H \times L \subset \divd{\Sigma}$, it will be useful to describe our extension of $\Psi^t$ in terms of the first return map. As we did in Section~\ref{divd-first-return}, let $W \subset \pi H$ be the positions of the critical leaves, and partition it into the backward- and forward-critical sets $W^+$ and $W^-$. By our previous reasoning, $A_{\llane{w}} = A_{\rlane{w}}$ for all $w \in W^+$. Defining $A_{\median{w}}$ to be equal to both, we can extend the forward parallel transport cocycle $A$ over the medians of all backward-critical points. If $\mathcal{E}$ is uniformly hyperbolic, the forward-stable lines $E^+_{\llane{w}}$ and $E^+_{\rlane{w}}$ of $E = \mathcal{E}_U$ match, so we can define $E^+_{\median{w}}$ to be equal to both. The backward cocycle $A^{-1}$ extends over the medians of all forward-critical points in the same way, allowing us to define $E^-_{\median{w}}$ for all $w \in W^-$.
\subsection{Constructing uniformly hyperbolic local systems}\label{construct-unif-hyp}
Our abelianization process can only be carried out when $\mathcal{E}$ is uniformly hyperbolic, so it will be nice to have a way of constructing uniformly hyperbolic $\on{SL}_2 \C$ local systems on $\divd{\Sigma}$. Thanks to Proposition~\ref{unif-hyp-descent}, we can do the construction locally, in any well-cut flow box $H \times L \subset \divd{\Sigma}$ that intersects every leaf of $\frakd{\Sigma}$.

Recall that $\alpha \maps \nom{H} \to \nom{H}$ is the fractured version of an interval exchange. The parallel transport cocycle over $\alpha$ is constant on each of the exchanged intervals. Following the terminology of Appendix~\ref{markov-stable-lines}, I'll call this kind of cocycle a {\em Markov cocycle}. A local system on $\divd{\Sigma}$ is determined up to isomorphism by the parallel transport cocycle it induces over $\alpha$. Conversely, any Markov cocycle over $\alpha$ is the parallel transport cocycle of some local system on $\divd{\Sigma}$.

To get a sense of why the claims above are true, first recall that Theorem~\ref{loc-sys-equiv-2d} lets us pass from local systems on $\divd{\Sigma}$ to ones on $\Sigma \smallsetminus \mathfrak{B}$. Our conditions on $H \times L$ ensure that we can express $\Sigma \smallsetminus \mathfrak{B}$ as a suspension of the first return relation on $\iota^{-1} H$~\cite[\S 1.2.1]{lyap-teich}. Any local system on $\Sigma \smallsetminus \mathfrak{B}$ can be trivialized over the flow boxes that make up the suspension, with transition morphisms given by the parallel transport cocycle. Conversely, the transition morphisms given by an interval cocycle can be used to construct a local system over $\Sigma \smallsetminus \mathfrak{B}$.

Now all we need is a way of constructing uniformly hyperbolic Markov cocycles over $\alpha \maps \nom{H} \to \nom{H}$. Here's a method based on the ``cone field criterion,'' a topological characterization of uniform hyperbolicity~\cite[\S 2.1]{pos-exp-dense}. Fix a two-dimensional complex vector space $E$ with a volume form. Define $\mathcal{A}$, like we did in Section~\ref{divd-ex}, as the intersections of the intervals exchanged by $\alpha$ and the ones exchanged by $\alpha^{-1}$. Over each interval $J \in \mathcal{A}$, choose an open set $\Omega_J \subset \Proj E$ which is neither empty nor dense. The union
\[ \Omega = \bigcup_{J \in \mathcal{A}} J \times \Omega_J \]
is a cone field in $\nom{H} \times \Proj E$. I'll call it a {\em Markov cone field} over $\alpha$.

Following \cite{mult-uni-erg}, let's say an $\on{SL}(E)$ cocycle over $\alpha$ is {\em positive} with respect to $\Omega$ if its action maps $\overline{\Omega}$ into $\Omega$. (Here, $\overline{\blankbox}$ means closure.) By the cone field criterion, any positive cocycle is uniformly hyperbolic. For a Markov cocycle, positivity with respect to a Markov cone field is particularly easy to check, since both the cocycle and the cone field are constant on each interval in $\mathcal{A}$.

For a minimal interval exchange, the Markov cone field criterion works in both directions. Section~\ref{divd-minimality} offers convenient ways to ensure that $\alpha$ is minimal.
\begin{prop}
Suppose $\alpha$ is minimal. Then an $\on{SL}(E)$ cocycle over $\alpha$ is uniformly hyperbolic if and only if it's positive with respect to a Markov cone field over some power of $\alpha$.
\end{prop}
\begin{proof}
The ``if'' direction follows directly from the cone field criterion, as we just saw. For the ``only if'' direction, define $\mathcal{A}^n$ for $n \in \N_+$ as the intersections of the intervals exchanged by $\alpha^n$ and $\alpha^{-n}$. Because $\alpha$ is minimal, the intervals in $\mathcal{A}^1 \cup \mathcal{A}^2 \cup \mathcal{A}^3 \cup \ldots$ form a basis for the topology of $\nom{H}$.

Take a uniformly hyperbolic $\on{SL}(E)$ cocycle over $\alpha$, and call it $M$. By the cone field criterion, $M$ is positive with respect to some cone field $\Omega \subset \nom{H} \times \Proj E$. Write $\Omega$ as a union of the form
\[ \bigcup_{n \in \N_+} \left[ \bigcup_{J \in \mathcal{A}^n} J \times \Omega^n_J \right], \]
where each $\Omega^n_J$ is an open subset of $\Proj E$. For convenience, let
\[ \Omega^n = \bigcup_{J \in \mathcal{A}^n} J \times \Omega^n_J. \]
Since each element of $\mathcal{A}^{n+1}$ lies within an element of $\mathcal{A}^n$, we can arrange for
\[ \Omega^1 \subset \Omega^2 \subset \Omega^3 \subset \ldots \]
by enlarging the subsets $\Omega^{n+1}_J$ round by round. Notice that $\Omega^n$ is a Markov cone field over $\alpha^n$ as long as all the subsets $\Omega^n_J$ are nonempty.

Recall that $M$ is positive with respect to $\Omega$, which means $M\overline{\Omega} \subset \Omega$. That means $\Omega^1, \Omega^2, \Omega^3, \ldots$ form an open cover of $M\overline{\Omega}$. The space $\nom{H} \times \Proj E$ is compact, so $\overline{\Omega}$ is compact. The action of $M$ on $\nom{H} \times \Proj E$ is a homeomorphism, so $M\overline{\Omega}$ is compact too. It follows that $M\overline{\Omega} \subset \Omega^m$ for some $m \in \N_+$.

One consequence is that $M\Omega \subset \Omega^m$. Since $M\Omega$ is a cone field, all the subsets $\Omega^m_J$ must be nonempty, so $\Omega^m$ is a Markov cone field over $\alpha^m$. Another consequence is that $M\overline{\Omega^m} \subset \Omega^m$. In other words, $M$ is positive with respect to $\Omega^m$.
\end{proof}
\section{Abelianization in principle}\label{ab-princ}
\subsection{Overview}\label{ab-overview}
Now that we have the tools we need, we can turn again to our goal of extending abelianization to surfaces without punctures. At this point, it will be useful to take a closer look at the original description of abelianization, which is implicit in \cite[\S 10]{spec-nets}, but first appears explicitly in \cite[\S 4]{fenchel-nielsen}. For simplicity, we'll only talk about abelianization using translation structures, leaving aside Gaiotto, Moore, and Neitzke's more powerful and general {\em spectral networks}.
\subsubsection{Setting}
Our review takes place on a translation surface $\Sigma'$ which is compact except for a finite set of {\em punctures}. A puncture is a region homeomorphic to a punctured disk, with a translation structure taken from a certain family of shapes. This definition of a puncture is analogous to our earlier definition of a singularity. For simplicity, let's consider a translation surface whose punctures all have the simplest shape: a half-infinite cylinder. A complete list of puncture shapes, and an explanation of where they come from, can be found in Appendix~\ref{punk-shapes}.

Let's assume $\Sigma'$ has no saddle connections and at least one singularity.\footnote{A ``singularity'' of cone angle $2\pi$ counts.} In this case, if you follow a vertical leaf in some direction, your fate is easy to describe. If you're following a critical leaf in the critical direction, you will by definition fall into a singularity after a finite amount of time. Otherwise, you'll end up falling forever into a puncture; in our case, that means spiraling down the long end of a half-infinite cylinder.

Every non-critical leaf on $\Sigma'$ is thus associated with two punctures, not necessarily distinct: the punctures its ends spiral into. If you remove the critical leaves $\mathfrak{W}$, the surface $\Sigma'$ falls apart into infinite vertical strips of leaves that all go into the same punctures. Each strip is bounded by four critical leaves, joined at two singularities, as illustrated below. Gaiotto, Hollands, Moore, and Neitzke compactify the surface by filling in the punctures, so the closure of each strip becomes a quadrilateral with singularities as two of its vertices and punctures as the other two.
\begin{center}
\begin{tikzpicture}
\matrix[row sep=4mm, column sep=2cm]{
\foliate{pwbeige}{30}{60} (-1, -3) rectangle (1, 3);
\draw[pworange, line width=1.5pt] (-1, -3) -- (-1, 3);
\draw[pworange, line width=1.5pt] (1, -3) -- (1, 3);
\sing{(-1, 2/3)}{pworange}
\sing{(1, -2/3)}{pworange}
\fill[white, path fading=south] (-1.1, 3) rectangle (1.1, 2.8);
\fill[white, path fading=north] (1.1, -3) rectangle (-1.1, -2.8);
\draw[white] (-1.1, 3) -- (1.1, 3);
\draw[white] (-1.1, -3) -- (1.1, -3); &
\node {\includegraphics{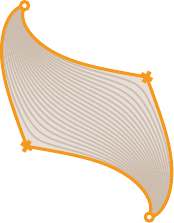}}; \\
\node {\small Strip}; &
\node {\small Compactified strip}; \\
};
\end{tikzpicture}
\end{center}
\subsubsection{Framings}
To abelianize a linear $\on{SL}_2 \C$ local system $\mathcal{E}$ on $\Sigma'$, we first need to give it a bit of extra structure, called a {\em framing}~\cite{fenchel-nielsen} (or {\em flag data}, in \cite{spec-nets}). A framing specifies a projectively flat section of $\mathcal{E}$ on a neighborhood of each puncture. For reasons that will become apparent later, I'll refer to the specified sections as {\em stable lines}. Framings always exist, because an operator on a finite-dimensional complex vector space always has at least one eigenvector.

The framing gives a pair of lines in every stalk of $\mathcal{E}$ over a non-critical leaf. One, which I'll call the {\em forward-stable} line, is gotten by following the forward vertical flow until you fall into a puncture, grabbing the stable line, and carrying it back by parallel transport. The {\em backward-stable} line is gotten in the same way by following the backward vertical flow. The forward- and backward-stable lines fit together into sections of $\mathcal{E}$ over every strip of $\Sigma' \smallsetminus \mathfrak{W}$.

The framing also gives a line in every stalk of $\mathcal{E}$ over a critical leaf---the stable line from the puncture the critical leaf falls into. This line matches the backward- or forward-stable lines in the strips on either side, depending on whether the leaf is forward- or backward-critical. Hence, as you cross the boundary between two strips, one of the stable lines stays fixed, although the other can change.
\begin{center}
\begin{tikzpicture}
\matrix[row sep=4mm]{
\node {\includegraphics[width=0.75\textwidth]{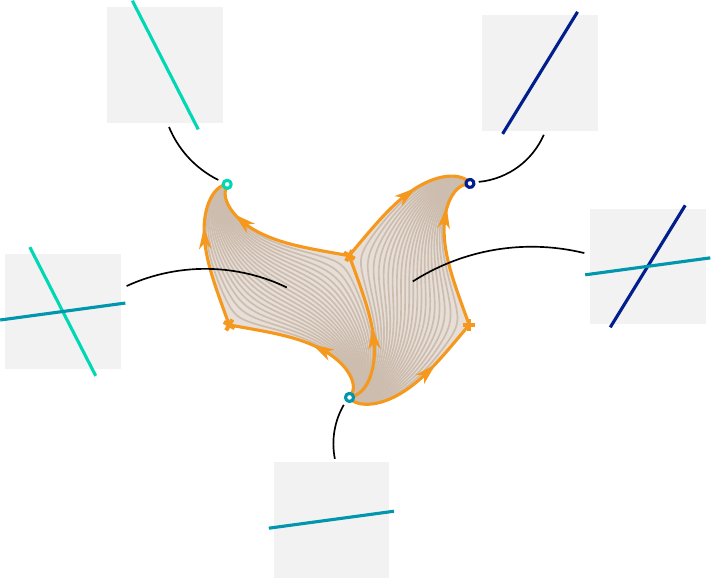}}; \\
\node {\small The stable lines over two adjacent strips}; \\
};
\end{tikzpicture}
\end{center}
\subsubsection{The WKB framing}
Although the leaves of $\Sigma'$ can plunge straight forward or backward into a puncture, they typically spiral in. This picks out a special framing, called the {\em WKB framing}, for each local system with hyperbolic or parabolic holonomies around the punctures~\cite[\S 6.5]{wkb}. When you're using abelianization to compute shear-bend coordinates, as discussed in Section~\ref{spec-coords}, the WKB framing is the one you need.

There are two projectively flat sections around each hyperbolic puncture. The stable line of the WKB framing is the one that contracts when you circle the puncture in the winding direction of the flow toward the puncture. Around a parabolic puncture, there's only one projectively flat section, and therefore only one possible choice of stable line.
\subsubsection{Abelianization}\label{punkd-ab}
For a generic local system, the forward and backward-stable lines over each strip are complementary, diagonalizing the structure group of $\mathcal{E}$ over $\Sigma' \smallsetminus \mathfrak{W}$. The changes in the stable lines at the boundaries between strips generally keep this diagonalization from extending over all of $\Sigma'$. At each critical leaf, however, there's a unique element of $\on{SL}_2 \C$ that sends the stable lines on one side to the stable lines on the other, acting by the identity on the line that stays fixed.
\begin{center}
\includegraphics[width=0.75\textwidth]{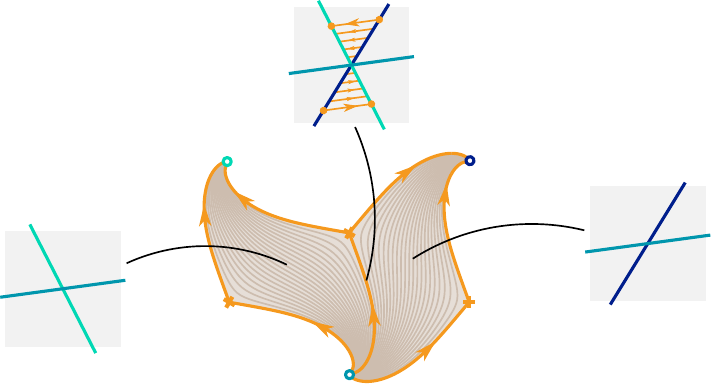}
\end{center}
By cutting $\mathcal{E}$ along the critical leaves, shifting it by this automorphism, and gluing it back together, we can match up the stable lines across the boundaries of the strips, so the diagonalization they give becomes global.
\begin{center}
\begin{tikzpicture}
\matrix{
\node {\includegraphics[width=0.95\textwidth]{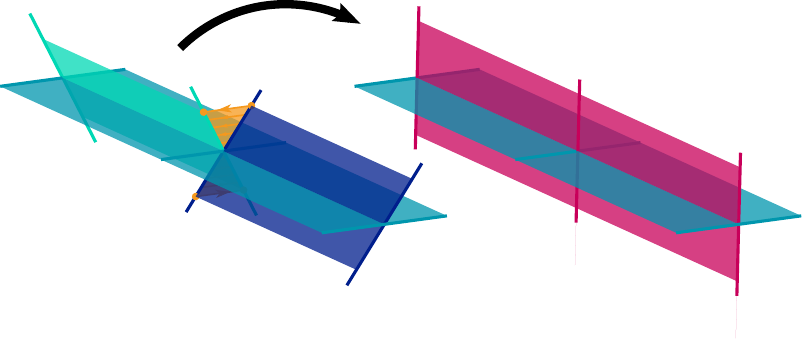}}; \\
\node {\small Cutting and gluing to match up the stable lines}; \\
};
\end{tikzpicture}
\end{center}
That's abelianization.
\subsubsection{Abelianization without punctures}\label{no-punks}
If we want to carry out the process above on a surface without punctures, there are two questions we have to settle. One is conceptual: what should play the role of the framing? If you've read Section~\ref{unif-hyp}, our suggestive terminology may have already told you the answer. When $\mathcal{E}$ is uniformly hyperbolic, it comes with complementary forward- and backward-stable lines over every non-critical leaf, which can be used as the forward- and backward-stable lines in the process above. The discussion in Section~\ref{unif-hyp-medians} amounts to saying that one of the stable lines stays fixed when you cross a critical leaf, so we can define the automorphisms over the critical leaves just as we did before.

The stable lines of a uniformly hyperbolic local system on a compact surface are analogous to the stable lines of a WKB-framed local system on a punctured surface, with hyperbolic holonomies around the punctures. In both cases, the stable lines are the ones that contract exponentially as you follow the vertical flow.

The other question is just a technical difficulty. On a compact translation surface with no saddle connections, every vertical leaf is dense, so how do we think about shifting $\mathcal{E}$ along the automorphisms over the critical leaves? How do we know the process is well-defined? How do we know the resulting local system has the diagonal subgroup of $\on{SL}_2 \C$ as its structure group? The three parts of this question are answered in Sections \ref{slithering}, \ref{ab-conv}, and \ref{ab-deliv}, respectively.
\subsection{Running assumptions}\label{running-assumptions}
The compact translation surface $\Sigma$ introduced in Section~\ref{running-notation} will, of course, stay with us. We'll discuss the abelianization of a fixed linear $\on{SL}_2 \C$ local system $\mathcal{E}$ on $\divd{\Sigma}$. Abelianization will yield a diagonalizable $\on{SL}_2 \C$ local system $\mathcal{F}$ and a stalkwise isomorphism $\Upsilon \maps \mathcal{E} \to \mathcal{F}$, supported on $\frakd{\Sigma}$.

From now on, we'll assume the following things:
\begin{itemize}
\item The surface $\Sigma$ has no saddle connections.\footnote{\label{minimality-note}This implies that $\Sigma$ is minimal, as I mentioned in Section~\ref{tras-surfs}.}
\item The local system $\mathcal{E}$ is uniformly hyperbolic.
\end{itemize}
All it takes to get rid of any saddle connections is an arbitrarily small rotation of the translation structure on $\Sigma$, as discussed in Section~\ref{tras-surfs}. Once you've fixed a saddle-connection-free translation structure, you can use the results of Section~\ref{construct-unif-hyp} to find lots of uniformly hyperbolic local systems.
\subsection{The slithering jump}\label{slithering}
\subsubsection{Overview}
Working on $\divd{\Sigma}$ gives us a convenient way to talk about the stable lines on either side of a critical leaf: using the identification in Section~\ref{unif-hyp-medians}, we can compare the stable lines over the left and right lanes. The automorphisms that match up the stable lines across the median can be encoded as a jump in the local system $\mathcal{E}$, as defined in Section~\ref{warping-on-divd}. This jump contains essentially the same information as {\em slithering maps} defined by Bonahon and Dreyer in \cite{hit-chars}, so we'll call it the {\em slithering jump}. We abelianize $\mathcal{E}$ by warping it along the deviation defined by the slithering jump. More explicitly, we abelianize $\mathcal{E}$ by carrying out the following steps:
\begin{enumerate}
\item Compute the slithering jump, using the formulas in Section~\ref{slithering-formulas}.
\item\label{shearing-devi} Turn the slithering jump into a deviation, as described in Section~\ref{jump-concept}.
\item Warp $\mathcal{E}$ along the deviation, as described in Section~\ref{warping-sheaves}.
\end{enumerate}
We'll prove in Sections \ref{ab-conv} and \ref{ab-deliv} that these instructions have the desired effect, as long as the conditions in Section~\ref{running-assumptions} are satisfied.
\subsubsection{Definition}
Consider a point $w$ on a backward-critical leaf of $\Sigma$. Because $\mathcal{E}_w$ is two-dimensional, and $\on{SL}_2 \C$ is the group of volume-preserving linear maps, there's a unique automorphism $s_w$ of $\mathcal{E}_w$ that sends $\mathcal{E}^-_{\rlane{w}}$ to $\mathcal{E}^-_{\llane{w}}$ and is the identity on $\mathcal{E}^+_w$. This induces an automorphism of $\mathcal{E}_U$ for any simple flow box $U$ containing $\median{w}$. If $w$ is on a forward-critical leaf instead of a backward-critical one, we can define $s_w$ in the same way, with the roles of the backward-stable and forward-stable lines reversed.

Given a divider $P$ in a simple flow box $U$, it's not hard to see that $s_w$ induces the same automorphism of $\mathcal{E}_U$ for every $w \in P$. Call this automorphism $s_P$. As $P$ varies over all dividers in all simple flow boxes, the automorphisms $s_P$ fit together into a jump $s$ in the local system $\mathcal{E}$---the {\em slithering jump}.
\subsubsection{Formulas}\label{slithering-formulas}
Assuming, for convenience, that $w$ is on a backward-critical leaf, we can get an explicit expression for $s_w$ by choosing representatives
\begin{align*}
u' & \in \mathcal{E}^-_{\llane{w}} &
v & \in \mathcal{E}^+_w &
u & \in \mathcal{E}^-_{\rlane{w}}.
\end{align*}
Observe that $\{v, u'\}$ and $\{v, u\}$ are ordered bases for $\mathcal{E}_w$. By rescaling $u'$ and $u$, we can ensure that both ordered bases have unit volume. The transformation $s_w$ is then given by\footnote{Given a pre-existing basis for $\mathcal{E}_w$, you can find $s_w$ by solving the matrix equation
\[ \left[ \begin{array}{c|c} \multirow{2}{*}{$v$} & \multirow{2}{*}{$u'$} \\ & \end{array} \right]
= s_w \left[ \begin{array}{c|c} \multirow{2}{*}{$v$} & \multirow{2}{*}{$u$} \\ & \end{array} \right], \]
which I have found convenient for numerical work.}
\begin{align*}
v & \mapsto v &
u & \mapsto u'.
\end{align*}
A quick calculation with the volume form $D$ gives the relation
\[ u' = u + D(u', u)\,v, \]
revealing that $s_w$ is a shear transformation whose off-diagonal part $s_w - 1$ is given by
\begin{align*}
v & \mapsto 0 &
u & \mapsto D(u',u)v.
\end{align*}
When $w$ is on a forward-critical leaf, similar expressions can be obtained.
\subsubsection{Flow invariance}\label{flow-invariance}
Suppose $w$ is on a backward-critical leaf. Because $s_w$ is defined in terms of the stable lines $\mathcal{E}^\pm_w$ and the volume form on $\mathcal{E}_w$, which are all invariant under the vertical flow, the diagram
\[ \xymatrix@C=15mm{
\mathcal{E}_{\psi^t w} & \ar[l]_{s_{\psi^t w}} \mathcal{E}_{\psi^t w} \\
\ar[u]^{\Psi^t} \mathcal{E}_w & \ar[l]^{s_w} \mathcal{E}_w \ar[u]_{\Psi^t}
} \]
commutes for all positive times $t$. If $w$ is on a forward-critical leaf, the same is true for all negative times.

This flow invariance property is not unique to the slithering jump. In fact, it holds for all jumps, as a direct consequence of the defining consistency condition. Its introduction has been delayed until now only for convenience.
\section{Abelianization converges}\label{ab-conv}
\subsection{Overview}\label{ab-conv-overview}
To show that the slithering jump defines a deviation $\sigma$, as discussed in Section~\ref{warping-on-divd}, we need to verify that the infinite product defining the automorphism $\sigma^U_{yx}$ converges for every simple flow box $U \subset \divd{\Sigma}$ and every pair of points $y, x \in \nom{U}$. Because jumps commute with restrictions, it's enough to check for convergence on a set of simple flow boxes that cover $\divd{\Sigma}$. We'll use the simple, well-cut flow boxes for this purpose.

Consider a simple, well-cut flow box $U = H \times L$ in $\divd{\Sigma}$, and let $E = \mathcal{E}_U$. Pick an inner product on $E$, so we can use the decay conditions given by the uniform hyperbolicity of $\mathcal{E}$. Denote the parallel transport cocycle and its stable distributions as we did in Section~\ref{local-unif-hyp}, extending them over medians as discussed in Section~\ref{unif-hyp-medians}. As usual, let $W \subset \pi H$ be the positions of the critical leaves, recalling that $\nom{H} = H \smallsetminus \iota W$. Label each divider $\{\median{w}\} \times L$ in $U$ by the point $w \in W$ it sits above. As we did in Section~\ref{divd-first-return}, let $B^+ \subset W^+$ and $B^- \subset W^-$ be the break points of $\alpha^{-1}$ and $\alpha$, respectively.

We'll keep using the shorthand $\blankbox \lesssim \blankbox$ to say that one positive function is bounded by a constant multiple of another, as we started doing in Section~\ref{unif-hyp-motive}.
\subsection{Bounding the jump}\label{bound-jump}
For any $b \in B^+$, we can choose representatives
\begin{align*}
u' & \in E^-_{\llane{b}} &
v & \in E^+_{\median{b}} &
u & \in E^-_{\rlane{b}}
\end{align*}
for which $D(v, u')$ and $D(v, u)$ are $1$ and conclude that $s_b - 1$ is given by
\begin{align*}
v & \mapsto 0 &
u & \mapsto D(u',u)\,v,
\end{align*}
as described in the previous section. Applying the flow invariance of jumps, we see that $s_{\alpha^n b} - 1$ is given by
\begin{align*}
A_{\median{b}}^n v & \mapsto 0 &
A_{\median{b}}^n u & \mapsto D(u',u)\,A_{\median{b}}^n v
\end{align*}
for all positive $n$.

We see from the formula above that $D(u',u)\,A_{\median{b}}^n v$ spans the image of $s_{\alpha^n b} - 1$. We also learn that the shortest vector $s_{\alpha^n b} - 1$ sends to $D(u',u)\,A_{\median{b}}^n v$ is the orthogonal projection of $A_{\median{b}}^n u$ onto the orthogonal complement of $A_{\median{b}}^n v$. From this, we can calculate the operator norm of $s_{\alpha^n b} - 1$:
\[ \|s_{\alpha^n b} - 1\| = \frac{|D(u',u)|}{d_\angle(A_{\median{b}}^n v, A_{\median{b}}^n u)}\,\frac{\|A_{\median{b}}^n v\|}{\|A_{\median{b}}^n u\|}, \]
where $d_\angle$ is the sine metric from Section~\ref{stable-lipschitz}. Rearranging a bit, we get
\[ \|s_{\alpha^n b} - 1\|
= \frac{|D(u',u)|}{d_\angle(A_{\median{b}}^n v, A_{\median{b}}^n u)}\,\frac{\|v\|}{\|u\|}
\cdot \frac{\|A_{\median{b}}^n v\|}{\|v\|}
\cdot \frac{\|u\|}{\|A_{\median{b}}^n u\|}. \]
Because $\mathcal{E}$ is uniformly hyperbolic, the first-return cocycle $A$ is uniformly hyperbolic too, as a consequence of Proposition~\ref{unif-hyp-descent}. Pick a bounding exponent $K > 0$ for $A$. The stable lines of $A$ vary continuously (Proposition~\ref{transport-stable-lipschitz}),\footnote{We need a division metric on $\nom{H}$ to apply Proposition~\ref{transport-stable-lipschitz}, so we're using the fact that the critical leaves of $\Sigma$ are dense in $U$. See footnote~\ref{division-pseudometric}.} and $\nom{H}$ is compact (Proposition~\ref{frakd-cpt-basis-1d}), so $d_\angle(A_{\median{b}}^n v, A_{\median{b}}^n u)$ is bounded away from zero. That and the uniform hyperbolicity of $A$ tell us that
\[ \|s_{\alpha^n b} - 1\| \lesssim e^{-2Kn} \]
over all $b \in B^+$ and $n \in \N$.

Applying the same reasoning in the other direction, we see more generally that
\[ \|s_{\alpha^{\pm n} b} - 1\| \lesssim e^{-2Kn} \]
over all $b \in B^\pm$ and $n \in \N$. Exchanging the roles of $u$ and $u'$, we see that the inverses of the jump automorphisms obey the same bound.
\subsection{Showing its product converges}\label{jumps-converge}
Recall that $\sigma$ is the deviation we're hoping will be defined by the slithering jump. Pick any two points $y, x \in \nom{U}$, with $y$ to the left of $x$. Since we're labeling the dividers in $U$ by points of $W$, we can think of $(y \mid x)^U$ as a subset of $W$, and write
\[ \sigma^U_{yx} = \prod_{w \in (y \mid x)^U} s_w. \]
Proposition~\ref{ord-prod-conv} in Appendix~\ref{ord-prod} tells us that this product converges if the sum
\[ C_{yx} = \sum_{w \in (y \mid x)^U} \|s_w - 1\| \]
does. (I've given the sum a name because its value, as a function of $y$ and $x$, will be useful to us later.)

Let's say every point in $(y \mid x)^U$ takes at least $n$ iterations of $\alpha$ or $\alpha^{-1}$ to hit a break point. Then the set
\[ \bigcup_{m \ge n} \left[ \vphantom{\bigcup} \{\alpha^m b : b \in B^+\} \cup \{\alpha^{-m} b : b \in B^-\} \right] \]
contains all the points in $(y \mid x)^U$, so
\[ C_{yx} \le \sum_{m \ge n} \left[ \sum_{b \in B^+} \|s_{\alpha^m b} - 1\| + \sum_{b \in B^-} \|s_{\alpha^{-m} b} - 1\| \right]. \]
Applying the bound from the previous section, we see that
\begin{align*}
C_{yx} & \lesssim \sum_{m \ge n} \left[ \sum_{b \in B^+} e^{-2Km} + \sum_{b \in B^-} e^{-2Km} \right] \\
& \lesssim \sum_{m \ge n} e^{-2Km}.
\end{align*}
Hence, the sum defining $C_{yx}$ converges.

Summing the geometric series, we learn that $C_{yx} \lesssim e^{-2Kn}$. But $n$ is the grade of the highest point in $(y \mid x)^U$, so $e^{-2Kn}$ is the distance between $y$ and $x$ in the division metric with steepness $e^{2K}$! The implied constant multiple in the bound above doesn't depend on $y$ and $x$, so we've proven that $C_{yx} \lesssim d(y, x)$ over all $y, x \in \nom{U}$ with $y$ to the left of $x$.

For symmetry, define
\[ C_{yx} = \sum_{w \in (x \mid y)^U} \|s_w^{-1} - 1\| \]
for $y$ to the right of $x$. Since the jump automorphisms and their inverses decay at the same rate, we can show in general that $C_{yx} \lesssim d(y, x)$ over all $y, x \in \nom{U}$.
\section{Abelianization delivers}\label{ab-deliv}
\subsection{Overview}
Now we know the slithering jump defines a deviation $\sigma$, so we can warp $\mathcal{E}$ along this deviation to produce a new local system $\mathcal{F}$ and a stalkwise isomorphism $\Upsilon \maps \mathcal{E} \to \mathcal{F}$, supported on $\frakd{\Sigma}$. By design, $\Upsilon$ matches up the stable lines of $\mathcal{E}$ across the medians of $\divd{\Sigma}$, sending corresponding stable lines in $\mathcal{E}_\llane{w}$ and $\mathcal{E}_\rlane{w}$ to the same line in $\mathcal{F}_\median{w}$ for all $w \in \mathfrak{W}$. To diagonalize the structure group of $\mathcal{F}$, we need to prove that $\Upsilon$ matches up the stable lines on larger scales. For any simple flow box $U \subset \divd{\Sigma}$, we have to show that $\Upsilon$ sends the corresponding stable lines in all the stalks of $\mathcal{E}$ over $\nom{U}$ to the stalk restrictions of a single line in $\mathcal{F}_U$. Because of the way deviations restrict, it's enough to prove the desired result on a set of simple flow boxes that cover $\divd{\Sigma}$, just like in Section~\ref{ab-conv}. Once again, we'll use the simple, well-cut flow boxes for this purpose.

We'll keep all the notation from Section~\ref{ab-conv}, and add to it the shorthand $F = \mathcal{F}_U$. To make the geometry facts from Appendix~\ref{euclidean} available, scale the inner product on $E$ so that the unit square has unit volume. To make the results from Sections \ref{ab-conv} and \ref{lipschitz-functions} available, pick a bounding exponent $K > 0$ for $A$, and give $\nom{H}$ the division metric\footnote{\label{division-pseudometric}For our definition of the division metric to yield a true metric, rather than a pseudometric, we need $W$ to be dense in $\pi H$. Equivalently, we need the critical leaves of $\Sigma$ to be dense in $U$. Footnote~\ref{minimality-note} guarantees this.} with steepness $e^{2K}$.

The argument we're about to do is somewhat technical, so let's first recall how it works over a punctured surface $\Sigma'$, where it's very straightforward. Think of the stable lines of $\mathcal{E}$ as lines in $E$ parameterized by the points of $\nom{U}$, and think of their images in $\mathcal{F}$ under $\Upsilon$ as lines in $F$. The stable lines are constant in $E$ away from the critical leaves of $\Sigma'$, and the slithering jump only disturbs $\mathcal{E}$ at the critical leaves. Hence, the images under $\Upsilon$ of the stable lines are constant in $F$ away from the critical leaves. By design, the images of the stable lines under $\Upsilon$ are also constant across the critical leaves, so they must be constant everywhere. In other words, $\Upsilon$ matches up the images of the stable lines in $F$ all across $\nom{U}$.

This argument hinges on the fact that the images of the stable lines are ``rigid,'' in the sense that they're locally constant away from the critical leaves. On the unpunctured surface $\Sigma$, it's too hard to get away from the critical leaves for that notion of rigidity to make sense. Fortunately, as we saw in Section~\ref{lipschitz-functions}, the fractured surface $\frakd{\Sigma}$ comes with its own notion of rigidity: being locally Lipschitz in the horizontal direction. Theorem~\ref{no-bulk-flex} lets us plug this notion of rigidity into the argument we used over $\Sigma'$.

We know that the stable lines over $\frakd{\Sigma}$ are rigid (Section~\ref{stable-lipschitz}). We'll show that the images of the stable lines are rigid as well (Sections \ref{after-ab} \thru \ref{still-lipschitz}). We'll then see, with no fuss, that the images of the stable lines are constant (Section~\ref{ab-const}).
\subsection{The stable distributions after abelianization}\label{after-ab}
Recall that warping $\mathcal{E}$ along $\sigma$ has given us a new local system $\mathcal{F}$ and a stalkwise isomorphism $\Upsilon \maps \mathcal{E} \to \mathcal{F}$, supported on $\frakd{\Sigma}$. We're using the shorthand $F = \mathcal{F}_U$. Because $U$ is simple, the stalk restrictions of $\mathcal{E}$ and $\mathcal{F}$ identify $\mathcal{E}_p$ with $E$ and $\mathcal{F}_p$ with $F$ for every $p \in \nom{U}$. We can thus view $\Upsilon$ as a map from $\nom{U}$ to $\on{SL}(E, F)$. Because $\sigma$ comes from a jump, $\Upsilon$ is constant along the vertical leaves of $\frakd{\Sigma}$, so in fact we can treat $\Upsilon$ as a map from $\nom{H}$ to $\on{SL}(E, F)$. This section takes place entirely within the flow box $U$, so we'll abbreviate $\sigma^U_{yx}$ as $\sigma_{yx}$ and $(y \mid x)^U$ as $(y \mid x)$.

Just as parallel transport in $\mathcal{E}$ along the vertical flow gave the linear cocycle $A \maps \nom{H} \to \on{SL}(E)$, parallel transport in $\mathcal{F}$ along the vertical flow gives a linear cocycle $\nom{H} \to \on{SL}(F)$. Define $F^\pm_h \subset F$ as the images of the lines $E^\pm_h$ under $\Upsilon_h$. Like $E^\pm$, the distributions $F^\pm$ are invariant under the parallel transport cocycle.\footnote{As a matter of fact, $\mathcal{F}$ should be uniformly hyperbolic, with $F^\pm$ as its stable distributions. We don't need to know that, though.} Let's put an inner product on $F$ by declaring $\Upsilon_a$, for some arbitrary $a \in \nom{H}$, to be an isometry. We then get a sine metric on $\Proj F$, and we can ask whether the functions $F^\pm$ are Lipschitz.\footnote{In light of the previous footnote, you might hope to show that $F^\pm$ are Lipschitz the same way we showed that $E^\pm$ are Lipschitz, by applying Proposition~\ref{markov-stable-lipschitz}. The snag is that the parallel transport cocycle for $F$ typically won't be a Markov cocycle.}

Recalling that $\Upsilon_h = \Upsilon_a \sigma_{ah}$, we see that $F^\pm_h = \Upsilon_a \sigma_{ah} E^\pm_h$ for all $h \in \nom{H}$. Since $\Upsilon_a$ is, by definition, an isometry,
\[ d_\angle(F^\pm_y, F^\pm_x) = d_\angle(\sigma_{ay} E^\pm_y, \sigma_{ax} E^\pm_x) \]
for all $y, x \in \nom{H}$. We might therefore be able to prove that $F^\pm$ are Lipschitz by looking at how $\sigma^U$ affects distances in $\mathbf{P}E$.
\subsection{The abelianized stable distributions are still Lipschitz}\label{still-lipschitz}
\subsubsection{The deviation between nearby points is close to the identity}
Remember the sum $C_{yx}$ we used in Section~\ref{jumps-converge}? We'll soon see that $\|\sigma_{yx} - 1\| \lesssim C_{yx}$ over all $y, x \in \nom{H}$. Combining this with the bound $C_{yx} \lesssim d(y, x)$ proven at the end of Section~\ref{jumps-converge}, we'll learn that
\[ \|\sigma_{yx} - 1\| \lesssim d(y, x) \]
over all $y, x \in \nom{H}$.

Let's get down to business. Recall that
\begin{align*}
\sigma_{yx} & = \prod_{w \in (y \mid x)} s_w & C_{yx} & = \sum_{w \in (y \mid x)^U} \|s_w - 1\|,
\end{align*}
assuming for convenience that $y$ lies to the left of $x$. Knowing that $C_{yx}$ converges, we can use Proposition~\ref{ord-prod-near-id} to bound $\|\sigma_{yx} - 1\|$. The norm of a volume-preserving transformation is always at least one, so the bound simplifies to
\[ \|\sigma_{yx} - 1\| \le C_{yx} \left(\prod_{w \in (y \mid x)} \|s_w\|\right). \]
Applying Proposition~\ref{ord-prod-bound}, we learn that
\[ \|\sigma_{yx} - 1\| \le C_{yx} \exp C_{yx}. \]
Since $C_{yx} \lesssim d(y, x)$, and distances in $\nom{H}$ are bounded, we can bound $\exp C_{yx}$ by a constant. Hence, $\|\sigma_{yx} - 1\| \lesssim C_{yx}$. Although we've been assuming that $y$ lies to the left of $x$, similar reasoning leads to the same conclusion when $y$ lies to the right. It follows, as explained at the beginning of the section, that
\[ \|\sigma_{yx} - 1\| \lesssim d(y, x) \]
over all $y, x \in \nom{H}$.
\subsubsection{The abelianized stable distributions are Lipschitz}
We know from Section~\ref{stable-lipschitz} that $E^\pm$ are Lipschitz. For any $y, x \in \nom{H}$, as discussed in Section~\ref{after-ab},
\begin{align*}
d_\angle(F^\pm_y, F^\pm_x) & = d_\angle(\sigma_{ay} E^\pm_y, \sigma_{ax} E^\pm_x) \\
& = d_\angle(\sigma_{ay} E^\pm_y, \sigma_{ay} \sigma_{yx} E^\pm_x).
\end{align*}
Proposition~\ref{expansion-bound} gives
\[ d_\angle(F^\pm_y, F^\pm_x) \le \|\sigma_{ya}\|^2\;d_\angle(E^\pm_y, \sigma_{yx} E^\pm_x). \]
Because distances in $\nom{H}$ are bounded, the result of the previous section ensures that $\|\sigma_{ya}\|$ is bounded as well. Therefore,
\begin{align*}
d_\angle(F^\pm_y, F^\pm_x) & \lesssim d_\angle(E^\pm_y, \sigma_{yx} E^\pm_x) \\
& \le d_\angle(E^\pm_y, E^\pm_x) + d_\angle(E^\pm_x, \sigma_{yx} E^\pm_x)
\end{align*}
over all $y, x \in \nom{H}$. Proposition~\ref{motion-bound} combines with the bound from the previous section to show that
\begin{align*}
d_\angle(E^\pm_x, \sigma_{yx} E^\pm_x) & \le \|\sigma_{yx} - 1\| \\
& \lesssim d(y, x).
\end{align*}
Meanwhile, the Lipschitz continuity of $E^\pm$ gives
\[ d_\angle(E^\pm_y, E^\pm_x) \lesssim d(y, x). \]
Therefore, altogether,
\[ d_\angle(F^\pm_y, F^\pm_x) \lesssim d(y, x) \]
over all $y, x \in \nom{H}$. In other words, the abelianized stable distributions $F^\pm$ are Lipschitz.
\subsection{The abelianized stable distributions are constant}\label{ab-const}
By construction, the values of the functions $F^\pm \maps \nom{H} \to \mathbf{P}F$ match at adjacent edge points, in the sense of Theorem~\ref{no-bulk-flex}. Since we just saw that $F^\pm$ are Lipschitz, Theorem~\ref{no-bulk-flex} tells us that $F^\pm$ are constant.

Globally, this means the stable distributions $\mathcal{F}^\pm$ are constant with respect to the local system $\mathcal{F}$. Thus, they reduce the structure group of $\mathcal{F}$ to the diagonal subgroup of $\on{SL}_2 \C$.
\section{A quick example}\label{toy-example}
\subsection{Overview}
Now that we've proven abelianization works, let's see an example of what it does. The calculations in this section aren't rigorous, but I'll try to indicate what it would take to make them rigorous.
\subsection{Setting the scene}
\subsubsection{A translation surface}\label{exmp-transl-surf}
Construct a torus with a translation structure by gluing the opposite sides of a parallelogram, inserting a singularity of cone angle $2\pi$ at the corner. For concreteness, let's fix the base of the parallelogram to be horizontal with length one, and set the height to be one as well. This leaves only one degree of freedom in the translation structure: the slope parameter $m$ labeled in the drawing below.
\begin{center}
\begin{tikzpicture}
\spiralparallelogram
\end{tikzpicture}
\end{center}
The torus has one forward-critical leaf and one backward-critical leaf. The drawing follows the critical leaves a little ways out from the singularity, so you can get an idea of how they wind around the surface. We'll assume $m$ is irrational, ensuring that neither critical leaf is a saddle connection.

The details of the computation depend on which way the parallelogram is leaning---a first hint of something like the cluster variety structure mentioned in Section~\ref{shear-intro}. To match the drawings, we'll show the work for the left-leaning case.
\subsubsection{A variety of local systems}
An $\on{SL}_2 \C$ local system on the torus minus the singularity is specified, up to isomorphism, by the group elements $A, B \in \on{SL}_2 \C$ that describe the parallel transport across the sides of the parallelogram, as shown in the drawing. Any values of $A$ and $B$ are possible.

Let's restrict ourselves to the dense open subset of the character stack in which $B$ has distinct eigenvalues. In this region, we can hit every isomorphism class using group elements of the form
\begin{align*}
A & = \left[ \begin{array}{cc} \mu & \rho \\ \rho & \nu \end{array} \right] &
B & = \left[ \begin{array}{cc} \lambda & \cdot \\ \cdot & \tfrac{1}{\lambda} \end{array} \right],
\end{align*}
where $|\lambda| < 1$. The ordinary points---the isomorphism classes of irreducible local systems---are the ones with $\mu \nu \neq 1$. Restricting further to the dense open subset in which $\mu \nu \notin (-\infty, 1]$, we can make $\rho$ a holomorphic function of $\mu$ and $\nu$ by noting that $\det A = 1$ and imposing the additional constraint $\Re \rho > 0$. This gives a holomorphic parameterization of a dense open subset of the character variety by the three variables $\mu$, $\nu$, and $\lambda$, which vary over the domain
\begin{align*}
\mu \nu \notin (-\infty, 1] & & |\lambda| < 1.
\end{align*}
\subsubsection{Reduction to an interval exchange}
Let $Z$ be the horizontal segment running across the middle of the parallelogram. Under the vertical flow, $Z$ sweeps out a simple flow box that covers almost the whole torus. The vertical edge of the flow box is non-critical, so we can compute the abelianized local system just by looking at the parallel transport cocycle over $Z$. For this purpose, we'll mostly carry on with the notation from Section~\ref{ab-conv}.
\begin{center}
\begin{tikzpicture}
\returnparallelogram
\end{tikzpicture}
\end{center}

Identify $Z$ with $(-1, 0)$. The first return relation $\alpha$ has a single break point, $b = -\tfrac{m}{2}$. Its inverse $\alpha^{-1}$ has break point $c = -1 + \tfrac{m}{2}$. Because $Z$ isn't well-cut, the break points aren't the only points where $\alpha$ and $\alpha^{-1}$ return nothing: $-m$ and $-1 + m$ vanish under the actions of $\alpha$ and $\alpha^{-1}$ as well. We aren't calling the latter break points because they don't lie on critical leaves.

The forward parallel transport cocycle is constant on the intervals
\begin{align*}
\left(-1, -m \vphantom{\tfrac{m}{2}}\right) & &  \left(-m, -\tfrac{m}{2}\right) & & \left(-\tfrac{m}{2}, 0\right),
\end{align*}
where it has the values
\begin{align*}
B & & A^{-1}B & & BA^{-1},
\end{align*}
respectively. You might be able to use the Markov cone field criterion from Section~\ref{construct-unif-hyp} to figure out which values of $\mu, \nu$ and $\lambda$ make the parallel transport cocycle uniformly hyperbolic, but it doesn't seem straightforward. Let's just assume from now on that we've chosen a uniformly hyperbolic local system.
\subsection{Abelianization}
\subsubsection{Approximation}
There isn't an obvious way to compute the abelianized local system exactly, but there is a pretty obvious way to approximate it when $m$ is tiny. As before, we'll show the work for the left-leaning case.
\subsubsection{The slithering jumps at the break points}
As $m$ approaches zero, the sequence of $\on{SL}_2 \C$ elements generated by repeatedly applying the forward parallel transport cocycle to $\rlane{b}$ approaches
\[ \ldots B, B, B, B, BA^{-1}, \]
in the sense that it takes more and more iterations to deviate from this sequence. Using Proposition~\ref{stable-line-appx}, you can deduce from this that the forward-stable line $E^+_{\rlane{b}}$ approaches the line spanned by
\[ A \left[ \begin{array}{c} 1 \\ 0 \end{array} \right] =
\left[ \begin{array}{c} \mu \\ \rho \end{array} \right] \sim
\left[ \begin{array}{c} 1 \\ \tfrac{\rho}{\mu} \end{array} \right] \]
Similarly, applying the forward parallel transport cocycle to $\llane{b}$ yields a sequence approaching
\[ \ldots B, B, B, B, A^{-1}B \]
as $m$ goes to zero, so $E^+_{\llane{b}}$ approaches the span of
\[ B^{-1}A \left[ \begin{array}{c} 1 \\ 0 \end{array} \right] =
\left[ \begin{array}{c} \tfrac{1}{\lambda}\,\mu \\ \lambda\,\rho \end{array} \right] \sim
\left[ \begin{array}{c} \hphantom{\lambda^2}\,1 \\ \lambda^2\,\tfrac{\rho}{\mu} \end{array} \right]. \]
On the other hand, applying the backward parallel transport cocycle to $b$ yields a sequence approaching
\[ \ldots B^{-1}, B^{-1}, B^{-1}, B^{-1}, B^{-1}, \]
so $E^-_{b}$ goes to the span of
\[ \left[ \begin{array}{c} 0 \\ 1 \end{array} \right]. \]
The slithering jump $s_{b}$ therefore approaches
\[ \left[ \begin{array}{cc}
\hphantom{(\lambda^2 - 1)} 1 & \cdot \\
(\lambda^2 - 1)\tfrac{\rho}{\mu} & 1
\end{array} \right] \]
as $m$ goes to zero. A similar computation shows that $s_c$ approaches
\[ \left[ \begin{array}{cc}
1 & (\lambda^2 - 1)\tfrac{\rho}{\mu} \\
\cdot & \hphantom{(\lambda^2 - 1)} 1
\end{array} \right] \]
in the same limit.
\subsubsection{The slithering jumps at all the critical points}
The slithering jumps at all the critical points can be deduced from the ones at the break points using the flow-invariance property discussed in Section~\ref{flow-invariance}. For each $n \ge 0$, the jump $s_{\alpha^{-n} b}$ at the forward-critical point $\alpha^{-n} b$ goes to
\[ B^{-n} s_b B^n =
\left[ \begin{array}{cc}
\hphantom{\lambda^{2n}(\lambda^2 - 1)} 1 & \cdot \\
\lambda^{2n}(\lambda^2 - 1)\tfrac{\rho}{\mu} & 1
\end{array} \right] \]
as $m$ goes to zero. The jump $s_{\alpha^n c}$ at the backward-critical point $\alpha^n c$ goes to
\[ B^n s_c B^{-n} = \left[ \begin{array}{cc}
1 & \lambda^{2n}(\lambda^2 - 1)\tfrac{\rho}{\mu} \\
\cdot & \hphantom{\lambda^{2n}(\lambda^2 - 1)} 1
\end{array} \right] \]
in the same limit.
\subsubsection{The slithering deviation}
Looking back at the drawing in Section~\ref{exmp-transl-surf}, you can see that forward-critical points $\alpha^{-n} b$ march from right to left across $Z$ as $n$ grows, while the backward-critical points $\alpha^n c$ march from left to right. As $m$ goes to zero, it takes longer and longer for the parades to meet at $-\tfrac{1}{2} \in Z$. If we can find a common bounding exponent for all the parallel transport cocycles we pass through, the bound from Section~\ref{bound-jump} will hold uniformly as $m$ goes to zero, so we shouldn't have to worry too much about the later jumps. We can therefore compute as though the parades never meet.

In this approximation, the deviation to $-\tfrac{1}{2}$ from $0$ is given by the product
\[ \cdots \quad s_{\alpha^{-3} b} \quad s_{\alpha^{-2} b} \quad s_{\alpha^{-1} b} \quad s_b, \]
and the deviation to $-1$ from $-\tfrac{1}{2}$ is given by
\[ s_c \quad s_{\alpha^1 c} \quad s_{\alpha^2 c} \quad s_{\alpha^3 c} \quad \cdots. \]
As $m$ goes to zero, the jumps $s_{\alpha^{-n} b}$ go to commuting shears, so you should be able to show that their product approaches
\[ \left[ \begin{array}{cc}
1 \hphantom{(\lambda^2 - 1) \sum_{n = 0}^\infty \lambda^{2n}} & \cdot \\
\tfrac{\rho}{\mu} (\lambda^2 - 1) \sum_{n = 0}^\infty \lambda^{2n} & 1
\end{array} \right] =
\left[ \begin{array}{cc}
\hphantom{-} 1 & \cdot \\
-\tfrac{\rho}{\mu} & 1
\end{array} \right] \]
Similarly, the product of the jumps $s_{\alpha^n c}$ should approach
\[ \left[ \begin{array}{cc}
1 & \tfrac{\rho}{\mu}(\lambda^2 - 1) \sum_{n = 0}^\infty \lambda^{2n} \\
\cdot & 1 \hphantom{(\lambda^2 - 1) \sum_{n = 0}^\infty \lambda^{2n}}
\end{array} \right] =
\left[ \begin{array}{cc}
1 & -\tfrac{\rho}{\mu} \\
\cdot & \hphantom{-} 1
\end{array} \right] \]

Now we know enough to approximate the holonomy $A_\text{ab}$ of the abelianized local system around the loop that starts at $-\tfrac{1}{2}$, runs left to $-1$, wraps around to $0$, and runs left back to $-\tfrac{1}{2}$. As $m$ goes to zero, this holonomy approaches
\[ \left[ \begin{array}{cc}
\hphantom{-} 1 & \cdot \\
-\tfrac{\rho}{\mu} & 1
\end{array} \right]
A
\left[ \begin{array}{cc}
1 & -\tfrac{\rho}{\mu} \\
\cdot & \hphantom{-} 1
\end{array} \right] =
\left[ \begin{array}{cc}
\mu & \cdot \\
\cdot & \tfrac{1}{\mu}
\end{array} \right]. \]
As expected, the abelianized holonomy preserves the stable lines $E^+_{-1/2}$ and $E^-_{-1/2}$, which approach
\[ \left[ \begin{array}{c} 1 \\ 0 \end{array} \right]
\quad \text{and} \quad
\left[ \begin{array}{c} 0 \\ 1 \end{array} \right]
\]
as $m$ goes to zero. In the limit, abelianization has no effect on the holonomy around a vertical loop, so $B_\text{ab}$ goes to $B$ as $m$ goes to zero.
\subsection{Limiting coordinates}
We've learned that on the translation torus constructed from a left-leaning parallelogram with slope parameter $m$, the abelianization of the local system
\begin{align*}
A & = \left[ \begin{array}{cc} \mu & \rho \\ \rho & \nu \end{array} \right] &
B & = \left[ \begin{array}{cc} \lambda & \cdot \\ \cdot & \tfrac{1}{\lambda} \end{array} \right]
\end{align*}
approaches
\begin{align*}
A_\text{ab} & = \left[ \begin{array}{cc} \mu & \cdot \\ \cdot & \tfrac{1}{\mu} \end{array} \right] &
B_\text{ab} & = \left[ \begin{array}{cc} \lambda & \cdot \\ \cdot & \tfrac{1}{\lambda} \end{array} \right]
\end{align*}
as $m$ goes to zero. For a right-leaning parallelogram, the analogous calculation shows that the abelianization approaches
\begin{align*}
A_\text{ab} & = \left[ \begin{array}{cc} \tfrac{1}{\nu} & \cdot \\ \cdot & \nu \end{array} \right] &
B_\text{ab} & = \left[ \begin{array}{cc} \lambda & \cdot \\ \cdot & \tfrac{1}{\lambda} \end{array} \right]
\end{align*}
as $m$ goes to zero.

As expected, the abelianized local system diagonalizes along the forward- and backward-stable lines of the original. Using the embedding
\begin{align*}
\C^\times & \to \on{SL}_2 \C \\
\xi & \mapsto \left[ \begin{array}{cc} \xi & \cdot \\ \cdot & \tfrac{1}{\xi} \end{array} \right],
\end{align*}
we can see it as a $\C^\times$ local system. Its limiting holonomies are
\begin{align*}
A_\text{ab} & = \mu &
B_\text{ab} & = \lambda
\end{align*}
in the left-leaning case, and
\begin{align*}
A_\text{ab} & = \tfrac{1}{\nu} &
B_\text{ab} & = \lambda
\end{align*}
in the right-leaning case. Looking at both limits, we can recover the holomorphic coordinates $\mu$, $\nu$, and $\lambda$ that we've been using to parameterize a dense open subset of the $\on{SL}_2 \C$ character variety.
\section{The shear parameterization}\label{shear-params}
\subsection{Overview}
In Section~\ref{toy-example}, we approximately abelianized a local system on a punctured torus by direct computation. We wrote down the slithering jumps and composed them, mostly in the right order, to find an approximation of the slithering deviation. Then we used that deviation to construct a diagonalizable local system.

In this section, we'll abelianize the local system of charts on a compact hyperbolic surface by working in the opposite direction. We'll use a construction from hyperbolic geometry to produce a diagonalizable $\on{SL}_2 \R$ local system, and a densely defined stalkwise isomorphism to the new local system from the local system of charts. Then we'll show that the deviation of that stalkwise isomorphism is the slithering deviation coming from a certain translation structure.

This example is important because it demonstrates the relationship between abelianization and the shear parameterization, summarized in Theorem~\ref{shear-access}. I'll show in Section~\ref{ab-vs-shears} that the abelianized local system of charts neatly encodes the shear parameters of the hyperbolic surface.

I expect this section to be most interesting to readers already familiar with the shear parameterization, so I won't pause to define technical terms. I'll try, though, to provide a few entry-level references.
\subsection{Collapsing a hyperbolic surface}\label{collapsing}
Take a compact hyperbolic surface $C_\text{hyp}$ and equip it with a measured maximal geodesic lamination $\mathcal{V}$, being sure to choose a lamination with no leaves that are closed geodesics~\cite{glam}. Build the corresponding horocyclic measured foliation $\mathcal{H}$ according to the plans in \cite[\S 3.1]{length-convexity}. Using the process Gupta outlines in \cite[\S 3.1]{asympt-of-grafting}, collapse the hyperbolic surface $C_\text{hyp}$ to a half-translation surface $C_\text{flat}$, producing a quotient map $G \maps C_\text{hyp} \to C_\text{flat}$ that sends $\mathcal{V}$ and $\mathcal{H}$ to the vertical and horizontal foliations of $C_\text{flat}$.\footnote{In \cite[\S 6.4]{warping-geom}, I call this process ``deflation,'' unaware of Gupta's prior terminology. Gupta mentions that $\mathcal{H}$ can be modified to cover the central regions of the triangles complementary to $\mathcal{V}$, but there's no need to do that before collapsing. Indeed, Gupta later mentions ``the central (unfoliated) region of each ideal triangle'' while describing the collapsing process. In \cite{asympt-of-grafting}, it doesn't matter, but here it will be best to leave $\mathcal{H}$ unmodified.} Under $G$, each non-edge leaf of $\mathcal{V}$ maps isometrically to a non-critical vertical leaf of $C_\text{flat}$, and each edge leaf maps isometrically to a pair of critical leaves joined at a singularity, as pictured below.\footnote{By the ``edge leaves,'' I mean the leaves bounding the ideal triangles that make up the complement of $\mathcal{V}$.}
\begin{center}
\begin{tikzpicture}[nodes={align=center}]
\matrix[row sep=2cm, column sep=2cm, nodes={outer sep=2mm}]{
\node (go) {\includegraphics{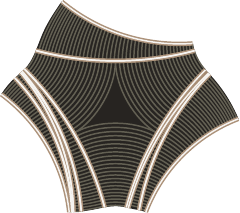}}; &
\node (gone) {\includegraphics{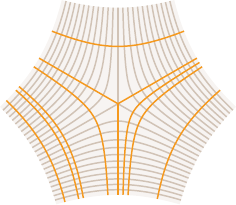}};
\\
};
\path[
  commutative diagrams/.cd, every arrow, every label,
  nodes={fill=white, inner sep=1mm, outer sep=0.5mm}
]
  (go) edge[swap] node {$G$} (gone);
\end{tikzpicture}
\end{center}
At the center of any ideal triangle, you can find an open triangular region bounded by horocycles around the vertices. I'll call this region the {\em contact triangle}, after its Euclidean analogue. The map $G$ sends each contact triangle in the complement of $\mathcal{V}$ to a singularity of $C_\text{flat}$.

Our procedure for dividing a translation surface, introduced in Section~\ref{dividing}, can be applied just as well to a half-translation surface. Let $\divd{C}_\text{flat}$ be the divided version of $C_\text{flat}$, and let $g \maps \mathcal{V} \to \divd{C}_\text{flat}$ be the unique continuous map that commutes with $G \maps C_\text{hyp} \to C_\text{flat}$ in the diagram below.
\begin{center}
\begin{tikzpicture}[nodes={align=center}]
\matrix[row sep=2cm, column sep=2cm, nodes={outer sep=2mm}]{
\node (lam) {\includegraphics{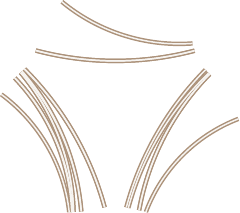}}; &
\node (going) {\includegraphics{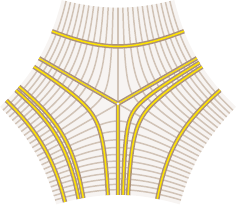}};
\\
\node (go) {\includegraphics{figures/fattened.pdf}}; &
\node (gone) {\includegraphics{figures/deflated.pdf}};
\\
};
\path[
  commutative diagrams/.cd, every arrow, every label,
  nodes={fill=white, inner sep=1mm, outer sep=0.5mm}
]
  (lam) edge node {$g$} (going)
  (going) edge node {$\pi$} (gone)
  (lam) edge[right hook->] node {} (go)
  (go) edge[swap] node {$G$} (gone);
\end{tikzpicture}
\end{center}
The divided surface $\divd{C}_\text{flat}$ is what you might imagine the collapsing hyperbolic surface $C_\text{hyp}$ looks like in the instant before it flattens out. The lanes of the critical leaves are the edges of the complementary triangles, about to fuse together. The medians that separate them are the last vestiges of the interiors of the triangles.
\subsection{The twisted local system of spin charts}
\subsubsection{Construction}\label{construct-spin-charts}
As I mentioned in Section~\ref{context}, the local isometries from $C_\text{hyp}$ to the hyperbolic plane form a $\on{PSL}_2 \R$ local system, which I'll call the ``local system of charts.'' It lifts, in a canonical way, to a twisted $\on{SL}_2 \R$ local system, which I'll call the ``twisted local system of spin charts''~\cite[\S 23]{curves-on-surfaces}\cite[\S 3.3.1]{warping-geom}. The twisted local system of spin charts will play a starring role in this section, so I'd like to describe it in some detail, building on the treatment in \cite{warping-geom}.

Throughout this section, we'll model the hyperbolic plane $\Hyp$ as the upper half-plane in $\Proj\C^2$. This identifies its ideal boundary with $\Proj \R^2$, and its isometry group with $\on{PSL}_2 \R$. Fix a base point in $U\Hyp$: the unit tangent vector at $(i, 1)$ pointing along the geodesic $(i\R_+, 1)$ in the $(i, 0)$ direction. The action of the isometry group on the base point identifies $U\Hyp$ with $\on{PSL}_2 \R$. We can then view $\on{SL}_2 \R$ as a circle bundle over $\Hyp$, with each fiber wrapped twice around the corresponding fiber of $U\Hyp$. I'll call it the ``unit spinor bundle'' of $\Hyp$, and write it as $W\Hyp$. Each unit spinor has a negative, which projects to the same unit tangent vector. The identity element of $\on{SL}_2 \R$, which projects to the base point of $U\Hyp$, will serve as a base point for $W\Hyp$. The action of the isometry group $\on{PSL}_2 \R$ on $U\Hyp$ lifts to an action of the ``spin isometry group'' $\on{SL}_2 \R$ on $W\Hyp$.

For each pair of unit tangent vectors $u \in UC_\text{hyp}$ and $v \in U\mathbb{H}$, there's a local isometry $C_\text{hyp} \to \mathbb{H}$ whose derivative sends $u$ to $v$, and it's unique up to restriction. The trivial $\on{PSL}_2 \R$ bundle $\underline{U\mathbb{H}} \to UC_\text{hyp}$, which I'll call $M$, thus parameterizes germs of charts on $C_\text{hyp}$. We can give $M$ a flat connection by declaring the derivative of each chart to be a flat section. For each $u \in UC_\text{hyp}$, let $m_u$ be the germ of the chart that sends $u$ to the base point of $U\Hyp$, defining a global section $m \maps UC_\text{hyp} \to M$ which is smooth but not flat.

Let $E$ be the trivial $\on{SL}_2 \R$ bundle $\underline{W\Hyp} \to UC_\text{hyp}$, equipped with the lift of the flat connection on $M$. Its flat sections, which I'll call ``spin charts,'' form a twisted local system on $C_\text{hyp}$. Up to restriction, there are two spin charts that send each $u \in UC_\text{hyp}$ to the base point in $U\Hyp$. Their germs appear in the fiber $E_u$ as the base point of $W\Hyp$ and its negative. I'll call the base point $e_u$. The section $e \maps UC_\text{hyp} \to E$ is a lift of the section $m \maps UC_\text{hyp} \to M$.
\subsubsection{Linearization}\label{linear-spin-charts}
In Section~\ref{showing-ab}, we'll need to view the twisted local system of spin charts as a linear local system. For this purpose, let $L$ be the trivial vector bundle $\underline{\R}^2 \to UC_\text{hyp}$. Beneath all our evocative language, $E$ is just the trivial bundle $\underline{\on{SL}_2 \R} \to UC_\text{hyp}$, so it's naturally identified with the bundle of linear maps $\on{SL}(L, \underline{\R^2})$. Place the flat connection on $L$ that gives $\on{SL}(L, \underline{\R}^2)$ the flat connection of $E$. Spin charts now correspond to flat sections of $\on{SL}(L, \underline{\R}^2)$. In the language of Section~\ref{lin-loc-sys}, we've identified the twisted local system of spin charts with the local system of flat $\on{SL}_2 \R$ structured frames in $L$.

To get a geometric understanding of $L$, consider paths on $C_\text{hyp}$ up to homotopies fixing their start and end points. Under the map that sends each path to its start point, they form a bundle whose fibers are universal covers of $C_\text{hyp}$. This bundle of paths comes with a natural flat connection, whose flat sections are families of paths that share the same end point. Pull it back to a flat bundle $P \to UC_\text{hyp}$. By extending the germs of charts parameterized by $M_u$ to global charts on the universal cover $P_u$, we can identify $M$ with the bundle of isometries $\on{Isom}(P, \Hyp)$, observing that its flat connection matches the one induced by the flat connection on $P$.

The section $m \maps UC_\text{hyp} \to \on{Isom}(P, \Hyp)$ trivializes $P$, and identifies the ideal boundary of each fiber $P_u$ with $\Proj\R^2$. Recalling that $L$ is $\underline{\R}^2$, we can interpret $\Proj L_u$ as the ideal boundary of $P_u$. The lines in $L_u$ generated by $(1, 0)$ and $(0, 1)$ are the forward and backward boundary points of the geodesic along $u$. Call them $L^+_u$ and $L^-_u$, respectively.

The isometries of $P_u$ act by inverse precomposition\footnote{Plain precomposition is a right action, which would conflict with the convention used throughout the paper that automorphisms act from the left. Inverting before precomposing resolves the conflict.} on $\on{Isom}(P_u, \Hyp)$, giving us a concrete model for the automorphisms of $M_u$. The volume-preserving linear automorphisms of $L_u$ act similarly on $\on{SL}(L_u, \R^2)$, providing a representation of the automorphisms of $E_u$. The abstract double covering $\Aut E_u \to \Aut M_u$ then appears concretely as the projection $\on{SL}(L_u) \to \on{PSL}(L_u)$, which tells us how $\on{SL}(L_u)$ acts on the ideal boundary of $P_u$.
\subsection{Untwisting the local system of spin charts}\label{untwisting}
The contact triangles form an open subset $B \subset C_\text{hyp}$. Their complement is a topological manifold with boundary. The unit tangent vectors perpendicular to the leaves of $\mathcal{H}$ form a double cover $\Sigma_\text{hyp} \to C_\text{hyp} \smallsetminus B$, embedded in $UC_\text{hyp}$ as a topological submanifold with boundary. The flat bundle $E$ pulls back to a flat bundle on $\Sigma_\text{hyp}$. Its sheaf of flat sections is an ordinary $\on{SL}_2 \R$ local system on $\Sigma_\text{hyp}$, which I'll call the ``untwisted local system of spin charts.''

Let's find the holonomy of the untwisted local system of spin charts around a boundary component of $\Sigma_\text{hyp}$. The loop shown below encloses a boundary component, traveling first on one sheet of $\Sigma_\text{hyp}$ and then on the other.
\begin{center}
\begin{tikzpicture}[nodes={align=center}]
\matrix[row sep=2cm, column sep=1cm, nodes={outer sep=2mm}]{
\node {\includegraphics{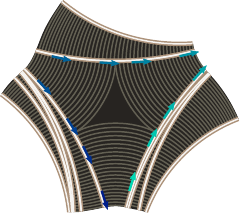}}; &
\node {\includegraphics{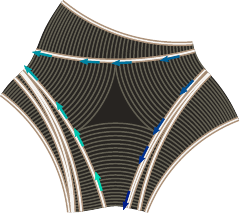}};
\\
};
\end{tikzpicture}
\end{center}
In $UC_\text{hyp}$, it contracts to a loop that runs once around a fiber.
\begin{center}
\includegraphics{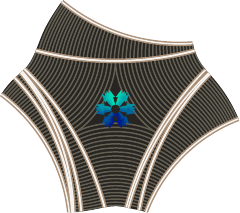}
\end{center}
The twisted local system of spin charts has holonomy $-1$ around the contracted loop, as the definition of a twisted local system demands. Hence, the untwisted local system of spin charts has holonomy $-1$ around each boundary component of $\Sigma_\text{hyp}$.

The resemblance between the double cover $\Sigma_\text{hyp} \to C_\text{hyp} \smallsetminus B$ and the translation double cover $\Sigma_\text{flat} \to C_\text{flat}$ is very strong. It suggests, for the quotient map $G \maps C_\text{hyp} \to C_\text{flat}$, a preferred lift $\Gamma \maps \Sigma_\text{hyp} \to \Sigma_\text{flat}$, whose action is illustrated below.
\begin{center}
\begin{tikzpicture}[nodes={align=center}]
\matrix[row sep=2cm, column sep=2cm, nodes={outer sep=2mm}]{
\node (go) {\includegraphics{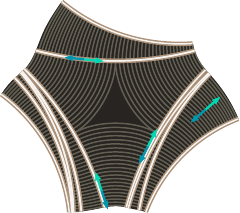}}; &
\node (gone) {\includegraphics{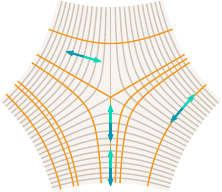}};
\\
};
\path[
  commutative diagrams/.cd, every arrow, every label,
  nodes={fill=white, inner sep=1mm, outer sep=0.5mm}
]
  (go) edge[swap] node {$\Gamma$} (gone);
\end{tikzpicture}
\end{center}
Because the untwisted local system of spin charts has non-identity holonomy around each boundary component of $\Sigma_\text{hyp}$, its pushforward to $\Sigma_\text{flat}$ isn't quite a local system: the stalks over the singularities $\mathfrak{B} \subset \Sigma_\text{flat}$ are empty. The pushforward does, however, restrict to a local system on $\Sigma_\text{flat} \smallsetminus \mathfrak{B}$. From there, it ascends to $\divd{\Sigma}_\text{flat}$ via the equivalence in Theorem~\ref{loc-sys-equiv-2d}. We've now repackaged the twisted local system of spin charts on $C_\text{hyp}$ as an ordinary local system $\mathcal{E}$ on $\divd{\Sigma}_\text{flat}$.

Just as $G \maps C_\text{hyp} \to C_\text{flat}$ commutes with a unique continuous map $g \maps \mathcal{V} \to \divd{C}_\text{flat}$, its lift $\Gamma \maps \Sigma_\text{hyp} \to \Sigma_\text{flat}$ commutes with a unique continuous map $\gamma \maps U\mathcal{V} \to \divd{\Sigma}_\text{flat}$, where $U\mathcal{V} \subset \Sigma_\text{hyp}$ consists of the vectors tangent to the leaves of $\mathcal{V}$. The map $\gamma$ is a homeomorphism onto its image, $\frakd{\Sigma}_\text{flat}$.

As I mentioned in Section~\ref{divd-props-2d}, dividing a translation surface opens up a tiny hole at each singularity. The local system $\mathcal{E}$ has holonomy $-1$ around each of these holes. You can see this by laying down a lily path, as defined in Appendix~\ref{lily-pads}, along the sequence of points in $\divd{\Sigma}_\text{flat}$ shown below.
\begin{center}
\begin{tikzpicture}[nodes={align=center}]
\matrix[row sep=2cm, column sep=1cm, nodes={outer sep=2mm}]{
\node {\includegraphics{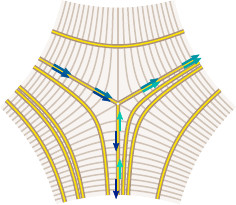}}; &
\node {\includegraphics{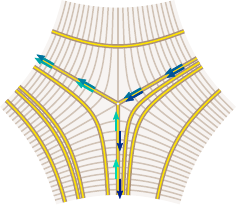}};
\\
};
\end{tikzpicture}
\end{center}
If you choose the lily path well, its preimage under $\gamma$ follows the loop we took earlier around a boundary component of $\Sigma_\text{hyp}$.
\subsection{Identifying germs of charts}\label{identifying-germs}
The local charts on the translation surface $\Sigma_\text{flat}$ are maps to $\R^2$, which form an $\R^2$ local system. The vertical components of local charts are maps to $\R$, which form an $\R$ local system. Let's identify $\R$ with the geodesic $(i\R_+, 1) \subset \mathbb{H}$ by following our base point for $U\mathbb{H}$ along the geodesic flow. The translations of $\R$ are represented by isometries of $\mathbb{H}$ through the embedding
\begin{align*}
\R & \to \on{SL}_2 \R \\
t & \mapsto \left[ \begin{array}{cc} e^{t/2} & \cdot \\ \cdot & e^{-t/2} \end{array} \right].
\end{align*}
Now we can view the vertical charts on $\Sigma_\text{flat}$ as maps to $\R \subset \mathbb{H}$, which form an $\on{SL}_2 \R$ local system whose structure group reduces to the diagonal subgroup.\footnote{From an abstract perspective, we're carrying out the local system version of the usual induced bundle construction~\cite{princ-buns}. From a $K$-set $X$ and a group embedding $K \hookrightarrow H$, we can induce an $H$-set $X_H = H \times_K X$. If $X$ is a $K$ torsor, $X_H$ will be an $H$ torsor, so we can use this operation to induce an $H$ local system from a $K$ local system.} Push this local system up to $\divd{\Sigma}_\text{flat}$ through the equivalence in Theorem~\ref{loc-sys-equiv-2d}, and call the result $\mathcal{F}$.

Let $\theta \maps \frakd{\Sigma}_\text{flat} \to U\mathcal{V}$ be the inverse of $\gamma$. At any point $p \in \frakd{\Sigma}_\text{flat}$, our construction of $\mathcal{E}$ suggests an identification of the stalk $\mathcal{E}_p$ with the fiber $E_{\theta p}$. As we saw in Section~\ref{construct-spin-charts}, $E_{\theta p}$ has a distinguished element $e_{\theta p}$---one of the two spin chart germs that sends $\theta p$ to the base point in $U\mathbb{H}$. The stalk $\mathcal{F}_p$ has a similar distinguished element: the germ $f_{\pi p}$ of the vertical chart that sends the upward unit tangent vector at $\pi p$ to the base point in $U\Hyp$, where $\pi \maps \divd{\Sigma}_\text{flat} \to \Sigma_\text{flat}$ is the usual projection. It seems natural to identify $\mathcal{E}_p$ with $\mathcal{F}_p$ through the isomorphism of $\on{SL}_2 \R$ torsors that sends $e_{\theta p}$ to $f_{\pi p}$. This defines a stalkwise isomorphism $\Upsilon \maps \mathcal{E} \to \mathcal{F}$, supported on $\frakd{\Sigma}_\text{flat}$.
\subsection{Showing we've abelianized the untwisted spin charts}\label{showing-ab}
At this point, we have a diagonalizable $\on{SL}_2 \R$ local system $\mathcal{F}$ and a densely defined stalkwise isomorphism $\Upsilon \maps \mathcal{E} \to \mathcal{F}$. Now we'll show that $\mathcal{F}$ and $\Upsilon$ form an abelianization of $\mathcal{E}$. In other words, we'll show that the deviation of $\Upsilon$ is the slithering deviation for $\mathcal{E}$.
\paragraph{The deviation of $\Upsilon$ across a critical road}
First, let's find the deviation of $\Upsilon$ across the median of a critical road. Let $\upsilon$ be the deviation of $\Upsilon$. Pick a point $w$ on a forward-critical leaf of $\Sigma_\text{flat}$, and a simple flow box $U \subset \divd{\Sigma}_\text{flat}$ containing $\hat{w}$. We'd like to work out $\upsilon^U_{\llane{w}\rlane{w}}$. Luckily, we already have all the ingredients for this calculation in place. We know from the definition of $\Upsilon$ that $\Upsilon_\llane{w}$ sends $e_{\theta \llane{w}}$ to $f_w \in \mathcal{F}_\llane{w}$, while $\Upsilon_\rlane{w}$ sends $e_{\theta \rlane{w}}$ to $f_w \in \mathcal{F}_\rlane{w}$. Though these two copies of $f_w$ live in different fibers of $\mathcal{F}$, they extend to the same element of $\mathcal{F}_U$. Hence, looking back to the definition of the deviation in Sections \ref{deviations-intuition} \thru \ref{dev-loc-const}, we can characterize $\upsilon^U_{\llane{w}\rlane{w}}$ as the automorphism of $\mathcal{E}_U$ which sends the spin chart with germ $e_{\theta \rlane{w}}$ to the one with germ $e_{\theta \llane{w}}$.

To compare $\upsilon^U_{\llane{w}\rlane{w}}$ with the slithering automorphism $s_w$ defined in Section~\ref{slithering}, we need to view $\mathcal{E}$ as a linear local system. Take the flat vector bundle $L$ and the bundle of paths $P$ from Section~\ref{linear-spin-charts} and turn their local systems of flat sections into local systems $\mathcal{L}$ and $\mathcal{P}$ on $\divd{\Sigma}_\text{flat}$, reprising the process we used to turn $E$ into $\mathcal{E}$. Now we can identify $\mathcal{E}_U$ with the torsor of frames $\on{SL}(\mathcal{L}_U, \R^2)$.

To get a geometric understanding of the vector space $\mathcal{L}_U$, let's unpack the definition of $\mathcal{P}_U$. Let $\iota \maps \Sigma_\text{flat} \to \divd{\Sigma}_\text{flat}$ be the usual embedding. The preimage $\Gamma^{-1} \iota^{-1} U$ is an open subset of $\Sigma_\text{hyp}$ that doesn't touch the boundary. Its image $V$ under the covering map $\Sigma_\text{hyp} \to C_\text{hyp} \smallsetminus B$ is therefore an open subset of $C_\text{hyp}$. Shrink the flow box $U$ until the projection $\divd{\Sigma}_\text{flat} \to \divd{C}_\text{flat}$ is injective on it. Then $\Gamma^{-1} \iota^{-1} U$ covers $V$ homeomorphically. Because $\iota^{-1} U$ is a flow box, bounded by vertical and horizontal leaves, $V$ is bounded as shown below by leaves of $\mathcal{V}$ and $\mathcal{H}$.
\begin{center}
\begin{tikzpicture}[nodes={align=center}]
\matrix[row sep=2cm, column sep=2cm, nodes={outer sep=2mm}]{
\node (fat) {\includegraphics[width=5cm]{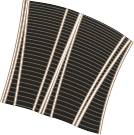}}; &
\node (divd) {\includegraphics[width=4cm]{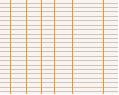}};
\\
};
\path[
  commutative diagrams/.cd, every arrow, every label,
  nodes={fill=white, inner sep=1mm, outer sep=0.5mm}
]
  (fat) edge[swap] node {$G$} (divd);
\end{tikzpicture}
\end{center}
Recalling the definition of the flat connection on $P$, we can interpet $\mathcal{P}_U$ as the space of paths on $C_\text{hyp}$ starting in $V$, up to homotopies fixing their end points and keeping their start points in $V$. From this point of view, $\mathcal{P}_U$ is a universal cover of $C_\text{hyp}$, with a natural inclusion $V \hookrightarrow \mathcal{P}_U$. Our interpretation of $\Proj L_u$ as the ideal boundary of $P_u$ is consistent with the flat connections on $L$ and $P$, so we can interpret $\Proj \mathcal{L}_U$ as the ideal boundary of $\mathcal{P}_U$.

When we look at $\mathcal{E}_U$ as $\on{SL}(\mathcal{L}_U, \R^2)$, the torsor automorphisms of $\mathcal{E}_U$ appear as the volume-preserving linear automorphisms of $\mathcal{L}_U$. We can find the element of $\on{SL}(\mathcal{L}_U)$ that represents $\upsilon^U_{\llane{w}\rlane{w}}$ by watching how $\upsilon^U_{\llane{w}\rlane{w}}$ acts on the ideal boundary of $\mathcal{P}_U$. The automorphism of $\on{SL}(\mathcal{L}_U, \R^2)$ that sends $e_{\theta \rlane{w}}$ to $e_{\theta \llane{w}}$ projects to the automorphism of $\on{Isom}(\mathcal{P}_U, \Hyp)$ that sends $m_{\theta \rlane{w}}$ to $m_{\theta \llane{w}}$. The corresponding isometry of $\mathcal{P}_U$, which acts on $\on{Isom}(\mathcal{P}_U, \Hyp)$ by inverse precomposition, sends $\theta \rlane{w}$ to $\theta \llane{w}$. The action of $\upsilon^U_{\llane{w}\rlane{w}}$ on $\Proj \mathcal{L}_U$ must therefore send $L^\pm_{\theta \rlane{w}}$ to $L^\pm_{\theta \llane{w}}$.

The geodesics along $\theta \rlane{w}$ and $\theta \llane{w}$ form the sides of an ideal triangle in $\mathcal{V}$, as shown below. They therefore share the same backward boundary point.
\begin{center}
\begin{tikzpicture}[nodes={align=center}]
\matrix[row sep=2cm, column sep=2cm, nodes={outer sep=2mm}]{
\node (fat) {\includegraphics[width=5cm]{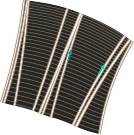}}; &
\node (divd) {\includegraphics[width=4cm]{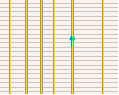}};
\\
};
\path[
  commutative diagrams/.cd, every arrow, every label,
  nodes={fill=white, inner sep=1mm, outer sep=0.5mm}
]
  (fat) edge[swap] node {$\gamma$} (divd);
\end{tikzpicture}
\end{center}
In other words, $L^-_{\theta \rlane{w}}$ and $L^-_{\theta \llane{w}}$ extend to the same line in $\mathcal{L}_U$, which I'll call $\mathcal{L}^-_w$. Now we know that $\upsilon^U_{\llane{w}\rlane{w}}$ sends $L^+_{\theta \rlane{w}}$ to $L^+_{\theta \llane{w}}$ and preserves $\mathcal{L}^-_w$. In fact, because $\theta \rlane{w}$ and $\theta \llane{w}$ lie on the same horocycle through the boundary point $\mathcal{L}^-_w$ represents, $\upsilon^U_{\llane{w}\rlane{w}}$ must act on $\mathcal{L}^-_w$ by $\pm 1$. This determines the action of $\upsilon^U_{\llane{w}\rlane{w}}$ on $\mathcal{L}_U$ up to sign.

To pin $\upsilon^U_{\llane{w}\rlane{w}}$ down precisely, consider a unit tangent vector $v$ sliding from $\theta \rlane{w}$ to $\theta \llane{w}$ along the horocycle mentioned above, staying perpendicular to the horocycle at all times. It traces out the arc $\Gamma^{-1} w$ in $UV$. As it moves, the automorphism of $\mathcal{E}_U$ sending $e_{\theta \rlane{w}}$ to $e_v$ changes continuously from the identity to $\upsilon^U_{\llane{w}\rlane{w}}$, acting on $\mathcal{L}^-_w$ by $\pm 1$ at all times. The identity, of course, acts on $\mathcal{L}^-_w$ by $1$, so $\upsilon^U_{\llane{w}\rlane{w}}$ must do the same.
\paragraph{Comparison with the slithering jump}
When we introduced the slithering jump $s$ in Section~\ref{slithering}, we defined $s_w$ by its action on the dynamically determined stable lines $\mathcal{E}^+_\llane{w}, \mathcal{E}^-_w, \mathcal{E}^+_\rlane{w}$. Let's find those lines in $\mathcal{L}_U$. Let $A$ be the parallel transport cocycle of $\mathcal{E}$ over $U$. Pick a point $p \in U$ not lying on a critical leaf, and apply $A^1_p, A^2_p, A^3_p, \ldots$ to the spin chart on $V$ whose germ is $e_{\theta p}$. This yields a sequence of spin charts whose images in $\Hyp$ hop forward along the geodesic generated by the base point of $U\Hyp$. If we view the automorphisms of $\mathcal{E}_U$ as elements of $\on{SL}(\mathcal{L}_U)$, acting on $\on{SL}(\mathcal{L}_U, \R^2)$ by inverse precomposition, we can say equivalently that the actions of $(A^1_p)^{-1}, (A^2_p)^{-1}, (A^3_p)^{-1}, \ldots$ on $\mathcal{P}_U$ skip $V$ forward along the geodesic generated by $\theta p$. The line $L^+_{\theta p}$ representing the forward boundary point of the geodesic therefore contracts exponentially under the actions of $A^1_p, A^2_p, A^3_p, \ldots$\,. That means $\mathcal{E}^+_p$ is $L^+_{\theta p}$. Generalizing this reasoning to the critical leaves, we see that $\mathcal{E}^+_\llane{w}, \mathcal{E}^-_w, \mathcal{E}^+_\rlane{w}$ are the lines $L^+_{\theta \llane{w}}$, $\mathcal{L}^-_w$, $L^+_{\theta \rlane{w}}$, which stand for the vertices of the ideal triangle pictured above.

By definition, in light of the above, $s_w$ sends $L^+_{\theta \rlane{w}}$ to $L^+_{\theta \llane{w}}$, acting by $1$ on $\mathcal{L}^-_w$. Earlier, we characterized $\upsilon^U_{\llane{w}\rlane{w}}$ in the same way, proving that $\upsilon^U_{\llane{w}\rlane{w}} = s_w$ for any point $w$ on a forward-critical leaf of $\Sigma_\text{flat}$. On a backward-critical leaf, the same arguments work, and the same conclusion holds.
\paragraph{Going off-road}
So far, we've shown that $\upsilon$ matches the slithering deviation $\sigma$ when you look at points on opposite sides of a critical road. Now we need to extend that result to all pairs of points in $\nom{U}$. Since $\upsilon$ and $\sigma$ are both the identity for pairs of points lying on the same vertical leaf, we just need to show that $\upsilon^U_{qp} = \sigma^U_{qp}$ for any two points $q, p \in \nom{U}$ lying on the same horizontal slice $Z \subset U$, with $q$ to the left of $p$. Recall from Section~\ref{jumps-converge}---referring back to Section~\ref{jump-concept} as needed---that $\sigma^U_{qp}$ is defined as the product
\[ \prod_{\median{w} \in (q \mid p)^U} s_w \]
over the points between $q$ and $p$ where the medians of the critical roads pierce the horizontal slice $Z$.

We know from the construction of $\Sigma_\text{flat}$ that $\theta$ sends points on the same horizontal slice of $\nom{U}$ to unit tangent vectors located on the same leaf of the horocyclic foliation. Consider a unit tangent vector sliding from $\theta p$ to $\theta q$ along the horocyclic foliation, staying perpendicular to the foliation at all times. It traces out a path $Y$ in $UV$. As we noted earlier, the action of $s_w$ on $\mathcal{P}_U$ connects the endpoints of the arc $\Gamma^{-1} w$, which lies in $Y$. If you put together all the arcs $\Gamma^{-1} w$ for $\median{w} \in (q \mid p)^U$, you get a full-measure subset of $Y$, suggesting that the product defining $\sigma^U_{qp}$ ought to converge to $\upsilon^U_{qp}$. We'll complete our proof that $\sigma = \upsilon$ by formalizing that argument.

To show that
\[ \upsilon^U_{qp} = \prod_{\median{w} \in (q \mid p)^U} s_w, \]
we need to show that if the product on the right-hand side is restricted to a finite subset $S \subset (q \mid p)^U$ of the medians separating $q$ from $p$, we can get it arbitrarily close to the left-hand side by enlarging $S$. For convenience, I'll refer to the arcs $\Gamma^{-1} w$ with $\hat{w} \in S$ as ``the bites.'' Removing the bites from $Y$ leaves a finite sequence of curves, which I'll call ``the scraps.''
\begin{center}
\begin{tikzpicture}
\matrix[row sep=0.2cm, column sep=0.5cm]{
\node {\includegraphics[width=5cm]{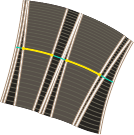}}; &
\node {\includegraphics[width=5cm]{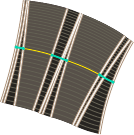}};
\\
\node {\small The bites, projected to $V$}; &
\node {\small The scraps, projected to $V$};
\\
};
\end{tikzpicture}
\end{center}
One way to remove the bites is to literally cut the ideal wedges containing them out of $\mathcal{P}_U$. You can glue $\mathcal{P}_U$ back together by applying the automorphisms $\{s_w^{-1}\}_{\hat{w} \in S}$ in reverse order, with $s_w^{-1}$ acting only on the part of $\mathcal{P}_U$ that comes after the wedge containing $\Gamma^{-1} w$.
\begin{center}
\begin{tikzpicture}
\matrix[row sep=0.2cm, column sep=0.5cm]{
\node {\includegraphics[width=5cm]{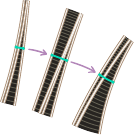}};
\\
\node {\small The actions of $s_w^{-1}$ for $\hat{w} \in S$};
\\
};
\end{tikzpicture}
\end{center}
This regluing combines the scraps into a new path $Y_S \subset UV$, which runs from $p$ to a new end point $q_S$. You should be able to convince yourself that the automorphism
\[ y_S = \left(\prod_{\median{w} \in S} s_w\right)^{-1} \upsilon^U_{qp}. \]
sends $e_p$ to $e_{q_S}$.

We want to prove we can bring $y_S$ arbitrarily close to the identity by enlarging $S$. We can do it with the help of a well-chosen metric on $\on{SL}(\mathcal{L}_U)$. Take the Sasaki metric on $U\mathcal{P}_U$~\cite{tangent-metrics}, pull it back along the action of $\on{Isom}(\mathcal{P}_U)$ on $p$, and then lift it to a metric on $\on{SL}(\mathcal{L}_U)$. The distance of $y_S$ from the identity in this metric is bounded by twice the arc length of $Y_S$, because $Y_S$ has unit curvature almost everywhere. The arc length of $Y_S$ is the total arc length of the scraps. We can therefore bring $y_S$ as close as we want to the identity by taking more bites. It follows, by our reasoning above, that the product defining $\sigma^U_{qp}$ converges to $\upsilon^U_{qp}$. This completes our proof that $\sigma = \upsilon$.
\subsection{Abelianization and the shear parameterization}\label{ab-vs-shears}
In Sections \ref{collapsing} \thru \ref{untwisting}, we used a measured maximal geodesic lamination on a hyperbolic surface $C_\text{hyp}$ to ``untwist'' the twisted local system of spin charts, repackaging its geometric information in an $\on{SL}_2 \R$ local system $\mathcal{E}$ on a divided surface $\divd{\Sigma}_\text{flat}$. In Sections \ref{identifying-germs} \thru \ref{showing-ab}, we found a stalkwise isomorphism between $\mathcal{E}$ and the local system $\mathcal{F}$ of vertical charts on $\Sigma_\text{flat}$, and showed that it abelianizes $\mathcal{E}$. We'll now see that $\mathcal{F}$ neatly encodes the shear parameters of $C_\text{hyp}$, so abelianization encompasses the shear parameterization.

First, let's review the shear parameterization, as described by Papadopoulos and Th\'{e}ret~\cite{teich-thurs}. A shear parameter is a number associated with a {\em horogeodesic curve} on $C_\text{hyp}$: a continuous curve built from finitely many segments of the leaves of $\mathcal{H}$ and $\mathcal{V}$. If you walk along a horogeodesic curve, you'll make a quarter-turn each time you cross from one segment to another. We'll say a horogeodesic curve is {\em non-backtracking} if it never makes two left turns or two right turns in a row. A non-backtracking horogeodesic curve has a {\em signed length}, obtained by measuring the lengths of its geodesic segments and summing them with the signs shown below.
\begin{center}
\begin{tikzpicture}
\matrix[row sep=0.2cm, column sep=1cm]{
\node {\includegraphics[width=5cm]{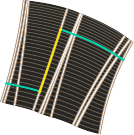}}; &
\node {\includegraphics[width=5cm]{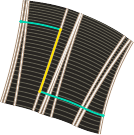}};
\\
\node {\small Positive (turn left on entry)}; &
\node {\small Negative (turn right on entry)};
\\
};
\end{tikzpicture}
\end{center}
Making the convention that a homotopy of non-backtracking horogeodesic curves must travel through the family of such curves, we can say that signed length is invariant under homotopy. A {\em shear parameter} is the signed length of a non-backtracking horogeodesic curve whose endpoints are the corners of contact triangles.

A non-backtracking horogeodesic curve is determined up to homotopy by its endpoints and the ordered set of leaves of $\mathcal{V}$ it passes through. Since $\mathcal{V}$ can be specified in a purely topological way, without reference to the hyperbolic structure of $C_\text{hyp}$~\cite[\S 5.3.1]{warping-geom}, the shear parameters can be labeled under the same restrictions. That means you can think of the shear parameters as variables whose values depend on the hyperbolic structure of $C_\text{hyp}$. In fact, various subsets of the shear parameters form coordinate systems for the space of hyperbolic structures.

To extract the values of the shear parameters from $\mathcal{F}$, first recall that Gupta's collapsing map $G \maps C_\text{hyp} \to C_\text{flat}$ sends the leaves of $\mathcal{H}$ and $\mathcal{V}$ to the horizontal and vertical leaves of $C_\text{flat}$, respectively. Hence, $G$ projects each horogeodesic curve on $C_\text{hyp}$ to what I'll call a {\em taxicab curve}: a continuous curve built from finitely many segments of the horizontal and vertical leaves of $C_\text{flat}$. The definitions of {\em non-backtracking} and {\em signed length} adapt straightforwardly to taxicab curves, and projection along $G$ preserves both properties. Since $G$ collapses each contact triangle of $C_\text{hyp}$ to a singularity of $C_\text{flat}$, we can say a shear parameter is the signed length of a taxicab curve whose endpoints are singularities.

A taxicab curve $Z$ on $C_\text{flat}$ has two lifts to $\Sigma_\text{flat}$. If $Z$ is non-backtracking, and it has at least one horizontal segment, you can orient both lifts so that their horizontal segments are oriented rightward. With this choice of orientation, a vertical segment of $Z$ is positively signed if its lift is oriented upward, and negatively signed if its lift is oriented downward. Hence, if you walk forward along either lift of $Z$, the net vertical distance you travel will be the signed length of $Z$.

If $Z$ has its endpoints at singularities, where the sheets of $\Sigma_\text{flat}$ meet, its lifts join up into an oriented loop on $\Sigma_\text{flat}$. By the reasoning above, if you measure the holonomy around this loop of the $\R$ local system of vertical charts on $\Sigma_\text{flat}$, you get twice the signed length of $Z$. The shear parameters of $C_\text{hyp}$ are thus encoded in the holonomies of the local system of vertical charts on $\Sigma_\text{flat}$.\footnote{For a different picture of how the holonomies of vertical charts on $\Sigma_\text{flat}$ encode the signed lengths of taxicab curves connecting singularities of $C_\text{flat}$, see the discussion in Section~6.3.2 of \cite{warping-geom}, which carries over to compact surfaces without much fuss.} The abelianized local system $\mathcal{F}$ is a straightforward repackaging of the local system of vertical charts, so its holonomies encode the shear parameters as well.
\subsection{A numerical note}\label{numerical-note}
Once you've made your way through the subtleties of convincing yourself that it works, abelianization is a very practical numerical tool for finding the shear parameters of hyperbolic surfaces. Computationally, abelianization amounts to:
\begin{enumerate}
\item Listing the critical points of an interval exchange transformation.
\item Finding the forward- and backward-stable lines of a Markov $\on{SL}_2 \C$ cocycle over that interval exchange.
\item Computing the jump automorphism at each critical point.
\item Composing the jumps in order.
\end{enumerate}
These steps are all well-suited to numerical approximation, and they're implemented in a software package developed in conjunction with this paper~\cite{whorl}. In the application described here, the interval exchange encodes the topology of the surface $C_\text{hyp}$ and its measured maximal geodesic lamination $\mathcal{V}$, and the Markov cocycle encodes the hyperbolic structure.
\appendix
\section{Technical tools for warping local systems}\label{warping-tech}
\subsection{The lily pad lemma}\label{lily-pads}
\begin{lemma}
Suppose $\mathcal{U}$ is an open cover of a connected space. For any two points $a$ and $b$ in the space, there is a finite sequence of elements of $\mathcal{U}$ in which the first element contains $a$, the last element contains $b$, and every element intersects the next one.
\end{lemma}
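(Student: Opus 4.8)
The plan is the standard ``chain-connectedness'' argument: fix $a$, let $S$ be the set of points reachable from $a$ by an admissible finite chain of cover elements, and show that $S$ is nonempty, open, and closed. Connectedness then forces $S$ to be the whole space, so in particular $b \in S$, which is exactly the conclusion.

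First I would set up the definitions precisely. Say a finite sequence $U_1, \ldots, U_n$ of elements of $\mathcal{U}$ is a \emph{chain from $a$ to a point $p$} if $a \in U_1$, $p \in U_n$, and $U_i \cap U_{i+1} \neq \emptyset$ for $i = 1, \ldots, n-1$. Let $S$ be the set of points $p$ admitting a chain from $a$. To see $S \neq \emptyset$, note that $\mathcal{U}$ covers the space, so some $U \in \mathcal{U}$ contains $a$; the length-one sequence $U$ is a chain from $a$ to $a$, hence $a \in S$.

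Next I would check that $S$ is open. If $p \in S$ with chain $U_1, \ldots, U_n$, then every point of $U_n$ lies in $U_n$, so the very same sequence $U_1, \ldots, U_n$ is a chain from $a$ to each point of $U_n$; thus $U_n \subseteq S$, and $U_n$ is an open neighborhood of $p$ inside $S$. For the complement: if $c \notin S$, pick $U \in \mathcal{U}$ containing $c$, and suppose some $p \in U \cap S$ with chain $U_1, \ldots, U_n$. Then $p \in U_n \cap U$, so $U_1, \ldots, U_n, U$ is a chain from $a$ to $c$, contradicting $c \notin S$. Hence $U \cap S = \emptyset$, so $U$ is an open neighborhood of $c$ in the complement of $S$, proving the complement is open as well.

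Since the space is connected and $S$ is a nonempty clopen subset, $S$ is the entire space, so $b \in S$ and we obtain the desired chain. I do not expect a genuine obstacle here; the only things to be careful about are handling the degenerate length-one chain at $a$ correctly and using the covering hypothesis in exactly two places (to start a chain at $a$ and to produce the neighborhood $U$ of a point in the complement). No hypothesis beyond connectedness and ``$\mathcal{U}$ is an open cover'' is needed, and in particular no local connectedness or separation axioms.
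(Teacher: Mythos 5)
Your proof is correct and is essentially the same argument the paper gives: the paper defines a ``lily path'' exactly as your ``chain,'' shows the set of points connectable to $a$ is open by the same observation about the last element, and shows the complement is open by the same extend-a-chain contradiction. There is no substantive difference.
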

\begin{proof}
Let's call a finite sequence of elements of $\mathcal{U}$ a {\em lily path} if every element intersects the next one. We'll say two points $a$ and $b$ can be ``connected by a lily path'' if there's a lily path whose first element contains $a$ and whose last element contains $b$.

A lily path connecting $a$ to $b$ also connects $a$ to every other point in the last element of the path, which is an open neighborhood of $b$. Hence, the set of points that can be connected to $a$ by a lily path is open.

On the other hand, suppose $b$ can't be connected to $a$ by a lily path. Since $\mathcal{U}$ is a cover, there's some $U \in \mathcal{U}$ containing $b$. If a point in $U$ could be connected to $a$ by a lily path, adding $U$ to the end of that path would give a lily path connecting $a$ to $b$. Hence, no point in $U$ can be connected to $a$ by a lily path. Therefore, the set of points that can't be connected to $a$ by a lily path is also open.
\end{proof}
\subsection{Collapsing downward-directed colimits}\label{iso-colim}
\begin{prop}
Suppose $\Lambda$ is a downward-directed set, $C$ is some category, and $F \maps \Lambda \to C$ is a diagram in which all the arrows are isomorphisms. Then $F$ has a colimit, and the defining arrows from the diagram to its colimit are isomorphisms.
\end{prop}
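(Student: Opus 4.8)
The plan is to show that, for \emph{any} chosen object $\lambda_0 \in \Lambda$ (recall a directed set is nonempty by convention), the object $F(\lambda_0)$ can be equipped with structure maps making it a colimit of $F$. Write $F_{\lambda \le \mu}\maps F(\lambda) \to F(\mu)$ for the arrow $F$ assigns to $\lambda \le \mu$. First I would build, for each $\lambda \in \Lambda$, a canonical comparison isomorphism $c_\lambda \maps F(\lambda) \to F(\lambda_0)$: using downward-directedness, pick $\nu$ with $\nu \le \lambda$ and $\nu \le \lambda_0$, and, since every arrow of $F$ is an isomorphism, set $c_\lambda = F_{\nu \le \lambda_0} \circ (F_{\nu \le \lambda})^{-1}$. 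The first thing to check is that $c_\lambda$ is independent of the choice of $\nu$: given a second lower bound $\nu'$, choose $\nu'' \le \nu, \nu'$, and use the functoriality identities $F_{\nu'' \le \lambda_0} = F_{\nu \le \lambda_0} \circ F_{\nu'' \le \nu}$ and $F_{\nu'' \le \lambda} = F_{\nu \le \lambda} \circ F_{\nu'' \le \nu}$ to see that $\nu$ and $\nu''$ produce the same $c_\lambda$, and likewise $\nu'$ and $\nu''$. Note $c_{\lambda_0} = 1_{F(\lambda_0)}$, taking $\nu = \lambda_0$.

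Next I would verify that $\{c_\lambda\}$ is a cocone under $F$ with vertex $F(\lambda_0)$, i.e. that $c_\mu \circ F_{\lambda \le \mu} = c_\lambda$ whenever $\lambda \le \mu$. Choosing $\nu \le \lambda, \lambda_0$ (so also $\nu \le \mu$), we have $c_\mu = F_{\nu \le \lambda_0} \circ (F_{\nu \le \mu})^{-1}$, and the identity $F_{\nu \le \mu} = F_{\lambda \le \mu} \circ F_{\nu \le \lambda}$ lets me cancel $F_{\lambda \le \mu}$ and recover $c_\lambda$. Then, to see this cocone is universal, I would take an arbitrary cocone $\{d_\lambda \maps F(\lambda) \to Z\}$ and put $u = d_{\lambda_0}$. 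Because $d$ is a cocone, $d_{\lambda_0} \circ F_{\nu \le \lambda_0} = d_\nu = d_\lambda \circ F_{\nu \le \lambda}$ for a common lower bound $\nu$ of $\lambda$ and $\lambda_0$, which yields $u \circ c_\lambda = d_\lambda$ for every $\lambda$; uniqueness is immediate since any competing $u'$ satisfies $u' = u' \circ c_{\lambda_0} = d_{\lambda_0} = u$. Thus $F(\lambda_0)$ together with the $c_\lambda$ is a colimit of $F$, and its structure maps are precisely the isomorphisms $c_\lambda$, which is the assertion.

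I do not expect a serious obstacle here: the argument is almost entirely bookkeeping about composing and inverting isomorphisms. The one place where something genuine happens is the well-definedness of $c_\lambda$, and that is exactly where downward-directedness is used, through the existence of the common lower bound $\nu''$. The only points requiring care are keeping the direction of the arrows of $F$ consistent throughout and remembering the standing assumption that $\Lambda$ is nonempty.
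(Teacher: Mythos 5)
Your proof is correct and takes essentially the same route as the paper's: pick a base object, build comparison isomorphisms to it through common lower bounds, and verify the universal property directly. You have the arrows of $\Lambda$ running in the opposite direction from the paper's notation, and you spell out the well-definedness of $c_\lambda$, the cocone identity, $c_{\lambda_0} = 1$, and uniqueness of the mediating map — details the paper asserts or leaves implicit — but the argument is the same.
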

\begin{proof}
Pick any object $s$ of $\Lambda$. For any other object $t$, we can get an isomorphism $f_t \maps F(t) \to F(s)$ by picking a common lower bound $\check{t}$ of $s$ and $t$ and taking the composition $F(\check{t} \leftarrow s)^{-1} F(\check{t} \leftarrow t)$. The isomorphism we get doesn't depend on our choice of $\check{t}$.

Now, pick any object $c$ of $C$. Suppose that for each object $t$ of $\Lambda$, we have an arrow $\phi_t \maps F(t) \to c$, and these arrows commute with the arrows of the diagram $F$. Observing that
\begin{align*}
\phi_s f_t & = \phi_s F(\check{t} \leftarrow s)^{-1} F(\check{t} \leftarrow t) \\
& = \phi_{\check{t}} F(\check{t} \leftarrow t) \\
& = \phi_t,
\end{align*}
we see that the object $F(s)$, equipped with the isomorphisms $f_t \maps F(t) \to F(s)$, is a colimit of the diagram $F$.
\end{proof}
\section{Relational dynamics}\label{rel-dyne}
Some dynamical systems, including vertical flows on singular translation surfaces, interval exchanges, and even the humble doubling map, are discontinuous if you insist on defining them at every point. (These particular examples are discussed in Sections \ref{tras-surfs} and \ref{divd-ex}.) You can recover a kind of continuity, however, if you describe the dynamics using relations rather than maps, allowing points to get lost as they fall into singularities, breaks between intervals, or whatever.

Consider a relation $\phi$ between topological spaces $Y$ and $X$. Define
\begin{align*}
\phi A = \{y \in Y : y \mathbin{\phi} a \text{ for some } a \in A\} \\
B \phi = \{x \in X : b \mathbin{\phi} x \text{ for some } b \in B\}
\end{align*}
for subsets $A \subset X$ and $B \subset Y$. For convenience, we'll relax the distinction between singletons and points, denoting $\phi\{x\}$, for example, by $\phi x$. If $\phi$ is a function $Y \leftarrow X$, then $\phi x \in Y$ is the value of $\phi$ at a point $x \in X$, and $B\phi \subset X$ is the preimage of a subset $B \subset Y$.

Define $\phi$ to be {\em injective} if $y \phi$ is a singleton for all $y \in Y$, {\em coinjective} if $\phi x$ is a singleton for all $x \in X$, and {\em biinjective} if it's both injective and coinjective. In less baroque language, a coinjective relation is just a partially defined function.

Define $\phi$ to be {\em continuous} if $V \phi$ is open whenever $V \subset Y$ is open, {\em cocontinuous} if $\phi U$ is open whenever $U \subset X$ is open, and {\em bicontinuous} if it's both continuous and cocontinuous. If $\phi$ is a function, ``continuous'' means what it usually means, and ``cocontinuous'' means ``open.'' Local systems can be pushed forward and backward along a bicontinuous relation.

A {\em flow by bicontinuous relations} on $X$ can be defined as a relation $\psi$ between $X$ and $\R \times X$ with the following properties:
\begin{itemize}
\item As a whole, $\psi$ is continuous and coinjective.
\item For all $t \in \R$, the relation $\psi^t = \blankbox \mathbin{\psi} (t, \blankbox)$ is bicontinuous and biinjective.
\item For all $t, s \in [0, \infty)$, we have $\psi^t \psi^s = \psi^{t + s}$ and $\psi^{-t} \psi^{-s} = \psi^{-t - s}$.
\end{itemize}
This kind of partially defined flow acts a lot like an ordinary flow by homeomorphisms. In particular, if $X$ carries a local system $\mathcal{E}$, it gives a parallel transport morphism $\mathcal{E}_{\psi^t U} \leftarrow \mathcal{E}_U$ for every open subset $U$ of $X$ and every time $t$.
\section{Infinite ordered products}\label{ord-prod}
\subsection{Definition}
Suppose $M$ is a Hausdorff topological monoid, like the one formed by the endomorphisms or automorphisms of a finite-dimensional vector space, and $A$ is a totally ordered set. Given a function $m \maps A \to M$, we'd like to make sense of the potentially infinite ordered product
\[ \prod_{p \in A} m_p, \]
which I'll refer to in writing as ``the product of $m$ over $A$.''

Recall that a function from a directed set $\Lambda$ into a topological space is called a {\em net}. For any $s \in \Lambda$, let's call the set $\{t \in \Lambda : t \ge s\}$ the {\em shadow} of $s$. A net is said to {\em converge} to a point if, for every open neighborhood $\Omega$ of that point, there is some element of $\Lambda$ whose shadow is sent by the net into $\Omega$. A net into a Hausdorff space, like $M$, converges to at most one point.

The finite subsets of $A$ form a directed set under inclusion, and the product of $m$ over any finite subset is well-defined, so we can define the product of $m$ over $A$ to be the limit of the net that sends each finite subset of $A$ to the product of $m$ over that subset. I'll call this net the ``product net'' for short.
\subsection{Calculation}\label{ord-prod-calculation}
Suppose $\Lambda'$ and $\Lambda$ are directed sets, $n$ is a net on $\Lambda$, and $f \maps \Lambda' \to \Lambda$ is an order-preserving map whose image intersects the shadow of every element of $\Lambda$. In these circumstances, the net $nf$ is called a {\em subnet} of $n$. If $n$ converges to a certain point, every subnet of $n$ converges to that point.

In particular, let $A$ be a totally ordered set, and $\Lambda$ its finite subsets. If $f \maps \N \to \Lambda$ is an increasing sequence of subsets whose union is all of $A$, the image of $f$ intersects the shadow of every finite subset. Thus, if a product over $A$ converges, we can find it by looking at partial products over any sequence of finite subsets whose union is $A$. Such a sequence must exist if $A$ is countable.
\subsection{Composition}
Given two totally ordered sets $B$ and $A$, let $B \sqcup A$ be the disjoint union of $B$ and $A$ with the total order that makes the inclusions order-preserving and puts every element of $B$ to the left of every element of $A$.
\begin{prop}\label{prod-compose}
Say we have a function $m \maps B \sqcup A \to M$. If the products of $m$ over $B$ and $A$ converge, then the product over $B \sqcup A$ converges too, and
\[ \prod_{p \in B \sqcup A} m_p = \left(\prod_{p \in B} m_p\right)\left(\prod_{p \in A} m_p\right). \]
\end{prop}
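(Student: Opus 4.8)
The plan is to work directly with the product net, as defined in the preceding subsection, and use the fact that convergence of the products over $B$ and over $A$ lets us control the behavior of partial products over finite subsets of $B \sqcup A$. First I would set $L = \prod_{p \in B} m_p$ and $R = \prod_{p \in A} m_p$, and fix an open neighborhood $\Omega$ of the product $LR$. Since multiplication in $M$ is continuous, I can find open neighborhoods $\Omega_L \ni L$ and $\Omega_R \ni R$ with $\Omega_L \Omega_R \subset \Omega$. By the definition of convergence of a net, there is a finite subset $S_B \subset B$ whose shadow (the finite subsets of $B$ containing $S_B$) is sent into $\Omega_L$ by the product net for $m|_B$, and likewise a finite $S_A \subset A$ whose shadow is sent into $\Omega_R$ by the product net for $m|_A$.

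The key step is then to check that $S_B \sqcup S_A$ witnesses convergence of the product net over $B \sqcup A$ to $LR$. Take any finite $T \subset B \sqcup A$ containing $S_B \sqcup S_A$. Because the order on $B \sqcup A$ puts all of $B$ to the left of all of $A$, the set $T$ splits as a disjoint union $T = T_B \sqcup T_A$ with $T_B = T \cap B$ and $T_A = T \cap A$, and the ordered product of $m$ over $T$ factors as $\bigl(\prod_{p \in T_B} m_p\bigr)\bigl(\prod_{p \in T_A} m_p\bigr)$ — this is just the finite-product bookkeeping, no continuity needed. Now $T_B \supset S_B$ and $T_A \supset S_A$, so the first factor lies in $\Omega_L$ and the second in $\Omega_R$, hence the product lies in $\Omega_L \Omega_R \subset \Omega$. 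Since $\Omega$ was an arbitrary neighborhood of $LR$, the product net over $B \sqcup A$ converges to $LR$, which is exactly the claim.

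One small point to handle carefully, rather than a real obstacle: I should make sure the splitting $T = T_B \sqcup T_A$ really does produce the ordered product as a product of the two ordered sub-products in the right order. This follows because every element of $T_B$ precedes every element of $T_A$ in the total order on $B \sqcup A$, so the enumeration of $T$ in increasing order is the concatenation of the increasing enumerations of $T_B$ and $T_A$; associativity of the monoid operation then gives the factorization. I expect this to be the ``hard part'' only in the sense of being the one place where the definition of $B \sqcup A$ is used essentially — everything else is a routine chase through the definition of net convergence and the continuity of multiplication. No appeal to Hausdorffness is needed for existence of the limit, though it is what makes the limit (and hence the notation $\prod_{p \in B \sqcup A} m_p$) unambiguous.
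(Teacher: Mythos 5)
Your proof is correct and follows essentially the same route as the paper's: choose neighborhoods $\Omega_L, \Omega_R$ of the partial products with $\Omega_L \Omega_R \subset \Omega$, pick finite witnesses $S_B, S_A$ for convergence over $B$ and $A$, and show that any finite $T \supset S_B \sqcup S_A$ splits as $T_B \sqcup T_A$ so that the ordered product factors and lands in $\Omega$. The extra remark you make about the concatenation of increasing enumerations, and the observation that Hausdorffness is only needed for uniqueness, are both correct refinements but don't change the argument.
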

\begin{proof}
Let $\beta$ and $\alpha$ be the products of $m$ over $B$ and $A$, respectively. Given an open neighborhood $\Omega$ of $\beta\alpha$, we want to find a finite subset of $B \sqcup A$ whose shadow is sent by the product net into $\Omega$.

Using the fact that multiplication in $M$ is continuous, find open neighborhoods $\Omega_B$ of $\beta$ and $\Omega_A$ of $\alpha$ with $\Omega_B \Omega_A \subset \Omega$. Then, find finite subsets $S_B$ and $S_A$ of $B$ and $A$ whose shadows are sent into $\Omega_B$ and $\Omega_A$, respectively.

For any finite subset $R$ of $B \sqcup A$ containing $S_B \cup S_A$, we can use the fact that $B < A$ to rewrite the product of $m$ over $R$ as
\[ \left(\prod_{p \in R \cap B} m_p\right)
\left(\prod_{p \in R \cap A} m_p\right). \]
Observing that $R \cap B$ contains $S_B$ and $R \cap A$ contains $S_A$, we conclude that the product of $m$ over $R$ is in $\Omega$.
\end{proof}
\subsection{Equivariance}
\begin{prop}\label{prod-equivar}
If the product of $m \maps A \to M$ over $A$ converges,
\[ \phi \left( \prod_{p \in A} m_p \right) = \prod_{p \in A} \phi(m_p) \]
for any continuous homomorphism $\phi \maps M \to M$.
\end{prop}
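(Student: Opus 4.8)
The plan is to work straight from the definition of the infinite ordered product as the limit of the product net, using the fact that a continuous map carries convergent nets to convergent nets. Write $\Lambda$ for the directed set of finite subsets of $A$, and let $\Pi \maps \Lambda \to M$ be the product net for $m$, so that $\Pi(S) = \prod_{p \in S} m_p$ for each finite $S$; similarly let $\Pi' \maps \Lambda \to M$ be the product net for $\phi \circ m$.

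First I would record the elementary observation that $\phi$ commutes with \emph{finite} ordered products: if the elements of a finite set $S$ are $p_1 < \cdots < p_k$ in increasing order, then $\phi(m_{p_1} \cdots m_{p_k}) = \phi(m_{p_1}) \cdots \phi(m_{p_k})$ by an immediate induction on $k$, using only that $\phi$ is a monoid homomorphism (the base case being $\phi(1_M) = 1_M$, the empty product). In the notation above this says precisely that $\Pi' = \phi \circ \Pi$ as nets; note it is here, and only here, that we use that $\phi$ is a homomorphism rather than an anti-homomorphism, so that the order of the factors is preserved.

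Next I would invoke the standard fact that if a net converges to a point, then its image under a continuous map converges to the image of that point. By hypothesis the product of $m$ over $A$ converges, i.e.\ $\Pi \to \prod_{p \in A} m_p$; applying $\phi$ gives $\Pi' = \phi \circ \Pi \to \phi\bigl(\prod_{p \in A} m_p\bigr)$. But $\Pi'$ is exactly the product net for $\phi \circ m$, so this shows that the product of $\phi \circ m$ over $A$ converges, and since $M$ is Hausdorff its value is unambiguously $\phi\bigl(\prod_{p \in A} m_p\bigr)$, which is the claimed identity.

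There is no genuine obstacle here; the only thing one must not do is conflate ``$\phi$ preserves finite products'' with the conclusion ``$\phi$ preserves the infinite product'' — bridging that gap is exactly what continuity of $\phi$ provides. If one wishes to sidestep net language altogether, the same argument can be phrased directly: given an open neighborhood $\Omega$ of $\phi\bigl(\prod_{p\in A} m_p\bigr)$, pull it back along $\phi$ to an open neighborhood of $\prod_{p\in A} m_p$, use convergence of the product of $m$ to find a finite $S \subseteq A$ whose shadow $\Pi$ sends into $\phi^{-1}\Omega$, and observe that $\Pi'$ then sends the same shadow into $\Omega$.
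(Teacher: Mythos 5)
Your argument is correct and is essentially the same as the paper's: the paper runs exactly the neighborhood-pullback version you sketch in your final paragraph (pull $\Omega$ back along $\phi$, invoke convergence of the product of $m$, then push forward), with the preservation of finite ordered products left implicit in the step ``applying $\phi$ to both sides.'' Your rephrasing in net language and your explicit remark about where the homomorphism property is used are harmless elaborations, not a different route.
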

\begin{proof}
Given an open neighborhood $\Omega$ of the left-hand side, we want to find a finite subset $S$ of $A$ with the property that
\[ \prod_{p \in R} \phi(m_p) \in \Omega \]
for all finite subsets $R \subset A$ containing $S$.

Since $\phi$ is continuous, $\phi^{-1}(\Omega)$ is open neighborhood of the product of $m$ over $A$. By the definition of convergence, we can find a finite subset $S$ of $A$ with the property that
\[ \prod_{p \in R} m_p \in \phi^{-1}(\Omega) \]
for all finite subsets $R \subset A$ containing $S$. Applying $\phi$ to both sides, we see that $S$ is just what we wanted.
\end{proof}
\subsection{Inversion}
Instead of just a map into a topological monoid, suppose we have a map $g \maps A \to G$ into a topological group. Let $A^{\op}$ be $A$ with the opposite order.
\begin{prop}
If the product of $g$ over $A$ converges,
\[ \prod_{p \in A^{\op}} g_p^{-1} = \left(\prod_{p \in A} g_p\right)^{-1}. \]
\end{prop}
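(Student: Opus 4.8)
The plan is to reduce the statement to the elementary identity that reversing the order of a finite product of group elements and inverting each factor yields the inverse of the original product, combined with the continuity of inversion in a topological group.

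First I would observe that the finite subsets of $A$ and of $A^{\op}$ are literally the same collection of sets, directed in the same way by inclusion, since the inclusion order on finite subsets does not depend on the order carried by $A$. Call this directed set $\Lambda$. So both the product net for $\prod_{p \in A} g_p$ and the product net for $\prod_{p \in A^{\op}} g_p^{-1}$ are nets on the single directed set $\Lambda$.

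Next, fix a finite subset $S = \{p_1, \ldots, p_n\}$ with $p_1 < \cdots < p_n$ in $A$. In $A^{\op}$ these same elements are ordered $p_n < \cdots < p_1$, so the product of $g^{-1}$ over $S$ computed in $A^{\op}$ is $g_{p_n}^{-1} \cdots g_{p_1}^{-1}$, which equals $(g_{p_1} \cdots g_{p_n})^{-1}$, the inverse of the product of $g$ over $S$ computed in $A$. (This is a one-line induction on $|S|$ using $(ab)^{-1} = b^{-1}a^{-1}$.) Hence, if $\iota \maps G \to G$ denotes inversion, the product net for $\prod_{p \in A^{\op}} g_p^{-1}$ is exactly $\iota$ composed pointwise with the product net for $\prod_{p \in A} g_p$.

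Finally, $\iota$ is continuous because $G$ is a topological group, and by hypothesis the product net for $\prod_{p \in A} g_p$ converges to $\prod_{p \in A} g_p$; since a continuous map carries a convergent net to a net converging to the image of the limit, the composed net converges to $\iota\left(\prod_{p \in A} g_p\right) = \left(\prod_{p \in A} g_p\right)^{-1}$. As that composed net is precisely the product net for $\prod_{p \in A^{\op}} g_p^{-1}$, the claimed equality follows. There is no serious obstacle here; the only point that needs care is keeping the two reindexings straight — that the directed set of finite subsets is genuinely shared and that passing to $A^{\op}$ merely reverses the internal order within each finite partial product. If one prefers not to invoke net-continuity abstractly, one can instead imitate the proof of Proposition~\ref{prod-equivar} almost verbatim, replacing the homomorphism property of $\phi$ there with the anti-homomorphism property of $\iota$ and using the continuity of $\iota$ to pull back neighborhoods of $\left(\prod_{p \in A} g_p\right)^{-1}$.
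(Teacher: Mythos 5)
Your argument is correct and is essentially the proof the paper intends: the paper's proof reads only ``Analogous to the proof of Proposition~\ref{prod-equivar},'' and your proposal spells out exactly that analogy — same directed set of finite subsets, finite partial products related by the anti-homomorphism $\iota$, and continuity of $\iota$ (which you also note explicitly as the verbatim imitation at the end).
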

\begin{proof}
Analogous to the proof of Proposition~\ref{prod-equivar}.
\end{proof}
We won't use this result for anything. It's only here to reassure you that the directionality of our construction of deviations from jumps in Section~\ref{jump-concept} doesn't introduce any actual asymmetry.
\subsection{Convergence}
Any Banach algebra, like $\End \C^2$ with the operator norm, can be thought of as a topological monoid by forgetting the addition. Since all the ordered products we care about will be taken in $\on{SL}_2 \C$, a closed submonoid of $\End \C^2$, understanding ordered products in a Banach algebra will be very helpful to us. The following results generalize Theorem~2.3, Corollary~2.4, and a simplified version of Theorem~2.7 from \cite{prod-banach} to products over arbitrarily ordered index sets.
\begin{prop}\label{ord-prod-conv}
Suppose we have a totally ordered set $A$, a unital Banach algebra $\mathcal{X}$, and a function $x \maps A \to \mathcal{X}$. If the sum $\sum_{p \in A} \|x_p - 1\|$ converges, the product $\prod_{p \in A} x_p$ converges as well.
\end{prop}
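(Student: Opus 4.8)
The plan is to show that the \emph{product net} of Appendix~\ref{ord-prod} --- the net assigning to each finite subset $S \subseteq A$ the ordered product $\prod_{p \in S} x_p$ --- is Cauchy in $\mathcal{X}$, and then to produce its limit. This upgrades the sequence-indexed statement of \cite[Theorem~2.3]{prod-banach} to an arbitrary totally ordered index set, and the only genuinely new point is a little bookkeeping about how inserting extra factors into a finite ordered product interleaves them with the factors already present.

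First I would unpack what convergence of $\sum_{p \in A} \|x_p - 1\|$ gives us. Write $L$ for its value. Since the terms are nonnegative, every finite partial sum is at most $L$, so $\prod_{p \in S}(1 + \|x_p - 1\|) \le e^{L}$ for every finite $S \subseteq A$; in particular $\bigl\| \prod_{p \in S} x_p \bigr\| \le e^{L}$. Convergence also forces $A' = \{p \in A : x_p \neq 1\}$ to be countable, since $\{p : \|x_p - 1\| > 1/k\}$ must be finite for each $k$. And, by the definition of convergence over the directed set of finite subsets, for every $\varepsilon > 0$ there is a finite $S_0 \subseteq A$ with $\sum_{p \in F} \|x_p - 1\| < \varepsilon$ for every finite $F \subseteq A \smallsetminus S_0$.

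The core estimate compares $\prod_{p \in F} x_p$ with $\prod_{p \in G} x_p$ for finite $F \subseteq G$. I would apply the telescoping identity
\[ a_1 \cdots a_m - b_1 \cdots b_m = \sum_{j = 1}^{m} a_1 \cdots a_{j-1}\,(a_j - b_j)\,b_{j+1} \cdots b_m \]
to the factors $a_i$ of $\prod_{p \in G} x_p$, taking $b_i = a_i$ when the corresponding index lies in $F$ and $b_i = 1$ otherwise. Only the terms indexed by $G \smallsetminus F$ survive, and bounding each remaining factor by $1 + \|x_p - 1\|$ and then by $e^{L}$ gives
\[ \Bigl\| \prod_{p \in G} x_p - \prod_{p \in F} x_p \Bigr\| \le e^{L} \sum_{p \in G \smallsetminus F} \|x_p - 1\|. \]
This is the step where the interleaving is handled: the elements of $G \smallsetminus F$ need not lie to one side of those of $F$, but the telescoping identity does not notice. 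I expect this to be the only place in the proof requiring any care; everything else is routine $\varepsilon$-bookkeeping with the net definitions.

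To finish, given $\varepsilon$ pick $S_0$ as above but with tail bound $\varepsilon e^{-L}$. For finite $F, G \supseteq S_0$, compare each of $\prod_{p \in F} x_p$ and $\prod_{p \in G} x_p$ to $\prod_{p \in F \cup G} x_p$: the sets $(F \cup G) \smallsetminus F$ and $(F \cup G) \smallsetminus G$ are finite and disjoint from $S_0$, so the core estimate gives $\bigl\| \prod_{p \in F} x_p - \prod_{p \in F \cup G} x_p \bigr\| < \varepsilon$ and likewise for $G$, hence $\bigl\| \prod_{p \in F} x_p - \prod_{p \in G} x_p \bigr\| < 2\varepsilon$. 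So the product net is Cauchy. For the limit, enumerate $A' = \{p_1, p_2, \dots\}$ and put $S_n = \{p_1, \dots, p_n\}$; the partial products $y_n = \prod_{p \in S_n} x_p$ form a Cauchy sequence in the Banach space $\mathcal{X}$, hence converge to some $y$, and one more application of the core estimate shows that every finite $S$ containing a large enough $S_n$ has $\prod_{p \in S} x_p$ within $2\varepsilon$ of $y$. Therefore the product net converges to $y$, which is exactly the assertion that $\prod_{p \in A} x_p$ converges.
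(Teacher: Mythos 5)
Your proof is correct and takes essentially the same route as the paper's: both show the product net is Cauchy by bounding $\|\prod_{p \in G} x_p - \prod_{p \in F} x_p\|$ through a telescoping expansion (the paper inserts the elements of $G \smallsetminus F$ one at a time, you invoke the full telescoping identity in one shot) together with a uniform $e^{L}$ bound on finite partial products (which the paper packages as Proposition~\ref{ord-prod-abs-bound}). Your final step constructing the limit from an enumeration of $\{p : x_p \neq 1\}$ just spells out the standard fact, used directly in the paper, that a Cauchy net in a complete metric space converges.
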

\begin{prop}\label{ord-prod-inv}
If the sum in the proposition above converges, and each factor $x_p$ is invertible, the product is invertible.
\end{prop}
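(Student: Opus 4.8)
The plan is to reduce the invertibility of the product $\prod_{p \in A} x_p$ to the convergence and invertibility of a single ``reversed'' product, built from the inverses of the factors. Concretely, I would first observe that the hypothesis $\sum_{p \in A} \|x_p - 1\|$ converges, together with the assumption that each $x_p$ is invertible, controls the inverses: since $\|x_p - 1\|$ is summable it is in particular bounded, so the norms $\|x_p\|$ are bounded, say by some $R \ge 1$; and because $x_p$ is invertible with $x_p = 1 - (1 - x_p)$, for all but finitely many $p$ we have $\|1 - x_p\| < \tfrac12$, whence $\|x_p^{-1}\| \le 2$ and $\|x_p^{-1} - 1\| = \|x_p^{-1}(1 - x_p)\| \le 2\|1 - x_p\|$ for those $p$. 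Absorbing the finitely many exceptional terms into a constant, this gives $\sum_{p \in A} \|x_p^{-1} - 1\| < \infty$. Hence, by Proposition~\ref{ord-prod-conv} applied to the function $p \mapsto x_p^{-1}$ over the oppositely ordered index set $A^{\op}$, the product $\prod_{p \in A^{\op}} x_p^{-1}$ converges in $\mathcal{X}$; call its value $y$.

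Next I would show $y$ is a two-sided inverse of $\prod_{p \in A} x_p$. For finite index sets this is just the elementary identity that reversing the order and inverting each factor inverts the product, so the claim for infinite $A$ should follow by a net/continuity argument: multiplication in $\mathcal{X}$ is jointly continuous, both product nets converge, and one can match up partial products over the same finite subsets $S \subset A$ (viewed inside $A$ and inside $A^{\op}$), where $\bigl(\prod_{p \in S} x_p\bigr)\bigl(\prod_{p \in S^{\op}} x_p^{-1}\bigr) = 1$ exactly. Passing to the limit along the directed set of finite subsets then yields $\bigl(\prod_{p \in A} x_p\bigr)\, y = 1$, and symmetrically $y \,\bigl(\prod_{p \in A} x_p\bigr) = 1$. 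A clean way to package the continuity step is to note that the map $(\text{finite } S) \mapsto \bigl(\prod_{p\in S} x_p,\ \prod_{p\in S^{\op}} x_p^{-1}\bigr)$ is a net into $\mathcal{X} \times \mathcal{X}$ converging to $(\prod_A x_p,\ y)$, and the continuous map $(a,b) \mapsto ab$ sends it to the constant net $1$.

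The main obstacle I anticipate is purely bookkeeping: making sure the ``reverse and invert'' identity for finite products is lined up correctly with the order conventions for $A^{\op}$ and with the definition of the product net, so that the partial products really do cancel on the nose for each finite $S$ before taking limits. There is no analytic difficulty beyond what Proposition~\ref{ord-prod-conv} already provides; the only genuinely new input is the estimate $\|x_p^{-1} - 1\| \lesssim \|x_p - 1\|$ for the cofinitely many small factors, which is where invertibility of the individual $x_p$ is used. One should also remark that in the case of interest, $\on{SL}_2 \R$ (a closed submonoid of $\End \R^2$ that happens to be a group), every factor is automatically invertible and the product lands back in $\on{SL}_2 \R$ by closedness, so the proposition applies with no extra hypotheses—this is the remark the surrounding text needs.
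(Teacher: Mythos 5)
Your proposal is correct and follows essentially the same route as the paper's proof: bound $\|x_p^{-1}\|$ uniformly via a Neumann-series argument (the paper uses the constants $6/7$ and $7$ where you use $1/2$ and $2$), deduce summability of $\|x_p^{-1} - 1\|$, apply Proposition~\ref{ord-prod-conv} over $A^{\op}$ to get a candidate inverse, and then verify the two-sided inverse identity by a continuity argument on partial products. Your packaging of the final step as a single net into $\mathcal{X} \times \mathcal{X}$ is a slightly tidier version of the paper's ``match up $\Omega''$ and $\Omega'$'' argument, but it is the same idea.
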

Our proof will give a handy bound for free.
\begin{prop}\label{ord-prod-bound}
If the sum in Proposition~\ref{ord-prod-conv} converges, the product $\prod_{p \in A} \|x_p\|$ converges as well, and
\[ \left\| \prod_{p \in A} x_p \right\| \quad\le\quad \prod_{p \in A} \|x_p\| \quad\le\quad \exp\left(\sum_{p \in A} \|x_p - 1\|\right). \]
\end{prop}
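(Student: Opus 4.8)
The plan is to get all three relations in Proposition~\ref{ord-prod-bound} by comparing \emph{finite} ordered products and then passing to the limit along the product net; no clever idea is needed beyond the elementary estimate $\|y\| = \|1 + (y-1)\| \le 1 + \|y-1\| \le \exp\|y-1\|$, valid for every $y$ in the unital Banach algebra $\mathcal{X}$ (where $\|1\| = 1$). Applying this factor by factor and using submultiplicativity of the norm, I would first observe that every finite subset $S \subseteq A$ satisfies
\[ \Big\|\prod_{p\in S} x_p\Big\| \;\le\; \prod_{p\in S}\|x_p\| \;\le\; \prod_{p\in S}\exp\|x_p-1\| \;=\; \exp\!\Big(\sum_{p\in S}\|x_p-1\|\Big) \;\le\; \exp\!\Big(\sum_{p\in A}\|x_p-1\|\Big), \]
the last step using that the nonnegative series $\sum_{p\in A}\|x_p-1\|$ is the supremum of its finite partial sums. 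Write $C$ for the right-hand side. Note that because $\|\cdot\|$ is merely submultiplicative rather than multiplicative, one cannot short-circuit this by pushing the norm through the product via Proposition~\ref{prod-equivar}; comparing finite products and taking limits is the natural route.

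Next I would show that the scalar product $\prod_{p\in A}\|x_p\|$ converges. This is an instance of Proposition~\ref{ord-prod-conv} with $\mathcal{X}$ replaced by $\R$ (a unital commutative Banach algebra) and the function $p \mapsto \|x_p\|$: the reverse triangle inequality gives $\bigl|\,\|x_p\| - 1\,\bigr| \le \|x_p - 1\|$, so the hypothesis $\sum_{p\in A}\|x_p-1\| < \infty$ forces $\sum_{p\in A}\bigl|\,\|x_p\| - 1\,\bigr|$ to converge, and Proposition~\ref{ord-prod-conv} then yields convergence of $\prod_{p\in A}\|x_p\|$ in $\R$. (Equivalently, this step can be folded into the proof of Proposition~\ref{ord-prod-conv} itself, where the Cauchy estimate for the partial products of $\|x_p\|$ is exactly what one needs anyway.)

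Finally I would pass to the limit. By Proposition~\ref{ord-prod-conv} the product net $S \mapsto \prod_{p\in S} x_p$ converges to $\prod_{p\in A} x_p$, so composing with the continuous map $\|\cdot\|$ shows $S \mapsto \bigl\|\prod_{p\in S} x_p\bigr\|$ converges to $\bigl\|\prod_{p\in A} x_p\bigr\|$; likewise $S \mapsto \prod_{p\in S}\|x_p\|$ is by definition the product net for $\prod_{p\in A}\|x_p\|$, which we just showed converges. The inequalities $\bigl\|\prod_{p\in S} x_p\bigr\| \le \prod_{p\in S}\|x_p\|$ and $\prod_{p\in S}\|x_p\| \le C$ hold for every finite $S$, and since $\{(a,b)\in\R^2 : a \le b\}$ and $\{a\in\R : a \le C\}$ are closed, these survive the passage to the net limit, giving exactly the asserted chain $\bigl\|\prod_{p\in A} x_p\bigr\| \le \prod_{p\in A}\|x_p\| \le \exp\bigl(\sum_{p\in A}\|x_p-1\|\bigr)$.

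There is no deep obstacle here; the proof is genuinely short. The two places demanding a little care are: (i) justifying convergence of the scalar product $\prod_{p\in A}\|x_p\|$ — the point being that its finite partial products need \emph{not} be monotone once some factors have norm below $1$, so a bare monotonicity argument fails and one must appeal to Proposition~\ref{ord-prod-conv} (or its internal Cauchy estimate); and (ii) the routine but easy-to-botch fact that a non-strict inequality between two convergent nets, or a uniform upper bound on a convergent net, is preserved in the limit.
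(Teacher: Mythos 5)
Your proof is correct, and it follows the same broad outline as the paper's (bound finite partial products, then pass to the limit along the product net), but it is more careful in two places where the paper is terse. The paper asserts that convergence of $\prod_{p\in A}\|x_p\|$ ``follows immediately from the fact that the norm is continuous.'' Continuity of the norm gives convergence of the net $S\mapsto\bigl\|\prod_{p\in S}x_p\bigr\|$, which---as you point out---is a \emph{different} net from $S\mapsto\prod_{p\in S}\|x_p\|$, since $\|\cdot\|$ is only submultiplicative, not a homomorphism, so Proposition~\ref{prod-equivar} does not apply. Your fix, applying Proposition~\ref{ord-prod-conv} in $\R$ to the function $p\mapsto\|x_p\|$ via the inequality $\bigl|\,\|x_p\|-1\,\bigr|\le\|x_p-1\|$, is exactly the right move. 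For the upper bound, the paper reduces to countable $A$ and combines Proposition~\ref{ord-prod-abs-bound} with the subnet discussion of Section~\ref{ord-prod-calculation}; you sidestep the countability reduction by noting that the uniform bound $\prod_{p\in S}\|x_p\|\le\exp\bigl(\sum_{p\in A}\|x_p-1\|\bigr)$ holds for \emph{every} finite $S$, so it survives the net limit because non-strict inequalities between convergent nets on the same directed set are preserved in the limit. The two approaches are morally the same, but yours is slightly cleaner and closes the small gap in the paper's justification of the convergence of $\prod_{p\in A}\|x_p\|$.
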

We can also bound the product's distance from $1$.
\begin{prop}\label{ord-prod-near-id}
If the sum in Proposition~\ref{ord-prod-conv} converges,
\[ \left\| \prod_{p \in A} x_p - 1 \right\| \le \left(\sum_{p \in A} \|x_p - 1\|\right) \left(\prod_{p \in A} \max\big\{\|x_p\|, 1\big\}\right). \]
\end{prop}

Let's start with a less ambitious result.
\begin{prop}\label{ord-prod-abs-bound}
If the sum in Proposition~\ref{ord-prod-conv} converges, then for any $C > \exp\left(\sum_{p \in A} \|x_p - 1\|\right)$, there's a finite subset $S$ of $A$ with the property that
\[ \prod_{p \in R} \|x_p\| < C \]
for all finite subsets $R \subset A$ containing $S$.
\end{prop}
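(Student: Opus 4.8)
The plan is to observe that the subset $S$ appearing in the statement can in fact be taken empty: the claimed strict bound holds for \emph{every} finite subset $R \subset A$, so there is nothing to "wait for." The only ingredient is submultiplicativity of the Banach-algebra norm together with the elementary inequality $1 + t \le e^t$ valid for all $t \ge 0$.

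First I would fix an arbitrary finite subset $R \subset A$ and estimate the partial product directly. By the triangle inequality, $\|x_p\| \le 1 + \|x_p - 1\|$ for each $p$, so submultiplicativity gives
\[ \prod_{p \in R} \|x_p\| \;\le\; \prod_{p \in R} \left(1 + \|x_p - 1\|\right) \;\le\; \prod_{p \in R} e^{\|x_p - 1\|} \;=\; \exp\left(\sum_{p \in R} \|x_p - 1\|\right). \]
Next I would compare the finite sum on the right with the sum over all of $A$. Since every term $\|x_p - 1\|$ is nonnegative, the net of partial sums defining $\sum_{p \in A}\|x_p - 1\|$ is monotone under inclusion of finite subsets, so its limit equals the supremum of those partial sums; in particular $\sum_{p \in R}\|x_p - 1\| \le \sum_{p \in A}\|x_p - 1\|$. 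Monotonicity of $\exp$ then gives $\prod_{p \in R}\|x_p\| \le \exp\left(\sum_{p \in A}\|x_p - 1\|\right)$, which is strictly less than $C$ by the hypothesis on $C$. Since $R$ was an arbitrary finite subset, $S = \emptyset$ does the job.

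There is no genuine obstacle in this argument; the only point deserving a moment's care is the meaning of the (possibly uncountable) sum $\sum_{p \in A}\|x_p - 1\|$ as the limit of its net of finite partial sums, and the elementary fact that, for nonnegative summands, that limit coincides with the supremum over finite subsets — which is exactly what legitimizes the comparison $\sum_{p \in R}\|x_p-1\| \le \sum_{p \in A}\|x_p-1\|$ used above.
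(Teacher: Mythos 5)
Your argument is correct and follows essentially the same route as the paper: both proofs rest on the inequality $\|x_p\| \le e^{\|x_p - 1\|}$ (equivalently $\log\|x_p\| \le \|x_p - 1\|$) and the boundedness of the finite partial sums. The one genuine difference is that you notice $S = \emptyset$ already works: since the terms $\|x_p - 1\|$ are nonnegative, the partial-sum net is monotone and every finite partial sum is bounded by the total, so the bound holds for \emph{all} finite $R$. The paper instead invokes the definition of net convergence to manufacture a finite $S$ whose shadow satisfies the estimate — which is harmless but unnecessary, and your observation shows the proposition could be stated without any $S$ at all. (The paper's phrasing with an existential $S$ is presumably to match the form in which the proposition is applied in the proof of Proposition~\ref{ord-prod-conv}, where it is combined via union with another finite set obtained from a Cauchy estimate.)
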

\begin{proof}
Assume $\sum_{p \in A} \|x_p - 1\|$ converges, and consider any positive constant $C$ with $\log C > \sum_{p \in A} \|x_p - 1\|$. By the definition of convergence, we can find a finite subset $S$ of $A$ with the property that
\[ \sum_{p \in R} \|x_p - 1\| < \log C \]
for all finite subsets $R \subset A$ containing $S$. Since
\[ \log \|x_p\| \le \bigl| \|x_p\| - 1 \bigr|
\qquad \text{and} \qquad
\bigl| \|x_p\| - 1 \bigr| \le \|x_p - 1\| \]
for all $p \in A$, we know
\[ \sum_{p \in R} \log \|x_p\| < \log C. \]
for all finite subsets $R \subset A$ containing $S$. Exponentiating both sides gives the desired result.
\end{proof}
\begin{proof}[Proof of Proposition~\ref{ord-prod-conv}]
By definition, $\mathcal{X}$ is complete, so we can prove that the product converges by showing that the product net is Cauchy. To that end, given any $\epsilon > 0$, we want to find a finite subset $S$ of $A$ with the property that
\[ \left\| \prod_{p \in R} x_p - \prod_{p \in S} x_p \right\| < \epsilon \]
for all finite subsets $R \subset A$ containing $S$.

Assume the sum $\sum_{p \in A} \|x_p - 1\|$ converges, and pick a constant $C > \exp\left(\sum_{p \in A} \|x_p - 1\|\right)$. Every convergent net is Cauchy~\cite[Proposition~3.2]{gen-top}, so we can find a finite subset $S'$ of $A$ with the property that
\[ \left| \sum_{p \in R} \|x_p - 1\| - \sum_{p \in S'} \|x_p - 1\| \right| < \epsilon/C \]
for all finite subsets $R \subset A$ containing $S'$. This inequality simplifies to
\[ \sum_{p \in R \smallsetminus S'} \|x_p - 1\| < \epsilon/C. \]
By Proposition~\ref{ord-prod-abs-bound}, we can also find a finite subset $S''$ of $A$ with the property that
\[ \prod_{p \in R} \|x_p\| < C \]
for all finite subsets $R \subset A$ containing $S''$. Defining $S$ as $S' \cup S''$, and observing that $R \smallsetminus S'$ contains $R \smallsetminus S$, we see that
\[ \sum_{p \in R \smallsetminus S} \|x_p - 1\| < \epsilon/C
\qquad \text{and} \qquad
\prod_{p \in R} \|x_p\| < C \]
for all finite subsets $R \subset A$ containing $S$.

Put the elements of $R \smallsetminus S$ in some order $r_1, \ldots, r_n$. Let
\begin{align*}
R_0 & = S, \\
R_1 & = R_0 \cup \{r_1\}, \\
R_2 & = R_1 \cup \{r_2\},
\end{align*}
and so on. Let
\[ \Delta_{k + 1} = \left\| \prod_{p \in R_{k + 1}} x_p - \prod_{p \in R_k} x_p \right\|. \]
Notice that
\begin{align*}
\prod_{p \in R_{k + 1}} x_p - \prod_{p \in R_k} x_p =
\left( \prod_{\substack{p \in R_k \\ p < r_{k + 1}}} x_p \right)
(x_{r_{k + 1}} - 1)
\left( \prod_{\substack{p \in R_k \\ r_{k + 1} < p}} x_p \right),
\end{align*}
yielding the bound
\begin{align*}
\Delta_{k + 1} & \le \|x_{r_{k + 1}} - 1\| \prod_{p \in R_k} \|x_p\| \\
& \le \|x_{r_{k + 1}} - 1\|C.
\end{align*}
Finally, observe that
\begin{align*}
\left\| \prod_{p \in R} x_p - \prod_{p \in S} x_p \right\| & \le \Delta_1 + \ldots + \Delta_n \\
& \le \|x_{r_1} - 1\|C + \ldots + \|x_{r_n} - 1\|C \\
& = C \sum_{p \in R \smallsetminus S} \|x_p - 1\| \\
& < \epsilon.
\end{align*}
Since $R$ could have been any finite subset of $A$ containing $S$, and the method we used to find $S$ works for any $\epsilon > 0$, we've proven that the product net is Cauchy.
\end{proof}
\begin{proof}[Proof of Proposition~\ref{ord-prod-inv}]
Given an ordered set $I$, let $I^{\op}$ be the same set in the opposite order. Assume $\sum_{p \in A} \|x_p - 1\|$ converges. Equivalently, because addition is commutative, $\sum_{p \in A^{\op}} \|x_p - 1\|$  converges. This is only possible if $\|x_p - 1\| \le 6/7$ for all but finitely many $p \in A^{\op}$. It follows, by the calculation in the proof of \cite[Lemma~2.6]{prod-banach}, that $\|x_p^{-1}\| \le 7$ for all but finitely many $p \in A^{\op}$. That means $\|x_p^{-1}\|$ has a maximum over all $p \in A^{\op}$, which I'll call $M$. Just as in the proof of \cite[Theorem~2.7]{prod-banach}, observe that
\begin{align*}
\|x_p^{-1} - 1\| & = \|x_p^{-1}(1 - x_p)\| \\
& \le M\|x_p - 1\|
\end{align*}
for all $p \in A^{\op}$. Then \cite[Exercise 7.40.c]{haf} tells us that $\sum_{p \in A^{\op}} \|x_p^{-1} - 1\|$ converges, so $\prod_{p \in A^{\op}} x_p^{-1}$ converges by Proposition~\ref{ord-prod-conv}.

Define $y'' = \prod_{p \in A^{\op}} x_p^{-1}$ and $y' = \prod_{p \in A} x_p$. Multiplication in a Banach algebra is continuous, so for any open neighborhood $\Omega$ of $y''y'$, we can find open neighborhoods $\Omega''$ of $y''$ and $\Omega'$ of $y'$ with $\Omega'' \Omega' \subset \Omega$. By convergence, we can find finite subsets $S''$ and $S'$ of $A$ such that
\[ \prod_{p \in R^{\op}} x_p^{-1} \in \Omega'' \]
for all finite subsets $R \subset A$ containing $S''$, and
\[ \prod_{p \in R} x_p \in \Omega' \]
for all finite subsets $R \subset A$ containing $S'$. Now, defining $S$ as $S'' \cup S'$, we can observe that
\[ \left( \prod_{p \in S^{\op}} x_p^{-1} \right) \left( \prod_{p \in S} x_p \right) \in \Omega. \]
But the product in the expression above is clearly equal to 1! We've shown that every open neighborhood of $y''y'$ contains 1, which means $y''y'$ is equal to 1. The same argument can be used to show that $y'y''$ is 1. Therefore, $\prod_{p \in A} x_p$ is invertible, with inverse $\prod_{p \in A^{\op}} x_p^{-1}$.
\end{proof}
\begin{proof}[Proof of Proposition~\ref{ord-prod-bound}]
Assume $\sum_{p \in A} \|x_p - 1\|$ converges. By Proposition~\ref{ord-prod-conv}, $\prod_{p \in A} x_p$ converges too. The convergence of $\prod_{p \in A} \|x_p\|$ follows immediately from the fact that the norm is continuous. The first inequality is easy to establish.

Because $\sum_{p \in A} \|x_p - 1\|$ is a sum of non-negative numbers, its convergence implies that at most countably many of its terms are nonzero. We can therefore assume, without loss of generality, that $A$ is countable. Combining this fact with Proposition~\ref{ord-prod-abs-bound} and the discussion in Section~\ref{ord-prod-calculation}, it's not hard to show that $\prod_{p \in A} \|x_p\|$ is less than any number greater than $\exp\left(\sum_{p \in A} \|x_p - 1\|\right)$. That gives the second inequality.
\end{proof}
\begin{proof}[Proof of Proposition~\ref{ord-prod-near-id}]
We'll do a simpler version of the trick we did to show the product net is Cauchy. Assume $\sum_{p \in A} \|x_p - 1\|$ converges. Pick a finite subset of $A$, and label its elements from left to right as $1, \ldots, n$. Consider the telescoping sum
\newlength{\stair}
\setlength{\stair}{18pt}
\begin{align*}
x_1 \cdots x_n - 1 & = (x_1 - 1) x_2 \cdots x_n \\
& \hphantom{=}\hspace{\stair} + (x_2 - 1) x_3 \cdots x_n \\
& \hphantom{=}\hspace{2\stair}\hspace{-2pt} \ddots \\
& \hphantom{=}\hspace{3\stair} + (x_{n-1} - 1) x_n \\
& \hphantom{=}\hspace{4\stair} + (x_n - 1).
\end{align*}
For each term,
\[ \|(x_k - 1) x_{k+1} \cdots x_n\| \le \|x_k - 1\| \left(\prod_{p \in A} \max\big\{\|x_p\|, 1\big\}\right), \]
so overall,
\[ \|x_1 \cdots x_n - 1\| \le \left(\sum_{p \in A} \|x_p - 1\|\right) \left(\prod_{p \in A} \max\big\{\|x_p\|, 1\big\}\right). \]
We can bring the product $x_1 \cdots x_n$ arbitrarily close to $\prod_{p \in A} x_p$ by picking a large enough finite subset of $A$. Hence, the bound above implies the bound we want.
\end{proof}
\section{Linear algebra on a complex Euclidean plane}\label{euclidean}
In the world of complex vector spaces, the analogue of a Euclidean plane is a two-dimensional inner product space $E$ with a volume form $D \maps E^2 \to \C$ that gives each unit square a volume of unit norm. Each real plane in $E$ comes with two natural Euclidean plane structures, with opposite orientations. As a result, many basic facts about real Euclidean planes extend to the complex setting. I'll collect a few of them here. Really well-known facts will be stated without proof.

The sine metric $d_\angle$ on $\Proj E$ is defined as in Section~\ref{stable-lipschitz}. The area of a parallelogram can be computed from the lengths of its sides and the angle between them.
\begin{secprop}\label{area-angle}
\[ |D(u, v)| = \|u\| \|v\|\;d_\angle(u, v) \]
for all nonzero $u, v \in E$.
\end{secprop}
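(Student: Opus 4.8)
The plan is to reduce the identity to the classical Lagrange (Gram) identity $D(u,v)^2 = \|u\|^2\|v\|^2 - \langle u, v\rangle^2$ and then verify that in coordinates. First, note that the statement is only meaningful when $u$ and $v$ are nonzero, since $d_\angle$ is a distance on $\mathbf{P}E$; assume this. By the inner-product definition of the angle $\vartheta \in [0, \tfrac{\pi}{2}]$ between the lines spanned by $u$ and $v$, we have $\cos\vartheta = |\langle u,v\rangle|/(\|u\|\|v\|)$, so
\[ d_\angle(u,v) = \sin\vartheta = \sqrt{1 - \frac{\langle u,v\rangle^2}{\|u\|^2\|v\|^2}} = \frac{\sqrt{\|u\|^2\|v\|^2 - \langle u,v\rangle^2}}{\|u\|\|v\|}. \]
Multiplying through by $\|u\|\|v\|$, the claim becomes $|D(u,v)| = \sqrt{\|u\|^2\|v\|^2 - \langle u,v\rangle^2}$, which after squaring is exactly $D(u,v)^2 = \|u\|^2\|v\|^2 - \langle u,v\rangle^2$.

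To prove this, I would pick an orthonormal basis $e_1, e_2$ of $E$ and work in coordinates, writing $u = u_1 e_1 + u_2 e_2$ and $v = v_1 e_1 + v_2 e_2$. Since $D$ is a nonzero alternating bilinear form on a two-dimensional space, it equals $c$ times the coordinate determinant $(a,b) \mapsto a_1 b_2 - a_2 b_1$ for some nonzero scalar $c$; the normalization that the unit square has unit volume forces $|c| = 1$, and this is independent of the chosen orthonormal basis, because two orthonormal bases differ by an orthogonal coordinate change, which scales the determinant by $\pm 1$. Hence $D(u,v)^2 = (u_1 v_2 - u_2 v_1)^2$, while $\|u\|^2\|v\|^2 = (u_1^2 + u_2^2)(v_1^2 + v_2^2)$ and $\langle u,v\rangle^2 = (u_1 v_1 + u_2 v_2)^2$; expanding the right-hand side of
\[ (u_1^2 + u_2^2)(v_1^2 + v_2^2) - (u_1 v_1 + u_2 v_2)^2 = (u_1 v_2 - u_2 v_1)^2 \]
is a one-line computation that finishes the proof.

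The only step that warrants any care — and it is the ``main obstacle'' only in a very mild sense — is the justification that the normalization ``the unit square has unit volume'' pins $D$ down to $\pm$ the standard determinant in every orthonormal basis; once that is in hand, the rest is the Lagrange identity. If one prefers a coordinate-free presentation, the same facts assemble as follows: decompose $v = v_\parallel + v_\perp$ with $v_\parallel$ parallel to $u$ and $v_\perp \perp u$; then $D(u, v) = D(u, v_\perp)$ because $D(u, v_\parallel) = 0$, and $|D(u, v_\perp)| = \|u\|\,\|v_\perp\|$ because $u$ and $v_\perp$ are orthogonal (rescale them to an orthonormal basis, on which $|D| = 1$), while $\|v_\perp\| = \|v\|\,d_\angle(u,v)$ by the very definition of the sine of the angle between the lines through $u$ and $v$. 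Combining these three observations gives $|D(u,v)| = \|u\|\,\|v\|\,d_\angle(u,v)$.
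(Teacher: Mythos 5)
Your proof is correct. The paper deliberately gives no argument here: the appendix preamble declares that ``really well-known facts will be stated without proof,'' and the sentence immediately preceding the proposition (``The area of a parallelogram can be computed from the lengths of its sides and the angle between them'') is the only gloss offered. So there is no proof in the paper to compare against; your coordinate-free closing paragraph (decomposing $v = v_\parallel + v_\perp$ and using orthogonality) is essentially the reading the paper intends, and your reduction to the Lagrange identity in an orthonormal basis is an equally valid route. One small remark: you note the identity is ``only meaningful'' for $u, v \neq 0$ because $d_\angle$ lives on $\mathbf{P}E$, which is right given the paper's definition of $d_\angle$ in Section~7.2 as a distance between spanned lines; the paper's ``for all $u, v \in E$'' is slightly loose on that point, so your caveat is warranted rather than a gap.
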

By comparing the areas of some well-chosen parallelograms, you can deduce the law of sines.
\begin{secprop}[The law of sines]
\[ \|u\|\;d_\angle(u,u+v) = \|v\|\;d_\angle(v,u+v) \]
for all nonzero $u, v \in E$ with $u + v \neq 0$.
\end{secprop}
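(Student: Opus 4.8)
The plan is to reduce everything to the area formula just proven, using only the bilinearity and the alternating property of $D$. First I would observe that, since $D$ is alternating, $D(u, u+v) = D(u,u) + D(u,v) = D(u,v)$ and likewise $D(v, u+v) = D(v,u) + D(v,v) = -D(u,v)$. So the two ``well-chosen parallelograms'' to compare are the ones spanned by $\{u, u+v\}$ and by $\{v, u+v\}$: they have the same area, namely
\[ |D(u, u+v)| = |D(u,v)| = |D(v, u+v)|. \]

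Next I would apply the previous proposition to each of these, getting $|D(u,u+v)| = \|u\|\,\|u+v\|\,d_\angle(u, u+v)$ and $|D(v,u+v)| = \|v\|\,\|u+v\|\,d_\angle(v, u+v)$. Feeding in the equality of areas yields
\[ \|u\|\,\|u+v\|\,d_\angle(u, u+v) = \|v\|\,\|u+v\|\,d_\angle(v, u+v), \]
and since $u+v \neq 0$ by hypothesis we have $\|u+v\| > 0$, so cancelling this common factor gives the law of sines.

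The only point that needs a moment's care — and the closest thing to an obstacle — is the degenerate case in which $u = 0$ or $v = 0$, where one argument of $d_\angle$ is the zero vector and the cancellation above is being performed against a ``sine'' that is not literally an angle. But if $u = 0$ the left-hand side vanishes because $\|u\| = 0$, while the right-hand side is $\|v\|\,d_\angle(v,v) = 0$; the case $v = 0$ is symmetric. So the identity is trivially true there, and in every remaining case all vectors inside $d_\angle$ are nonzero and the argument above applies verbatim. Apart from this bookkeeping, the proof is a one-line consequence of the area formula together with $D(u,u+v) = D(u,v) = -D(v,u+v)$.
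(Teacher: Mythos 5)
Your proof is correct and fills in exactly what the paper gestures at: the ``well-chosen parallelograms'' are the ones spanned by $\{u, u+v\}$ and $\{v, u+v\}$, which have equal area $|D(u,v)|$, and applying the area formula to each and cancelling $\|u+v\| \neq 0$ gives the identity. Your attention to the degenerate cases $u = 0$ or $v = 0$, where $d_\angle$ is strictly speaking undefined, is a welcome bit of extra care.
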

From this we see that the extent to which a linear map $T$ can move lines is limited by the operator norm of $T - 1$.
\begin{secprop}\label{motion-bound}
For any linear map $T \maps E \to E$,
\[ d_\angle(u,Tu) \le \|T - 1\| \]
whenever $Tu \neq 0$.
\end{secprop}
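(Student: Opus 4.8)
The plan is to reduce everything to the law of sines (the second proposition of Appendix~\ref{euclidean}) by writing $Tu$ as a sum with $u$ as one summand. Set $v = (T-1)u$, so that $Tu = u + v$. First I would dispose of the degenerate cases: since $Tu \neq 0$ by hypothesis, in particular $u \neq 0$, so $d_\angle(u, Tu)$ is defined; and if $v = 0$ then $Tu$ and $u$ span the same line, so $d_\angle(u, Tu) = 0$ and the inequality is immediate. So assume $v \neq 0$.

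Now apply the law of sines to the pair $u, v$, which is legitimate because $u + v = Tu \neq 0$. It gives
\[ \|u\|\;d_\angle(u, Tu) = \|v\|\;d_\angle(v, Tu). \]
The sine metric takes values in $[0, 1]$ (this is clear from the definition, and also follows from the first proposition of Appendix~\ref{euclidean} together with the Cauchy--Schwarz-type bound $|D(v, Tu)| \le \|v\|\|Tu\|$), so $d_\angle(v, Tu) \le 1$. Dividing by $\|u\| > 0$,
\[ d_\angle(u, Tu) \le \frac{\|v\|}{\|u\|} = \frac{\|(T-1)u\|}{\|u\|} \le \|T - 1\|, \]
where the last step is just the definition of the operator norm. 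That completes the argument.

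There is essentially no obstacle here: the only thing to notice is the substitution $Tu = u + (T-1)u$, which puts the statement in the exact shape the law of sines wants, and the trivial bound $d_\angle \le 1$. The one point worth a sentence in the write-up is checking that the hypothesis $Tu \neq 0$ is precisely what is needed both to make $d_\angle(u, Tu)$ meaningful and to license the law of sines (whose hypothesis is $u + v \neq 0$).
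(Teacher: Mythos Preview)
Your proof is correct and follows the same route as the paper: write $Tu = u + (T-1)u$, apply the law of sines, use the trivial bound $d_\angle \le 1$, and finish with the definition of the operator norm. Your treatment is a bit more careful about the degenerate cases and the hypothesis $Tu \neq 0$, but the argument is identical in substance.
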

\begin{proof}
Because distances in $\mathbf{P}E$ are never greater than one, it follows from the law of sines that
\[ d_\angle(u,u+v) \le \frac{\|v\|}{\|u\|} \]
In particular,
\[ d_\angle(u,Tu) \le \frac{\|(T-1)u\|}{\|u\|}. \]
\end{proof}
The extent to which a volume-preserving map can expand angles is limited by the operator norm of its inverse.
\begin{secprop}\label{expansion-bound}
For any $T \in \on{SL}(E)$,
\[ d_\angle(Tu,Tv) \le \|T^{-1}\|^2\;d_\angle(u,v) \]
for all nonzero $u, v \in E$.
\end{secprop}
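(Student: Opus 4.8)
The plan is to reduce everything to the area formula $|D(u,v)| = \|u\|\,\|v\|\,d_\angle(u,v)$ from the first proposition of this appendix, together with the fact that elements of $\on{SL}(E)$ preserve the volume form $D$. Concretely, for nonzero $u, v$ (the only case in which $d_\angle$ is defined), I would first write
\[ d_\angle(Tu, Tv) = \frac{|D(Tu, Tv)|}{\|Tu\|\,\|Tv\|}. \]
Since $\det T = 1$, we have $D(Tu, Tv) = D(u, v)$, so the numerator is just $|D(u,v)| = \|u\|\,\|v\|\,d_\angle(u,v)$, giving
\[ d_\angle(Tu, Tv) = \frac{\|u\|}{\|Tu\|}\cdot\frac{\|v\|}{\|Tv\|}\cdot d_\angle(u, v). \]

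The remaining step is to bound each of the two ratios by $\|T^{-1}\|$. This is immediate: writing $u = T^{-1}(Tu)$ gives $\|u\| \le \|T^{-1}\|\,\|Tu\|$, hence $\|u\|/\|Tu\| \le \|T^{-1}\|$, and likewise for $v$. Multiplying the two bounds yields the factor $\|T^{-1}\|^2$ and completes the proof.

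I do not expect any real obstacle here; the only things to be careful about are the hypotheses ($u, v$ nonzero, which is built into the definition of $d_\angle$, and $Tu, Tv$ automatically nonzero because $T$ is invertible) and making sure the normalization of $D$ is not actually needed — it is not, since only the ratio in the area formula enters. If one wanted an alternative route avoiding the volume form, one could instead combine the law of sines with Proposition~\ref{motion-bound}, but the $D$-based argument is cleaner and is the one I would present.
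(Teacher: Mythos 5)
Your proof is correct and follows essentially the same route as the paper's: rewrite $d_\angle(Tu,Tv)$ via the area formula, use volume preservation to turn $|D(Tu,Tv)|$ into $|D(u,v)|$, and bound each norm ratio by $\|T^{-1}\|$. You are merely a bit more explicit than the paper about the step $\|u\|/\|Tu\|\le\|T^{-1}\|$, which it leaves implicit.
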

\begin{proof}
\begin{align*}
d_\angle(Tu,Tv) & = \frac{|D(Tu,Tv)|}{\|Tu\|\|Tv\|} \\
& = \frac{\|u\|}{\|Tu\|}\;\frac{\|v\|}{\|Tv\|}\;\frac{|D(u,v)|}{\|u\|\|v\|} \\
& = \frac{\|u\|}{\|Tu\|}\;\frac{\|v\|}{\|Tv\|}\;d_\angle(u,v) \\
& \le \|T^{-1}\|^2\;d_\angle(u,v).
\end{align*}
\end{proof}
\section{The stable lines of a Markov cocycle}\label{markov-stable-lines}
The bi-infinite sequences over a finite alphabet $\mathcal{A}$ form a topological space $\mathcal{A}^\Z$. The {\em shift map}, which acts on a sequence by moving each letter one step earlier, makes $\mathcal{A}^\Z$ a dynamical system. I'll call a dynamical cocycle over the shift map a {\em Markov cocycle} if its action at a given sequence depends only on the letter at index zero.

Pick a subset $X \subset \mathcal{A}^\Z$ which is invariant under forward and backward shifts. Consider a Markov cocycle $A \maps X \to \on{SL}(E)$, where $E$ is a two-dimensional complex inner product space. If $A$ is uniformly hyperbolic, we'll collect its forward- and backward-stable lines into a pair of {\em stable distributions} $E^\pm \maps X \to \Proj E$, following the notation of Section~\ref{local-unif-hyp}.

It can be useful to know how sensitively the stable lines $E^\pm_x$ depend on the point $x \in X$. In \cite{cmv-matrices}, Damanik, Fillman, Lukic, and Yessen show in passing that the sensitivity is reasonably low, as measured with respect to natural metrics on $\Proj E$ and $\mathcal{A}^\Z$. (These metrics are defined in Sections \ref{stable-lipschitz} and \ref{division-dynamics}.)
\begin{secprop}\label{markov-stable-lipschitz}
If the Markov cocycle $A$ is uniformly hyperbolic, with bounding exponent $K$, its stable distributions are Lipschitz continuous with respect to the sine metric on $\Proj E$ and the division metric on $X$ with steepness $e^{2K}$.
\end{secprop}
Here's a streamlined, stand-alone version of Damanik, Fillman, Lukic, and Yessen's argument. First, recall that a non-unitary transformation $T \in \on{SL}(E)$ must contract some vectors and expand others. The most contracted vectors form a line, and the most expanded vectors form an orthogonal line, because the most contracted and expanded vectors are the eigenvectors of $T^\dagger T$. Lemma~2.1 from \cite{cmv-matrices} shows that vectors which are contracted a lot must lie near the most contracted line. Here's the part of the lemma we'll need.
\begin{secprop}\label{growth-angle-bound}
Let $\Lambda$ be the line most contracted by a given non-unitary transformation $T \in \on{SL}(E)$. For any $v \in E$ and $R > 0$, the condition $\|Tv\| \le R\|v\|$ implies that $d_\angle(v, \Lambda) \le R\|T\|^{-1}$.
\end{secprop}
\begin{proof}
Let's assume $v$ is a unit vector, since the general case reduces easily to this one. Let $\Lambda'$ be the line most expanded by $T$. Recall that $\Lambda$ and $\Lambda'$ are orthogonal, and observe that their images under $T$ are orthogonal as well. Pick unit vectors $u \in \Lambda$ and $u' \in \Lambda'$ whose real span contains $v$. Note that $T$ maps the real span of $u, u'$ to the real span of $Tu, Tu'$. We then have the following picture, with the real span of $u, u'$ shown on the left and the real span of $Tu, Tu'$ on the right.
\begin{center}
\begin{tikzpicture}[x=0.25in, y=0.25in, color=magenta]
\node[anchor=south west, inner sep=0in] {\includegraphics[width=4.5in]{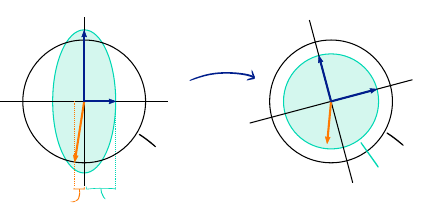}};

\node[black, anchor=south] at (3.5, 8) {$\Lambda$};
\node[black, anchor=west] at (7, 4.6) {$\Lambda'$};
\node[ietocean, anchor=east] at (3.5, 7.6) {$R \|T\| u$};
\node[ietocean, anchor=south west] at (4.5, 4.5) {$R \|T\|^{-1} u'$};
\node[ietpapaya, anchor=south east] at (3.2, 2.1) {$v$};
\node[ietpapaya, anchor=east] at (3, 0.4) {$d_\angle(v, \Lambda)$};
\node[ietlagoon, anchor=west] at (4.2, 0.4) {$R \|T\|^{-1}$};
\node[black, anchor=west] at (6.4, 2.6) {$1$};

\node[ietocean, anchor=south] at (9.2, 5.8) {$T$};

\node[black, anchor=south] at (12.7, 8) {$T\Lambda$};
\node[black, anchor=west] at (17, 5.6) {$T\Lambda'$};
\node[ietocean, anchor=east] at (13.2, 6.5) {$R \|T\| Tu$};
\node[ietocean, anchor=north west] at (15.5, 5.2) {$R \|T\|^{-1} Tu'$};
\node[ietpapaya, anchor=south east] at (13.6, 2.8) {$Tv$};
\node[ietlagoon, anchor=north west] at (15.5, 1.9) {$R$};
\node[black, anchor=west] at (16.7, 2.7) {$1$};
\end{tikzpicture}
\end{center}
Under $T^{-1}$, the circle of radius $R$ maps to an ellipse with long axis $\Lambda$, short axis $\Lambda'$, and short radius $R\|T\|^{-1}$. This gives the desired bound on $d_\angle(v, \Lambda)$, which you can see as the length of $v$ along $\Lambda'$.
\end{proof}
Because $A$ is uniformly hyperbolic, we can ensure that the transformations $A^{\pm n}_x$ are non-unitary for all $x \in X$ by making $n$ large enough. Say the numbers after $N \in \N$ are large enough. For each $x \in X$ and $n \in \N_{> N}$, let $E^\pm_{x,n}$ be the line most contracted by $A^{\pm n}_x$.
\begin{secprop}\label{stable-line-appx}
If $A$ is uniformly hyperbolic, with bounding exponent $K$, the most contracted lines $E^\pm_{x,n}$ are good approximations for the stable lines $E^\pm_x$, in the sense that
\[ d_\angle(E^\pm_x, E^\pm_{x,n}) \lesssim e^{-2Kn} \]
over all $x \in X$ and $n \in \N_{> N}$.
\end{secprop}
\begin{proof}
By the decay properties characterizing the forward and backward-stable lines, and the equivalent growth properties from Proposition~\ref{local-soft-turnaround}, we can find a constant $C > 0$ such that
\begin{align*}
\|A_x^{\pm n} v\| & \le Ce^{-Kn} \|v\| &
C^{-1} e^{Kn} \|w\| & \le \|A_x^{\pm n} w\|
\end{align*}
for all $x \in X$, $v \in E^\pm_x$, $w \in E^\mp_x$, and $n \in \N$. The first bound, together with Proposition~\ref{growth-angle-bound}, implies that
\[ d_\angle(v, E^\pm_{x,n}) \le Ce^{-Kn} \|A^{\pm n}_x\|^{-1} \]
for all $x \in X$, $v \in E^+_x$, and $n \in \N_{> N}$. The second bound implies that
\[ C^{-1} e^{Kn} \le \|A_x^{\pm n}\| \]
for all $x \in X$ and $n \in \N$. Combining, we see that
\[ d_\angle(E^\pm_x, E^\pm_{x,n}) \le C^2 e^{-2Kn} \]
for all $x \in X$ and $n \in \N_{> N}$.
\end{proof}
\begin{proof}[Proof of Proposition \ref{markov-stable-lipschitz}]
Assume $A$ is uniformly hyperbolic, with bounding exponent $K$. Give $X$ the division metric with steepness $e^{2K}$. Given $x, y \in X$, let $m_{xy}$ be the number of steps you have to take from position zero to reach a position where $x$ and $y$ differ. Then we can say $d(x, y) = e^{-2Km_{xy}}$.

Observe that $A^{\pm m_{xy}}_x$ and $A^{\pm m_{xy}}_y$ only depend on the letters of $x$ and $y$ up to $m_{xy}-1$ steps from position zero. Since $x$ and $y$ match until you get $m_{xy}$ steps from position zero, $A^{\pm m_{xy}}_x$ and $A^{\pm m_{xy}}_y$ are equal, and their most contracted lines $E^\pm_{x,m_{xy}}$ and $E^\pm_{y,m_{xy}}$ are equal as well. Hence, by the triangle inequality,
\[ d_\angle(E^\pm_x, E^\pm_y) \le d_\angle(E^\pm_x, E^\pm_{x,m_{xy}}) + d_\angle(E^\pm_y, E^\pm_{y,m_{xy}}). \]

We know from Proposition~\ref{stable-line-appx} that
\begin{align*}
d_\angle(E^\pm_x, E^\pm_{x,m_{xy}}) & \lesssim e^{-2Km_{xy}} &
d_\angle(E^\pm_y, E^\pm_{y,m_{xy}}) & \lesssim e^{-2Km_{xy}}
\end{align*}
over all $x, y \in X$ with $m_{xy} > N$. Plugging these bounds into the triangle inequality above, and recalling that $d(x, y) = e^{-2Km_{xy}}$, we learn that
\[ d_\angle(E^\pm_x, E^\pm_y) \lesssim d(x, y) \]
over all $x, y \in X$ with $d(x, y) < e^{-2KN}$. Hence, $E^\pm \maps X \to \Proj E$ are locally Lipschitz. Since $\Proj E$ has finite diameter, it follows that $E^\pm$ are globally Lipschitz.
\end{proof}
\section{Standard punctures for translation surfaces}\label{punk-shapes}
\subsection{Motivation}
To see where the standard puncture shapes come from, we need to talk about the complex geometry of translation surfaces. Every translation surface $\Sigma$ comes with a complex structure, which we get by identifying $\R^2$ with $\C$ in the usual way, and a complex-valued $1$-form $\omega$, which sends horizontal unit vectors to $1$ and vertical unit vectors to $i$. Observing that $\omega = dz$ for any local translation chart $z \maps \Sigma \to \C$, we see that $\omega$ is holomorphic. Conversely, a complex $1$-manifold equipped with a holomorphic $1$-form is canonically a translation surface. Where $\omega$ has a zero of order $n$, the translation structure has a conical singularity of angle $2(n+1)\pi$.

The complex point of view suggests a natural class of translation surfaces that are non-compact, but still well-behaved. Putting a meromorphic $1$-form on a compact Riemann surface defines a translation structure on the complement of the poles. The poles correspond to the ends of the translation surface, and the Riemann surface is its end compactification \cite[\S 1]{theory-of-ends}. The poles of a $1$-form have a limited variety of behaviors, so the ends of the translation surface have a limited variety of shapes, which we'll call the {\em standard punctures}.
\subsection{First-order punctures}
A first-order pole in $\omega$ makes a puncture shaped like a half-infinite cylinder. You can build one by rolling up a half-infinite rectangular strip and gluing its sides together:
\begin{center}
\begin{tikzpicture}
\begin{scope}[rotate=25]
\foliate{pwbeige}{30}{60} (-1.2, 0) rectangle (1.2, 5);
\foliate{pwpink}{30}{60} (-1.2, 0) rectangle (-0.6, 5);
\foliate{pwpink}{30}{60} (0.6, 0) rectangle (1.2, 5);
\freelabel{pwpink}{1}{(-0.9, 1)}{30}
\freelabel{pwpink}{1}{(0.9, 1)}{30}
\draw[white] (-1.3, 5) -- (1.3, 5);
\end{scope}
\begin{scope}[transform canvas={rotate=25}]
\fill[white, path fading=south] (-1.3, 5) rectangle (1.3, 4.8);
\end{scope}
\end{tikzpicture}
\end{center}
The translation structure of the cylinder you end up with is determined by two parameters: the width of the strip and its orientation in the plane. In most orientations, the vertical leaves spiral up or down the cylinder. When the strip is horizontal, the vertical leaves close up into circles.
\subsection{Higher-order punctures}
The puncture created by a higher-order pole in $\omega$ can be glued together out of planes with quadrants cut away, like this:
\begin{center}
\begin{tikzpicture}
\matrix[row sep=0.75cm, column sep=1cm]{
\hairdown{pwviolet}{$6$}{pwblue}{$5$} & \hairdown{pwgreen}{$4$}{pwgold}{$3$} & \hairdown{pworange}{$2$}{pwpink}{$1$} \\
\hairup{pwgreen}{$4$}{pwblue}{$5$} & \hairup{pworange}{$2$}{pwgold}{$3$} & \hairup{pwviolet}{$6$}{pwpink}{$1$} \\
};
\end{tikzpicture}
\end{center}
The notches at the centers of the pieces fit together into a polygonal hole that the rest of the surface can be connected to. For the pieces to fit together, the notches all have to be the same size, which can be adjusted to accommodate the part of the surface the puncture is supposed to hook up to.

Unlike first-order punctures, which come in a $\C^\times$-worth of shapes parameterized by the residues at their poles, higher-order punctures depend only on the orders of their poles.
\subsection{Counting ends}
As I remarked earlier, a puncture is an end of the surface it lives in. Intuitively, you can think of it as a point on the boundary at infinity of the surface. The notion of an end is purely topological, however, and you might wonder if there's a notion of boundary at infinity that takes the translation structure into account. Here's one proposal, which is suitable at least for translation surfaces with punctures.

Let's say two vertical rays on a translation surface are {\em translation-equivalent} if they're connected by a continuous family of vertical rays.\footnote{To be formal about it, define a vertical ray to be a local isometry of $[0, \infty)$ into a vertical leaf. We can then say a continuous family of vertical rays is a continuous map from $[0, 1] \times [0, \infty)$ into the surface which restricts to a vertical ray when the first argument is fixed.} A {\em vertical end} is an equivalence class of vertical rays. A generic first-order puncture, whose vertical leaves spiral up or down rather than closing up into circles, is a single vertical end. A higher-order puncture, on the other hand, comprises several vertical ends, one for each building block.

\bibliographystyle{utphys}
\bibliography{abelianization}
\end{document}